\numberwithin{equation}{section}
\numberwithin{figure}{section}
\theoremstyle{plain}
\newtheorem{thm}{\protect\theoremname}[section]
\theoremstyle{definition}
\newtheorem{rem}[thm]{\protect\remarkname}
\theoremstyle{definition}
\newtheorem{defn}[thm]{\protect\definitionname}
\theoremstyle{plain}
\newtheorem{prop}[thm]{\protect\propositionname}
\theoremstyle{plain}
\newtheorem{lem}[thm]{\protect\lemmaname}
\theoremstyle{plain}
\theoremstyle{plain}
\theoremstyle{definition}
\theoremstyle{definition}
\theoremstyle{definition}
\theoremstyle{definition}
\newtheorem{lemma}[thm]{Lemma}
\newtheorem{assumption}[thm]{Assumption}
\DeclareMathOperator{\diam}{diam}
\DeclareMathOperator{\dist}{dist}
\DeclareMathOperator{\Arg}{Arg}
\DeclareMathOperator{\supp}{supp}
\newcommand{\D}{\mathbb D}
\newcommand{\R}{\mathbb R}
\newcommand{\Z}{\mathbb Z}
\newcommand{\N}{\mathbb N}
\newcommand{\Q}{\mathbb Q}
\newcommand{\T}{\mathbb T}
\newcommand{\PP}{\mathbb P}
\newcommand{\bP}{\mathbf P}
\newcommand{\bE}{\mathbf E}
\renewcommand{\Re}{\textup{Re}}
\renewcommand{\Im}{\textup{Im}}
\newcommand{\eps}{\varepsilon}
\newcommand{\mU}{\mathcal{U}}
\newcommand{\mG}{\mathcal{G}}
\newcommand{\s}{\mathbb{S}}
\newcommand{\ncal}{\mathcal{N}}
\def\Nk{{\ncal}}
\newcommand{\rcal}{\mathcal{R}}
\newcommand{\fm}{\mathfrak{m}}
\newcommand{\bx}{\mathbf{x}}
\newcommand{\E}{\mathbb{E}}
\newcommand{\X}{\mathbb{X}}
\newcommand{\mP}{\mathcal{P}}
\newcommand{\hdim}{\dim_H}
\DeclareMathOperator{\sdim}{s-dim}
\DeclareMathOperator{\satdim}{sat-dim}
\providecommand{\conjecturename}{Conjecture}
\providecommand{\corollaryname}{Corollary}
\providecommand{\definitionname}{Definition}
\providecommand{\examplename}{Example}
\providecommand{\lemmaname}{Lemma}
\providecommand{\problemname}{Problem}
\providecommand{\propositionname}{Proposition}
\providecommand{\remarkname}{Remark}
\providecommand{\theoremname}{Theorem}
\providecommand{\taskname}{Task}
\newcommand{\Conv}{\mathlarger{\mathlarger{\ast}}}
\def\diam{{\rm diam}}
\def\dist{{\rm dist}}
\def\supp{{\rm supp}}
\def\vphi{\varphi}
\def\wtil{\widetilde}
\def\half{\frac{1}{2}}
\newcommand{\lam}{\lambda}
\newcommand{\om}{\omega}
\def\Om{\Omega}
\newcommand{\sig}{\sigma}
\def\T{{\mathbb T}}
\def\N{{\mathbb N}}
\def\C{{\mathbb C}}
\def\pom{{(\om)}}
\def\bp{{\bf p}}
\def\Ak{{\mathcal A}}
\def\Bk{{\mathcal B}}
\def\Pk{{\mathcal P}}
\def\bx{{\mathbf x}}
\def\be{\begin{equation}}
	\def\ee{\end{equation}}
\def\Dk{{\mathcal D}}
\newcommand{\Ek}{{\mathcal E}}
\newcommand{\Fk}{{\mathcal F}}
\def\Gk{{\mathcal G}}
\def\ov{\overline}
\def\what{\widehat}
\def\wt{\widetilde}
\begin{document}
\title{Absolute continuity of self-similar measures on the plane}
\author{Boris Solomyak$^1$}
\address{$^1$Department of Mathematics, Bar-Ilan University , Ramat Gan, 5290002, Israel }
\email{bsolom3@gmail.com}
\author{Adam \'Spiewak$^{1,2}$}
\address{$^2$Institute of Mathematics, Polish Academy of Sciences,
	ul.~\'Sniadeckich 8, 00-656 Warszawa, Poland}
\email{ad.spiewak@gmail.com}

\thanks{Both authors acknowledge support from the Israel Science Foundation, grant 911/19.}

%\date{\today}

\begin{abstract} Consider an iterated function system consisting of similarities on the complex plane of the form $g_{i}(z) = \lam_i z + t_i,\ \lam_i, t_i \in \C,\ |\lam_i|<1, i=1,\ldots, k$. We prove that for almost every choice of $(\lam_1, \ldots, \lam_k)$ in the super-critical region (with fixed translations and probabilities), the corresponding self-similar measure is absolutely continuous. This extends results of Shmerkin-Solomyak (in the homogenous case)  and Saglietti-Shmerkin-Solomyak (in the one-dimensional non-homogeneous case). As the main steps of the proof, we obtain results on the dimension and power Fourier decay of random self-similar measures on the plane, which may be of independent interest.
\end{abstract}

\maketitle

\section{Introduction and main results}

Self-similar sets and measures are among the most basic objects in fractal geometry that have been studied for almost a century. The standard way to define and study them goes via the notion of an iterated function system (IFS). Dimension, measure, and topological properties are well-understood
when the IFS is ``non-overlapping'' or ``just-touching'' (satisfies the Open Set Condition), whereas the ``overlapping'' case is still full of difficult open questions, in spite of great progress achieved in the
last decades. The work of Hochman brought the methods of additive combinatorics into the field, which led to breakthrough results on the dimension of self-similar sets and
measures, first in $\R$ \cite{H}, and then in $\R^d$ \cite{HRd}; in particular, he established dimension formulas  for ``typical'' IFS in a very strong sense. It should be noted that in this development, as
in many other areas of fractal geometry, there is a big difference and many extra challenges as the ambient dimension $d$ increases from $d=1$ to $d>1$, and many times from $d=2$ to $d>2$ as well.

The current paper focuses on the question of absolute continuity of self-similar measures in the ``super-critical'' regime, that is, when the expected dimension exceeds $d$, and this requires new ideas.
Shmerkin \cite{ShmerkinBC} found a way to combine the results of Hochman \cite{H} on the dimension with old results of Erd\H{o}s and Kahane on power Fourier decay to obtain significant progress
for ``typical''  (with respect to the contraction parameter) Bernoulli convolutions and their immediate generalizations.  Another direction, pioneered by
 Varj\'u \cite{Var19} and continued by Kittle \cite{Kittle}, vastly increased the set of {\em specific} parameters, for which Bernoulli convolutions are known to be absolutely continuous.
It is based on a very delicate extension of Hochman's method, combined with number-theoretic and Diophantine techniques,  and is so far limited to the case of homogeneous self-similar measures, 
that is, when the linear parts of the similarities are equal to each other. It is, in a sense, ``orthogonal'' to Shmerkin's approach, which is aimed at getting results for ``typical'' parameters. So far, the
latter approach has been the one which allowed for a greater scope of generalizations, although, at its core, it is limited to 
measures having a convolution structure and thus harder to implement in the non-homogeneous case. A novel idea of representing a non-homogeneous self-similar measure as an integral of
measures, which are only ``statistically self-similar,'' but have a convolution structure, was introduced by Galicer, Saglietti, Shmerkin, and Yavicoli \cite{GSSY}. It was then used by
Saglietti, Shmerkin, and Solomyak \cite{SSS} to obtain definitive results on typical absolute continuity for typical (non-homogeneous) self-similar measures on $\R$.
Our main result is an extension
of these results to the case of non-homogeneous self-similar measures in $\R^2\cong \C$, for orientation preserving IFS. As we explain below, this method does not allow a direct generalization to $d>2$.

\subsection{Absolute continuity of planar self-similar measures}\ \\

For $\lam \in \D_* := \{ z \in \C :  0 < |z| < 1 \}$ and $t \in \C$, let $g_{\lam,t}$ denote the contracting similarity map on $\C$ defined as
\[  g_{\lam, t} (z) = \lam z + t. \]
Given  $\lam = (\lam_1, \ldots, \lam_k) \in \D_*^k$ and $t = (t_1, \ldots, t_k) \in \C^k$, together with a probability vector $\bp = (p_1, \ldots, p_k)$, there exists a unique Borel probability measure $\nu = \nu^{\bp}_{\lam, t}$ on $\C$, stationary for the iterated function system $\{g_{\lam_1, t_1}, \ldots, g_{\lam_k, t_k}\}$ with probabilities $(p_1, \ldots, p_k)$, i.e. satisfying
\[ \nu = \sum \limits_{i=1}^k p_i g_{\lam_i, t_i} \nu\]
(for a measurable map $f : X \to Y$ and a measure $\nu$ on $X$, we denote by $f\nu$ the push-forward of $\nu$ by $f$ defined as $f\nu(A) = \nu(f^{-1}(A))$ for all measurable sets $A \subset Y$). Measures of this form are called \textit{self-similar} measures. The \textit{similarity dimension} of the measure $\nu^{\bp}_{\lam,t}$ is
\begin{equation}\label{eq:sim dim} s(\lam,\bp) := \frac{\sum \limits_{i=1}^k p_i \log p_i}{\sum \limits_{i=1}^k p_i \log |\lam_i|}.
\end{equation}
It is well-known that the similarity dimension is an upper bound for the Hausdorff dimension of $\nu^\bp_{\lam, t}$ and hence  $\nu^\bp_{\lam, t}$ is singular with respect to the two-dimensional Lebesgue measure on $\C$ provided that $s(\lam, p) < 2$. In this paper we study absolute continuity of $\nu^\bp_{\lam, t}$ for typical parameters in the \textit{super-critical case} $s(\lam , \bp) > 2$. Our main result is the following.
\begin{thm}\label{thm:main ac}
Fix $k \geq 2$, distinct translations $t_1, \ldots, t_k \in \C$, and a probability  vector $\bp = (p_1, \ldots, p_k)$ with strictly positive entries. There exists a set
\[ E^{\bp}_t \subset \rcal^{\bp}_t := \{  (\lam_1, \ldots, \lam_k) \in \D_*^k : s(\lam, \bp) > 2\} \]
of zero Lebesgue measure such that for every $\lam \in \rcal^{\bp}_t \setminus E^{\bp}_t$, the self-similar measure $\nu^{\bp}_{\lam, t}$ is absolutely continuous (with respect to the $2$-dimensional Lebesgue measure on $\C$).
\end{thm}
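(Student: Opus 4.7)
The proof follows the Shmerkin blueprint combined with the decomposition idea of Galicer--Saglietti--Shmerkin--Yavicoli \cite{GSSY} used in \cite{SSS}: a measure on $\C$ of Hausdorff dimension $2$ whose Fourier transform decays polynomially must be absolutely continuous, so it is enough to establish these two properties. The measure $\nu^{\bp}_{\lam,t}$ does not itself have a useable convolution structure; to produce one, I would fix a scale $r\in(0,1)$ and a stopping time $n(\omega)$ on the symbolic space $\Sig:=\{1,\ldots,k\}^{\N}$ selecting the first generation at which $|\lam_{\omega_1}\cdots\lam_{\omega_n}|<r$. Disintegrating $\nu^{\bp}_{\lam,t}$ along this stopping time expresses it as an average $\nu^{\bp}_{\lam,t}=\int\eta_\omega\,d\mu(\omega)$ of random measures $\eta_\omega$ that are statistically self-similar and, crucially, possess an infinite convolution structure. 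This step exploits that orientation-preserving similarities of $\C$ commute up to the abelian structure of $\C^\times$; the analogous step fails in $\R^d$ for $d\geq 3$ since $\mathrm{SO}(d)$ is non-abelian, which is why the method is specific to the plane.

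With this reduction in hand, it suffices to verify, for Lebesgue-a.e.\ $\lam\in\rcal^{\bp}_t$ and $\mu$-a.e.\ $\omega$, the two statements highlighted in the abstract: (A) $\dim\eta_\omega=2$, and (B) $|\what{\eta_\omega}(\xi)|\leq C(\omega)\,|\xi|^{-\sigma}$ for some $\sigma=\sigma(\lam)>0$. For (A) I would extend Hochman's entropy-growth theorem \cite{HRd} to random planar self-similar measures: the similarity dimension is capped at $2$ in the super-critical regime, and a transversality/Fubini argument in the parameter $\lam\in\D_*^k$ shows that the relevant exponential separation condition holds Lebesgue-a.e. For (B) I would extend the Erd\H{o}s--Kahane method to planar random self-similar measures. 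The characteristic function of $\eta_\omega$ is a random infinite product of trigonometric factors $\sum_i p_i e^{2\pi i\,\Re\langle\xi,\lam_{\omega_1}\cdots\lam_{\omega_n}t_i\rangle}$, and one must show that outside a Lebesgue-null set of $\lam$ the factors with modulus close to $1$ are too sparse along every ray $\xi\to\infty$ to obstruct polynomial decay.

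The main obstacle will be (B). In dimension one, the Erd\H{o}s--Kahane method exploits that the parameter $\lam$ is determined by a single scale, so one can analyse the digit dynamics of $\lam^n$ in a fixed base. In the plane each $\lam_i\in\C$ carries both a modulus and an argument, and the rotational component introduces a genuinely new layer: one must control the joint equidistribution of scales and arguments of the random cocycle $\lam_{\omega_1}\cdots\lam_{\omega_n}$ uniformly in the direction of $\xi$, which in turn requires ruling out arithmetic coincidences that could align the translates onto a single line along too many scales (which would destroy Fourier decay in the perpendicular direction). Once (A) and (B) are in place, Shmerkin's criterion \cite{ShmerkinBC} combining full dimension with polynomial Fourier decay yields $\eta_\omega\ll\Leb$ for $\mu$-a.e.\ $\omega$, and integrating over $\omega$ gives $\nu^{\bp}_{\lam,t}\ll\Leb$ for Lebesgue-a.e.\ $\lam\in\rcal^{\bp}_t$, as required.
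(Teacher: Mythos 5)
There are two genuine gaps, and the first one is fatal to the final step as you have written it. The absolute-continuity criterion you invoke --- ``a measure on $\C$ of Hausdorff dimension $2$ whose Fourier transform decays polynomially must be absolutely continuous'' --- is not Shmerkin's criterion and is not a known theorem. The lemma from \cite{ShmerkinBC} (and its planar version in \cite{SS16'}) is about a \emph{convolution of two different measures}: if $\mu$ has full dimension and $\nu$ has power Fourier decay, then $\mu*\nu$ is absolutely continuous; having both properties for a single measure is not sufficient input for that argument. This is precisely why the paper does not stop at proving (A) and (B) for $\eta^{(\om)}_\lam$ itself: it splits each random measure once more, writing the random series as $\sum_{s\mid n}+\sum_{s\nmid n}$ so that $\eta^{(\om)}_\lam=(\eta'_\lam)^{(\om)}*(\eta''_\lam)^{(\om)}$ with both factors again generated by models of the same class, and then applies the Fourier theorem to $\Sigma'_\lam$ and the dimension theorem to $\Sigma''_\lam$ (which requires $\sdim(\Sigma''_\lam)\ge 2$, hence taking $s$ large and controlling the entropy lost in the construction). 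Your plan, which proves dimension $2$ and Fourier decay for the same measure $\eta_\om$ and then concludes $\eta_\om\ll\Leb$, does not close without this additional splitting.

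The second gap is in the disintegration itself. Conditioning on the stopping time at which $|\lam_{\om_1}\cdots\lam_{\om_n}|$ first drops below $r$ does \emph{not} make the conditional measures infinite convolutions: given the stopping times, the block products $\lam_{\om_1}\cdots\lam_{\om_{n_j}}$ (their arguments and exact moduli) are still random and are correlated with the translation contributions of the blocks, so the terms of the series are neither independent nor multiplied by deterministic coefficients. Commutativity is used in the paper in a different way: one takes blocks of \emph{fixed} length $r$ and conditions on the vector of symbol counts in each block; by commutativity of $\C^\times$ this determines the linear part of every block composition exactly, leaving only the translation parts random and independent across blocks, which is what produces the convolution structure \eqref{eq:random sum}. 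Moreover, the number of count vectors is polynomial in $r$, so the entropy lost by conditioning is $o(r)$ and the similarity dimension of the model stays above $2$ in the supercritical regime --- an estimate your stopping-time version would also have to supply (e.g.\ after repairing it by conditioning on the block products themselves). Finally, the passage to Lebesgue-a.e.\ $\lam\in\D_*^k$ is done in the paper by a coarea/Fubini reduction to a one-complex-parameter family $\lam_i=\lam^{\beta_i}$ and by quoting exponential separation outside a set of Hausdorff dimension at most one from \cite{HRd}; your appeal to ``transversality/Fubini'' would need to be made precise along these lines.
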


This (partially) extends \cite[Theorem B]{SS16} from the \textit{homogeneous} case (i.e. $\lam_1 = \lam_2 = \ldots = \lam_k$) to the \textit{non-homogeneous} case (not all $\lam_i$ equal) and \cite[Theorem 1.1]{SSS} from the real line to the complex plane. As in \cite{SSS}, we do not get any information about the density of the absolutely continuous measures $\nu^{\bp}_{\lam, t}$ or
estimates on the dimension of the exceptional set of parameters. However, the exceptional set is in general at least of complex
codimension one, since, for example, if $\frac{t_1}{1-\lam_1} = \frac{t_2}{1-\lam_2}$, then
the first two maps of the IFS share the fixed point, hence commute, resulting in the similarity dimension drop for the 2nd iterate of the IFS. It should be also possible to find  exceptional parameters,
by analogy with singular non-homogeneous self-similar measures on $\R$, having similarity dimension greater than one. Such examples were found by Neunh\"auserer \cite{Neun06,Neun11} 
using certain algebraic curves. We expect that analogous results hold in 2 dimensions as well. There are also exceptions arising from self-similar measures that are not Rajchman,
that is, measures for which the Fourier transform does not vanish at infinity, going back to Pisot reciprocal Bernoulli convolutions \cite{ErdosSingular}.
Br\'emont \cite{Bremont} gave a complete characterization of self-similar IFS on $\R$ for which there exists a probability vector such that the self-similar measure is non-Rajchman. They all have the 
property that the contraction ratios are powers of a single Pisot number. This was generalized by Rapaport \cite{Rapaport22Adv} to the case of $\R^d$; in the case of $\R^2$ and complex contraction
coefficients, they should all be powers of a complex Pisot number. Thus, these examples form a set of zero Hausdorff dimension in the context of Theorem~\ref{thm:main ac}.

\subsection{Background}

The study of geometric properties of self-similar measures (and, more generally, invariant measures for iterated function systems), in particular their dimension and absolute continuity, has been an object of intensive study. It dates back to the seminal works of Erd\H{o}s \cite{ErdosSingular, ErdosSmooth} on \textit{Bernoulli convolutions}, i.e., the family of self-similar measures corresponding to the homogeneous system $\{ x \mapsto \lam x, x \mapsto \lam x +1\},\ \lam \in (0,1)$, on the real line with equal probabilities $p_1 = p_2 = 1/2$.
 We do not review the long history, but instead refer the reader to the surveys  \cite{Sixty,VarjuSurvey}. In the mid-1990's a {\em transversality technique} was 
introduced in the context of homogeneous self-similar IFS on the line by Pollicott and Simon in \cite{PollicottSimonDeleted}, which was further extended by many other authors.
Since then, transversality has been widely studied and applied in much greater generality --- see \cite{BSSStypical} and references therein for the latest developments.

\begin{comment}
Erd\H{o}s proved in \cite{ErdosSingular} that there exist parameters $\lam \in (1/2, 1)$, for which the Bernoulli convolution is singular with respect to the one-dimensional Lebesgue measure (even though the attractor of the system is an interval). Parameters found by Erd\H{o}s are reciprocals of Pisot-Vijayaraghavan numbers and it remains a major open problem whether these are the only parameters in the super-critical region $\lam  > 1/2$ for which Bernoulli convolutions are singular.

In \cite{SolAC}, the first author has proved that Bernoulli convolutions are absolutely continuous for \textit{almost every} $\lam \in (1/2, 1)$. The proof was based on the transversality technique, introduced in the context of iterated function systems by Pollicott and Simon in \cite{PollicottSimonDeleted}, and the infinite convolution structure of Bernoulli convolutions. Since then, transversality has been widely studied and applied --- both in the context of self-similar sets and measures, see e.g. \cite{SolPeresBernoulli, SolFamilies, SolPeresIntersections, NeunhausererTrans, NgaiWang, SolXu}, as well as for more general non-linear iterated function systems, see e.g. \cite{SimonSolHorseshoes, PeresSchlag, SSUPacific, SSUTrans, BPSBlackwell, BaranyKolossvaryBlackwell, BSSSTypical}.
\end{comment}

 The drawback of this technique is that it is rarely able to provide almost sure results in the full region of parameters, even for self-similar measures. This was the case for non-homogeneous self-similar systems $\{ \lam_1 x, \lam_2 x + 1 \}$ on the real line, for which  Neunh\"auserer \cite{NeunhausererTrans} and Ngai-Wang \cite{NgaiWang} proved absolute continuity for almost every pair $(\lam_1, \lam_2)$ in a certain simply-connected subset of the super-critical region $\lam_1\lam_2 > 1/4$, far from covering the whole region. Similarly, Solomyak and Xu \cite{SolXu} proved absolute continuity of homogeneous complex Bernoulli convolutions, i.e. self-similar measures for the system $\{\lam x, \lam x + 1\}$ on $\C$ for almost every $\lam \in \D_*$ such that $2^{-1/2} \leq |\lam| \leq 2 \times 5^{-5/8}$. Again, this does not cover the whole super-critical region $|\lam| \geq 2^{-1/2}$. Let us also mention that it is often easier to obtain typical absolute continuity if one fixes the linear parts and varies the translations - see \cite{SS02, JPS07, KaenmakiOrponenTranslation} for results valid for typical translations.

An improvement over the transversality technique was possible due to remarkable results of Hochman \cite{H, HRd}, who proved inverse theorems for entropy and applied them to study the dimension of self-similar measures. In dimension one, he proved that if an IFS consisting of similarities on $\R$ satisfies the \textit{exponential separation condition}, then each corresponding self-similar measure has dimension equal to the minimum of one and the similarity dimension, see \cite[Theorem 1.3]{H}. In (reasonably) parametrized families of self-similar iterated function systems, the exponential separation condition holds outside of an exceptional set of Hausdorff dimension zero \cite[Theorem 1.8]{H}, hence the formula for the dimension of self-similar measures is established for typical parameters. Based on this result, Shmerkin \cite{ShmerkinBC} was able to prove that for a homogeneous self-similar IFS $\{\lam x + t_1,  \ldots, \lam x + t_k,\}$ on $\R$, there is an exceptional set $E$ of Hausdorff dimension zero, such that for every $\lam \in (0,1) \setminus E$ and a probability vector $\bp$ such that $s(\lam, \bp) > 1$, the corresponding self-similar measure is absolutely continuous on $\R$. The proof is based on the fact (see \cite[Lemma 2.1]{ShmerkinBC}) that if $\mu,\nu$ are probability measures such that $\mu$ has full dimension and $\nu$ has power Fourier decay (i.e. $\hat{\nu}(\xi) \leq O(\xi^{-\sigma})$ for some $\sigma>0$), then the convolution $\mu * \nu$ is absolutely continuous. As a homogeneous self-similar measure is an infinite convolution, which can be in turn represented as a convolution of two self-similar measures with properly scaled parameters, Hochman's results, combined with the Erd\H{o}s-Kahane argument for typical power Fourier decay (\cite{ErdosSmooth, Kahane}, see also \cite{Sixty}), give absolute continuity outside of a set of dimension zero. 
This was later extended by Shmerkin and Solomyak %in \cite[Theorem A]{SS16'} by obtaining additionally absolute continuity with density in $L^q$ for some $q > 1$ and 
in \cite[Theorem B]{SS16} to the case of homogeneous self-similar measures in the complex plane. A crucial feature required for  the above proofs of typical absolute continuity was the convolution structure of homogeneous self-similar measures. An extension to the non-homogeneous self-similar measures (which lack the convolution structure) was performed in the one-dimensional case by Saglietti, Shmerkin and Solomyak in \cite{SSS}. Their approach is based on disintegrating a non-homogeneous self-similar measure into a family of homogeneous random self-similar measures (a technique pioneered in \cite[Section 6.4]{GSSY}) and extending  the dimension and power Fourier decay results to such class of measures. We review this approach with more details in the next subsection. 
The goal of this paper is to repeat the same strategy in two dimensions, where new difficulties emerge.

The disintegration mentioned above leads one to the study of \textit{models} - certain classes of random self-similar measures, where at each step one is allowed to apply a map from a finite collection of homogeneous iterated functions systems. Several classes of random fractal measures were studied in the literature, see e.g. \cite{Hambly, StenfloRandomIFS, FalconerJinExact, VFractals, PeresSimonSolRandom, BaranyPerssonRandom}; the particular model considered in both \cite{SSS} and the present paper was introduced in \cite{GSSY}. As this class of random measures turns out to be useful in the study of non-homogeneous self-similar iterated function systems \cite{SSS, ASSNormal, KaenmakiOrponenTranslation}, our results on the dimension (Theorem \ref{assmp:non-deg and rotation}) and the power Fourier decay (Theorem \ref{thm:fourier main}) of such measures in two dimensions might be of independent interest. Having that in mind, we prove a more general version of the dimension result than the one sufficient for the proof of Theorem \ref{thm:main ac} by considering general ergodic shift-invariant measures on the symbolic space of the model (instead of only Bernoulli measures) and possibly reducible case (with $\Arg(\lam_i) \in \pi \Q$ for all $i$). %The argument is similar to the one from the proof of \cite[Lemma 3.7]{HR} with both of the variants mentioned above introducing some additional technical difficulties.

As explained at the end of the next subsection, the methods of our work do not extend to dimensions higher than two. Let us note, however, that using a different approach, Lindenstrauss and Varj\'u \cite[Theorem 1.3]{LindenstraussVarju} were able to prove absolute continuity for many self-similar measures with contraction rate close enough to one. More precisely, they proved that for $d \geq 3$, if $U_1, \ldots, U_k \in SO(d)$ and a probability vector $\bp = (p_1, \ldots, p_k)$ are such that the transfer operator $f \mapsto \sum \limits_{j=1}^k p_j f \circ U_j$ has a spectral gap on $L^2(SO(d))$, then there exists $0 < \overline{r} < 1$ such that for all $r_1, \ldots r_k  \in (\ov{r}, 1)$ and every $t_1, \ldots, t_k \in \R^d$, the self-similar measure corresponding to the system $\{r_1U_1 + t_1, \ldots, r_kU_k + t_k\}$ with probabilities $\bp$ is absolutely continuous (with a smooth density), provided that the maps $r_jU_j + t_j, j = 1, \ldots, k$ do not share a common fixed point. As Benoist and de Saxc\'e \cite{BenoistSaxce} established a spectral gap in the case  when $U_1, \ldots, U_k$ are matrices with algebraic entries and generating a dense subgroup, absolute continuity follows in this case.

For completeness, we now discuss some recent work on Fourier decay of fractal measures. In \cite{Solomyak21} it was shown that non-homogeneous self-similar measures on the line
have power Fourier decay at infinity, outside of a zero Hausdorff dimension set of parameters. The proof was based on a multi-parameter version of the Erd\H{o}s-Kahane method. However, the
power decay obtained there was too weak to imply absolute continuity in any parameter region, and the techniques of that paper are not directly related to the method of \cite{SSS} or the current paper.
A more classical version of the Erd\H{o}s-Kahane argument was employed in \cite{Solomyak22} to show that typical {\em non-self-similar, homogeneous} self-affine measures in $\R^d$ have
power Fourier decay as well.

In a broader area of fractal measures and their Fourier decay, breakthrough results were very recently obtained by Algom, Rodriguez Hertz, Wang \cite{ARW23} and Baker, Sahlsten \cite{BaSa23}
independently. In particular, they showed that a self-conformal measure corresponding to a strictly non-linear sufficiently smooth IFS on the line, has power Fourier decay. A detailed discussion of their
methods, and of other extensive literature on this topic, is beyond the scope of this introduction, but it is interesting that both papers use a model-method disintegration technique.

It is also worth mentioning that a measure may have power decay outside of a sparse set of frequencies, even if it is not a Rajchman measure. In fact, Kaufman \cite{Kaufman} (in the homogeneous case) and Tsujii \cite{Tsujii} (in the non-homogeneous case) proved that for any non-trivial self-similar measure $\mu$ on the real line, for any $\eps>0$ there exists $\delta>0$ such that the set
$
\{t\in [-T,T]:\ |\widehat{\mu}(t)| \ge T^{-\delta}\}
$
can be covered by $T^\eps$ intervals of length $1$. %Mosquera and Shmerkin \cite{MS} made the dependence of $\delta$ on $\eps$ quantitative in the homogeneous case. 
Kaufman \cite{Kaufman} used a version of the Erd\H{o}s-Kahane argument, whereas the proof of Tsujii \cite{Tsujii} is based on large deviation estimates. 
Very recently, Khalil \cite{Khalil23} obtained strong results of this kind for
measures in higher dimension, using methods of additive combinatorics. See, in particular, \cite[Corollary 11.5]{Khalil23}, where this was shown for any compactly supported measure in $\R^d$ that is
affinely non-concentrated at almost every scale (see his paper for the definitions).

\subsection{Strategy of the proof}
The plan of the proof of Theorem \ref{thm:main ac} follows closely the one from \cite{SSS}. Let us recall it. As noted in the introduction, the main difficulty in the non-homogeneous case (i.e. when not all $\lam_i$ are equal) is the fact that the self-similar measure $\nu^\bp_{\lam, t}$ is not an infinite convolution. In order to recover (some of) the convolution structure, Saglietti, Shmerkin and Solomyak consider in \cite{SSS} a disintegration
\begin{equation}\label{eq:disintegration} \nu^{\bp}_{\lam, t} = \int \limits_\Om \eta_\lam^{\pom} d\PP(\om),
\end{equation}
where measures $\eta_\lam{(\om)}$ are given as follows. First, denote $g_i = g_{\lam_i, t_i}$ and define the natural projection map $\Pi_\lam : \{1, \ldots, k\}^\N \to \C$ as
\[ \Pi_\lam (u_1, u_2, \ldots) := \lim \limits_{n \to \infty} g_{u_1} \circ \cdots \circ g_{u_n}(0) = \sum \limits_{n=1}^\infty \left( \prod \limits_{j=1}^{n-1}\lam_{u_j}\right) t_{u_n}. \]
It is easy to see that $\nu^{\bp}_{\lam, t} = \Pi_\lam \bp^{\N}$, where $\bp^{\N}$ denotes the Bernoulli measure on $\{1, \ldots, k\}^\N$ with the marginal $\bp$. Fix $r \in \N$ and define  
\[I := \left\{ (n_1, \ldots, n_k) \in \{0, \ldots, r \}^k : \sum \limits_{j=1}^k n_j = r \right\}\]
and a map $\Psi : \{1, \ldots, k\}^r \to I$ given by $\Psi(w) = \left( N_1(w), \ldots, N_k(w) \right)$, where $N_i(w)$ denotes the number of occurrences of the symbol $i$ in the word $w$. Set $\Om = I^\N$ and consider a random variable $X = (X_0, X_1, \ldots) : \{1, \ldots, k\}^\N \to \Om$, where $X_j(u_1, u_2, \ldots) = \Psi (u_{jr+1}, \ldots u_{(j+1)r} )$ and we treat $u_1, u_2$ as i.i.d. random variables with distribution $\bp$. The distribution $\PP$ of $X$ on $\Om$ is a Bernoulli measure. For $\om \in \Om$, let $\eta_\lam^{\pom}$ be the conditional distribution of $\Pi_\lam$ conditioned on $X = \om$. In other words, we condition on the number of occurrences of each symbol $i$ in the blocks of length $r$ in the symbolic space $\{1, \ldots, k\}^\N$ (and hence the remaining randomness is in the order of the occurrences of symbols in each block). The crucial feature of this construction is the following:  The number of occurrences of symbols $1, \ldots, k$ in the $j$-th block $(u_{jr +1}, \ldots, u_{(j+1)r})$ determines uniquely the linear part of the corresponding map $g_{u_{jr + 1}} \circ \cdots \circ g_{u_{(j+1)r}}$ (as the complex multiplication is commutative), hence after conditioning on $\om$ only the translation part of the random map $g_{u_{jr + 1}} \circ \cdots \circ g_{u_{(j+1)r}}$ remains random. Therefore, for each $\om = (\om_1, \om_2, \ldots)$ the measure $\eta_\lam{(\om)}$ can be realized as a distribution of a random sum of the form
\begin{equation}\label{eq:random sum} \sum \limits_{n=1}^\infty \left( \prod_{j=1}^{n-1}\lam_{\om_j} \right) Y_n^{(\om, \lam)},
\end{equation}
where $Y^{(\om, \lam)}_n$ is a sequence of independent random variables (corresponding to the random choice of the translation part of $g_{u_{jr + 1}} \circ \cdots \circ g_{u_{(j+1)r}}$). Therefore, for a fixed $\om$, the measure $\eta_\lam{\pom}$ is an infinite convolution, as a distribution of a random sum with independent terms. Even though the
measures $\eta_\lam{\pom}$ are no longer self-similar, they are \textit{statistically self-similar} in a suitable sense (see \eqref{eq:dyn self-sim} below). The main body of the proof is extending to such random self-similar measures the dimension result of Hochman \cite{HRd} and the Erd\H{o}s-Kahane argument on typical power decay of the Fourier transform, obtaining their versions valid for $\PP$-a.e.\ $\om$ -- see Theorems \ref{thm:main dim} and \ref{thm:fourier main}. After establishing those, one can follow the plan of proving typical absolute continuity from \cite{ShmerkinBC} -- splitting the sum in \eqref{eq:random sum} into $\sum \limits_{s | n}$ and $\sum \limits_{s \nmid n}$, one realizes $\eta_\lam^{(\om)}$ as a convolution of two measures $\eta'^{(\om)}_\lam, \eta''^{(\om)}_\lam$ from the same general class of random self-similar measures. For a typical $\lam$ in the supercritical region, taking $r$ and $s$ large enough will yield a measure $\eta'^{(\om)}_\lam$ of full Hausdorff dimension and  $\eta''^{(\om)}_\lam$ with a power Fourier decay for almost every $\om$. As a convolution of two such measures is absolutely continuous (see \cite[Lemma 2.1]{ShmerkinBC}; the proof is given there for $d=1$ but it extends to higher dimensions without any change), this gives the absolute continuity of $\eta^{\pom}$ for $\PP$-almost every $\om$, and by \eqref{eq:disintegration} also the absolute continuity of $\nu^\bp_{\lam, t}$.

The strategy explained above was realized in \cite{SSS} for non-homogeneous self-similar measures on the real line. In this paper we follow the same general scheme and extend the result to the complex plane. This amounts to extending the higher dimensional results of Hochman \cite{HRd} and the complex Erd\H{o}s-Kahane argument \cite{SS16} to the model of random self-similar measures. The main new difficulty (with respect to the one-dimensional case of \cite{SSS}) in the dimension part is the phenomena of saturation on one-dimensional subspaces, which can cause a dimension drop for a self-similar measure even in the presence of exponential separation, see \cite{HRd}. In the genuinely self-similar case, this possibility can be excluded by assuming that the IFS generates a non-trivial rotation group. For random self-similar measures we exclude this possibility for $\PP$-a.e.\ $\eta^{(\om)}$ by studying projections onto one-dimensional subspaces and employing some tools from the ergodic theory. The difficulty in extending the Erd\H{o}s-Kahane method lies in combining a rather subtle algebraic and combinatorial argument of \cite{SS16} with the random setting of models. Nevertheless, several points of the arguments follow in the same way as in \cite{SSS}, without any essential changes. In such places we provide detailed sketches and reference reader to \cite{SSS} for full details.

The reason that the disintegration into random self-similar measures with a convolution structure works in dimensions 1 and 2 is that the orthogonal group, hence the group of linear similarities
 in $\R^d$ for $d\le 2$, is of polynomial growth. Grouping together all compositions of $g_i$'s of length $r$ according to the linear part yields a homogeneous self-similar IFS and for large $r$, many of those systems have entropy approximating $rH(\bp)$ (up to an error sublinear in $r$). Applying these IFS in a random order results in the statistically self-similar convolutions $\eta_\lam^{(\om)}$ which we study. When $d\ge 3$,
finitely generated subgroups of the orthogonal group $O(d)$ typically have exponential growth, and hence conditioning on linear parts of compositions of length $r$ in the construction above might lead to a significant entropy drop. In an extreme case of generating a free group, all the systems in the corresponding model will consist of a single map and thus measures $\eta^{(\om)}$ will all be concentrated on singletons, hence we cannot obtain full dimension of $\eta^{\pom}$ in this construction. It therefore remains an open problem to obtain typical absolute continuity in the super-critical region for self-similar measures in higher dimensions, where the non-abelian character of the orthogonal group makes it more challenging to apply the model method for obtaining the convolution structure. Nevertheless, there are works where the lack of commutativity has been used as an advantage.  One instance is the already mentioned work of  Lindenstrauss and Varj\'u \cite{LindenstraussVarju}, who were able to prove absolute continuity of some self-similar measures in $\R^d$ for  $d\geq 3$. Their result is however far from covering the whole super-critical region of the parameter space. Also, a variant of the disintegration into random model approach was employed recently in \cite{ARW23} and \cite{BaSa23} in order to obtain power Fourier decay of self-conformal measures for $C^2$ non-linear iterated function systems on $\R$ (see also \cite{ASSNormal}).

\begin{rem} \label{rem:SSSgap}
There is a minor technical oversight in \cite{SSS}. 
Namely, the dimension result on the random measure is stated in \cite[Theorem 1.3]{SSS} under the assumption that the shift-invariant measure on the symbolic space
of the model is a general ergodic measure. However, the proof used (implicitly) that this measure is Bernoulli in two instances. 
This does not affect the main result of that paper, \cite[Theorem 1.1]{SSS}, which asserts absolute continuity for almost all parameters and only relies on the Bernoulli case.
Moreover, the same approach as in the current paper provides a fix needed to confirm \cite[Theorem 1.3]{SSS}  as stated. We will indicate the relevant places in our proof.
\end{rem}

\subsection{Random self-similar measures}\ \\

Let us now introduce the model of random self-similar measure which we will consider and state our main results on them. We follow the setting from \cite{GSSY} and \cite{SSS}. Let $I$ be a finite set. For each $i \in I$, let $\Phi^{(i)} = (f_1^{(i)}, \ldots, f_{k_i}^{(i)})$ be a homogeneous, orientation preserving and uniformly contracting iterated function system of similarities on the complex plane, i.e., the maps $f_j^{(i)}$ are of the form
\[ f_j^{(i)}(z) = \lam_i z + t_j^{(i)} = r_i \vphi_i z + t_j^{(i)},\ i \in I,\ j = 1, \ldots, k_i,\ z \in \C,\]
where $\lam_i, t_j^{(i)} \in \C$ and $|\lam_i|<1$, with $\lam_i = r_i \vphi_i$ and $0 < r_i < 1,\ \vphi_i \in  \s^1= \{ z \in \C : |z| = 1 \}$. Let $r_{\mathrm{min}} = \min\{ r_i : i \in I \}$ and $r_{\mathrm{max}} = \max\{ r_i : i \in I \}$.

Let $\Om = I^\N$. For $\om = (\om_1, \om_2, \ldots) \in \Om$ and $n \in \N \cup \{ \infty \}$ let
\begin{equation}\label{eq:doubleX def} \X^{(\om)}_n := \prod_{j=1}^n \{1 , \ldots, k_{\om_j} \}.
\end{equation}
For $\om \in \Om,\ n \in \N$, and $u \in \X_n^{(\om)}$, define $f_u^{\pom}(z) = f^{(\om_1)}_{u_1} \circ \cdots \circ f^{(\om_n)}_{u_n}(z) = \lam^{(\om)}_u z + t^{(\om)}_u$. Let $\Pi_{\om} : \X_\infty^{(\om)} \to \C$ be the coding map (natural projection) given by
\[ \Pi_{\om}(u) := \lim \limits_{n \to \infty} f^{(\om)}_{u|_n}(0) = \sum \limits_{n=1}^\infty \left( \prod_{j=1}^{n-1}\lam_{\om_j} \right) t_{u_n}^{(\om_n)}, \]
where $u|_n = (u_1 ,\ldots, u_n)$ is the restriction of the infinite word $u \in \X_\infty^{(\om)}$ to the first $n$ coordinates. We will also consider the $n$-truncated coding map $\Pi^{(n)}_\om : \X_\infty^{(\om)} \to \C$ given by
\begin{equation}\label{eq:truncated pi}
\Pi^{(n)}_\om (u) := \sum \limits_{k=1}^n \left( \prod_{j=1}^{k-1}\lam_{\om_j} \right) t_{u_k}^{(\om_k)} = f^{(\om)}_{u|n}(0).
\end{equation}

For each $i \in I$, let $p_i = (p_1^{(i)}, \ldots, p^{(i)}_{k_i})$ be a probability vector with strictly positive entries. For $\om \in \Om$, define a probability measure on $\X_\infty^{(\om)}$ as the infinite product
\[ \overline{\eta}^{(\om)} := \bigotimes_{n=1}^{\infty} p_{\om_n}. \]
Our main object of interest are the projections
\[ \eta^{(\om)} := \Pi_{\om} \overline{\eta}^{(\om)}. \]
These measures are not self-similar, but they satisfy a ``dynamic self-similarity'' relation
\begin{equation}\label{eq: one step dyn self-sim}
\eta^{(\om)} = \sum \limits_{u \in \X_1^{(\om)}} p_u^{(\om_1)} \cdot f_u^{(\om)}\eta^{(T\om)}, 
\end{equation}
where $T : \Om \to \Om$ is the left shift. Iterating this relation, we obtain for each $k \in \N$:
\begin{equation}\label{eq:dyn self-sim}
\eta^{(\om)} = \sum \limits_{u \in \X_k^{(\om)}} p_u^{\pom} \cdot f_u^{(\om)}\eta^{(T^k\om)} {  = \sum \limits_{u \in \X_k^{(\om)}} p_u^{\pom} \left( \lam_{\om_1} \cdots \lam_{\om_k} \eta^{(T^k\om)} + t_u^{\pom} \right),}
\end{equation}
where we set $p_u^{\pom} = p_{u_1}^{(\om_1)} \cdots p_{u_k}^{(\om_k)}$.

Let $\PP$ be a Borel probability measure on $\Om$. Following \cite{SSS}, we call the triple
\[\Sigma = \left( (\Phi^{(i)})_{i \in I}, (p_i)_{i \in I}, \PP \right)\] a \textit{(planar) model} with \textit{selection measure} $\PP$. We will say that the measures $\eta^{(\om)}$ are {\em
 random measures generated by the model $\Sigma$}.
%\begin{defn}
%We say that a model $\Sigma$ is \textit{non-degenerate} if there exists $i \in I$ and $1 \leq j < j' \leq k_i$ with $t^{(i)}_j \neq t^{(i)}_{j'}$.
%\end{defn}
%By $B(x,r)$ we will denote an open ball in the Euclidean space with center $x$ and radius $r$. 
If $\PP$ is $T$-invariant and ergodic, then the measures $\eta^{(\om)}$ are almost surely exact dimensional, with a constant value of the dimension:

\begin{prop}\label{prop:exact dimension}
Let $\Sigma$ be a model for which $\PP$ is $T$-invariant and ergodic. Then there exists $\alpha \in [0,2]$ such that for $\PP$-almost every $\om$, the measure $\eta^{(\om)}$ has exact dimension $\alpha$, i.e.
\[ \lim \limits_{r \to 0} \frac{\log (\eta^{(\om)}(B(x,r)))}{\log r}  = \alpha \]
for $\eta^{(\om)}$-almost every $x \in \R^2$. 
\end{prop}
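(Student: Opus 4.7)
The strategy is to adapt the Feng--Hu method for exact dimensionality of self-similar measures to the random (statistically self-similar) setting of the model $\Sigma$, by working on a skew-product system that combines $\PP$ with the random fibre measures $\overline{\eta}^{(\om)}$. The homogeneity of each $\Phi^{(i)}$ is crucial: the linear part of the composition $f_u^{(\om)} = f_{u_1}^{(\om_1)} \circ \cdots \circ f_{u_n}^{(\om_n)}$ depends only on $\om$, not on $u$, which is exactly the structural fact underpinning the Feng--Hu conditional-entropy argument.

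\textbf{Step 1 (skew product and ergodic limits).} I form $\widetilde{\Om} := \{(\om,u) : \om\in\Om,\ u \in \X_\infty^{(\om)}\}$ with the skew shift $S(\om,u) := (T\om, \sigma u)$, where $\sigma$ deletes the first coordinate of $u$, and the measure $\widetilde{\PP}(A) := \int \overline{\eta}^{(\om)}(\{u : (\om,u)\in A\})\,d\PP(\om)$. The identity $\overline{\eta}^{(\om)} = p_{\om_1} \otimes \overline{\eta}^{(T\om)}$ makes $\widetilde{\PP}$ invariant under $S$, and ergodicity of $\widetilde{\PP}$ follows from ergodicity of $\PP$ together with the Bernoulli structure of the fibres. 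I set
\[
H := \int_\Om \sum_{j=1}^{k_{\om_1}} -p_j^{(\om_1)}\log p_j^{(\om_1)}\,d\PP(\om),\qquad \chi := -\int_\Om \log|\lam_{\om_1}|\,d\PP(\om),\qquad \alpha := H/\chi.
\]
Birkhoff's ergodic theorem applied to the observables $(\om,u) \mapsto -\log p^{(\om_1)}_{u_1}$ and $(\om,u) \mapsto -\log|\lam_{\om_1}|$ yields, for $\widetilde{\PP}$-a.e.\ $(\om,u)$,
\[
\tfrac{1}{n}\log p^{(\om)}_{u|n} \longrightarrow -H,\qquad \tfrac{1}{n}\log|\lam^{(\om)}_{u|n}| \longrightarrow -\chi.
\]

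\textbf{Step 2 (upper bound on the lower local dimension).} Iterating \eqref{eq:dyn self-sim} one gets the cylinder mass bound $\eta^{(\om)}(B(\Pi_\om u,\, C|\lam^{(\om)}_{u|n}|)) \geq p^{(\om)}_{u|n}$, with $C$ uniform in $\om$ and $n$ (depending only on a uniform bound for the diameter of $\supp \eta^{(\om)}$, which in turn is controlled by $\sup_i \diam(\supp \Phi^{(i)}) / (1 - r_{\mathrm{max}})$). Taking logarithms and invoking Step 1 gives, at $\eta^{(\om)}$-a.e.\ $x$ and $\PP$-a.e.\ $\om$, the bound $\underline{d}(\eta^{(\om)}, x) \leq \alpha$.

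\textbf{Step 3 (matching upper bound via conditional entropy).} This is the main obstacle. Following Feng--Hu, I analyse $\tfrac{1}{n} H(\eta^{(\om)} \mid \mathcal{D}_n)$, where $\mathcal{D}_n$ is the dyadic partition of $\C$ at scale $2^{-n}$. Because the linear part of $f_u^{(\om)}$ for $u\in\X_k^{(\om)}$ is the common scalar $\lam_{\om_1}\cdots\lam_{\om_k}$, \eqref{eq:dyn self-sim} yields a near-additive decomposition
\[
H(\eta^{(\om)}\mid \mathcal{D}_{n+n'}) = \sum_{j=1}^k H(p_{\om_j}) + H(\eta^{(T^k\om)}\mid \mathcal{D}_{n'}) + O(1),
\]
with $n'$ chosen so that $2^{-n'} \sim |\lam_{\om_1}\cdots\lam_{\om_k}|\cdot 2^{-n}$ and the $O(1)$ error controlled by the multiplicity of overlaps between affine images of $\mathcal{D}_{n+n'}$. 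Running a subadditive/ergodic argument along this relation inside $(\widetilde{\Om},S,\widetilde{\PP})$ shows $\tfrac{1}{n}H(\eta^{(\om)}\mid\mathcal{D}_n) \to \alpha$ for $\PP$-a.e.\ $\om$. A maximal-function and Egorov argument, as in Feng--Hu's upgrade of information dimension to pointwise local dimension, then gives $\underline{d}(\eta^{(\om)},x) = \overline{d}(\eta^{(\om)},x) = \alpha$ at $\eta^{(\om)}$-a.e.\ $x$. Since $\alpha$ is a deterministic integral and $\eta^{(\om)}$ lives on $\C\cong\R^2$, the common value is constant and lies in $[0,2]$. The core difficulty is bookkeeping the random scales $|\lam^{(\om)}_{u|n}|$: the single-IFS Feng--Hu argument uses a fixed contraction ratio, whereas here the "scale at time $k$" fluctuates with $\om$ and must be synchronised with $\mathcal{D}_n$ via Birkhoff averages, with the skew-product ergodic theorem replacing the scalar self-similarity used in the classical proof.
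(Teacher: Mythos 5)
There is a genuine gap, and it sits exactly at the heart of your Step 3. The displayed ``near-additive decomposition'' $H(\eta^{(\om)}\mid \Dk_{n+n'}) = \sum_{j=1}^k H(p_{\om_j}) + H(\eta^{(T^k\om)}\mid \Dk_{n'}) + O(1)$ is only an inequality $\le$: the symbolic entropy $\sum_j H(p_{\om_j})$ is not preserved by the projection $u \mapsto f_u^{(\om)}$ in \eqref{eq:dyn self-sim} when distinct words give coinciding or heavily overlapping maps, and consequently the identification $\alpha = H/\chi$ (your similarity dimension, cf.\ \eqref{eq:sdim def}) is false in general. Note that Proposition \ref{prop:exact dimension} does not assume Assumption \ref{assmp:non-deg and rotation}: take a model in which every $\Phi^{(i)}$ consists of $k_i\ge 2$ copies of one and the same map; then each $\eta^{(\om)}$ is a point mass, so $\alpha=0$, while $H/\chi>0$. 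Even for non-degenerate models, exact overlaps or Pisot-type parameters force $\dim\eta^{(\om)}<H/\chi$ (this is precisely why Theorem \ref{thm:main dim} needs its hypotheses, and why the exceptional parameters discussed after Theorem \ref{thm:main ac} exist). Your argument is also internally inconsistent when $H/\chi>2$: the a.e.\ local dimension of a measure on $\R^2$ cannot exceed $2$, so the limit you claim in Step 3 cannot hold. In short, Steps 1--2 are fine (the skew product is ergodic, and the cylinder mass bound correctly gives $\underline{d}(\eta^{(\om)},x)\le H/\chi$ a.e.), but Step 3 proves ``too much'' and therefore cannot be repaired as stated.

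For comparison, the paper gives no new argument here: it observes that the proof of \cite[Theorem 1.2]{SSS} carries over to the plane verbatim. That proof (in the Feng--Hu circle of ideas, adapted to the random, statistically self-similar setting) establishes \emph{exactness} of the limit with $\alpha$ expressed through a conditional (projection-type) entropy of the skew-product system relative to $\Pi_\om^{-1}(\mathrm{Borel})$, divided by the Lyapunov exponent $\chi$; this quantity is always $\le H/\chi$ and is strictly smaller exactly in the overlapping situations above. Your skew-product set-up and Birkhoff limits are compatible with that route, but the entropy bookkeeping must be done with projection/conditional entropy and conditional measures (plus the attendant maximal-inequality work), not with the full symbolic entropy $H$, and the value of $\alpha$ must be left as that conditional quantity rather than identified with $\min\{2,H/\chi\}$.
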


The value $\alpha$ above is called the \textit{dimension} of the model $\Sigma$ and we will denote it by $\dim(\Sigma)$. Proposition \ref{prop:exact dimension} extends \cite[Theorem 1.2]{SSS} to the plane. As the proof from \cite{SSS} extends without any changes, we omit it.

Given a model $\Sigma$, we define its \textit{similarity dimension} as
\begin{equation}\label{eq:sdim def} \sdim(\Sigma) := \frac{\int \limits_{\Om}H(p_{\om_1})d\PP(\om)}{-\int \limits_{\Om}\log r_{\om_1}d\PP(\om)} =  \left( \int \limits_{\Om} \log (r_{\om_1}) d\PP(\om) \right)^{-1} \int \limits_{\Om} \sum \limits_{j=1}^{k_{\om_1}} p_j^{(\om_1)}\log p_j^{(\om_1)} d\PP(\om),
\end{equation}
where $H(p) = -\sum p_j \log p_j$ denotes the Shannon entropy of a probability vector $p = (p_j)$. Here and throughout the paper we use logarithms in base $2$. Note that if $\PP$ is the Bernoulli measure with a marginal $q = (q_i)_{i \in I}$, then
\begin{equation}\label{eq:sdim Bernoulli} \sdim(\Sigma) = \frac{\sum \limits_{i \in I} q_i H(p_i)}{- \sum \limits_{i \in I} q_i \log r_i}. 
\end{equation}
For a given $\om \in \Om$ and $u,v \in \X_n^{(\om)}$ define
\[ d^{(\om)}(u,v) := |f^{(\omega)}_u(0) - f^{(\omega)}_v(0)|\]
and
\begin{equation}\label{eq:delta def} \Delta_n^{(\om)} = \Delta_n^{(\om)}(\Sigma)  :=  \begin{cases} \min\{ d^{(\om)}(u,v) : u,v \in \X_n^{(\om)}, u \neq v\} & \text{ if } |\X_n^{(\om)}| > 1 \\
 0 & \text{ if } |\X_n^{(\om)}| = 1.  \end{cases}\end{equation}

Below we state our main result on the dimension of measures $\eta^{(\om)}$, extending \cite[Theorem 1.3]{SSS} to the plane, in the spirit of the results of \cite{HRd} for deterministic self-similar measures. It shows that if $\dim(\Sigma) < \min\{2, \sdim(\Sigma)\}$, then the system has a super-exponential condensation of cylinders (in probability). In the case of deterministic self-similar measures on the plane, such alternative requires, in general, an assumption that there is a non-trivial rotation in the system (see \cite{HRd}), hence we make a similar assumption in the random case. Moreover, the result is trivial in the degenerate case (i.e., with all $\Phi_i$ consisting of a single map -- then all $\eta^{(\om)}$ are singletons), hence our standing assumption will be the following one:

\begin{assumption}\label{assmp:non-deg and rotation}
	The selection measure $\PP$ is $T$-invariant and ergodic. Moreover, 
	\begin{enumerate}[(i)]
		\item\label{it:non-degenerate}  there exists $i_0 \in I$ such that $\PP(\om_1 = i_0) > 0$ and not all maps in $\Phi^{(i_0)}$ are equal,
		\item\label{it:non-real} there exists $i_1 \in I$ such that $\PP(\om_1 = i_1) > 0$ and $\vphi_{i_1} \notin \R$.
	\end{enumerate}
\end{assumption}

We will say that model $\Sigma$ is \textit{non-degenerate} if the item \ref{it:non-degenerate} of Assumption \ref{assmp:non-deg and rotation} holds. Our main dimension result is the following:

\begin{thm}\label{thm:main dim} Let $\Sigma$ be a model satisfying Assumption \ref{assmp:non-deg and rotation}. If $\dim(\Sigma) < \min\{ 2, \sdim(\Sigma)\}$, then
\[ \frac{\log \Delta_n^{(\cdot)}}{n} \stackrel{\PP}{\longrightarrow} -\infty, \]
i.e. for every $M>0$,
\[ \PP\left( \left\{ \om \in \Om : \Delta_n^{(\om)} \leq e^{-Mn} \right\} \right) \stackrel{n \to \infty}{\longrightarrow} 1. \]
\end{thm}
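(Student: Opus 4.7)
My plan is to argue by contradiction using Hochman's planar entropy inverse theorem \cite{HRd}, adapted to the random setting in the spirit of \cite[Theorem~1.3]{SSS}; the novel ingredient in $\R^2$ (absent in the one-dimensional \cite{SSS} setup) is the need to rule out saturation on affine lines, which is precisely where Assumption \ref{assmp:non-deg and rotation}\ref{it:non-real} enters. Suppose for contradiction that $\dim(\Sigma)<\min\{2,\sdim(\Sigma)\}$ yet there exist $M,\epsilon>0$ with $\PP(\Delta_n^{(\om)}>2^{-Mn})>\epsilon$ for infinitely many $n$.

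The first step is to set up the dyadic entropy framework: with $H(\mu,\DD_n)$ denoting entropy at scale $2^{-n}$, Proposition \ref{prop:exact dimension} gives $H(\eta^{(\om)},\DD_n)/n\to\dim(\Sigma)$ in $\PP$-probability. The essential algebraic input is the homogeneity of each $\Phi^{(i)}$: in \eqref{eq:dyn self-sim} the contraction factor $\lam^{(\om)}_k:=\lam_{\om_1}\cdots\lam_{\om_k}$ is independent of $u\in\X_k^{(\om)}$, yielding the genuine convolution
\begin{equation*}
\eta^{(\om)} \;=\; \nu_k^{(\om)}\ast\bigl(\lam^{(\om)}_k\cdot\eta^{(T^k\om)}\bigr),\qquad \nu_k^{(\om)}:=\sum_{u\in\X_k^{(\om)}}p_u^{(\om)}\delta_{t_u^{(\om)}}.
\end{equation*}
Choosing $k=k(n,\om)$ so that $|\lam^{(\om)}_k|\approx 2^{-n}$ (possible on a full $\PP$-measure set by Birkhoff applied to the additive cocycle $\log|\lam_{\om_1}|$), the contradiction hypothesis makes the atoms of $\nu_k^{(\om)}$ well-separated relative to the scale $2^{-n}$ and pushes $H(\nu_k^{(\om)},\DD_n)$ close to $\min\{2,\sdim(\Sigma)\}\cdot n$ with positive $\PP$-probability.

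Next, I would apply Hochman's planar inverse theorem \cite[Theorem~2.7]{HRd} to this convolution. Since $H(\eta^{(\om)},\DD_n)\sim\dim(\Sigma)\cdot n$ while the convolutional factors jointly carry strictly larger entropy, the theorem forces, at a positive-density set of scales, a positive proportion of $\nu_k^{(\om)}$-mass to $(\epsilon,n)$-concentrate on a proper affine subspace of $\C\cong\R^2$. Concentration on $0$-dimensional subspaces (points) would force $\dim(\Sigma)=0$, which is excluded by the non-degeneracy clause \ref{assmp:non-deg and rotation}\ref{it:non-degenerate} combined with strict positivity of every $p_j^{(i)}$. Hence a positive proportion of the concentration must occur on affine \emph{lines}.

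The hardest step is using Assumption \ref{assmp:non-deg and rotation}\ref{it:non-real} to exclude this line-saturation. For $\theta\in\mathbb{T}$, let $P_\theta(z):=\Re(\bar\theta z)$; the projection $P_\theta\eta^{(\om)}$ is itself a random self-similar measure on $\R$ for the one-dimensional model derived from $\Sigma$, and the results of \cite{SSS} apply to it. Line-saturation along many scales produces, via a CP-chain style extraction, a measurable section $\om\mapsto\theta(\om)\in\mathbb{T}/\{\pm1\}$ along which $P_{\theta(\om)}\eta^{(\om)}$ absorbs essentially the full dimension of $\eta^{(\om)}$, while the convolution structure forces the equivariance $\theta(T\om)=\overline{\vphi_{\om_1}}\theta(\om)$ $\PP$-a.s. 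The existence of $i_1\in I$ with $\vphi_{i_1}\notin\R$ combined with ergodicity of $(T,\PP)$ then implies that the multiplicative cocycle $\vphi_{\om_1}\cdots\vphi_{\om_k}$ moves $\theta(\om)$ to at least two distinct directions along $\PP$-a.e.\ orbit. Consequently $\eta^{(\om)}$ would saturate on two distinct lines and hence on their intersection, a single point -- contradicting the already-ruled-out $0$-dimensional case. I expect this ergodic-cocycle analysis to be the principal technical hurdle: it has to handle both the irreducible case (some $\Arg\vphi_i\notin\pi\Q$) and the reducible one (all $\Arg\vphi_i\in\pi\Q$), and it is precisely the step at which generality of $\PP$ beyond Bernoulli matters -- addressing the gap highlighted in Remark \ref{rem:SSSgap}.
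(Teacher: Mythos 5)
Your high-level strategy is recognizably the one the paper follows: decompose $\eta^{(\om)}$ as in \eqref{eq:convolution decomposition} into the atomic part $\nu^{(\om,n')}$ convolved with a scaled copy of $\eta^{(T^{n'}\om)}$, feed this into Hochman's planar inverse theorem (Theorem \ref{thm: inverse convolution}), and rule out saturation along lines using Assumption \ref{assmp:non-deg and rotation}\ref{it:non-real} together with the equivariance of directions under the skew product $(\om,V)\mapsto(T\om,\vphi_{\om_1}^{-1}V)$ and ergodicity of $\PP$. However, the execution has genuine gaps. First, the entropy bookkeeping that actually brings the hypothesis $\dim(\Sigma)<\sdim(\Sigma)$ into play is wrong: separation $\Delta^{(\om)}_{n'}>2^{-Mn}$ controls the entropy of $\nu^{(\om,n')}$ at scale $Mn$ (where it equals the weight entropy $\approx \sdim(\Sigma)\,n$), not at scale $n$, where it is always $\le 2n+O(1)$ and in fact $\approx \alpha n$; and the inverse theorem cannot be applied ``globally'' to $\eta^{(\om)}=\nu^{(\om,n')}*\tau^{(\om,n')}$, because at fine scales the convolution genuinely carries more entropy than the factor $\tau^{(\om,n')}$ (by about $\alpha n$). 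One must pass to level-$n$ components of $\nu^{(\om,n')}$ and to conditional entropies between scales $n$ and $(q+1)n$, and first prove that for most components no entropy growth occurs (this is Proposition \ref{prop: G measure} and Theorem \ref{thm:main entropy on scales} in the paper); none of this appears in your sketch.

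Second, your treatment of the zero-dimensional alternative is inverted. Concentration of the components of the atomic factor on points neither forces $\dim(\Sigma)=0$ nor is it excluded by Assumption \ref{assmp:non-deg and rotation}\ref{it:non-degenerate}; it is precisely the \emph{desired} outcome, since combined with the count of weight entropy $s(\om,n)\to\sdim(\Sigma)>\alpha$ it forces atoms of $\nu^{(\om,n')}$ to collide in cells of $\Dk_{(q+1)n}$, i.e.\ it produces the super-exponential bound on $\Delta_n^{(\om)}$ (or, in your contradiction framework, it contradicts the assumed separation). By ``excluding'' it you destroy the very mechanism that closes the argument, and indeed the hypothesis $\alpha<\sdim(\Sigma)$ is never correctly used anywhere in your proposal. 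Third, the final step confuses saturation with concentration: saturation of $\eta^{(\om)}$-components on two transversal lines yields saturation on their \emph{span}, i.e.\ on all of $\R^2$ (Lemma \ref{lem: saturation on sum}), and it is this that contradicts $\alpha<2$ via the uniform entropy dimension of components (Proposition \ref{prop:unif ent dim}, whose proof via the Frostman-on-tubes property is a substantial part of the paper and is missing from your outline); it does not give ``saturation on their intersection, a single point.'' The correct line-exclusion argument is quantitative: a lower bound $\alpha-1+\kappa''$ on the entropy of projections of components in \emph{all} directions (Lemma \ref{lem:proj entropy improved comp}), obtained from the skew-product ergodic theorem, which then combines with Proposition \ref{prop:unif ent dim} to give Proposition \ref{prop:sat-dim 0}; your ``measurable section $\theta(\om)$'' heuristic points in this direction but is not a proof, and as written the chain of contradictions does not close.
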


Finally, we state our result on the Fourier decay of random measures $\eta^{\pom}$.
Let $\Pk(\R^2)$ denote the set of Borel probability measures on $\R^2 \cong \C$. 
The Fourier transform of $\mu\in \Pk(\R^2)$ is
$$
\what\mu(\xi) = \int_\C e^{2\pi i  \Re(z\ov{\xi})}d\mu(z).
$$
Denote
\[
\Dk_2(\sigma) = \{\mu\in \Pk(\R^2):\, |\widehat{\mu}(\xi)| = 
O_\mu(|\xi|^{-\sigma})\}\quad \text{and}\quad \Dk_2 = \bigcup_{\sig>0} 
\Dk_2(\sig).
\]
The following result extends \cite[Theorem 1.5]{SSS} to the plane. For a complex number $\beta$, we take $\lam^\beta := e^{\beta \log \lam}$, where $\log$ is the principal branch of logarithm, taking values in  $\R + (-\pi, \pi]i$ and continuous on $\C\setminus (-\infty,0]$.

\begin{thm}\label{thm:fourier main}
Let $I$ be a finite set and fix non-zero complex numbers $\beta_i, i \in I$. Let $\PP$ be a Bernoulli measure on $\Om = I^\N$. Then there exists a Borel set $\Gk \subset \Om \times \D$ such that
\begin{enumerate}[(i)]
	\item For $\PP$-almost every $\om \in \Om$,
	\[ \hdim\left( \left\{ \lam \in \D_*: (\om, \lam) \notin \Gk \right\} \right) \leq 1;\]
	\item if $(\om, \lam) \in \Gk$ and $\Sigma = \left( (\Phi^{(i)})_{i \in I}, (p_i)_{i \in I}, \PP \right)$ is any non-degenerate model with selection measure $\PP$, such that the linear part of the maps in $\Phi^{(i)}$ is $\lam^{\beta_i}$ with $|\lam^{\beta_i}| < 1$, then $\eta^{\pom} \in \Dk_2$.
\end{enumerate}
\end{thm}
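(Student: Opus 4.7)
The plan is to combine the complex Erd\H{o}s--Kahane argument of \cite{SS16}, which treats deterministic homogeneous complex self-similar measures, with the random-parameter framework developed in \cite{SSS} for the real line. Since $\eta^{(\om)}$ is the distribution of the independent sum \eqref{eq:random sum}, iterating \eqref{eq:dyn self-sim} yields the product representation
\[
\bigl|\what{\eta}^{(\om)}(\xi)\bigr| \;=\; \prod_{n=1}^{\infty}\bigl|\what{\mu}_{\om_n}\bigl(\lam^{B_{n-1}(\om)}\xi\bigr)\bigr|,\qquad B_n(\om):=\sum_{j=1}^n\beta_{\om_j},
\]
where $\mu_i:=\sum_{j=1}^{k_i}p_j^{(i)}\delta_{t_j^{(i)}}$. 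For any non-degenerate $\mu_{i_0}$ there is a constant $c>0$ and a structured ``resonance'' set $R_{i_0}\subset\C$ (a union of affine lines determined by the translations $t_j^{(i_0)}$) such that $|\what{\mu}_{i_0}(\zeta)|\leq 1-c$ whenever $\zeta$ is at distance at least $\eps$ from $R_{i_0}$. The Birkhoff ergodic theorem yields a positive density of indices $n$ with $\om_n=i_0$, $\PP$-a.s. Hence, to obtain $\eta^{(\om)}\in\Dk_2$ it suffices to show that, outside an exceptional set of $\lam$ of Hausdorff dimension at most $1$, for every large $|\xi|$ only a fraction at most $1-\delta$ of the indices $n\leq N\asymp\log|\xi|$ with $\om_n=i_0$ satisfies $\lam^{B_{n-1}(\om)}\xi\in R_{i_0}^{(\eps)}$.

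The core is a covering bound. Fix $\om$, $\xi$, a subset $S\subset\{n\leq N:\om_n=i_0\}$ of size $|S|\geq\delta N$, and an $\eps$-discretised target tuple $(y_n)_{n\in S}\in R_{i_0}^{S}$. After passing to the principal branch of the logarithm on $\D_*$, each constraint $|\lam^{B_{n-1}(\om)}\xi-y_n|<\eps$ becomes a complex affine equation in $\log\lam$ with complex coefficient $B_{n-1}(\om)$, modulo $2\pi i\Z$ and up to an error $O(\eps|y_n|^{-1})$. Following the scheme of \cite{SS16}, pairs of such equations corresponding to $B_{n_1}(\om),B_{n_2}(\om)$ that are either $\R$-linearly independent in $\R^2$ (the generic case) or merely distinct (the degenerate case in which the $B_n$'s lie on a real line) pin $\log\lam$ down to an error $O(\eps/|B_{n_2}-B_{n_1}|)$, so the admissible set of $\lam$ is covered by $(\log|\xi|)^{O(1)}$ balls of radius $|\xi|^{-c}$. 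Summing over the $\binom{N}{|S|}(\#\text{targets at scale }\eps)^{|S|}$ choices of $(S,(y_n))$ and taking $\delta$ small enough, the combinatorial factor is beaten by this polynomial saving, and a Borel--Cantelli argument across dyadic $\xi$ produces, for each fixed $\om$, an exceptional $\lam$-set of Hausdorff dimension at most $1$.

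To combine these slice-wise statements into a single Borel set $\Gk\subset\Om\times\D_*$ we use that the above estimate is Borel-measurable in $(\om,\lam)$ and that the resonance targets $(y_n)$ can be enlarged to a countable universal collection depending only on $(\beta_i)_{i\in I}$, so $\Gk$ does not depend on the specific choice of $t_j^{(i)}$ or $p_j^{(i)}$. The main obstacle is the adaptation of the covering estimate of \cite{SS16} from the deterministic exponents $n\in\N$ to the complex random walk $B_n(\om)$: one must establish, $\PP$-a.s., a quantitative anti-concentration lower bound on $|B_{n_2}(\om)-B_{n_1}(\om)|$ for polynomially many pairs $n_1<n_2\leq N$, and, when applicable, $\R$-linear independence of $B_{n_1}(\om)$ and $B_{n_2}(\om)$ viewed as vectors in $\R^2$. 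Both ingredients follow from a Kolmogorov--Rogozin-type small-ball inequality combined with the ergodic theorem, while the degenerate real-collinear case reduces to the one-dimensional argument of \cite{SSS} applied separately to the modulus and the phase of $\lam$.
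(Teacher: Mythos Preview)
Your high-level outline --- product representation of $\what\eta^{(\om)}$, restriction to indices with $\om_n=i_0$ via the ergodic theorem, and an Erd\H{o}s--Kahane covering argument --- is the right shape, but the central analytic step is misidentified. The resonance condition you write as ``$\lam^{B_{n-1}(\om)}\xi$ close to $y_n$'' is not a complex constraint: after normalizing $t_1^{(i_0)}=0$, $t_2^{(i_0)}=1$, the relevant condition is that the \emph{real part} $\Re(\lam^{B_{n-1}(\om)}\bar\xi)$ is close to an integer. This is a single real equation per index $n$, and taking logarithms does \emph{not} linearize it in $\log\lam$ (the map $w\mapsto\Re(e^{B_n w}\bar\xi)$ is genuinely nonlinear). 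Consequently, two such constraints for distinct $n_1,n_2$ do not pin down $\lam$ to a ball of controlled radius in the way you describe, and the covering bound as stated does not follow.

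The paper's proof resolves this by restricting to occurrences of the block $1^5 2$ in $\om$. Each such block yields five \emph{consecutive} real-part constraints $\Re(\theta^j z)$, $j=1,\ldots,5$, with common ratio $\theta=\lam^{-1}$. The key technical input is \cite[Lemma~2.2]{SS16}: from four consecutive values $x_j=\Re(\theta^j z_0)$ one can reconstruct both $\theta$ and the imaginary part $y_3=\Im(\theta^3 z_0)$ via smooth functions $F,G$, with explicit derivative bounds. This lets one run the Erd\H{o}s--Kahane recursion on the integer parts $K_{n,j}$ (Lemma~\ref{lem-step}), bounding the number of admissible integer sequences rather than solving approximate equations in $\log\lam$ directly. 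The probabilistic input is not a Kolmogorov--Rogozin bound on $B_{n_2}-B_{n_1}$, but a tail estimate on the waiting times $|W_n|$ between successive $1^5 2$ blocks (Lemma~\ref{lem:proba}), which controls the branching factor $B_n=C_7 C_8^{|W_{M-n}|}$ in the recursion. Your proposal would need a substitute for this reconstruction-from-real-parts step; without it, the argument does not go through.
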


%%%%%%%%%%%%%%%%%%%%%%

\section{Preliminaries}

Much of the notation follows \cite{HRd}. We will use the Landau's $O(\cdot)$ and $\Theta(\cdot)$ notation with $A = O(B)$ meaning $A \leq CB$  and $A = \Theta(B)$ meaning $cA \leq B \leq CA$ for some constants $c, C > 0$. If the constant is allowed to depend on a parameter, we indicate that in a subscript, e.g. $A = O_R(B)$ means $A \leq C(R)B$. We will usually not include the dependence on the model $\Sigma$ in the notation, but all the constants are allowed to depend on it. A statement  of the form ``the given property holds for all $n$ and $m > m(n)$'' is a shorthand for saying that for each $n$ there exists $m(n)$ such that the given property holds for all $m > m(n)$. 

\subsection{Entropy and component measures}

For a metric space $X$, we denote by $\mP(X)$ the set of all Borel probability measures on $X$. For a countable measurable partition $\Ek$ of $X$, the (Shannon) entropy of a probability measure $\mu$ with respect to $\Ek$ is defined as
\[ H(\mu, \Ek) := - \sum \limits_{E \in \Ek} \mu(E) \log \mu(E)\]
(with the convention $0 \log 0 =0$). The conditional entropy with respect to a countable partition $\Fk$ is
\[ H(\mu, \Ek | \Fk) = \sum \limits_{F \in \Fk} \mu(F)H(\mu_F, \Ek), \]
where $\mu_F = \frac{1}{\mu(F)}\mu|_F$. A useful property of entropy is the following: If $\Ek,\Fk$ are two countable partitions such that every element of $\Ek$ intersects at most $k$ elements of $\Fk$ and vice versa, then
\begin{equation}\label{eq:intersecting partitions} \left| H(\mu, \Ek) - H(\mu, \Fk)  \right| = O(\log k).
\end{equation}
Let $\Dk_n$ be the standard partition of $\R^d$ into dyadic cubes of side-length $2^{-n}$, i.e., sets of the form
\[ [k_1 2^{-n}, (k_1 + 1)2^{-n})  \times \cdots \times [k_d 2^{-n}, (k_d + 1)2^{-n}), k_1, \ldots, k_d \in \Z.\]
For a probability measure $\mu$ on $\R^d$, we denote
\[H_n(\mu) := \frac{1}{n}H(\mu, \Dk_n).\]
It follows from \eqref{eq:intersecting partitions}, that if $f(z) = \lam z + t$ with $|\lam| = \Theta(2^{-k})$, then
\begin{equation}\label{eq:entropy bounded similarity} \left| H(\mu, \Dk_n) - H(f\mu, \Dk_{n+k})  \right| = O(1).
\end{equation}
For a pair of probability measures $\mu,\nu \in \Pk([-R,R]^d)$ we denote by $\mu * \nu$ their convolution. We have (see \cite[Proposition 4.1.v]{SSS}):
\begin{equation}\label{eq:conv entropy bound}
	H(\mu * \nu, \Dk_n) \geq H(\mu, \Dk_n) - O_R(1).
\end{equation}
We emphasize that throughout the paper, $\Dk_n$ will denote both the partition of $\R$ and of $\R^2$, depending on the context. For $x \in \R^d$ we will write $D_n(x)$ for the element of $\Dk_n$ containing $x$. For $D \in \Dk_n$, we will denote by $T_D$ the unique similarity $t\mapsto 2^n t + a_D$ mapping $D$ onto the unit cube.

\begin{defn} \label{def:raw}
	For a probability measure $\mu$ on $\R^d$ and a dyadic cell $D \in \Dk_n$ satisfying $\mu(D) > 0$, the \textit{raw $D$-component} of $\mu$ is
	\[ \mu_D := \frac{1}{\mu(D)}\mu|_D \]
	and the \textit{rescaled $D$-component}  is
	\[ \mu^D := T_D \left( \mu_D \right).\]
	For $x \in \R^d$ with $\mu(D_n(x)) >0$ we write $\mu_{x,n} := \mu_{D_n(x)}$ and $\mu^{x,n} = \mu^{D_n(x)}$. These are called the $n$-level components. For measures $\eta^{\pom}$ generated by a model $\Sigma$, we use notation $\eta^{\pom, x, n} := \left( \eta^{\pom} \right)^{x,n}$.
\end{defn}

We will often consider \textit{randomly chosen components}, in the following manner. For a bounded measurable function $f : \Pk(\R^d) \to \R$ and a finite set $J \subset \N$ we set
\[ \bE_{i \in J}(f(\mu_{x,i})) := \frac{1}{\#J} \sum \limits_{i \in J} \int \limits_{\R^d} f(\mu_{x,i})d\mu(x), \]
and $\bP_{i \in J}(\mu_{x,i} \in A) := \bE_{i \in J} \mathds{1}_{A}(\mu_{x,i})$. In other words, a level $i$ is chosen uniformly among $J$ and then an $i$-level component $\mu_{x,i}$ is chosen independently, with $x$ drawn according to $\mu$. We will also consider analogous expressions for the rescaled components. For example,
\begin{equation}\label{eq:expect}
\bE_{0 \leq i \leq n} \left( H_m(\mu^{x,i}) \right)  = \frac{1}{n+1} \sum \limits_{i=0}^n \sum \limits_{D \in \Dk_i} \mu(D) H_m(\mu^D).
\end{equation}
We will also sometimes use the notation $\bP_{i \in J}$ to denote a uniform probability over the set $J$, for example
\[ \bP_{0 \leq k \leq n} \left( \bE_{i = k} H_m(\eta^{(T^k \om)}_{x,i}) > \alpha)  \right) = \frac{1}{n+1}\cdot  \#\left\{ 0 \leq k \leq n : \bE_{i = k} H_m(\eta^{(T^k \om)}_{x,i}) > \alpha \right\}.  \]
Throughout the paper, $\bP$ and $\bE$ will denote the probability and expectation defined above, whereas $\PP$ and $\E$ will denote the selection measure of the model $\Sigma$ and the expectation corresponding to it.

The next lemma expresses the important ``global entropy from local entropy'' principle.

\begin{lem}{\cite[Lemma 3.4]{H}}\label{lem: local to global entropy} For $R \geq 1,\ \mu \in \mP([-R,R]^d)$ and integers $m < n$,
	\[ H_n(\mu) = \bE_{0 \leq i \leq n} \left( H_m(\mu^{x,i}) \right) + O\left( \frac{m + \log R}{n} \right) = \frac{1}{n} \sum \limits_{k=0}^{n-1} \frac{1}{m} H(\mu, \Dk_{k+m}|\Dk_k) + O\left( \frac{m + \log R}{n} \right). \]
\end{lem}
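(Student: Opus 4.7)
The plan is to prove both equalities by manipulating dyadic conditional entropies directly; this is really a purely combinatorial identity about entropy plus a standard change-of-coordinates calculation for rescaled components.

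First I would establish the second equality, which is essentially the chain rule plus a counting argument. By the chain rule for conditional entropy,
\[
H(\mu,\Dk_{k+m}\mid \Dk_k) = \sum_{j=k}^{k+m-1} H(\mu,\Dk_{j+1}\mid \Dk_j),
\]
so summing over $0\le k\le n-1$ and interchanging the order of summation, each term $H(\mu,\Dk_{j+1}\mid \Dk_j)$ appears exactly $m$ times for $m-1\le j\le n-1$, and the boundary contributions (for $0\le j\le m-2$ and $n\le j\le n+m-2$) contribute at most $O(md)$ each per block since $H(\mu,\Dk_{j+1}\mid \Dk_j)\le d\log 2$. On the other hand, telescoping gives $H(\mu,\Dk_n)=H(\mu,\Dk_0)+\sum_{j=0}^{n-1}H(\mu,\Dk_{j+1}\mid \Dk_j)$, and $H(\mu,\Dk_0)=O(d\log R)$ by the support hypothesis $\supp\mu\subseteq [-R,R]^d$. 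Combining and dividing by $nm$ yields
\[
\frac{1}{n}H(\mu,\Dk_n) = \frac{1}{n}\sum_{k=0}^{n-1}\frac{1}{m}H(\mu,\Dk_{k+m}\mid \Dk_k) + O\!\left(\frac{m+\log R}{n}\right),
\]
which is the claimed second identity.

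Next I would prove the first equality by unwinding the definition of a rescaled component. For each dyadic cell $D\in\Dk_k$ with $\mu(D)>0$, the similarity $T_D$ with ratio $2^k$ sends the restricted partition $\Dk_{k+m}|_D$ bijectively onto $\Dk_m$ on the unit cube, so $H(\mu^D,\Dk_m)=H(\mu_D,\Dk_{k+m})$. Averaging $\mu_D$'s with weights $\mu(D)$ and using the definition of conditional entropy gives
\[
\int_{\R^d} H_m(\mu^{x,k})\,d\mu(x) = \frac{1}{m}\sum_{D\in\Dk_k}\mu(D)H(\mu^D,\Dk_m) = \frac{1}{m}H(\mu,\Dk_{k+m}\mid \Dk_k).
\]
Averaging this over $0\le i\le n$ (with denominator $n+1$ per the paper's convention) reproduces the second expression in the lemma, up to a single-term discrepancy $O(1/n)$ from replacing $\tfrac{1}{n+1}\sum_{i=0}^n$ by $\tfrac{1}{n}\sum_{k=0}^{n-1}$, and this is absorbed into the stated error.

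The only step requiring any care is bookkeeping the boundary contributions to the double sum in the first paragraph; everything else is the chain rule and the definition of rescaled components. There is no real obstacle — the potential pitfall is simply being sloppy about the $O(m+\log R)$ error term, making sure to track both the $H(\mu,\Dk_0)=O(\log R)$ contribution from the non-compactly-scaled starting level and the $O(m^2)$ contribution from incomplete blocks at the two ends of the range $0\le j\le n+m-2$, which when divided by $nm$ yields the required $O((m+\log R)/n)$ rate.
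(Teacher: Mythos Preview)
Your proof is correct and is essentially the standard argument that the paper cites from Hochman \cite[Lemma 3.4]{H}; the paper does not give its own proof, only remarking afterward that the second equality is an application of \eqref{eq:expect}, which is exactly your computation $\int H_m(\mu^{x,k})\,d\mu(x)=\frac{1}{m}H(\mu,\Dk_{k+m}\mid\Dk_k)$.
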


Note that the second equality in the last lemma is simply an application of \eqref{eq:expect}.

\subsection{Orthogonal projections and saturation}

We will write $W \leq \R^d$ to denote that $W$ is a linear subspace of $\R^d$, and by $W^\perp$ we will denote the orthogonal complement of $W$ in $\R^d$. The symbol $\R\PP^1$ stands for the projective line, which we identify with the  set of  $1$-dimensional linear subspaces of the plane. Formally, we will treat elements of $\R\PP^1$ both as $1$-dimensional linear subspaces of $\R^2$ and elements of the group $ \s^1/ \{ -1, 1\}$. 

For $W \leq \R^d$, let $\pi_W : \R^d \to W$ denote the orthogonal projection onto $W$. Let $\Dk_n^{W} := \pi_W^{-1}(\Dk_n)$ be the partition of $\R^d$ induced by the dyadic partition of $W$.
Observe that, by definition,\footnote{Formally, in order to consider dyadic partitions of $W$, one has to fix a basis in $W$. However, the particular choice does not influence any of our calculations as long as we choose a basis which is orthonormal in the standard inner product on $\R^d$, as entropy $H(\mu, \Dk_n)$ on $W$ for any two such bases will differ by at most a universal constant (independent of $n$) by \eqref{eq:intersecting partitions}.}
\begin{equation}\label{eq:entpro}
H(\pi_W\mu,\Dk_m) = H(\mu, \Dk_m^W).
\end{equation}
\noindent Moreover, let $\Dk^{W \oplus W^\perp}_n := \Dk_n^{W} \vee \Dk_n^{W^\perp}$ be the dyadic partition of $\R^d$ parallel to $W$. Note that for  $\mu \in \Pk(\R^d)$,
\begin{equation}\label{eq:entpro2}
|H(\mu, \Dk_n) - H(\mu, \Dk^{W \oplus W^\perp}_n)| = O \left( 1\right),
\end{equation}
and for $\mu \in \mP\left([-R, R]^d\right)$,
\begin{equation}\label{eq:entropy cond W}
\begin{split}
\frac{1}{n}H(\mu, \Dk^{W \oplus W^\perp}_n | \Dk^W_n) & = \frac{1}{n}H(\mu, \Dk^{W^\perp}_n | \Dk^W_n) \\
& = \frac{1}{n}H(\mu, \Dk_n | \Dk^W_n) + O(1/n) \\
& \leq \dim(W^\perp) + O_R(1/n).
\end{split}
\end{equation}
Recall the following definitions from \cite{HRd}:

\begin{defn}
	Let $V \leq \R^d$ be a linear subspace and $\eps > 0$. A measure $\mu \in \mP(\R^d)$ is \textit{$(V,\eps)$-concentrated} if there is a translate $W$ of $V$ such that $\mu(W^{(\eps)}) \geq 1 - \eps$, where $W^{(\eps)} = \{ x \in \R^d : \dist(x,W) < \eps \}$ is the $\eps$-neighborhood of $W$.
\end{defn}

\begin{defn} \label{def:saturated}
	Let $V \leq \R^d$ be a linear subspace and $\eps > 0$. A measure $\mu \in \mP(\R^d)$ is \textit{$(V,\eps)$-saturated at scale $m$}, or $(V,\eps,m)$-saturated, if
	\[ H_m(\mu) \geq H_m(\pi_{V^\perp}\mu) + \dim V - \eps. \]
\end{defn}

We will make use of several lemmas, proved in \cite{H,HRd}. The first one is immediate from \eqref{eq:entropy cond W} and Definition~\ref{def:saturated}. It shows that $(V,\eps,m)$-saturation (for small $\eps$) means (at least morally) that conditional measures of $\mu$ with respect to the projection $\pi_{V^\perp}$ have (on average) almost full entropy in scale $m$.

\begin{lem}\cite[Lemma 3.9]{HRd}\label{lem:sat by cond}
	A measure $\mu \in \mP(\R^d)$ is $(V, \eps+O(1/m), m)$-saturated if and only if
	\[ \frac{1}{m}H(\mu, \Dk_m^V | \Dk_m^{V^\perp}) \geq \dim V - \left(\eps + O(1/m)\right). \]
\end{lem}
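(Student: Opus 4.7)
The plan is to rewrite the saturation condition so that the conditional-entropy expression on the right-hand side appears directly. Since $\Dk_m^{V \oplus V^\perp} = \Dk_m^V \vee \Dk_m^{V^\perp}$ by definition, the standard chain rule for entropy gives
\[
H(\mu, \Dk_m^{V \oplus V^\perp}) = H(\mu, \Dk_m^{V^\perp}) + H(\mu, \Dk_m^V \mid \Dk_m^{V^\perp}).
\]
Combining this with \eqref{eq:entpro2}, which allows swapping $\Dk_m$ for $\Dk_m^{V \oplus V^\perp}$ at an additive cost of $O(1)$, and with \eqref{eq:entpro}, which identifies $H(\mu, \Dk_m^{V^\perp}) = H(\pi_{V^\perp}\mu, \Dk_m)$, I obtain after dividing by $m$ the key identity
\[
H_m(\mu) \;=\; H_m(\pi_{V^\perp}\mu) + \frac{1}{m}H(\mu, \Dk_m^V \mid \Dk_m^{V^\perp}) + O(1/m).
\]

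Substituting this into Definition \ref{def:saturated}, the saturation inequality
\[H_m(\mu) \geq H_m(\pi_{V^\perp}\mu) + \dim V - (\eps + O(1/m))\]
becomes, after cancelling $H_m(\pi_{V^\perp}\mu)$ from both sides, equivalent to
\[
\frac{1}{m}H(\mu, \Dk_m^V \mid \Dk_m^{V^\perp}) \geq \dim V - (\eps + O(1/m)),
\]
which is exactly the claim. Both directions of the ``if and only if'' are immediate from this equivalence, since the key identity above is an equality up to an $O(1/m)$ error that can be absorbed into the slack on either side.

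The argument is essentially bookkeeping with entropy identities already recorded in the preliminaries, so there is no real obstacle. The only point to watch is the propagation of the $O(1)$ term from \eqref{eq:entpro2}: after normalizing by $m$ it contributes an $O(1/m)$ discrepancy, and this is precisely the reason the statement is phrased with the $\eps + O(1/m)$ slack rather than with $\eps$ directly.
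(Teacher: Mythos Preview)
Your proof is correct and follows essentially the same route as the paper, which simply notes that the lemma is immediate from \eqref{eq:entropy cond W} and Definition~\ref{def:saturated}. You have just unpacked the relevant chain-rule identity a bit more explicitly using \eqref{eq:entpro} and \eqref{eq:entpro2}, which are precisely the ingredients behind \eqref{eq:entropy cond W}.
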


\begin{lem}\cite[Lemma 3.21.5]{HRd}\label{lem: saturation on sum} If $\mu\in\mP([0,1)^d)$ is both $(V_1, \eps, m)$ and $(V_2,\eps,m)$-saturated, and $\angle(V_1, V_2) > \delta > 0$, then $\mu$ is $(V_1 + V_2, \eps', m)$-saturated, where $\eps' = 2\eps + O(\frac{1}{m} \log \frac{1}{\delta})$. 
\end{lem}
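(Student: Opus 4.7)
The plan is to convert both saturation hypotheses into the lower bounds on $H_m(\mu)$ coming from Definition~\ref{def:saturated}, add them, and close the resulting gap with a Shearer-type mutual-information inequality. Set $W := V_1 + V_2$. Since $\angle(V_1, V_2) > \delta > 0$ forces $V_1 \cap V_2 = \{0\}$, we have $\dim W = \dim V_1 + \dim V_2$. Summing the two hypothesized saturation inequalities gives
\begin{equation}\label{eq:prop-sum}
2H_m(\mu) \geq H_m(\pi_{V_1^\perp}\mu) + H_m(\pi_{V_2^\perp}\mu) + \dim W - 2\eps.
\end{equation}
The crux of the argument will be the inequality
\begin{equation}\label{eq:prop-shearer}
H_m(\pi_{V_1^\perp}\mu) + H_m(\pi_{V_2^\perp}\mu) \geq H_m(\mu) + H_m(\pi_{W^\perp}\mu) - O\!\left(\tfrac{1}{m}\log\tfrac{1}{\delta}\right),
\end{equation}
which, substituted into \eqref{eq:prop-sum} and rearranged, immediately yields the desired $(W,\eps',m)$-saturation with $\eps' = 2\eps + O(m^{-1}\log(1/\delta))$.

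To prove \eqref{eq:prop-shearer} I would introduce the linear map $L \colon \R^d \to V_1^\perp \oplus V_2^\perp$, $L(x) := (\pi_{V_1^\perp}x, \pi_{V_2^\perp}x)$. Injectivity of $L$ follows from $V_1 \cap V_2 = \{0\}$, and a short geometric computation (any point within distance $r$ of both $V_1$ and $V_2$ must lie within distance $O(r/\sin\delta)$ of the origin, because the minimum principal angle $\delta$ between $V_1$ and $V_2$ controls $\dist(v,V_2)/\|v\|$ for $v\in V_1$) shows that $L^{-1}$ on its image is Lipschitz with constant $O(1/\delta)$. Hence $L$ is a bi-Lipschitz bijection onto its image with distortion $O(1/\delta)$, and the equivalence of partitions encoded in \eqref{eq:intersecting partitions} gives
\[
H(L\mu, \Dk_m) = H(\mu, \Dk_m) + O(\log(1/\delta)),
\]
where on the left $\Dk_m$ denotes the dyadic partition of the ambient space $V_1^\perp \oplus V_2^\perp$ at scale $m$.

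The key structural input is the factorization $\pi_{W^\perp} \circ \pi_{V_i^\perp} = \pi_{W^\perp}$, valid because $V_i \subset W = (W^\perp)^\perp$. At the discretized scale this says that $\pi_{W^\perp}\mu$ is a function of each $\pi_{V_i^\perp}\mu$ up to $O(1)$ bits, since $\pi_{W^\perp}$ is $1$-Lipschitz. I will then invoke the standard information-theoretic fact that $I(X;Y) \geq H(Z)$ whenever $Z$ is a function of both $X$ and $Y$ (since $H(Z\mid X) = H(Z\mid Y) = 0$ implies $H(Z) = I(X;Z) \leq I(X;Y)$), applied with $X = \pi_{V_1^\perp}\mu$, $Y = \pi_{V_2^\perp}\mu$, $Z = \pi_{W^\perp}\mu$. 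This yields $H_m(\pi_{V_1^\perp}\mu) + H_m(\pi_{V_2^\perp}\mu) - H_m(L\mu) \geq H_m(\pi_{W^\perp}\mu) - O(1/m)$, and substituting the bi-Lipschitz identity $H_m(L\mu) = H_m(\mu) + O(m^{-1}\log(1/\delta))$ produces \eqref{eq:prop-shearer}. The main obstacle is precisely this inequality: it is the only place where the transversality constant $\delta$ enters, and it requires carefully tracking the $O(\log(1/\delta))$ discretization error from the bi-Lipschitz distortion of $L$ alongside the $O(1)$ error from the approximate functoriality of $\pi_{W^\perp}$ at scale $m$. Everything else is straightforward bookkeeping.
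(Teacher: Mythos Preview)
The paper does not give its own proof of this lemma; it is simply quoted from \cite[Lemma 3.21.5]{HRd}. Your argument is a valid self-contained proof: summing the two saturation inequalities and closing the gap via the mutual-information estimate $I(X_1;X_2)\ge H(Z)-O(1)$, where $X_i$ is the $\Dk_m$-cell of $\pi_{V_i^\perp}$ and $Z$ is the $\Dk_m$-cell of $\pi_{W^\perp}$, is exactly the right mechanism, and the bi-Lipschitz distortion of $L=(\pi_{V_1^\perp},\pi_{V_2^\perp})$ is where the $\log(1/\delta)$ legitimately enters. One small point worth making precise: the direction you need from the bi-Lipschitz comparison is $H_m(L\mu)\ge H_m(\mu)-O(m^{-1}\log(1/\delta))$, which follows since $L$ itself is $O(1)$-Lipschitz and it is $L^{-1}$ that carries the $1/\delta$ constant; your two-sided statement covers this but it is worth being explicit about which inequality is used. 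This is essentially the approach Hochman takes in \cite{HRd}, so there is no substantive divergence to report.
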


\subsection{Inverse theorem for entropy}

The following is the Hochman's inverse theorem for convolutions in $\R^d$ \cite{H, HRd}. This deep result is the central tool in proving Theorem \ref{thm:main dim}.

\begin{thm}\cite[Theorem 2.8]{HRd}\label{thm: inverse convolution}
	For every $R,\eps > 0$ and $m \in \N$ there is $\delta = \delta(\eps, R, m) > 0$ such that for every $n > n(\eps, R, \delta, m)$, the following holds: If $\mu, \nu \in \mP([-R, R]^d)$ and
	\[ H_n(\mu * \nu) < H_n(\mu) + \delta, \]
	then there exists a sequence $V_0, \ldots, V_n \leq \R^d$ of subspaces such that
	\[ \bP_{0 \leq i \leq n} \begin{pmatrix} \mu^{x,i} \text{ is } (V_i, \eps, m)\text{-saturated and }\\
	\nu^{y,i} \text{ is } (V_i,\eps)\text{-concentrated} \end{pmatrix} > 1 - \eps, \]
where for each $i$, the random components $\mu^{x,i}, \nu^{y,i}$ are chosen independently of one another.
\end{thm}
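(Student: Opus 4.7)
The plan is to reduce the $d$-dimensional inverse theorem to the one-dimensional analog from \cite{H}, which asserts that if $H_n(\mu' * \nu') < H_n(\mu') + \delta$ for measures on a bounded interval, then at most scales the components of $\mu'$ have near-full entropy (at scale $m$) and those of $\nu'$ are concentrated in small balls. First, I would use the local-to-global principle (Lemma~\ref{lem: local to global entropy}) to rephrase the hypothesis: the near-equality $H_n(\mu * \nu) \le H_n(\mu) + \delta$ transfers to the local statement that for most scales $i \in [0,n]$ and most independently sampled components, $H_m((\mu*\nu)^{x,i})$ is close to $H_m(\mu^{x,i})$. A direct Markov-type computation then shows that $(\mu * \nu)^{x,i}$ is well approximated (after rescaling by $T_{D_i(x)}$) by a convolution $\mu^{x,i} * \nu^{y,i}$ for most $y$ drawn from $\nu$ independently.

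Next, for each unit vector $\theta$, I would exploit that $\pi_\theta(\mu * \nu) = \pi_\theta \mu * \pi_\theta \nu$, together with \eqref{eq:entpro}, to see that the assumption persists under projection to every line. Applying the one-dimensional inverse theorem of Hochman in each direction $\theta$ then yields, at most scales $i$, the dichotomy at the level of components: either (a) $\pi_\theta \mu^{x,i}$ attains near-full entropy at scale $m$ (i.e.\ is $(\R\theta,\eps,m)$-saturated), or (b) $\pi_\theta \nu^{y,i}$ is $(\R\theta,\eps)$-concentrated. This is precisely the one-dimensional version of the required conclusion, but only in a single direction at a time.

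The main obstacle, and heart of the argument, is piecing together the directional information into a single subspace $V_i$ at each scale. The approach I would take is to fix a fine $\rho$-net $\Theta \subset \R\PP^1$ (or, more generally, in the Grassmannian in higher $d$), and at each good scale $i$ define $V_i$ as the span of those $\theta \in \Theta$ for which the saturation alternative (a) holds. For $\theta \in V_i^\perp \cap \Theta$, alternative (b) must hold by complementarity, and averaging over this orthogonal net would force $\nu^{y,i}$ to be $(V_i,\eps)$-concentrated after absorbing the net resolution into $\eps$. For $\mu^{x,i}$, $(V_i,\eps,m)$-saturation would follow by iterated application of Lemma~\ref{lem: saturation on sum}, which upgrades saturation in two transverse subspaces to saturation on their sum; the delicate bookkeeping is that the cumulative loss $\eps' = 2\eps + O(m^{-1}\log \rho^{-1})$ must be kept admissible, which is achieved by taking $\rho$ and $\eps$ small and $m$ large in the correct order, and then $\delta = \delta(\eps,R,m)$ even smaller to power the 1D inverse theorem applied to each direction.

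Finally, passing from a ``most scales $i$ and most independently sampled $(x,y)$'' statement to the uniform-looking probabilistic conclusion in the theorem is a matter of Fubini plus a union bound over the finite net $\Theta$ and over the scales, with all losses absorbed into the final $\eps$ by shrinking $\delta$ further and choosing $n > n(\eps,R,\delta,m)$ large enough so that the averaged local errors dominate the $O((m+\log R)/n)$ correction from Lemma~\ref{lem: local to global entropy}. I expect the subspace-gluing step via Lemma~\ref{lem: saturation on sum} to be by far the most delicate, since the quantitative thresholds in $\eps$, $\rho$, $m$, and $\delta$ must be orchestrated very carefully.
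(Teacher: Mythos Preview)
The paper does not prove this theorem; it is quoted verbatim as \cite[Theorem 2.8]{HRd} and used as a black box. So there is no ``paper's own proof'' to compare against, and you should not expect to find one in the present text.

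That said, your proposed reduction has a genuine gap at the projection step. You claim that the hypothesis $H_n(\mu * \nu) < H_n(\mu) + \delta$ ``persists under projection to every line,'' but this is not true: from $H_n(\mu*\nu) - H_n(\mu) < \delta$ one cannot conclude $H_n(\pi_\theta\mu * \pi_\theta\nu) - H_n(\pi_\theta\mu) < \delta'$ for all (or even most) directions $\theta$. Decomposing entropy as $H_n(\cdot) = H_n(\pi_\theta(\cdot)) + \frac{1}{n}H(\cdot, \Dk_n \mid \Dk_n^\theta) + O(1/n)$, the small global gap only controls the \emph{sum} of the projected-entropy gap and the conditional-entropy gap; either summand can be large while the other compensates. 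Concretely, one can have $\mu$ and $\nu$ for which $\mu*\nu$ gains no entropy over $\mu$ in $\R^d$, yet $\pi_\theta\nu$ spreads $\pi_\theta\mu$ significantly in a given direction $\theta$, with the gain absorbed by a loss in the fibre direction. Thus the one-dimensional inverse theorem cannot be invoked direction-by-direction from the global hypothesis alone.

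Hochman's actual proof in \cite{HRd} does not proceed by projecting to lines and citing \cite{H}; it reworks the entire entropy-growth and inverse-theorem machinery intrinsically in $\R^d$, analyzing how convolution acts on component measures at each scale and extracting the subspaces $V_i$ from the structure of the ``almost-atomic'' and ``almost-uniform'' pieces that arise simultaneously across all directions. The subspace-gluing via Lemma~\ref{lem: saturation on sum} that you anticipate does play a role in the $\R^d$ theory, but only after the correct multiscale structure has been established, not as a device to bootstrap from independent one-dimensional conclusions.
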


\subsection{Projective cocycle}

Although the map $(\om, V) \mapsto H_m(\pi_V \eta^{(\om)})$ is not continuous on $\Om \times \R\PP^1$, it is close to being continuous, in the sense of the two lemmas below.

\begin{lem}\label{lem:V cont}
Let $\mu \in \mP\left([0,1)^2\right)$. For $W,V \in \R\PP^1$, if $\|\pi_W - \pi_V\| < 2^{-m}$, then
\[ |H_m(\pi_W \mu) - H_m(\pi_V \mu)| \leq O(1/m). \]
\end{lem}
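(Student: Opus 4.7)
The plan is to use identity \eqref{eq:entpro} to convert the claim into a comparison of $\mu$-entropies under two nearly-coincident strip partitions of $\R^2$, and then apply the elementary coarsening estimate \eqref{eq:intersecting partitions}. Concretely, \eqref{eq:entpro} gives $mH_m(\pi_W\mu) = H(\mu, \Dk_m^W)$ and similarly $mH_m(\pi_V\mu) = H(\mu, \Dk_m^V)$, where $\Dk_m^W = \pi_W^{-1}(\Dk_m)$ consists of strips of width $2^{-m}$ perpendicular to $W$, and analogously for $V$. Thus it suffices to prove $|H(\mu, \Dk_m^W) - H(\mu, \Dk_m^V)| = O(1)$.

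For this, the operator norm bound $\|\pi_W - \pi_V\| < 2^{-m}$ translates into the pointwise estimate $|\pi_W x - \pi_V x| \leq \sqrt{2}\cdot 2^{-m}$ for every $x \in [0,1)^2 \supset \supp \mu$. A short calculation (choosing matching orientations of $W$ and $V$ so that the unit vectors $v \in W$ and $w \in V$ satisfy $|v - w| = O(\|\pi_W - \pi_V\|)$) then shows that if $x, y \in [0,1)^2$ lie in the same strip of $\Dk_m^W$, i.e. $|\langle x-y, v\rangle| < 2^{-m}$, then $|\langle x-y, w\rangle| = O(2^{-m})$. Hence $x$ and $y$ lie in strips of $\Dk_m^V$ that are at most $O(1)$ apart, and symmetrically each strip of $\Dk_m^V$ meeting $[0,1)^2$ intersects only $O(1)$ strips of $\Dk_m^W$.

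Invoking \eqref{eq:intersecting partitions} for the partitions induced on $[0,1)^2$ then yields $|H(\mu, \Dk_m^W) - H(\mu, \Dk_m^V)| = O(1)$, and dividing by $m$ gives the claimed $O(1/m)$ bound. I do not expect any real obstacle here: this is a routine ``continuity up to $O(1/m)$'' statement in the entropy-geometry formalism of \cite{H,HRd}. The only minor bookkeeping concerns the choice of orientations on $W, V$ and the origin of the dyadic grid on each line, but all such choices are absorbed into the $O(1)$ constants.
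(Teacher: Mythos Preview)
Your proposal is correct and follows essentially the same route as the paper: the paper's proof is a one-line citation of \cite[Lemma~3.2.3]{HRd}, and your argument (pulling back to the strip partitions $\Dk_m^W,\Dk_m^V$ via \eqref{eq:entpro}, showing each strip of one meets $O(1)$ strips of the other on $[0,1)^2$, and invoking \eqref{eq:intersecting partitions}) is precisely the content of that cited lemma. The minor bookkeeping you flag (orientations, grid origins) is indeed absorbed into the $O(1)$.
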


\begin{proof}
Follows directly from \cite[Lemma 3.2.3]{HRd}.
\end{proof}

For a given model $\Sigma$, let $R = R(\Sigma) > 0$ be large enough to guarantee \begin{equation}\label{eq:R def} \overline{f^{(i)}_j\left( B(0,R) \right)} \subset B(0,R) \text{ for all } i \in I,\ j \in \{1, \ldots, k_i\}.
\end{equation}
Then $\supp\left(\eta^{(\om)}\right) \subset B(0,R)$ for every $\om \in \Om$. We will also denote $B_u^{(\om)} = f^{\om}_{u}(B(0,R))$ for $\om \in \Om$ and $u \in \X^{(\om)}_n$. For a pair of infinite sequences $\om, \tau \in \Om$, denote by $\om \wedge \tau$ the longest common prefix of $\om$ and $\tau$.

\begin{lem}\label{lem:om cont}
Fix a model $\Sigma$. If $\om, \tau \in \Om$ are such that $|\om \wedge \tau| \geq O(m)$, then
\[ \sup \limits_{W \in \R\PP^1} \left| H_m(\pi_W \eta^{(\om)}) - H_m(\pi_W \eta^{(\tau)}) \right| \leq O(1/m). \] 
\end{lem}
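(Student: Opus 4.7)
The plan is to exploit the dynamic self-similarity relation \eqref{eq:dyn self-sim} to decompose both $\eta^{(\om)}$ and $\eta^{(\tau)}$ as convolutions sharing a common factor, then show that the differing factors are so concentrated that they leave the scale-$m$ entropy essentially unchanged.

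Let $k=|\om\wedge\tau|$ and set $\Lla:=\lam_{\om_1}\cdots\lam_{\om_k}=\lam_{\tau_1}\cdots\lam_{\tau_k}$. Since $\om$ and $\tau$ agree on the first $k$ symbols, $\X_k^{(\om)}=\X_k^{(\tau)}$, the probabilities $p_u^{(\om)}=p_u^{(\tau)}$ coincide, and so do the maps $f_u^{(\om)}=f_u^{(\tau)}=:f_u$ for $u\in\X_k^{(\om)}$. Writing $f_u(z)=\Lla z+t_u$ with $t_u=\Pi_\om^{(k)}(u)$, the relation \eqref{eq:dyn self-sim} yields the convolution decompositions
\[
\eta^{(\om)}=\nu_k*\bigl(\Lla\,\eta^{(T^k\om)}\bigr),\qquad \eta^{(\tau)}=\nu_k*\bigl(\Lla\,\eta^{(T^k\tau)}\bigr),
\]
where $\nu_k=\sum_{u\in\X_k^{(\om)}}p_u\,\delta_{t_u}$ is common to both expressions, and $\Lla\mu$ denotes the push-forward of $\mu$ under $z\mapsto\Lla z$. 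Projecting onto any $W\in\R\PP^1$ and using $\pi_W(\mu_1*\mu_2)=\pi_W\mu_1*\pi_W\mu_2$, I obtain
\[
\pi_W\eta^{(\om)}=\pi_W\nu_k*\mu_\om,\qquad \pi_W\eta^{(\tau)}=\pi_W\nu_k*\mu_\tau,
\]
with $\mu_\om:=\pi_W(\Lla\,\eta^{(T^k\om)})$ and $\mu_\tau:=\pi_W(\Lla\,\eta^{(T^k\tau)})$, both supported in an interval of length $\leq 2R|\Lla|\leq 2R\cdot r_{\max}^k$.

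The key now is to choose the implicit constant in the hypothesis ``$|\om\wedge\tau|\geq O(m)$'' large enough that $2R\cdot r_{\max}^k\leq 2^{-m}$; this requires only $k\geq Cm$ for a constant $C=C(\Sigma)$ determined by $r_{\max}$ and $R$. Then $\mu_\om,\mu_\tau$ are each supported in an interval of length at most $2^{-m}$, and I claim that for any probability measure $\mu$ supported in such an interval and any $\nu\in\Pk([-R,R])$,
\[
\bigl|H(\nu*\mu,\Dk_m)-H(\nu,\Dk_m)\bigr|=O(1).
\]
The lower bound $H(\nu*\mu,\Dk_m)\geq H(\nu,\Dk_m)-O_R(1)$ is exactly \eqref{eq:conv entropy bound}. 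For the upper bound, decompose $\nu=\sum_{D\in\Dk_m}\nu(D)\nu_D$; by the mixture bound for entropy,
\[
H(\nu*\mu,\Dk_m)\leq H(\nu,\Dk_m)+\sum_{D\in\Dk_m}\nu(D)H(\nu_D*\mu,\Dk_m),
\]
and since each $\nu_D*\mu$ is supported in an interval of length $\leq 2^{-m}+2^{-m}$, it meets only $O(1)$ cells of $\Dk_m$, so the summed term contributes $O(1)$.

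Applying this twice (with the roles of $\mu$ played by $\mu_\om$ and $\mu_\tau$, and $\nu=\pi_W\nu_k$, which is supported in $[-R,R]$ by \eqref{eq:R def}) gives
\[
H(\pi_W\eta^{(\om)},\Dk_m)=H(\pi_W\nu_k,\Dk_m)+O(1)=H(\pi_W\eta^{(\tau)},\Dk_m)+O(1),
\]
with an absolute $O(1)$ depending only on $\Sigma$ and not on $W$. Dividing by $m$ yields the desired bound, uniform in $W\in\R\PP^1$. I do not anticipate a genuine obstacle here; the only subtle point is verifying that the constants are truly independent of $W$, which follows because the bound $|\Lla|R\leq 2^{-m}$ and the support bound $\supp\nu_k\subset B(0,R)$ survive under any orthogonal projection without loss.
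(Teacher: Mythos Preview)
Your proof is correct and follows essentially the same approach as the paper's: both compare $\eta^{(\om)}$ and $\eta^{(\tau)}$ to the common truncated measure $\nu_k$ (which the paper calls $\nu^{(\om,k)}=\nu^{(\tau,k)}$), using that the ``tail'' beyond level $k$ lives at scale $\le 2^{-m}$ and hence cannot affect $H_m$ by more than $O(1/m)$. The only difference is packaging: the paper invokes the pointwise bound $\|\Pi_\om-\Pi_\om^{(k)}\|\le 2^{-m}$ and the ``close measures have close entropy'' principle, whereas you make the convolution structure $\eta^{(\om)}=\nu_k*\mu_\om$ explicit and prove directly that convolving with a measure supported in an interval of length $\le 2^{-m}$ changes $H(\cdot,\Dk_m)$ by $O(1)$.
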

\begin{proof}
Let $k:= |\om \wedge \tau|$ be large enough to have $\diam(f_u^{(\om \wedge \tau)}(B(0,R))) < 2^{-m}$ for every $u \in \X^{(\om \wedge \tau)}_k$, where $R$ is defined in \eqref{eq:R def}. As the systems $\Phi^{(i)}$ are uniformly contracting, this will hold provided $k \geq O(m)$ (uniformly in $\om, \tau$). Let $\nu^{(\om, k)}, \nu^{(\tau, k)}$ be, respectively, images of measures $\overline{\eta}^{(\om)}, \overline{\tau}^{(\om)}$ via the $k$-truncated coding maps $\Pi^{(k)}_\om, \Pi^{(k)}_\tau$ defined in \eqref{eq:truncated pi}. As $\om$ and $\tau$ agree on the first $k$ coordinates, we have $\nu^{(\om, k)} = \nu^{(\tau, k)}$, thus
\begin{equation}\label{eq:truncated entropies}  H_m(\pi_W \nu^{(\om, k)}) = H_m(\pi_W \nu^{(\tau, k)}) \text{ for every } W \in \R\PP^1.
\end{equation}
On the other hand, we have $\| \Pi_\om(u) -  \Pi^{(n)}_\om (u)\| \leq 2^{-m}$ for every $u \in \X^{\pom}_\infty$ and $\| \Pi_\tau(u) -  \Pi^{(n)}_\tau (u)\| < 2^{-m}$ for every $u \in \X^{(\tau)}_\infty$, 
so (similarly to \cite[Lemma 3.2.3]{HRd}),
\[ \sup \limits_{W \in \R\PP^1} \left| H_m(\pi_W \eta^{(\om)}) - H_m(\pi_W\nu^{(\om, k)}) \right|, \sup \limits_{W \in \R\PP^1} \left| H_m(\pi_W \eta^{(\tau)}) - H_m(\pi_W\nu^{(\tau, k)}) \right| \leq O\left(\frac{1}{m}\right) .\]
Combining this with \eqref{eq:truncated entropies} finishes the proof.
\end{proof}

	We will also need the following  basic lemma on the effect of shifting $\om$ on the entropy:
	
	\begin{lem}\label{lem:shifted entropy comp}
		Fix a model $\Sigma$. Then for every $\om \in \Om, j,m \in \N$,
		\[
		\sup \limits_{W \leq \R^2} \left| H_m(\pi_W \eta^{\pom}) - H_m(\pi_W \eta^{(T^j \om)}) \right| \leq O(j/m).
		\]
	\end{lem}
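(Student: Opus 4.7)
The plan is to iterate the dynamic self-similarity relation \eqref{eq:dyn self-sim} $j$ times, expressing $\eta^{(\om)}$ as a finite convex combination of affine images of $\eta^{(T^j\om)}$, and then to control the entropy of the resulting mixture using the standard concavity/mixing inequalities together with the scaling formula \eqref{eq:entropy bounded similarity}. This is the same mixing-and-scaling template used in the proofs of Lemmas \ref{lem:om cont} and \ref{lem:V cont}, and most of the work is bookkeeping.

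Concretely, I would start from
\[
\eta^{(\om)} = \sum_{u\in\X_j^{(\om)}} p_u^{(\om)}\, f_u^{(\om)}\eta^{(T^j\om)},
\]
apply $\pi_W$ to both sides, and combine the concavity of $-x\log x$ (which gives a lower bound for the entropy of a mixture) with the matching upper bound $H(\sum p_i\mu_i,\Dk_m)\le\sum p_i H(\mu_i,\Dk_m)+H(p)$. Since $|\X_j^{(\om)}|\le k_{\max}^{j}$, the weight entropy satisfies $H(p^{(\om)})\le\log|\X_j^{(\om)}|=O(j)$, so
\[
\Big|H(\pi_W\eta^{(\om)},\Dk_m)-\sum_{u} p_u^{(\om)}\,H(\pi_W f_u^{(\om)}\eta^{(T^j\om)},\Dk_m)\Big|\le O(j).
\]
Next, each $f_u^{(\om)}$ is a similarity with ratio $r_u=\prod_{k=1}^{j}r_{\om_k}\in[r_{\min}^{j},r_{\max}^{j}]$, i.e.\ $r_u=\Theta(2^{-k_u})$ with $k_u=O(j)$. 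Applying \eqref{eq:entropy bounded similarity} in one dimension (on the line $W$) to the similarity induced by $\pi_W\circ f_u^{(\om)}$, one obtains
\[
H(\pi_W f_u^{(\om)}\eta^{(T^j\om)},\Dk_m)=H(\pi_W\eta^{(T^j\om)},\Dk_{m-k_u})+O(1),
\]
and the elementary refinement bound $|H(\mu,\Dk_{m-k_u})-H(\mu,\Dk_m)|\le k_u+O(1)=O(j)$ brings each summand to within $O(j)$ of $H(\pi_W\eta^{(T^j\om)},\Dk_m)$. Averaging over $u$ with weights $p_u^{(\om)}$ and combining with the mixture error then yields $|H(\pi_W\eta^{(\om)},\Dk_m)-H(\pi_W\eta^{(T^j\om)},\Dk_m)|=O(j)$; dividing by $m$ gives the claimed bound uniformly in $W$.

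The one non-routine point — and what I expect to be the main obstacle — is the application of \eqref{eq:entropy bounded similarity} to $\pi_W\circ f_u^{(\om)}$: strictly speaking this composition is not simply a rescaled copy of $\pi_W$, because multiplication by $\lam_u$ also rotates by $\arg(\lam_u)$, so the composition factors as scaling by $r_u$ followed by projection onto the rotated line $V(u)=e^{-i\arg\lam_u}W$ rather than onto $W$ itself. Absorbing this rotation is the delicate step. One natural way to handle it is to pull the argument back through $f_u^{(\om)}$ before projecting — writing $H(\pi_W f_u^{(\om)}\mu,\Dk_m)=H(\mu,(\pi_W f_u^{(\om)})^{-1}\Dk_m)$ and observing that, since $f_u^{(\om)}$ is an $\R^2$-similarity with ratio $r_u$, the pulled-back partition is an $O(1)$-bounded-distortion refinement of $\pi_W^{-1}\Dk_{m-k_u}$; this bypasses the need to identify $V(u)$ with $W$ directly. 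Everything else reduces to routine manipulations of Shannon entropy.
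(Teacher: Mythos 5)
Your overall scheme---iterating \eqref{eq:dyn self-sim}, controlling the resulting mixture by concavity together with the almost-convexity bound $H(\sum_u p_u\mu_u,\Dk_m)\le\sum_u p_uH(\mu_u,\Dk_m)+H(p)$ with $H(p)=O(j)$, and rescaling each piece via \eqref{eq:entropy bounded similarity}---is exactly the argument the paper gives, and you are right that the only non-routine point is what happens to the direction $W$ under $\pi_W\circ f_u^{\pom}$. But your proposed way of absorbing the rotation does not work. In coordinates, $\pi_W\circ f_u^{\pom}$ is $x\mapsto r_u\,\pi_{V(u)}(x)+c_u$ with $V(u)=e^{-i\arg\lam_u}W$, so the pulled-back partition $(\pi_W f_u^{\pom})^{-1}\Dk_m$ consists of slabs orthogonal to $V(u)$ of width about $2^{-(m-k_u)}$. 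When $V(u)\neq W$ this is \emph{not} a bounded-multiplicity refinement of $\pi_W^{-1}\Dk_{m-k_u}$: inside $\supp\eta^{(T^j\om)}\subset B(0,R)$ a single slab orthogonal to $V(u)$ meets $\Theta_R(2^{m-k_u})$ slabs orthogonal to $W$, so \eqref{eq:intersecting partitions} only yields an error of order $1$ after dividing by $m$, not $O(j/m)$. What the computation genuinely gives (via \eqref{eq:entpro} and \eqref{eq:entropy bounded similarity}) is $H_m(\pi_W f_u^{\pom}\eta^{(T^j\om)})=H_m(\pi_{V(u)}\eta^{(T^j\om)})+O(j/m)$: the rotated direction cannot be bypassed.

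Moreover, this is not a repairable technicality for a fixed one-dimensional $W$: the asserted estimate fails whenever $\lam_{\om_1}\cdots\lam_{\om_j}\notin\R$. Take $I=\{0,1\}$, $\Phi^{(0)}=\{z/2,\ z/2+1/2\}$, $\Phi^{(1)}=\{\tfrac{i}{2}z,\ \tfrac{i}{2}z+1\}$ with equal weights, $\om=(1,0,0,\dots)$ and $j=1$: then $\eta^{(T\om)}$ is Lebesgue measure on $[0,1]$ while $\eta^{\pom}$ is carried by two vertical segments, so for $W=\R$ one gets $H_m(\pi_W\eta^{(T\om)})=1$ but $H_m(\pi_W\eta^{\pom})=1/m$. (The paper's own one-line proof asserts $|H_m(\pi_W f_u^{\pom}\eta^{(T^j\om)})-H_m(\pi_W\eta^{(T^j\om)})|\le O(j/m)$ and thus elides the same rotation; the statement is correct as written for $W\in\{\{0\},\R^2\}$, and for lines it holds in the rotation-matched form $\bigl|H_m(\pi_W\eta^{\pom})-H_m\bigl(\pi_{\vphi_{\om_j}^{-1}\cdots\vphi_{\om_1}^{-1}W}\eta^{(T^j\om)}\bigr)\bigr|\le O(j/m)$, hence also after taking $\inf_W$ or $\sup_W$ of each term separately---and these are the only forms actually used later, in Lemma \ref{lem:n' entropy dim}, Lemma \ref{lem: frostman entropy bound} and in \eqref{eq:average comp entropy}.) So to complete your argument you should prove and use the rotation-matched version rather than attempt to identify $V(u)$ with $W$.
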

	
	\begin{proof}
		Fix $W \leq \R^2$. By the self-similarity relation \eqref{eq:dyn self-sim},
		\begin{equation}\label{eq:self-similarity} \pi_W \eta^{\pom} = \sum \limits_{u \in \X^{\pom}_{j}} p_u^{\pom}  \pi_W f_u^{\pom} \eta^{(T^j \om)}.
		\end{equation}
		As each $f_u^{\pom}$ above is a similarity contracting by at most $r_{\min}^{j} = O(2^{-cj})$ for some $c = c(\Sigma)$, we get by \eqref{eq:intersecting partitions} that
		\[ \left| H_{m}(\pi_W f_u^{\pom} \eta^{(T^j \om)}) - H_{m}( \pi_W \eta^{(T^j \om)}) \right| \leq O\left( j/m \right). \]
		Combining this with \eqref{eq:self-similarity} and the convexity bound for entropy (see e.g.\ \cite[eq.\ (20)]{SSS}) gives
		\[
		\begin{split} \Big| H_{m} & (\pi_W \eta^{\pom} )  -  H_{m}(\pi_W \eta^{(T^j \om)}) \Big|\\
		& \leq \left| H_m \left( \sum \limits_{u \in \X^{\pom}_{j}} p_u^{\pom} \pi_W f_u^{\pom} \eta^{(T^j \om)} \right) - \sum \limits_{u \in \X^{\pom}_{j}} p_u^{\pom}  H_m \left(\pi_W f_u^{\pom} \eta^{(T^j \om)} \right) \right|  + O(j/m)\\
		&\leq \frac{1}{m}H\left(  \left(p_u^{\pom}\right)_{u \in \X^{\pom}_{j}} \right) + O(j/m) \\
			&  \leq \frac{\log \left( \max\{k_i : i \in I \} \right)^{j}}{m} + O(j/m) =O(j/m).
		\end{split}
	\]
	\end{proof}

\begin{lem}\label{lem:Delta inequality} For every $\om \in \Om,\ n,j \in \N$ inequality $\Delta^{(\om)}_{n + j} \leq \Delta^{(\om)}_{n}$ holds provided $|\X^{(\om)}_n| > 1$.
\end{lem}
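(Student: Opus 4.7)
The plan is to exploit the fact that every composition $f^{(\om)}_u$ of length $n$ has the same linear part, namely the scalar $\lam_{\om_1}\cdots\lam_{\om_n}$ which depends only on $\om$ and $n$, not on $u \in \X^{(\om)}_n$. This follows because the IFS is homogeneous at each level $i$ (each map in $\Phi^{(i)}$ has the same linear part $\lam_i$) and because multiplication of complex numbers commutes. Consequently, for any two $u, v \in \X^{(\om)}_n$, the two maps $f^{(\om)}_u$ and $f^{(\om)}_v$ differ by the constant $t^{(\om)}_u - t^{(\om)}_v$, and appending the same suffix to both words preserves the distance between $f^{(\om)}_u(0)$ and $f^{(\om)}_v(0)$.

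More concretely, since $|\X^{(\om)}_n| > 1$, pick distinct $u, v \in \X^{(\om)}_n$ attaining the minimum, so that $d^{(\om)}(u,v) = \Delta^{(\om)}_n$. Since each $k_i \geq 1$, we may pick any word $w \in \X^{(T^n \om)}_j$. Then $uw, vw$ are distinct elements of $\X^{(\om)}_{n+j}$, and by the multiplicative structure of the compositions,
\[
f^{(\om)}_{uw}(0) - f^{(\om)}_{vw}(0) = f^{(\om)}_u\bigl(z\bigr) - f^{(\om)}_v\bigl(z\bigr) \quad \text{with } z = f^{(T^n\om)}_w(0).
\]
Writing $f^{(\om)}_u(z) = \lam^{(\om)}_u z + t^{(\om)}_u$ and similarly for $v$, and using that $\lam^{(\om)}_u = \lam^{(\om)}_v = \lam_{\om_1}\cdots \lam_{\om_n}$, the linear terms cancel and one obtains
\[
\bigl|f^{(\om)}_{uw}(0) - f^{(\om)}_{vw}(0)\bigr| = \bigl|t^{(\om)}_u - t^{(\om)}_v\bigr| = \bigl|f^{(\om)}_u(0) - f^{(\om)}_v(0)\bigr| = d^{(\om)}(u,v).
\]
Therefore $\Delta^{(\om)}_{n+j} \leq d^{(\om)}(uw, vw) = \Delta^{(\om)}_n$, as desired.

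There is no real obstacle here; the statement is a formal consequence of the homogeneity of each individual IFS $\Phi^{(i)}$ combined with the commutativity of scalar multiplication. The only point worth flagging is the need to verify that $\X^{(T^n\om)}_j$ is nonempty, which holds since every $k_i \geq 1$ by construction.
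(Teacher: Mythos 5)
Your proof is correct and follows essentially the same route as the paper: pick a minimizing pair $u \neq v$ in $\X^{(\om)}_n$, append an arbitrary common suffix $w \in \X^{(T^n\om)}_j$, and use that $f^{(\om)}_u$ and $f^{(\om)}_v$ share the linear part $\lam_{\om_1}\cdots\lam_{\om_n}$ so the distance $|f^{(\om)}_{uw}(0)-f^{(\om)}_{vw}(0)|$ equals $\Delta^{(\om)}_n$. (The appeal to commutativity is not even needed here, since the two length-$n$ words traverse the same sequence of levels $\om_1,\dots,\om_n$ and each $\Phi^{(i)}$ is homogeneous.)
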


\begin{proof}
	Take $u,v \in \X^{(\om)}_n$ such that $u \neq v$ and $\Delta^{(\om)}_{n} = |f^{(\om)}_u(0) - f^{(\om)}_v(0)|$. Take any $w \in \X^{(T^n \om)}_j$. Then $uw,vw \in \X^{(\om)}_{n+j}$ and
	\[\begin{split}
		\Delta^{(\om)}_{n+j} &  \leq |f^{(\omega)}_{uw}(0) - f^{(\omega)}_{vw}(0)| =  | f^{(\omega)}_{u} \circ f^{(T^n\omega)}_w(0) - f^{(\omega)}_{v} \circ f^{(T^n\omega)}_w(0)|  \\ & = |\lam_{\om_1}\cdots \lam_{\om_n} f^{(T^j\omega)}_w(0) + t^{(\om)}_u - \lam_{\om_1}\cdots \lam_{\om_n} f^{(T^j\omega)}_w(0) - t^{(\om)}_v| \\
		&  = |t^{(\om)}_u - t^{(\om)}_v| =  |f^{(\omega)}_{u}(0) - f^{(\omega)}_{v}(0)| =  \Delta^{(\om)}_{n}.
	\end{split}\]
\end{proof}

Given a model $\Sigma = \left( (\Phi^{(i)})_{i \in I}, (p_i)_{i \in I}, \PP \right)$, let $G_\Sigma \subset \R\PP^1$ be its \textit{rotation group}, i.e. the closed group generated by the set of rotations $\{ \vphi_i : i \in I,\ \PP(\om_1 = i) > 0 \} \subset  \s^1= \R\PP^1$. Note that if $\vphi_i = e^{2\pi i \rho}$ with $\rho \notin \Q$ for some $i \in I$, then $G_\Sigma = \R\PP^1$, and $G_\Sigma$ is finite otherwise. Let $ \fm$ be the Haar (probability) measure on $G_{\Sigma}$ (so $ \fm$ is the Lebesgue measure if $G_\Sigma = \R\PP^1$ and a uniform measure on the finite set $G_\Sigma$ otherwise). Moreover, for each $W \in \R\PP^1$, define a Borel probability measure $ \fm^{(W)}$ on $G_{\Sigma}W$ by setting
\begin{equation}\label{eq:xi W}  \fm^{(W)} = W \fm.
\end{equation}
Note that under Assumption \ref{assmp:non-deg and rotation}\ref{it:non-real}, measures $ \fm^{(W)}$ are not supported on a single atom. We will make use of the following ergodic property:
\begin{lem}\label{lem:skew product ergodic}
	Let $\PP$ be $T$-invariant and ergodic. Then for $\PP$-a.e.\ $\om \in \Om$,
	\begin{equation}\label{eq:skew product ergodic} \frac{1}{n}\sum \limits_{k=0}^{n-1} \delta_{\left(T^k\om,  \vphi^{-1}_{\om_k} \cdots \vphi^{-1}_{\om_1}  W \right)} \stackrel{w^{*}}{\longrightarrow} \PP \otimes  \fm^{(W)},\ \text{ uniformly in } W\in\R\PP^1.
	\end{equation} 
\end{lem}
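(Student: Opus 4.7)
The plan is to rewrite the sum as a Birkhoff average for the skew product
\[
\hat S : \Om \times \R\PP^1 \to \Om \times \R\PP^1,\qquad \hat S(\om, W) = (T\om, \vphi_{\om_1}^{-1}W),
\]
since a direct induction yields $\hat S^k(\om, W) = (T^k\om, \vphi_{\om_k}^{-1}\cdots\vphi_{\om_1}^{-1}W)$, so the displayed sum equals $\frac{1}{n}\sum_{k=0}^{n-1} \delta_{\hat S^k(\om, W)}$. First I would verify, for each fixed $W_0 \in \R\PP^1$, that $\PP \otimes \xi^{(W_0)}$ is $\hat S$-invariant on the forward-invariant set $\Om \times G_\Sigma W_0$: $T$-invariance of $\PP$ handles the base coordinate, while the translation invariance of the Haar measure $\xi$ on $G_\Sigma$, combined with the identification $\xi^{(W_0)} = W_0\xi$, handles the fiber via the substitution $h = \vphi_{\om_1}^{-1}g$ inside the integral. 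I would also record that the map $W_0 \mapsto \PP \otimes \xi^{(W_0)}$ is weak-$*$ continuous, which is immediate from continuity of the $G_\Sigma$-action on $\R\PP^1$.

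The core analytic step is to establish ergodicity of $\hat S$ with respect to $\PP \otimes \xi^{(W_0)}$. Identifying $G_\Sigma W_0$ with $G_\Sigma$ via $gW_0 \leftrightarrow g$ turns $\hat S$ into the standard compact abelian group extension $(\om, g) \mapsto (T\om, \vphi_{\om_1}^{-1}g)$ of $(\Om, T, \PP)$. Decomposing an $\hat S$-invariant $L^2$-function along characters of $G_\Sigma$, invariance reduces to the cocycle equation $f_\chi(T\om) = \chi(\vphi_{\om_1})f_\chi(\om)$; ergodicity of $\PP$ forces $|f_\chi|$ to be constant, and then for a non-trivial $\chi$ one excludes non-zero $L^2$ solutions using Assumption~\ref{assmp:non-deg and rotation}(ii). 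Granted ergodicity, Birkhoff's theorem yields, for each fixed $W_0$, convergence of the empirical measures to $\PP \otimes \xi^{(W_0)}$ for $\PP$-a.e.\ $\om$.

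To upgrade this to uniformity in $W$, I would apply an equicontinuity argument. For every $\om$ and $k$, the fiber map $W \mapsto \vphi_{\om_k}^{-1}\cdots\vphi_{\om_1}^{-1}W$ is an isometry of $\R\PP^1$; hence for every uniformly continuous $f \in C(\Om \times \R\PP^1)$ with modulus of continuity $\rho_f$, one has
\[
\left|\int f\, d\Bigl(\tfrac{1}{n}\sum_{k} \delta_{\hat S^k(\om, W)}\Bigr) - \int f\, d\Bigl(\tfrac{1}{n}\sum_{k} \delta_{\hat S^k(\om, W')}\Bigr)\right| \leq \rho_f(d(W, W')),
\]
uniformly in $n$ and $\om$. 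Choosing a countable dense sequence $\{W_m\} \subset \R\PP^1$ and intersecting the Birkhoff full-measure sets obtained for each $W_m$, one gets a single full-measure set of $\om$'s along which convergence holds simultaneously for every $W_m$; the equicontinuity above, together with weak-$*$ continuity of $W \mapsto \PP \otimes \xi^{(W)}$, then propagates this to uniform convergence in $W$.

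The principal obstacle is the ergodicity step in the general ergodic regime: the cocycle equation $f_\chi(T\om) = \chi(\vphi_{\om_1})f_\chi(\om)$ for a non-trivial $\chi$ can a priori admit non-zero $L^2$ solutions when the Koopman spectrum of $(T, \PP)$ contains an eigenvalue cohomologous to $\chi(\vphi_{\om_1})$. For Bernoulli $\PP$ this cannot happen, as the Bernoulli shift has only the trivial eigenvalue $1$ and hence the argument reduces to checking that $\chi(\vphi_{\om_1})$ is not a.s.\ constant, which follows from Assumption~\ref{assmp:non-deg and rotation}(ii); this is the case that suffices for Theorem~\ref{thm:main ac}. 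For the general ergodic case alluded to in Remark~\ref{rem:SSSgap}, one needs to combine Assumption~\ref{assmp:non-deg and rotation}(ii) with a finer spectral analysis of $T$ acting on $L^2(\PP)$ to exclude such resonances.
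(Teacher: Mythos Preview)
Your setup and your uniformity argument via the isometric fiber action are correct; the latter is essentially what the paper accomplishes by citing \cite[Lemma 3.3]{GSSY} together with a rotation-invariant metric on the space of probability measures on $\Om\times\R\PP^1$. The substantive divergence from the paper is in the ergodicity step.

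The paper does not decompose $L^2$ along characters of $G_\Sigma$. Instead it works on $\Om\times G_\Sigma$ and observes that the Haar measure $\xi$ is the \emph{unique} probability measure on $G_\Sigma$ satisfying the stationarity equation $\xi=\sum_{i\in I}\PP(\om_1=i)\,\vphi_i^{-1}\xi$ (this is where the definition of $G_\Sigma$ as the closed group generated by the $\vphi_i$ enters). It then invokes the theory of random transformations, \cite[Theorem 5.14 and Proposition 5.13]{VianaLec}, to pass from uniqueness of the stationary measure to ergodicity of $\PP\otimes\xi$ under the skew product. This route never confronts the coboundary equation $f_\chi\circ T=\chi(\vphi_{\om_1})f_\chi$ and thus sidesteps precisely the spectral obstruction you raise.

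As written, your proposal therefore has a genuine gap: the lemma is asserted for arbitrary ergodic $\PP$, but your Fourier argument is completed only in the Bernoulli case; for general ergodic $\PP$ you defer to an unspecified ``finer spectral analysis,'' which is not a proof. The missing idea is exactly what the paper supplies---replace the character-by-character coboundary analysis with the observation that $\xi$ is the unique stationary measure on $G_\Sigma$ and invoke the random dynamics machinery. (Your Bernoulli argument is fine and, as you note, suffices for Theorem~\ref{thm:main ac}.)
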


\begin{proof}
	Consider the skew-product $F : \Om \times G_\Sigma \to \Om \times G_\Sigma,\ F(\om, V) = (T \om, \vphi^{-1}_{\om_1}V)$. Note that $ \fm$ is a stationary measure for $F$, i.e. it satisfies
	\[	 \fm = \int \limits_{\Om} \vphi_{\om_1}^{-1} \fm d\PP(\om) = \sum \limits_{i \in I} \PP(\om_1 = i) \vphi^{-1}_i  \fm. \]
	Moreover, it is a standard observation that $ \fm$ is the unique probability measure on $G_\Sigma$ with this property.\footnote{If $G_\Sigma$ contains an irrational rotation, this can be verified e.g.\ using the Fourier coefficients of $ \fm$. Otherwise, $G_\Sigma$ is finite and one can use the fact that it is equal to the closed semigroup generated by the set $\{ \vphi_i: i \in I, \PP(\om_1 = i) > 0 \}$, see e.g.\ \cite[Theorem 1]{Wright}.} It is therefore an ergodic stationary measure for $F$ (in the sense of \cite[Chapter 5]{VianaLec}, see \cite[Theorem 5.14]{VianaLec}) and hence the product measure $\PP \otimes  \fm$ is ergodic (and invariant) for the the skew-product $F$ by \cite[Proposition 5.13]{VianaLec}. It follows now from \cite[Lemma 3.3]{GSSY} that for $\PP$-a.e.\ $\om \in \Om$ we have 
	\begin{equation}\label{eq:skew product ergodic xi}\frac{1}{n}\sum \limits_{k=0}^{n-1} \delta_{\left(T^k\om, \vphi^{-1}_{\om_k} \cdots \vphi^{-1}_{\om_1} V \right)} \stackrel{w^{*}}{\longrightarrow} \PP \otimes  \fm^{},\ \text{ uniformly in } V\in G_\Sigma.
	\end{equation}
	This extends to \eqref{eq:skew product ergodic}, by applying \eqref{eq:xi W} to \eqref{eq:skew product ergodic xi} together with the fact that the weak$^*$ topology on the set of Borel probability measures on $\Om \times \R\PP^1$ can be metrized by a metric which is rotation-invariant on the second coordinate.
\end{proof}

\subsection{Dynamical time change}

Let us introduce some useful notation. We can assume that $R = R(\Sigma)>0$ defined by \eqref{eq:R def} satisfies $2Rr_{\min} > 1/2$. For a given $\om \in \Om$ and $k = 0,1,2 \ldots$,  define $k' = k'(\om) = k'(\om, k)$ as the unique natural number satisfying
\begin{equation}\label{eq:prime def} 2R \prod \limits_{i=1}^{k'(\om)} r_{\om_i} \leq 2^{-k} \leq 2R \prod \limits_{i=1}^{k'(\om) - 1} r_{\om_i}.
\end{equation}
Note that $\diam \left( \supp \left( f^{(\om)}_u \eta^{(T^{k'} \om)} \right)  \right) \leq 2^{-k}$ for every $u \in \X^{(\om)}_{k'}$.  It follows easily from the definition that $n'(\om)$ is an ``almost additive cocycle'', i.e. there exists $c > 0$ depending only on the model, such that for every $\om \in \Om, n,k \in \N$, we have
\begin{equation}\label{eq:cocycle}
	|n'(\om) + k'(T^{n'}\om) - (n+k)'(\om)| \leq c.
\end{equation}
We will need the following simple lemma. 

\begin{lem}\label{lem:k k' comparison}
	There exists $C\geq 1$ depending only on the model, such that for every $\om \in \Om$ and $A \subset \N$, 
\begin{equation}\label{eq:freq general upper bound}
\limsup \limits_{n \to \infty} \frac{1}{n} \#\{ 0 \leq k < n: k'(\om) \in A  \} \leq C \limsup \limits_{n \to \infty} \frac{1}{n}  \#\{ 0 \leq k  < n: k \in A  \}.
\end{equation}
	Assume additionally that the model satisfies $r_{\mathrm{max}} \leq 1/2$.  Then $C$ can be taken such that for $n \geq 1$,
\begin{equation}\label{eq:freq half upper bound}
\#\{ 0 \leq k < n/C: k \in A  \} - C \leq \#\{ 0 \leq k < n: k'(\om) \in A  \}.
\end{equation}
\end{lem}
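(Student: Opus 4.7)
My plan is to treat $\phi(k):=k'(\om,k)$, for a fixed $\om\in\Om$, as a non-decreasing step function $\N\to\N$ and analyze it via elementary bookkeeping of the defining inequality \eqref{eq:prime def}. Taking base-$2$ logarithms there and setting $\alpha:=1/(-\log_2 r_{\mathrm{min}})$ and $\beta:=1/(-\log_2 r_{\mathrm{max}})$ (so $0<\alpha\le\beta$), one obtains the two-sided linear bound
\[
\alpha(k+\log_2(2R))\ \le\ \phi(k)\ \le\ \beta(k+\log_2(2R))+1.
\]
Because $2^{-k}$ strictly decreases with $k$, the function $\phi$ is non-decreasing, so each preimage $\phi^{-1}(m)$ is an interval of integers. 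Reading \eqref{eq:prime def} in the other direction, $\phi(k)=m$ iff $2^{-k}\in[2R\prod_{i=1}^m r_{\om_i},\,2R\prod_{i=1}^{m-1} r_{\om_i}]$, which is an interval of $\log_2$-length $-\log_2 r_{\om_m}\le 1/\alpha$; hence $\#\phi^{-1}(m)\le C_0$ for some $C_0=C_0(\Sigma)$.

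For the upper bound \eqref{eq:freq general upper bound} I combine the preimage bound with the range bound on $\phi$: decomposing
\[
\#\{0\le k<n:\phi(k)\in A\}\ =\ \sum_{m\in A\cap[0,\phi(n-1)]}\#(\phi^{-1}(m)\cap[0,n))\ \le\ C_0\cdot \#(A\cap[0,\beta n+O(1)]),
\]
then dividing by $n$ and passing to $\limsup$ yields \eqref{eq:freq general upper bound} for any $C$ majorizing $C_0\beta$.

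For \eqref{eq:freq half upper bound}, the extra hypothesis $r_{\mathrm{max}}\le 1/2$ is decisive: given $\phi(k)=m$, the next product satisfies $2R\prod_{i=1}^{m+1} r_{\om_i}\le r_{\mathrm{max}}\cdot 2^{-k}\le 2^{-(k+1)}$, so one extra factor always suffices to cross the next dyadic threshold. Consequently $\phi(k+1)-\phi(k)\in\{0,1\}$ for every $k$, the image $\phi(\{0,\dots,n-1\})$ is a block of consecutive integers, and it contains $[\phi(0),\alpha n-O(1)]$ by the lower bound on $\phi(n-1)$. Every integer in this image is attained by some $k<n$, so restricting to $A$ gives
\[
\#\{0\le k<n:\phi(k)\in A\}\ \ge\ \#(A\cap[0,\alpha n-O(1)])-O(1),
\]
which is \eqref{eq:freq half upper bound} for any $C$ exceeding both $1/\alpha$ and the $O(1)$ loss (for small $n$ the LHS $\#(A\cap[0,n/C))-C$ is already non-positive, so the bound is vacuous there).

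The whole argument is elementary once $\phi$ is recognized as a monotone function with linearly growing range and uniformly small level sets, so I do not foresee any substantive obstacle; the only mild care needed is to verify that all additive $O(1)$ constants and the multiplicative $C$ depend only on $r_{\mathrm{min}},r_{\mathrm{max}},R$, which is automatic from the construction.
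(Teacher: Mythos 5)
Your proof is correct and follows essentially the same route as the paper: you view $k\mapsto k'(\om,k)$ as a non-decreasing function with uniformly bounded fibers and linear growth (the paper's properties in its proof of the lemma), deduce \eqref{eq:freq general upper bound} from the fiber bound, and use $r_{\mathrm{max}}\le 1/2$ to get increments in $\{0,1\}$, i.e.\ surjectivity onto a ray, for \eqref{eq:freq half upper bound}. The only difference is cosmetic: you make the constants explicit via $\log_2 r_{\min}$, $\log_2 r_{\max}$ where the paper works with abstract constants $B,M$.
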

\begin{proof}
For integers $a \leq b$ denote $[a,b] = \{a, a+1, \ldots, b\}$. Fix $\om \in \Om$ and define $g : \N \to \N$ by $g(k) = k'(\om)$ and note that it is a non-decreasing map. Moreover, there exist integers $B$ and $M$ depending only on the model, such that
\begin{equation}\label{eq:g properties}
g(0) \leq M,\ g([0,n]) \subset [0, Bn + M] \text{ and } \#g^{-1}(\{n\}) \leq B \text{ for every } n \geq 0.
\end{equation}
The first two properties follow from $r_{\max} < 1$ (and the fact that $R$ depends only on the model $\Sigma$) and the last one from $r_{\min} > 0$. Therefore
\[ \{ 0 \leq k < n: k'(\om) \in A  \} = [0,n-1] \cap g^{-1}\left( A \cap g\left( [0, n-1] \right) \right) \subset g^{-1}\left( A \cap [0, B(n-1) + M] \right),\]
hence
\[ \# \{ 0 \leq k < n: k'(\om) \in A  \} \leq B \# \{ 0 \leq k < Bn: k \in A  \} + B (M+1).\]
This yields \eqref{eq:freq general upper bound}. If we assume that $r_{\mathrm{max}} \leq 1/2$, then $g$ has an additional property: $\# g^{-1}(\{k\}) \geq 1$ for every $k \in g(\N) = [g(0), \infty)$. Combining this with \eqref{eq:g properties} gives
\[
\begin{split} \# \{ 0 \leq k < n: k'(\om) \in A  \}  & = \!\!\!\sum \limits_{j \in [g(0), g(n-1)] \cap A}\!\!\!
 \#\left(g^{-1}(\{ j\}) \cap [0, n-1] \right) \geq \!\!\!\sum \limits_{j \in [g(0), g(n-1) - 1] \cap A} \!\!\!\#\left(g^{-1}(\{ j\}) \right)\\[1.1ex]
& \geq \# \left( [g(0), g(n-1) - 1] \cap A \right) \geq \# \left( [0, g(n-1)] \cap A \right) - (g(0) + 1) \\
& \geq \# \left( \left[0, \frac{n-1}{B}\right] \cap A \right) - (M + 1).
\end{split}
\]
This proves \eqref{eq:freq half upper bound}.
\end{proof}

\section{Excluding saturation on subspaces of positive dimension}

In this section we make the main preparations which will allow us to apply Hochman's inverse theorem (Theorem \ref{thm: inverse convolution}) in order to prove Theorem \ref{thm:main dim}. To avoid repetition of the assumptions:\\

\textbf{For the rest of the paper, we assume that $\Sigma$ is a model satisfying Assumption \ref{assmp:non-deg and rotation}.}\\

Let us first discuss the scheme of the proof of Theorem \ref{thm:main dim}. It follows the strategy of \cite{SSS}, which relies on an application of Theorem \ref{thm: inverse convolution} to the family of random measures $\eta^{(\om)}$  via the following decomposition, which is a direct consequence of \eqref{eq:dyn self-sim}:
\begin{equation}\label{eq:convolution decomposition} \eta^{(\om)} = \nu^{(\om, n'(\om))} * \tau^{(\om, n'(\om))},
\end{equation}
where for $\om\in \Om,\ n \in \N$, we denote by $\nu^{(\om, n)}$ the image of the measure $\overline{\eta}^{(\om)}$ via the $n$-truncated coding map $\Pi^{(n)}_\om$ defined in \eqref{eq:truncated pi}, i.e. $\nu^{(\om, n)} =  \sum \limits_{u \in \X^{(\om)}_{n}} p^{(\om)}_u \delta_{f^{(\om)}_u(0)}$, and
\[ \tau^{(\om, n)} := \lam_{\om_1} \cdots \lam_{\om_n}\eta^{(T^n \om)}. \]
The idea is to exclude the possibility that subspaces $V_i$ produced by Theorem \ref{thm: inverse convolution} satisfy $\dim V_i > 0$ with positive frequency. This will give an upper bound on the entropy of $\nu^{(\om, n'(\om))}$ and yield information on super-exponential concentration of its atoms in scale $2^{-n}$ if the dimension drop occurs. As Theorem \ref{thm:main dim} holds trivially if $\alpha := \dim(\Sigma) = 2$, we can assume $\alpha < 2$. Then, the case $\dim V_i = 2$ will be excluded using this assumption together with the uniform entropy dimension properties of $\eta^{(T^{n'} \om)}$ (Proposition \ref{prop:unif ent dim}) --- this portion of the proof is similar to the one-dimensional case of \cite{SSS}, with some technical adjustments. The main new difficulty in the planar setting is excluding the possibility that
$\dim V_i = 1$ with positive frequency. If $\alpha < 1$, this again follows from the uniform entropy dimension property, hence the remaining case is $\alpha \geq 1$. To deal with it, we will give a lower bound on the entropy of projections of the components  $\eta^{(T^{n'} \om), x, i}$ in all directions. The argument is similar to the proof of \cite[Lemma 3.7]{HR}, but we allow the reducible case (with all $\vphi_i \in \pi\Q$). The rough idea is to use the assumptions that the rotation group $G_\Sigma$ is non-trivial and $\alpha < 2$ to show that with positive probability projections $\pi_W \eta^{(T^{n'}\om), x, i}$  have entropy dimension strictly greater than the obvious lower bound $\alpha - 1$ (uniformly in $W$, for small enough scales). This will exclude saturation on one-dimensional subspaces. For technical reasons, during most of the proof we will assume $r_{\max} \leq 1/2$. In the final step, we will show how to reduce the general case to the one with this additional assumption.

Let us now state more precisely the result on excluding the saturation mentioned above. It will be convenient to introduce the following notation.

\begin{defn} \label{def:satdim}
	For $\mu \in \mP(\R^d), \eps>0, m,n \in \N$, we define the {\em saturation dimension} by
	\[ \satdim(\mu, \eps, m, k) : = \max \left\{  \dim V : V \leq \R^d \text{ and } \bP_{i = k}\left( \mu^{x, i} \text{ is } (V, \eps, m)\text{-saturated} \right) \geq 1-\eps \right\}. \]
\end{defn}

The saturation dimension gives the maximal dimension of subspaces on which most of the rescaled components are saturated. It should not be seen as a usual notion of a dimension of a measure and we introduce it only for a notational convenience. The goal of this section is to prove

\begin{prop}\label{prop:sat-dim 0}
	Assume $\alpha < 2$ and $r_{\mathrm{max}} \leq 1/2$. Then there exists $\eps_0>0$ such that for every $\eps \in (0, \eps_0)$ and $m>m(\eps)$
	\[ \lim \limits_{n \to \infty}\ \PP \left( \left\{ \om \in \Om :  \bP_{0 \leq k \leq n} \left( \satdim(\eta^{(T^{n'}\om)}, \eps, m, k) = 0 \right)  \geq 1 - \eps \right\} \right) = 1. \]
\end{prop}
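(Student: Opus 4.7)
My plan is to exclude, separately and with high $\PP$-probability on $\om$, the existence of a positive-dimensional $V \leq \R^2$ with $\bP_{i=k}$-probability $\geq 1-\eps$ of $(V,\eps,m)$-saturation for $\eta^{(T^{n'}\om),x,k}$, for a positive $\bP_{0\le k\le n}$-fraction of $k$'s. The common starting point is a \emph{uniform entropy upper bound} $H_m(\eta^{(T^{n'}\om),x,k})\le \alpha+\delta$, valid for a $(1-\eps/10)$-fraction of $(k,x)$ once $n$ and $m$ are large (this follows from Proposition \ref{prop:exact dimension} together with the uniform entropy dimension property, Proposition \ref{prop:unif ent dim}). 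Since $(\R^2,\eps,m)$-saturation forces $H_m\ge 2-\eps$, and $(V,\eps,m)$-saturation with $\dim V=1$ forces $H_m\ge 1-\eps$, this upper bound already excludes $V=\R^2$ (for $\eps<(2-\alpha)/4$), and also excludes $\dim V=1$ whenever $\alpha<1$.

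The nontrivial case is $\dim V=1$ with $1\le\alpha<2$, and this is where the rotation hypothesis \ref{assmp:non-deg and rotation}\ref{it:non-real} enters. The key geometric input is that by the dynamic self-similarity \eqref{eq:dyn self-sim} and the approximation of dyadic $k$-cells by cylinders of length $k'=k'(T^{n'}\om,k)$ (with boundary error costing only $O(1/m)$ in entropy via \eqref{eq:entropy bounded similarity}, \eqref{eq:conv entropy bound} and Lemma \ref{lem:om cont}), each rescaled $k$-component $\eta^{(T^{n'}\om),x,k}$ can be identified in $H_m$, up to $O(1/m)$, with a rotation $\psi_k:=\vphi_{\om_{n'+1}}\cdots\vphi_{\om_{n'+k'}}$ applied to $\eta^{(T^{n'+k'}\om)}$. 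Consequently, for every $W\in\R\PP^1$,
\[
H_m(\pi_W\,\eta^{(T^{n'}\om),x,k})=H_m\!\bigl(\pi_{\psi_k^{-1}W}\,\eta^{(T^{n'+k'}\om)}\bigr)+O(1/m).
\]
Suppose, toward a contradiction, that with $\bP_{0\le k\le n}$-probability $\ge\eps$ some $1$-dimensional $V_k$ has most $k$-components $(V_k,\eps,m)$-saturated. Using compactness of $\R\PP^1$ and the continuity of projected entropies (Lemma \ref{lem:V cont}), I would first replace the varying $V_k$ by a single fixed $V_0\in\R\PP^1$ (shrinking $\eps$ by a constant). The saturation inequality together with the upper bound then yields
\[
H_m\!\bigl(\pi_{\psi_k^{-1}V_0^\perp}\,\eta^{(T^{n'+k'}\om)}\bigr)\le \alpha-1+O(\eps+1/m)
\]
on a positive-frequency set of $k$'s.

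To turn this into a contradiction I would apply the uniform equidistribution of Lemma \ref{lem:skew product ergodic} with $W=V_0^\perp$, after using Lemma \ref{lem:k k' comparison} to pass from positive $k$-density to positive $k'$-density (this is the sole purpose of the standing assumption $r_{\max}\le 1/2$). Passing to the limit, the displayed inequality transfers to $H_m(\pi_{W'}\eta^{(\tau)})\le \alpha-1+O(\eps+1/m)$ on a set of positive $(\PP\otimes\xi^{(V_0^\perp)})$-measure. Combining with $H_m(\eta^{(\tau)})\le \alpha+O(\eps)$, this exhibits $\eta^{(\tau)}$ as being $({W'}^\perp,O(\eps+1/m),m)$-saturated for $\xi^{(V_0^\perp)}$-many $W'$ and a positive-$\PP$ set of $\tau$. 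Because $\xi^{(V_0^\perp)}$ is non-atomic in the irrational case and has support of cardinality $\geq 2$ in the rational case (Assumption \ref{assmp:non-deg and rotation}\ref{it:non-real}), an additional step --- invoking $\PP$-ergodicity and the one-step self-similarity \eqref{eq: one step dyn self-sim} to transport saturation across shifts and the rotations $\vphi_{\tau_j}$ --- produces, for some $\tau$, two saturation directions separated by a definite angle $\delta>0$. Lemma \ref{lem: saturation on sum} then upgrades this to $(\R^2,O(\eps+\log(1/\delta)/m),m)$-saturation of $\eta^{(\tau)}$, forcing $H_m(\eta^{(\tau)})\ge 2-O(\eps+\log(1/\delta)/m)$ and contradicting $\alpha<2$ for $\eps$ small and $m$ large.

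The principal obstacle will be the careful bookkeeping in this \emph{transfer of saturation}: dyadic $k$-cells do not align with cylinders of length $k'$, and the map $(\om,W)\mapsto H_m(\pi_W\eta^{(\om)})$ is only approximately continuous. Controlling boundary contributions, making the error terms uniform in $W$ (via Lemmas \ref{lem:V cont} and \ref{lem:om cont}), and --- especially in the reducible case where $G_\Sigma$ is a finite group of small order --- extracting a second saturation direction from the single direction produced by equidistribution constitute the core technical content. The assumption $r_{\max}\le 1/2$ is used exclusively through Lemma \ref{lem:k k' comparison}; as indicated in the paper, the general case reduces to this one by passing to a suitable iterate of the IFS.
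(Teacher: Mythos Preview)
Your proposal shares the paper's overall architecture: use uniform entropy dimension (Proposition \ref{prop:unif ent dim}) to rule out $V=\R^2$ and, when $\alpha<1$, also $\dim V=1$; then, for $1\le\alpha<2$, exploit the rotation structure together with Lemma \ref{lem: saturation on sum} to contradict $\alpha<2$. The key calculation you describe (pass from projection entropy of components to projection entropy of $\eta^{(T^{(n+k)'}\om)}$ in a rotated direction, via concavity) is exactly the content of \eqref{eq:average comp entropy} in the proof of Lemma \ref{lem:proj entropy improved comp}; this step is correct as a one-sided inequality (your claimed ``$=$'' is neither needed nor true without invoking the Frostman-on-tubes machinery).

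However, your organization inverts the paper's, and this creates a genuine gap at the step you yourself flag as the ``principal obstacle.'' You arrive at a set of $(\tau,W')$ of \emph{positive} $\PP\otimes\xi^{(V_0^\perp)}$-measure on which $H_m(\pi_{W'}\eta^{(\tau)})\le\alpha-1+O(\eps)$, and then want to exhibit, for a single $\tau$, \emph{two} separated directions with this property. Positive measure of the bad fiber $B_\tau$ is not enough: in the irrational case $\xi^{(W)}$ is Lebesgue and a set of small positive measure can be concentrated in an arbitrarily short arc; in the finite case it may be a single atom. Your suggested fix (transport saturation along the orbit via \eqref{eq: one step dyn self-sim}) degrades the constant by $O(j/m)$ after $j$ steps, and it is unclear how to extract two directions at bounded $j$ before the bound becomes useless. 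The compactness reduction to a fixed $V_0$ compounds this, since it already costs a factor $\approx 2^{-m}$ in frequency.

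The paper sidesteps this by proving a \emph{positive} result first, for the global measure rather than components. Lemma \ref{lem:averge entropy increase} fixes $p_0,\delta_0$ depending only on the model (via non-atomicity of $\xi^{(W)}$) so that any set of $\xi^{(W)}$-measure $\ge 1-p_0$ has diameter $>\delta_0$; one then argues that the bad direction set for $\eta^{(\om)}$ cannot be this large, since otherwise it \emph{already} contains two $\delta_0$-separated points and Lemma \ref{lem: saturation on sum} gives the contradiction. This yields $\xi^{(W)}(\{V:H_n(\pi_V\eta^{(\om)})\ge\alpha-1+\kappa\})\ge p_0$, which Lemma \ref{lem:proj entropy improved} boosts via equidistribution to $\inf_W H_n(\pi_W\eta^{(\om)})\ge\alpha-1+\kappa'$. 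Only afterward is this transferred to components (Lemma \ref{lem:proj entropy improved comp}), producing a lower bound uniform over $W$ that directly rules out $(V,\eps,m)$-saturation for any one-dimensional $V$. This order of operations is what makes the ``two directions'' step immediate rather than delicate; restructuring along these lines would close the gap.
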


\subsection{Orthogonal projections onto one-dimensional subspaces}
First, we study orthogonal projections of measures $\eta^{\pom}$ and their components onto one-dimensional subspaces. This will allow us to exclude (with high probability) the saturation of $\eta^{(T^{n'} \om)}$ on one-dimensional subspaces.  As mentioned above, for that part we need to study the case $1 \leq \alpha < 2$.  First, we need to express the dimension of measures $\eta^{(\om)}$ in terms of entropy. It is well known that the Hausdorff dimension coincides with the information dimension for exact dimensional measures \cite{Young}. It therefore follows from Proposition \ref{prop:exact dimension} that for $\PP$-a.e.\ $\om \in \Om$ the information dimension of $\eta^{(\om)}$ exists and equals $\alpha:= \dim(\Sigma)$, i.e.,
\begin{equation}\label{eq:inf dim}
	\lim \limits_{n \to \infty} H_n(\eta^{(\om)}) = \alpha.
\end{equation}

The next point is a suitable for our needs version of a trivial observation: A measure on the plane with information dimension $\alpha \geq 1$ has projections of dimension at least $\alpha - 1$.

\begin{lem}\label{lem:proj dim geq comp}
Assume $\alpha \geq 1$. Then for $\PP$-a.e.\ $\om \in \Om$ the following holds: For every $\eps > 0$ and $m > m(\om, \eps)$
\[
	\inf \limits_{W \in \R\PP^1}\ H_m(\pi_W \eta^{(\om)}) \geq \alpha - 1 - \eps.
\]

\end{lem}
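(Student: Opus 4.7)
The idea is that a projection of a bounded $2$-dimensional measure loses at most $1$ unit of entropy dimension, so a projection of an $\alpha$-dimensional measure has entropy dimension at least $\alpha-1$, uniformly in the direction. The estimate \eqref{eq:inf dim} provides convergence of the scale-$n$ entropies to $\alpha$ on a full-measure set, and this convergence transfers to the projections because the estimate we use is uniform in $W$.

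First, I fix $\om$ in the full $\PP$-measure set on which Proposition \ref{prop:exact dimension} gives exact dimensionality, so, by Young's theorem, $H_n(\eta^{(\om)}) \to \alpha$ as $n \to \infty$. Recall $\supp(\eta^{(\om)}) \subset B(0,R)$ with $R = R(\Sigma)$ from \eqref{eq:R def}, so all estimates below have constants depending only on the model.

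Next, for any $W \in \R\PP^1$, I combine \eqref{eq:entpro2} with the chain rule for conditional entropy to write
\begin{equation*}
H(\eta^{(\om)}, \Dk_n) = H(\eta^{(\om)}, \Dk_n^{W \oplus W^\perp}) + O(1) = H(\eta^{(\om)}, \Dk_n^W) + H(\eta^{(\om)}, \Dk_n^{W \oplus W^\perp} \mid \Dk_n^W) + O(1).
\end{equation*}
Using \eqref{eq:entpro} for the first summand and \eqref{eq:entropy cond W} (with $\dim W^\perp = 1$) for the second, after dividing by $n$ I get
\begin{equation*}
H_n(\eta^{(\om)}) \leq H_n(\pi_W \eta^{(\om)}) + 1 + O_R(1/n),
\end{equation*}
with the $O_R(1/n)$ term independent of $W$. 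Equivalently,
\begin{equation*}
\inf_{W \in \R\PP^1} H_m(\pi_W \eta^{(\om)}) \geq H_m(\eta^{(\om)}) - 1 - O_R(1/m).
\end{equation*}

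Finally, given $\eps>0$, by the convergence $H_m(\eta^{(\om)}) \to \alpha$ from step~1 there exists $m(\om,\eps)$ such that for all $m > m(\om,\eps)$ the right-hand side is at least $\alpha - 1 - \eps$, completing the proof. There is no serious obstacle here: the argument is a direct application of the general entropy inequalities \eqref{eq:entpro}, \eqref{eq:entpro2}, \eqref{eq:entropy cond W} combined with the exact dimensionality already established in Proposition \ref{prop:exact dimension}; the only point to check is that the slack term from projecting onto $W^\perp$ is controlled uniformly in $W$, which is immediate from the uniform bound on $\supp(\eta^{(\om)})$.
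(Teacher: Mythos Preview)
Your proof is correct and follows essentially the same approach as the paper: both arguments combine \eqref{eq:entpro}, \eqref{eq:entpro2}, and \eqref{eq:entropy cond W} to obtain the uniform-in-$W$ inequality $H_m(\eta^{(\om)}) \le H_m(\pi_W\eta^{(\om)}) + 1 + O_R(1/m)$, and then invoke \eqref{eq:inf dim}. The only cosmetic difference is that the paper packages this as the contrapositive implication \eqref{eq:entropy proj implication}, whereas you write the direct lower bound; the content is identical.
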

\begin{proof}
	First, note the following fact, valid for every $\mu \in \Pk \left([-R,R]^d\right), \alpha \geq 1, \eps > 0$ and $m > m(\eps, R)$:
	\begin{equation}\label{eq:entropy proj implication}\text{if } \left|H_m(\mu) - \alpha\right| < \frac{\eps}{2}, \text{ then } \inf \limits_{W \in \R\PP^1}\ H_m(\pi_W \mu) \geq \alpha - 1 - \eps.
	\end{equation}
	Indeed, if there exists $W \in \R\PP^1$ such that 
	\[ H_m(\pi_W \mu) < \alpha - 1 - \eps, \]
	then by \eqref{eq:entropy cond W},
	\[
	\begin{split} H_m(\mu) & = \frac{1}{m} H( \pi_W\mu , \Dk_{m}) + \frac{1}{m} H(\mu , \Dk^{W \oplus W^\perp}_{m} | \Dk_m^W) + O\left(\frac{1}{m} \right) \leq \frac{1}{m} H( \pi_W\mu , \Dk_{m}) + 1 + O_R\left(\frac{1}{m} \right)\\
		&  < \alpha - \eps + O_R\left(\frac{1}{m} \right) \leq \alpha - \eps/2
	\end{split}\]
for $m > m(\eps, R)$. Using \eqref{eq:entropy proj implication} with $\mu = \eta^{(\om)}$ we see that the claim of the Lemma follows from \eqref{eq:inf dim}. 
\end{proof}

Now, we show that the entropy $H_m(\pi_V \eta^{\pom})$ cannot be too close to the lower bound $\alpha - 1$ in $ \fm^{(W)}$-almost all directions $V \in \R\PP^1$.

\begin{lem}\label{lem:averge entropy increase}
Assume $1 \leq \alpha < 2$. There exists $p_0>0$ and $\kappa>0$ such that for $\PP$-a.e.\ $\om \in \Om$,
\begin{equation}\label{eq:xi proj dim} \liminf \limits_{n \to \infty}\ \inf \limits_{W \in \R\PP^1}\  \fm^{(W)}\left(\left\{ V \in \R\PP^1 : H_n(\pi_V \eta^{(\om)}) \geq \alpha - 1 + \kappa \right\} \right) \geq p_0.
\end{equation}
\end{lem}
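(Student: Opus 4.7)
The plan is to combine a two-projection entropy inequality with non-triviality of the rotation group $G_\Sigma$ (Assumption~\ref{assmp:non-deg and rotation}\ref{it:non-real}) to force the ``bad'' set of directions, where $H_n(\pi_V \eta^\pom)$ is close to the trivial lower bound $\alpha - 1$, to concentrate in a small ball of $\R\PP^1$. The $\xi^{(W)}$-mass of this small bad set will then be bounded away from $1$ uniformly in $W$. We will take $\kappa := (2-\alpha)/4 > 0$ and show that the conclusion holds with $p_0 := 1/2$.

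The first step is to establish the two-projection inequality
\[ H_n(\mu) \leq H_n(\pi_{V_1}\mu) + H_n(\pi_{V_2}\mu) + O\!\left( \tfrac{\log(1/\delta)}{n} \right), \]
valid for any $\mu \in \mP([-R, R]^2)$ and $V_1, V_2 \in \R\PP^1$ with $\sin\angle(V_1, V_2) \geq \delta$. This follows from subadditivity of entropy applied to the refinement $\Dk_n^{V_1} \vee \Dk_n^{V_2}$, combined with \eqref{eq:intersecting partitions}: cells of this refinement are parallelograms of diameter $O(2^{-n}/\delta)$, while dyadic cells intersect only $O(1)$ such cells, so the two partitions are mutually refined by a factor of $O(1/\delta)$. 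Applying the bound with $\mu = \eta^\pom$ and $\delta = e^{-\sqrt n}$, and invoking \eqref{eq:inf dim}, we conclude that the bad set $B_n^\om := \{V \in \R\PP^1 : H_n(\pi_V \eta^\pom) < \alpha - 1 + \kappa\}$ has $\sin$-diameter at most $e^{-\sqrt n}$ for $n$ large: indeed, if $V_1, V_2 \in B_n^\om$ with $\sin\angle(V_1, V_2) \geq e^{-\sqrt n}$, then $2(\alpha - 1 + \kappa) > \alpha - o(1)$, contradicting $\kappa = (2 - \alpha)/4$ for $n$ large.

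It then remains to bound $\xi^{(W)}(B_n^\om)$ uniformly in $W$. Since $G_\Sigma$ is a nontrivial closed subgroup of $\R\PP^1 \cong \mathbb{T}/\{\pm 1\}$, it either equals $\R\PP^1$ or is finite cyclic of some order $k_0 \geq 2$. In the former case, $\xi^{(W)}$ equals Haar measure on $\R\PP^1$ for every $W$, and the $\sin$-ball of radius $e^{-\sqrt n}$ has Haar measure $O(e^{-\sqrt n})$. In the latter, the action of $G_\Sigma$ on $\R\PP^1$ is free, so each orbit $G_\Sigma W$ consists of $k_0$ equispaced points with mutual $\sin$-distances at least $\sin(\pi/k_0) > 0$; for $n$ large enough that $e^{-\sqrt n} < \sin(\pi/k_0)$, $B_n^\om$ contains at most one orbit point, and so $\xi^{(W)}(B_n^\om) \leq 1/k_0 \leq 1/2$. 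In either case $\liminf_n \inf_W \xi^{(W)}(\R\PP^1 \setminus B_n^\om) \geq 1/2 = p_0$.

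The main conceptual ingredient is the two-projection inequality, which converts the hypothesis $\alpha < 2$ into a quantitative statement that no two sufficiently separated directions can simultaneously be bad. Once this is in place, the rest is essentially elementary; Assumption~\ref{assmp:non-deg and rotation}\ref{it:non-real} enters only through $|G_\Sigma| \geq 2$, ensuring that a single potentially bad direction cannot absorb all of $\xi^{(W)}$.
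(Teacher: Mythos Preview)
Your proof is correct and shares the same logical skeleton as the paper's argument: if two sufficiently separated directions $V_1,V_2$ both lie in the ``bad'' set $B_n^\om=\{V:H_n(\pi_V\eta^{(\om)})<\alpha-1+\kappa\}$, this contradicts $H_n(\eta^{(\om)})\to\alpha<2$; hence $B_n^\om$ is angularly small and cannot carry most of $\xi^{(W)}$.

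The implementation, however, is genuinely different. The paper proceeds via Hochman's saturation machinery: from $H_n(\pi_{V_i}\eta^{(\om)})\le\alpha-1+\kappa$ it deduces that $\eta^{(\om)}$ is $(V_i^\perp,2\kappa+O(1/n),n)$-saturated, invokes Lemma~\ref{lem: saturation on sum} to combine these into $(\R^2,8\kappa,n)$-saturation, and derives $H_n(\eta^{(\om)})\ge 2-8\kappa$, a contradiction for $\kappa=(2-\alpha)/16$. The separation $\delta_0$ between $V_1,V_2$ is a \emph{fixed} constant chosen so that any set of $\xi^{(W)}$-measure $\ge 1-p_0$ has diameter $>\delta_0$; consequently $p_0$ depends on the group $G_\Sigma$.

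Your route is more elementary: the subadditivity inequality
\[
H_n(\mu)\le H_n(\pi_{V_1}\mu)+H_n(\pi_{V_2}\mu)+O\!\left(\tfrac{\log(1/\delta)}{n}\right)
\]
follows directly from \eqref{eq:intersecting partitions} and bypasses the saturation lemmas entirely. By letting the separation threshold $\delta=e^{-\sqrt n}$ shrink with $n$, you force the bad set to have vanishing diameter, which yields the explicit constants $\kappa=(2-\alpha)/4$ and $p_0=1/2$ uniformly in the model. The paper's argument, by contrast, integrates naturally with the saturation framework used elsewhere (Proposition~\ref{prop:sat-dim 0}) and only needs the qualitative fact that $\xi^{(W)}$ is non-atomic. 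Both are valid; your version is shorter and gives sharper constants, while the paper's fits its surrounding toolkit.
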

\begin{proof}
Let $\kappa = \frac{2 - \alpha}{16}>0$. Let $p_0>0$ and $\delta_0 > 0$ be such that if a Borel set $A \subset \R\PP^1$ satisfies $ \fm^{(W)}(A) \geq 1 - p_0$ for at least one $W \in \R\PP^1$, then $A$ has diameter larger than $\delta_0$ in the angle metric on $\R\PP^1$. Such $p_0$ and $\delta_0$ exist as $ \fm^{(W)}$ are not concentrated on singletons due to Assumption \ref{assmp:non-deg and rotation}\ref{it:non-real}, and can be taken uniformly among $W$ due to \eqref{eq:xi W}.  We can therefore conclude the following: For every given $\om \in \Om$, if there exists $W \in \R\PP^1$ with $ \fm^{(W)}\left(\left\{ V \in \R\PP^1 : H_n(\pi_V \eta^{(\om)}) \geq \alpha - 1 + \kappa \right\} \right) \leq p_0$, then there are $V_1, V_2 \in \R\PP^1$ with
\begin{equation}\label{eq:V1 V2} \angle(V_1, V_2) > \delta_0\ \text{ and }\  H_n(\pi_{V_i} \eta^{(\om)}) \leq \alpha - 1 + \kappa\ \text{ for } i=1,2.
\end{equation} We will show that \eqref{eq:xi proj dim} holds for every $\om \in \Om$ satisfying \eqref{eq:inf dim}. Fix such $\om$ and for given $n \in \N$ assume
\[\inf \limits_{W \in \R\PP^1}\  \fm^{(W)}\left(\left\{ V \in \R\PP^1 : H_n(\pi_V \eta^{(\om)}) \geq \alpha - 1 + \kappa \right\} \right) < p_0.\]
Then there exist $V_1, V_2$ satisfying \eqref{eq:V1 V2}. If $n$ is large enough, we have for $i = 1,2$, in view of \eqref{eq:entpro} and \eqref{eq:entpro2},
\[
\begin{split} \alpha - \kappa & \leq H_n(\eta^{(\om)}) =  \frac{1}{n} H(\pi_{V_i}\eta^{(\om)} , \Dk_{n}) + \frac{1}{n} H(\eta^{(\om)} , \Dk^{V_i \oplus V_i^\perp}_{n} | \Dk^{V_i}_{n}) + O\left(\frac{1}{n} \right) \\
& \leq \alpha - 1 + \kappa + \frac{1}{n} H(\eta^{(\om)} , \Dk^{V_i \oplus V_i^\perp}_{n} | \Dk^{V_i}_{n}) + O\left(\frac{1}{n} \right),
\end{split}
\]
hence
\[ \frac{1}{n}H(\eta^{(\om)} , \Dk^{V_i^\perp}_{n} | \Dk^{V_i}_{n}) \geq 1 - \left(2\kappa + O\left( \frac{1}{n} \right) \right). \]
By Lemma \ref{lem:sat by cond}, $\eta^{(\om)}$ is $(V^\perp_i, 2\kappa + O(1/n), n)$-saturated for $i=1,2$. As $\angle(V_1, V_2) > \delta_0$, we conclude using Lemma \ref{lem: saturation on sum} that if $n$ is large enough, then $\eta^{(\om)}$ is $\left(\R^2, 8\kappa, n \right)$-saturated, so
\[ H_n(\eta^{(\om)}) \geq 2 - 8\kappa = \alpha + 8\kappa > \alpha. \]
As $\om$ was chosen to satisfy \eqref{eq:inf dim}, this cannot happen provided that $n$ is large enough. This contradiction shows
\[\inf \limits_{W \in \R\PP^1}\  \fm^{(W)}\left(\left\{ V \in \R\PP^1 : H_n(\pi_V \eta^{(\om)}) \geq \alpha - 1 + \kappa \right\} \right) \geq p_0\]
for all $n$ large enough.
\end{proof}

Next, we improve the above entropy bound to \emph{all} directions using ergodicity of $\PP \otimes  \fm^{(W)}$. This is the point where the technical assumption $r_{\mathrm{max}} \leq 1/2$ is introduced.

\begin{lem}\label{lem:proj entropy improved}
Assume  $1 \leq \alpha < 2$ and $r_{\mathrm{max}} \leq 1/2$. There exists $\kappa' > 0$ such that for $\PP$-a.e.\ $\om \in \Om$,
\begin{equation}\label{eq:proj entropy improved} \liminf \limits_{n \to \infty}\ \inf \limits_{W \in \R\PP^1} H_n\left(\pi_W \eta^{(\om)} \right) \geq \alpha - 1 + \kappa'.
\end{equation}
\end{lem}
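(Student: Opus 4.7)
The plan is to upgrade Lemma \ref{lem:averge entropy increase} -- which gives large entropy in many directions in the sense of $\xi^{(W)}$-measure -- to a uniform-in-$W$ lower bound, using the dynamic self-similarity \eqref{eq:dyn self-sim} and the skew-product ergodic theorem (Lemma \ref{lem:skew product ergodic}). The starting point is the convolution decomposition obtained by projecting \eqref{eq:dyn self-sim} onto $W$: for every $j\in\N$,
\[ \pi_W\eta^{(\om)} = \nu^{(\om,j,W)} * \bigl(c_j\,\pi_{V_j(\om)}\eta^{(T^j\om)}\bigr), \]
where $c_j := |\lam_{\om_1}\cdots\lam_{\om_j}|$, $V_j(\om) := \vphi_{\om_1}^{-1}\cdots\vphi_{\om_j}^{-1}W\in\R\PP^1$ (using commutativity of the planar similarity group), and $\nu^{(\om,j,W)}$ is atomic. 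Combining with \eqref{eq:conv entropy bound} and the scale-shift $H_n(c\mu)=\tfrac{n-k}{n}H_{n-k}(\mu)+O(1/n)$ for $c=\Theta(2^{-k})$ (which comes from \eqref{eq:entropy bounded similarity}), I would obtain the basic inequality
\[ H_n(\pi_W\eta^{(\om)}) \geq \tfrac{n-k_j}{n}\, H_{n-k_j}\bigl(\pi_{V_j(\om)}\eta^{(T^j\om)}\bigr) - O_R(1/n), \qquad k_j:=\lfloor\log c_j^{-1}\rfloor, \]
valid for every $j\in\N$. The proof then reduces to exhibiting, for each $(\om,W)$, an index $j$ with $k_j\ll n$ such that the right-hand side exceeds $\alpha-1+\kappa'$.

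To find such $j$, I would first apply Egorov's theorem to the $\liminf$ in Lemma \ref{lem:averge entropy increase}: for any $\epsilon>0$, there exist a measurable set $\Om_\epsilon\subset\Om$ with $\PP(\Om_\epsilon)\geq 1-\epsilon$ and a scale $m_1=m_1(\epsilon)$ such that
\[ \xi^{(W)}(B_m(\tau))\geq p_0-\epsilon \quad\text{for every } \tau\in\Om_\epsilon,\ W\in\R\PP^1,\ m\geq m_1, \]
where $B_m(\tau):=\{V:H_m(\pi_V\eta^{(\tau)})\geq\alpha-1+\kappa\}$. Next, pick $\delta=\delta(\alpha,\kappa)\in(0,1)$ small enough that $(1-\delta)(\alpha-1+\kappa)>\alpha-1+\kappa/2$, and for each large $n$ consider $j\in[0,J(n,\om)]$ with $J(n,\om):=\max\{j:k_j(\om)\leq\delta n\}$; by the Birkhoff ergodic theorem applied to $-\log r_{\om_1}$, $J(n,\om)=\Theta(n)$ almost surely. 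I would then combine Lemma \ref{lem:skew product ergodic} (uniform equidistribution of $(T^j\om,V_j(\om))$ to $\PP\otimes\xi^{(W)}$ in $W$) with Lemmas \ref{lem:V cont} and \ref{lem:om cont}, which make $(\tau,V)\mapsto H_m(\pi_V\eta^{(\tau)})$ continuous on $\Om\times\R\PP^1$ at each fixed $m$. A Portmanteau-type argument applied to the closed set $\{(\tau,V):\tau\in\Om_\epsilon,\,V\in B_m(\tau)\}$, which has $(\PP\otimes\xi^{(W)})$-measure at least $(1-\epsilon)(p_0-\epsilon)$ uniformly in $m\geq m_1$ and $W$, then yields for $\PP$-a.e.\ $\om$ and every $W$ the existence of $j\in[0,J(n,\om)]$ with $T^j\om\in\Om_\epsilon$ and $V_j(\om)\in B_{n-k_j}(T^j\om)$, for every large $n$. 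The standing hypothesis $r_{\mathrm{max}}\leq 1/2$ enters through Lemma \ref{lem:k k' comparison} to ensure that the sequence $\{k_j\}$ has the correct density along the ergodic sum.

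For such $j$, $H_{n-k_j}(\pi_{V_j(\om)}\eta^{(T^j\om)})\geq\alpha-1+\kappa$ by definition of $B_{n-k_j}(T^j\om)$, and $k_j\leq\delta n$, so the basic inequality gives
\[ H_n(\pi_W\eta^{(\om)}) \geq (1-\delta)(\alpha-1+\kappa) - O_R(1/n) > \alpha-1+\kappa/4 \]
for $n$ large enough, establishing \eqref{eq:proj entropy improved} with $\kappa':=\kappa/4$.

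The principal obstacle I foresee is the joint-scale Portmanteau step. The ``good set'' $\{(\tau,V):V\in B_m(\tau)\}$ depends on the scale $m$, and in the selection of $j$ the scale $m=n-k_j$ varies jointly with $j$. The uniformity in $m\geq m_1$ afforded by Egorov, combined with the fixed-scale continuity from Lemmas \ref{lem:V cont} and \ref{lem:om cont}, is what permits passage from weak convergence at each fixed $m$ to the existence of a single useful $j$ for each $(n,W)$. The reducible case -- finite $G_\Sigma$, with $\xi^{(W)}$ supported on a $W$-dependent finite orbit -- is an additional wrinkle not present in the irreducible setting, but it is absorbed by the uniformity in $W$ in Lemma \ref{lem:skew product ergodic}.
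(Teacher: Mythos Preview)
Your convolution decomposition and the basic inequality
\[
H_n(\pi_W\eta^{(\om)}) \;\ge\; \tfrac{n-k_j}{n}\,H_{n-k_j}\bigl(\pi_{V_j(\om)}\eta^{(T^j\om)}\bigr) - O_R(1/n)
\]
are correct, and the overall strategy---use the skew-product ergodicity to rotate $W$ into a direction where Lemma~\ref{lem:averge entropy increase} applies---is the right idea. However, the step you yourself flag as the ``principal obstacle'' is a genuine gap, not just a technicality. You need the orbit point $(T^j\om,V_j(\om))$ to land in the set $\{(\tau,V):V\in B_{n-k_j}(\tau)\}$, but this target \emph{moves with $j$} through the scale $m=n-k_j$. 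Egorov gives you a uniform lower bound on the $\xi^{(W)}$-\emph{measure} of $B_m(\tau)$ for $\tau\in\Om_\eps$ and $m\ge m_1$, but the sets $B_m(\tau)$ themselves can vary wildly with $m$: there is no reason the same directions $V$ are good across different scales. Weak$^*$ convergence of the empirical distribution of $(T^j\om,V_j(\om))$ to $\PP\otimes\xi^{(W)}$ says nothing about hitting a moving target, and Lemmas~\ref{lem:V cont}--\ref{lem:om cont} only give continuity at a \emph{fixed} scale, so they do not let you compare $B_m$ and $B_{m'}$ for $|m-m'|$ of order $n$. I do not see how to close this without changing the architecture.

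The paper sidesteps the moving-target problem by a different decomposition: rather than choosing a single $j$ for each $n$, it invokes the local-to-global entropy formula (Lemma~\ref{lem: local to global entropy}) to write
\[
H_n(\pi_W\eta^{(\om)}) \;\approx\; \frac{1}{n}\sum_{k=0}^{n-1} H_m\!\bigl(\pi_{\vphi_{\om_{k'}}^{-1}\cdots\vphi_{\om_1}^{-1}W}\,\eta^{(T^{k'}\om)}\bigr),
\]
with $m$ \emph{fixed} and small. Now the function $(\tau,V)\mapsto H_m(\pi_V\eta^{(\tau)})$ is fixed, Lemmas~\ref{lem:V cont}--\ref{lem:om cont} produce a genuine open set $\mU_m$ of good pairs, and the skew-product ergodic theorem (Lemma~\ref{lem:skew product ergodic}) gives a positive \emph{frequency} of $k$ with $F^{k}(\om,W)\in\mU_m$; the hypothesis $r_{\max}\le 1/2$ enters through \eqref{eq:freq half upper bound} to transfer this frequency to the sequence $k'(\om)$. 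At the remaining scales one uses the trivial lower bound $H_m(\pi_V\eta^{(\tau)})\ge\alpha-1-\eps$ from Lemma~\ref{lem:proj dim geq comp}. The average of ``good'' ($\ge\alpha-1+\kappa/2$, frequency $\ge p_0/2C$) and ``bad'' ($\ge\alpha-1-\eps$) contributions then exceeds $\alpha-1+\kappa'$. The point is that averaging over scales with a fixed $m$ replaces your search for one good $j$ at a $j$-dependent scale by a frequency statement at a single scale, which is exactly what the ergodic theorem delivers.
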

\begin{proof}
By Lemma \ref{lem: local to global entropy}, for $n \geq n(m,\eps)$,
\begin{equation}\label{eq:proj entropy average} H_n\left(\pi_W \eta^{(\om)}\right) \geq \frac{1}{n} \sum \limits_{k=0}^{n-1} \frac{1}{m} H(\pi_W \eta^{(\om)}, \Dk_{k+m}|\Dk_k) - \eps. \end{equation}
For $k \geq 1$ we have by \eqref{eq:dyn self-sim},
\[ \pi_W \eta^{(\om)} = \sum \limits_{u \in \X^{(\om)}_{k'}} p_u^{\pom} \pi_W\left( f_u^{(\om)} \eta^{(T^{k'}\om)} \right). \]
As $\diam \left( \supp \left( f^{(\om)}_u \eta^{(T^{k'} \om)} \right)  \right) \leq 2^{-k}$, we have
\[ \frac{1}{m}H\left( \pi_W\left( f_u^{(\om)} \eta^{(T^{k'}\om)} \right), \Dk_{k+m}|\Dk_k\right) = \frac{1}{m}H\left( \pi_W\left( f_u^{(\om)} \eta^{(T^{k'}\om)} \right), \Dk_{k+m}\right) + O(1/m). \]
Combining \eqref{eq:proj entropy average} with the last two equalities, we obtain using  concavity of the conditional entropy and the fact that $f^{(\om)}_u$ is a similarity contracting by $\Theta(2^{-k})$ for $u \in \X^{(\om)}_{k'}$ together with \eqref{eq:entropy bounded similarity}:
\begin{equation}\label{eq:proj entropy average bound}\begin{split}
\frac{1}{n}H\left(\pi_W \eta^{(\om)}, \Dk_n\right) & \geq \frac{1}{n} \sum \limits_{k=0}^{n-1} \frac{1}{m} \sum \limits_{u \in \X^{(\om)}_{k'}} p_u^{\pom} H\left( \pi_W\left( f_u^{(\om)} \eta^{(T^{k'}\om)} \right), \Dk_{k+m}|\Dk_k\right) - O\left( \eps \right) \\
&\geq \frac{1}{n} \sum \limits_{k=0}^{n-1} \frac{1}{m} \sum \limits_{u \in \X^{(\om)}_{k'}} p_u^{\pom} H\left( \pi_W\left( f_u^{(\om)} \eta^{(T^{k'}\om)} \right), \Dk_{k+m}\right) - O\left( \eps \right) - O(1/m) \\
& = \frac{1}{n} \sum \limits_{k=0}^{n-1} \frac{1}{m} \sum \limits_{u \in \X^{(\om)}_{k'}} p_u^{\pom} H\left( \pi_{\vphi_{\om_{k'}}^{-1} \cdots \vphi_{\om_1}^{-1} W} \eta^{(T^{k'}\om)} , \Dk_{m}\right) - O\left( \eps \right) - O(1/m) \\
& = \frac{1}{n} \sum \limits_{k=0}^{n-1}  H_m\left( \pi_{\vphi_{\om_{k'}}^{-1} \cdots \vphi_{\om_1}^{-1} W} \eta^{(T^{k'}\om)}\right) - O\left( \eps \right) - O(1/m).
\end{split}\end{equation}
On the other hand, by Lemma \ref{lem:averge entropy increase}, for $m$ large enough,
\begin{equation}\label{eq:symbol direction measure bound} \inf \limits_{W \in \R\PP^1}\ \PP \otimes  \fm^{(W)} \left(\left\{ \left(\om, V\right) \in \Om \times \R\PP^1 : H_m(\pi_V \eta^{(\om)}) \geq \alpha - 1 + \kappa \right\} \right) \geq p_0/2.
\end{equation}
It follows now from Lemmas \ref{lem:V cont} and \ref{lem:om cont} that, if $m$ is large enough, there exists an open set $\mU_m \subset \Om \times \R\PP^1$ (a small enough open neighbourhood of the set from \eqref{eq:symbol direction measure bound}) satisfying
\[\mU_m \subset \left\{ \left(\om, V\right) \in \Om \times \R\PP^1 : H_m(\pi_V \eta^{(\om)}) \geq \alpha - 1 + \kappa/2 \right\}\  \text{ and }\  \inf \limits_{W \in \R\PP^1}\ \PP \otimes  \fm^{(W)} (\mU_m) \geq p_0/2. \]
Recall the skew-product map $F : \Om \times \R\PP^1 \to \Om \times \R\PP^1$ defined as $F(\om, V) = (T \om, \vphi^{-1}_{\om_1}V)$. By Lemma \ref{lem:skew product ergodic}, for $\PP$-a.e.\ $\om \in \Om$,
\begin{equation}\label{eq:F Um first freq bound}  \liminf \limits_{n \to \infty} \inf \limits_{W \in \R\PP^1} \frac{1}{n}\sum \limits_{k=0}^{n-1}\mathds{1}_{\mU_m}(F^k(\om, W)) \geq p_0/2,
\end{equation}
and hence by the claim \eqref{eq:freq half upper bound} from Lemma \ref{lem:k k' comparison} (here we use that $r_{\max}\le 1/2$),
\begin{equation}\label{eq:F Um freq bound} \liminf \limits_{n \to \infty} \inf \limits_{W \in \R\PP^1} \frac{1}{n}\sum \limits_{k=0}^{n-1}\mathds{1}_{\mU_m}(F^{k'(\om)}(\om, W)) \geq p_0/2C.
\end{equation}
On the other hand, by Lemma \ref{lem:proj dim geq comp}, for $m>m(\eps)$ the inequality
\[	\inf \limits_{W \in \R\PP^1}\ H_m(\pi_W \eta^{(\om)}) \geq \alpha - 1 - \eps\]
holds for $\om$ from a subset of $\Om$ of $\PP$-measure at least $1-\eps$.  Therefore, by the ergodic theorem, for $\PP$-a.e.\ $\om \in \Om$,
\[ \liminf \limits_{n \to \infty} \frac{1}{n} \left\{ 0 \leq k < n:  \inf \limits_{W \in \R\PP^1}\ H_m(\pi_W \eta^{(T^k\om)}) \geq \alpha - 1 - \eps \right\} \geq 1 -\eps, \]
hence by the claim \eqref{eq:freq general upper bound} in Lemma \ref{lem:k k' comparison},
\begin{equation}\label{eq:proj entropy freq bound} \liminf \limits_{n \to \infty} \frac{1}{n} \left\{ 0 \leq k < n: \inf \limits_{W \in \R\PP^1}\ H_m(\pi_W \eta^{(T^{k'} \om)}) \geq \alpha - 1 - \eps \right\} \geq 1-C\eps.
\end{equation}
Combining \eqref{eq:F Um freq bound} and \eqref{eq:proj entropy freq bound} with \eqref{eq:proj entropy average bound} we conclude that for $\PP$-a.e.\ $\om \in \Om$, we obtain
\[
\begin{split}
\liminf \limits_{n \to \infty}\ & \inf \limits_{W \in \R\PP^1} H_n (\pi_W \eta^{(\om)}) \\
& \geq (\alpha - 1 + \kappa/2)\frac{p_0}{2C} + (\alpha - 1 - \eps)(1 - C\eps - \frac{p_0}{2C}) - O(\eps) - O(1/m).
\end{split}
\]
If $\eps>0$ was chosen small enough and $m$ large enough, the above inequality gives \eqref{eq:proj entropy improved} for some $\kappa' > 0$.
\end{proof}

 Next, we extend the previous statement to components and replace $\om$ by $T^{n'}(\om)$. We will use this result later as one of the main elements of the proof of Proposition \ref{prop:sat-dim 0}.

\begin{lem}\label{lem:proj entropy improved comp}
Assume $1 \leq \alpha < 2$ and $r_{\mathrm{max}} \leq 1/2$. There exist $p_1 > 0$ and $\kappa''>0$ such that for $\PP$-a.e.\ $\om \in \Om$, for every $\eps > 0$ and $m > m(\eps)$ the following holds\footnote{Here and throughout the paper, $\pi_W \eta^{(\om), x, i}:=\pi_W \left( \left(\eta^{(\om)}\right)^{x, i} \right)$ denotes the projection of the rescaled component $\eta^{(\om), x, i}$  onto $W$ (and \emph{not} the component of the projection).}
\[ \liminf \limits_{n \to \infty}\ \frac{1}{n} \# \left\{ 0 \leq k < n : \inf \limits_{W \in \R\PP^1} \bP_{i=k} \left( H_m(\pi_W \eta^{(T^{n'} \om), x, i}) \geq \alpha - 1 + \kappa'' \right) > p_1  \right\} \geq 1 - \eps \]
\end{lem}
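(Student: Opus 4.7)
The plan is to combine Lemma~\ref{lem:proj entropy improved}---applied to the shifted measures $\eta^{(T^{n'+k'}\om)}$---with the self-similarity decomposition of $\eta^{(T^{n'}\om)}$, a Markov-type lower bound, and an ergodic-theoretic frequency estimate. Let $\kappa'$ be as in Lemma~\ref{lem:proj entropy improved}, and for each $m \in \N$ define
\[ A_m := \Big\{ \om' \in \Om : \inf_{W \in \R\PP^1} H_m(\pi_W \eta^{(\om')}) \geq \alpha - 1 + \tfrac{\kappa'}{2} \Big\}. \]
By Lemma~\ref{lem:V cont} the infimum may be restricted to a countable dense subset of $\R\PP^1$, so $A_m$ is Borel; and by Lemma~\ref{lem:proj entropy improved}, $\PP(A_m) \to 1$ as $m \to \infty$. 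Fix constants $\kappa'' := \kappa'/8$ and $p_1 := \kappa'/\big(32(2-\alpha)\big) > 0$, independent of $\eps$ and $m$.

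Next, for fixed $\om, n, k$, write $\om^* := T^{n'(\om,n)}\om$ and $k' := k'(\om^*, k)$. The self-similarity relation~\eqref{eq:dyn self-sim} gives $\eta^{(\om^*)} = \sum_{u \in \X^{(\om^*)}_{k'}} p^{(\om^*)}_u f^{(\om^*)}_u \eta^{(T^{k'}\om^*)}$, and by homogeneity of each $\Phi^{(i)}$, every $f^{(\om^*)}_u$ has the common rotation $\vphi^{(\om^*)}_u = \vphi_{\om^*_1} \cdots \vphi_{\om^*_{k'}}$ and contraction of modulus $\Theta(2^{-k})$, so $\diam \supp f^{(\om^*)}_u \eta^{(T^{k'}\om^*)} \leq 2^{-k}$. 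Arguing as in the derivation of~\eqref{eq:proj entropy average bound}---via concavity of the conditional entropy and~\eqref{eq:entropy bounded similarity}---yields the key estimate
\[ \bE_{i=k} H_m(\pi_W \eta^{(\om^*), x, i}) \geq H_m(\pi_V \eta^{(T^{k'}\om^*)}) - O(1/m), \qquad V := \vphi^{-1}_{\om^*_{k'}} \cdots \vphi^{-1}_{\om^*_1} W. \]
Since $W \mapsto V$ is a bijection of $\R\PP^1$ and $H_m(\pi_W \mu) \leq 1 + O(\log R/m)$ for $\mu \in \mP(B(0,R))$, the reverse Markov inequality shows that whenever $\inf_V H_m(\pi_V \eta^{(T^{k'}\om^*)}) \geq \alpha - 1 + \kappa'/4$, then for every $W \in \R\PP^1$ and all sufficiently large $m$,
\[ \bP_{i=k}\!\left( H_m(\pi_W \eta^{(\om^*), x, i}) \geq \alpha - 1 + \kappa'' \right) > p_1. \]

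Finally, to bound the frequency of suitable $k$'s, use the cocycle~\eqref{eq:cocycle} to get $n'(\om,n) + k'(\om^*, k) = (n+k)'(\om) + O(1)$, and Lemma~\ref{lem:shifted entropy comp} to conclude that an $O(1)$ shift perturbs $H_m(\pi_V \eta^{(\cdot)})$ by $O(1/m)$. Hence for $m$ large, $T^{(n+k)'(\om)}\om \in A_m$ implies $\inf_V H_m(\pi_V \eta^{(T^{k'}\om^*)}) \geq \alpha - 1 + \kappa'/4$, so the hypothesis of the previous paragraph holds. For $\PP$-a.e.\ $\om$, Birkhoff's ergodic theorem gives that $\{j \in \N : T^j\om \in A_m\}$ has density $\PP(A_m)$; reindexing $j = n+k \in [n, 2n)$ and applying claim~\eqref{eq:freq general upper bound} of Lemma~\ref{lem:k k' comparison} to the complement $A_m^c$,
\[ \limsup_{n \to \infty}\, \frac{1}{n}\, \#\{ k \in [0,n) : T^{(n+k)'(\om)}\om \notin A_m \} \leq 2C\, \PP(A_m^c), \]
so choosing $m(\eps)$ with $2C\,\PP(A_m^c) \leq \eps$ and combining with the Markov step finishes the proof. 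The main technical point is the key estimate above; its proof requires handling the misalignment between the dyadic cells $\Dk_k$ and the $f^{(\om^*)}_u$-cells of comparable diameter, an error absorbed into $O(1/m)$ via a standard computation paralleling the proof of Lemma~\ref{lem:proj entropy improved}.
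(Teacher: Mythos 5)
Your proposal is correct and follows essentially the same route as the paper's proof: the same dynamical self-similarity decomposition at depth $k'$ combined with \eqref{eq:projected component entropy}-type bookkeeping to get the key estimate, Lemma \ref{lem:proj entropy improved} plus the ergodic theorem and \eqref{eq:freq general upper bound} for the frequency bound, the cocycle \eqref{eq:cocycle} with Lemma \ref{lem:shifted entropy comp} to pass to $T^{(n+k)'}\om$, and a reverse-Markov step to convert the expectation bound into the probability bound. The only differences are cosmetic: you apply the cocycle/shift comparison at the frequency stage rather than inside the entropy chain, and your constants $\kappa''$, $p_1$ differ harmlessly from the paper's $\kappa'/8$, $\kappa'/16$.
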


\begin{proof}
First note that  rescaling   and projection commute up to a translation, i.e., for given $x \in \R^2$ and $W \in \R\PP^1$, the compositions $ \pi_W\circ T_{D_k(x)}$ and $T_{D_k(\pi_W(x))} \circ \pi_W$ differ by a translation. Recalling Definition \ref{def:raw}, we can use this fact together with \eqref{eq:intersecting partitions}, \eqref{eq:entropy bounded similarity}, and \eqref{eq:entpro} to obtain
\begin{equation}\label{eq:projected component entropy}
\begin{split}
\bE_{i = k} H_m(\pi_W\eta^{({ T^{n'}}\om), x, i}) & = \bE_{i = k} \frac{1}{m}H(\eta^{({ T^{n'}}\om), x, i}, \pi_W^{-1}\Dk_m) \\
& =\bE_{i = k} \frac{1}{m}H(\eta^{({ T^{n'}}\om)}_{x, i}, T_{D_k(x)}^{-1}\pi_W^{-1}\Dk_m) \\
& = \bE_{i = k} \frac{1}{m}H(\eta^{({ T^{n'}}\om)}_{x, i}, \pi_W^{-1}T_{D_k(\pi_W x)}^{-1}\Dk_m) + O(1/m) \\
& = \bE_{i=k} \frac{1}{m}H(\pi_W \eta^{({ T^{n'}}\om)}_{x,i}, \Dk_{m+k}) + O(1/m) \\
& = \frac{1}{m}H(\pi_W \eta^{({ T^{n'}}\om)}, \Dk_{m+k} | \Dk_{k}) + O(1/m). \\
\end{split}
\end{equation}
Applying \eqref{eq:dyn self-sim} to $\eta^{(T^{n'} \om)}$ with the number of iterates equal to $k'(T^{n'}\om)$ gives
\[\label{eq:self-sim n'} \eta^{(T^{n'} \om)} = \sum \limits_{u \in \X_{k'(T^{n'}\om)}^{(T^{n'}\om)}} p_u^{(T^{n'}\om)} \cdot f_u^{(T^{n'}\om)}\eta^{(T^{k'(T^{n'}\om) + n'}\om)}.\]
Each $f_u^{(T^{n'}\om)}$ above is a similarity contracting by $\Theta(2^{-k})$ and $\diam \left( \supp \left(f_u^{(T^{n'}\om)}\eta^{(T^{k'(T^{n'}\om) + n'}\om)} \right)  \right) \leq 2^{-k}$. Combining this with  \eqref{eq:projected component entropy}, concavity of the conditional entropy and  \eqref{eq:entropy bounded similarity} yields a calculation similar to \eqref{eq:proj entropy average bound}:
\[
\begin{split}
\bE_{i = k} H_m(\pi_W\eta^{( T^{n'}\om), x, i}) & \geq \sum \limits_{u \in \X_{k'}^{(\om)}} p_u^{(T^{n'} \om)} \frac{1}{m} H\left(\pi_W f_u^{(T^{n'}\om)}\eta^{(T^{k'(T^{n'}\om) + n'}\om)} , \Dk_{m+k}| \Dk_{k}\right) - O(1/m) \\
& = \sum \limits_{u \in \X_{k'}^{(\om)}} p_u^{(T^{n'} \om)} \frac{1}{m} H\left(\pi_W f_u^{(T^{n'}\om)}\eta^{(T^{k'(T^{n'}\om) + n'}\om)}, \Dk_{m+k}\right) - O(1/m) \\
& = \sum \limits_{u \in \X_{k'}^{(\om)}} p_u^{(T^{n'} \om)} \frac{1}{m} H\left( \pi_{ \vphi^{-1}_{\om_{n' + k'(T^{n'}\om)}} \cdots \vphi^{-1}_{\om_{n'+1}} W} \left( \eta^{(T^{k'(T^{n'}\om) + n'}\om)} \right), \Dk_{m}\right) - O(1/m)\\
& \geq \inf \limits_{V \in \R\PP^1} \frac{1}{m}H\left(\pi_V \eta^{(T^{k'(T^{n'}\om) + n'}\om)}, \Dk_m \right) - O(1/m).
\end{split}\]
Applying \eqref{eq:cocycle} and Lemma \ref{lem:shifted entropy comp} to the inequality above gives
\begin{equation}\label{eq:average comp entropy}
\bE_{i = k} H_m(\pi_W\eta^{(T^{n'}\om), x, i})  \geq \inf \limits_{V \in \R\PP^1} \frac{1}{m}H\left(\pi_V \eta^{(T^{(n+k)'}\om)}, \Dk_m \right) - O(1/m).
\end{equation}

By Lemma \ref{lem:proj entropy improved}, for every $\eps > 0$ and $m > m(\eps)$,
{ \[ \PP \left( \left\{ \om \in \Om : \inf \limits_{W \in \R\PP^1} \frac{1}{m}H\left(\pi_W \eta^{(\om)}, \Dk_m\right) \geq \alpha - 1 + \kappa'/2 \right\} \right) \geq 1 - \eps.  \] }
Therefore, by the ergodic theorem, for $\PP$-almost every $\om \in \Om$,
\[
{ \lim \limits_{n \to \infty} }  \frac{1}{n}\#\left\{0 \leq k < n : \inf \limits_{W \in \R\PP^1} \frac{1}{m}H\left(\pi_W \eta^{(T^{ k } \om)}, \Dk_m\right) \geq \alpha - 1 + \kappa'/2 \right\} \geq 1 - \eps,
\]
and hence also, 
\[
{ \begin{split} \liminf \limits_{n \to \infty} & \frac{1}{n}\#\left\{0 \leq k < n : \inf \limits_{W \in \R\PP^1} \frac{1}{m}H\left(\pi_W \eta^{(T^{(n+k)' } \om)}, \Dk_m\right) \geq \alpha - 1 + \kappa'/2 \right\} \\
	& = \liminf \limits_{n \to \infty}  \frac{1}{n}\#\left\{n \leq k < 2n : \inf \limits_{W \in \R\PP^1} \frac{1}{m}H\left(\pi_W \eta^{(T^{ k' } \om)}, \Dk_m\right) \geq \alpha - 1 + \kappa'/2 \right\}  \\
	& \geq 2  \liminf \limits_{n \to \infty}  \frac{1}{2n}\#\left\{0 \leq k < 2n : \inf \limits_{W \in \R\PP^1} \frac{1}{m}H\left(\pi_W \eta^{(T^{ k' } \om)}, \Dk_m\right) \geq \alpha - 1 + \kappa'/2 \right\} - 1\\
	& \geq 2(1 - C\eps) -1 \\
	& = 1 - 2C\eps,
\end{split}
}
\]
where in the last inequality we have applied  \eqref{eq:freq general upper bound} from Lemma \ref{lem:k k' comparison}.
Combining this with \eqref{eq:average comp entropy} gives for $m > m(\eps)$ and $\PP$-a.e.\ $\om \in \Om$:
{
\begin{equation}\label{eq:average comp entropy freq} \liminf \limits_{n \to \infty}\ \frac{1}{n} \# \left\{ 0 \leq k < n :  \inf \limits_{W \in \R\PP^1} \bE_{i=k} H_m(\pi_W \eta^{( T^{n'}\om ), x, i})  \geq \alpha - 1 + \kappa'/4  \right\} \geq 1 -  2C\eps .
\end{equation}
}
As $H_m(\pi_W \eta^{({T^{n'}\om }),x,i}) \leq 1 + O(1/m)$, we obtain
\[
\begin{split}\bE_{i=k} H_m(\pi_W \eta^{({ T^{n'}\om }), x, i})  \leq\ &\bP_{i = k} \left( H_m(\pi_W \eta^{({ T^{n'}\om }), x, i}) \leq \alpha - 1 + \kappa'/8 \right) \left( \alpha - 1 + \kappa'/8\right)  \\
& +\bP_{i = k} \left( H_m(\pi_W \eta^{({ T^{n'}\om }), x, i}) \geq \alpha - 1 + \kappa'/8 \right) \left( 1 + O(1/m) \right),
\end{split}
\]
thus
\[ \bP_{i = k} \left( H_m(\pi_W \eta^{({ T^{n'}\om }), x, i}) \geq \alpha - 1 + \kappa'/8 \right) \geq \frac{\bE_{i=k} H_m(\pi_W \eta^{({T^{n'}\om }), x, i}) - \left( \alpha - 1 + \kappa'/8 \right)}{1 + O(1/m)}. \]
Applying the above inequality to \eqref{eq:average comp entropy freq} gives for $m > m(\eps)$:

\[ \liminf \limits_{n \to \infty}\ \frac{1}{n} \# \left\{ 0 \leq k < n : \inf \limits_{W \in \R\PP^1} \bP_{i=k} \left( H_m(\pi_W \eta^{( T^{n'}\om ), x, i}) \geq \alpha - 1 + \kappa'/8 \right) > \kappa'/16 \right\} \geq 1 - 2C\eps .\] 
This finishes the proof.
\end{proof}

\subsection{Uniform entropy dimension}
We shall study now the uniform entropy dimension properties of the random measures $\eta^{\pom}$. As we are going to apply the inverse theorem to the convolution \eqref{eq:convolution decomposition}, in the scale $2^{-n}$ we need to study $\eta^{(T^{n'(\om)}\om)}$ rather than $\eta^{\pom}$ itself. The main goal of this subsection is to prove the following:

\begin{prop}\label{prop:unif ent dim}
Let $\Sigma$ be a model satisfying Assumption \ref{assmp:non-deg and rotation}. Then, for every $\eps > 0$ and $m > m(\eps)$,
\begin{equation}\label{eq:unif ent dim} \lim \limits_{n \to \infty} \PP \left( \left\{ \om \in \Om : \bP _{0 \leq i \leq n} \left( \left| H_{m}\left( \eta^{(T^{n'}\om), x, i} \right) - \alpha \right| < \eps \right) > 1 -\eps \right\} \right) = 1.
\end{equation}
\end{prop}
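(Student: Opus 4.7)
My approach is to combine a general ``uniform entropy dimension'' principle for exact-dimensional measures with Proposition~\ref{prop:exact dimension} and the $T$-invariance of $\PP$, handling the fact that the base measure $\eta^{(T^{n'(\om,n)}\om)}$ depends simultaneously on $\om$ and on $n$.

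First I would prove the following general lemma, involving no IFS structure: for any compactly supported $\mu\in\Pk(\R^d)$ that is exact-dimensional with dimension $\alpha$, and every $\eps>0$, there is $m_0=m_0(\mu,\eps)$ such that for all $m\geq m_0$,
\[
\limsup_{n\to\infty}\, \bP_{0 \leq i \leq n}\!\bigl(|H_m(\mu^{x,i})-\alpha|\geq \eps\bigr) \leq \eps.
\]
The main tool is the identity
\[
H_m(\mu^{x,i}) = \frac{1}{m}\Bigl(\bE_{y\sim \mu_{D_i(x)}}\!\bigl[-\log \mu(D_{i+m}(y))\bigr] + \log \mu(D_i(x))\Bigr),
\]
which follows directly from the definitions together with the identification $T_{D_i(x)}^{-1}\Dk_m = \Dk_{i+m}|_{D_i(x)}$. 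Exact-dimensionality gives $-n^{-1}\log\mu(D_n(\cdot))\to \alpha$ in $\mu$-probability and in $L^1(\mu)$, so by Fubini the average of the right-hand side over $i\in[0,n]$ and $x\sim\mu$ tends to $\alpha$ once $m$ is chosen large with respect to the rate of this convergence; concentration then follows from this averaging combined with the trivial upper bound $H_m\leq d+O(1/m)$. This is essentially a particular case of Hochman's arguments in \cite[Section 5]{H}.

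Next I would apply the lemma to $\mu=\eta^{(T^{n'(\om,n)}\om)}$ and uniformize in $\om$. By Proposition~\ref{prop:exact dimension} there is a $T$-invariant set $A\subset \Om$ with $\PP(A)=1$ on which $\eta^{(\tau)}$ is exact-dimensional of dimension $\alpha$; by $T$-invariance of $A$ we have $T^{n'(\om,n)}\om \in A$ for every $\om \in A$ and every $n\in\N$. For each $m$ define
\[
G_m := \Bigl\{\tau \in A :\ \limsup_n \bP_{0\leq i\leq n}\bigl(|H_m(\eta^{(\tau),x,i})-\alpha|\geq \eps\bigr) < \eps\Bigr\},
\]
which is measurable, and the general lemma gives $\PP(G_m)\to 1$ as $m\to\infty$. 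Fix $m$ with $\PP(G_m)>1-\eps/2$. When $\PP$ is Bernoulli, $n'(\om,n)$ depends only on $(\om_1,\ldots,\om_{n'})$ while the tail $T^{n'}\om$ is independent of these coordinates, so $T^{n'(\om,n)}\om$ has law $\PP$ and $\PP(T^{n'(\om,n)}\om\in G_m)=\PP(G_m)>1-\eps/2$, yielding the Proposition. For general $T$-invariant ergodic $\PP$ one combines the almost-additive cocycle property~\eqref{eq:cocycle} of $n'(\cdot,n)$ with Birkhoff's theorem applied to $\mathds{1}_{G_m}\circ T^k$ to reach the same conclusion; this is precisely the extra input needed to close the gap flagged in Remark~\ref{rem:SSSgap}.

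\textbf{Main obstacle.} The general uniform-entropy-dimension lemma is essentially classical; the delicate step is uniformizing its measure-dependent threshold $m_0$ over the random family $\{\eta^{(T^{n'(\om,n)}\om)}\}_n$, whose parameter $T^{n'(\om,n)}\om$ is a stopping-time-like shift of $\om$. This is transparent in the Bernoulli case by independence of the past and the future, but in the general $T$-invariant ergodic setting it requires an additional ergodic-theoretic argument---exactly the step where our proof goes beyond that of \cite{SSS}.
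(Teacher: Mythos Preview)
There is a genuine gap: the ``general lemma'' you propose --- that every compactly supported exact-dimensional measure has uniform entropy dimension --- is false. For a counterexample on $[0,1]$, run a Moran construction in which at step $n$ each surviving dyadic interval either keeps only its left half (no branching) or keeps both halves with equal mass (full branching), according to whether $n$ lies in an ``odd block'' or an ``even block'', where the $k$-th block has length $k$. Since $\sum_{j\le k} j\sim k^2/2$, the density of branching steps among the first $n$ steps tends to $1/2$; for every $x$ in the support one has $\mu(D_n(x))=2^{-b(n)}$ with $b(n)/n\to 1/2$, so $\mu$ is exact-dimensional with $\dim\mu=1/2$. But for any \emph{fixed} $m$, once $n$ is large the overwhelming majority of scales $i\le n$ lie at distance $>m$ from a block boundary inside a block of length $>2m$; for such $i$ the rescaled component $\mu^{x,i}$ sees $m$ consecutive steps of the same type, giving $H_m(\mu^{x,i})\in\{0,1\}$. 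Hence $\bP_{0\le i\le n}\bigl(|H_m(\mu^{x,i})-\tfrac12|\ge \tfrac14\bigr)\to 1$, and no choice of $m$ rescues this. Your proof sketch breaks exactly at the sentence ``concentration then follows from this averaging combined with the trivial upper bound $H_m\le d+O(1/m)$'': when $0<\alpha<d$, an average close to $\alpha$ together with the bounds $0\le H_m\le d$ yields no concentration whatsoever. Hochman's \cite[Section 5]{H} does not prove such a general lemma either; his argument is specific to self-similar measures and uses the IFS structure.

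The paper's proof uses the dynamical self-similarity relation \eqref{eq:dyn self-sim} in an essential way, not merely exact-dimensionality of $\eta^{(\tau)}$. The mechanism (Lemmas~\ref{lem:frostman on tubes}--\ref{lem: frostman entropy bound}) is that, up to a total-variation error which is small for most level-$k$ components, $\eta^{(T^{n'}\om)}_{x,k}$ coincides with a convex combination of similar copies of the \emph{single} measure $\eta^{(T^{(n+k)'}\om)}$ (shifted by a bounded amount); concavity of entropy then gives the \emph{lower} bound $H_m(\eta^{(T^{n'}\om),x,k})\ge H_m(\eta^{(T^{(n+k)'}\om)})-O(1/m)$, and the right-hand side exceeds $\alpha-\eps$ for a $(1-\eps)$-fraction of $k$'s by the ergodic theorem and \eqref{eq:inf dim}. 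The matching upper bound is then forced, as you note, by Lemma~\ref{lem:n' entropy dim} and Lemma~\ref{lem: local to global entropy}. Controlling the total-variation error requires the Frostman-on-tubes property (Lemma~\ref{lem:frostman on tubes}), which is where Assumption~\ref{assmp:non-deg and rotation} is used, to bound the mass of cylinders meeting dyadic cell boundaries. This self-similar comparison of components to the full measure at a later time is exactly the structure your counterexample-free approach is missing.
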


\begin{rem}
	A probability measure $\mu$ on $\R^d$ has \textit{uniform entropy dimension} $\alpha$ if for every $\eps > 0$ and $m > m(\eps)$ it satisfies $\liminf \limits_{n \to \infty} \bP _{0 \leq i \leq n} \left( \left| H_{m}\left(\mu^{x, i} \right) - \alpha \right| < \eps \right) > 1 -\eps$ (see \cite[Def. 5.1]{H}). In \cite[Theorem 4.7]{SSS} it was proved that for models on the real line, almost every measure $\eta^{\pom}$ is of uniform entropy dimension $\alpha$. The proof of Proposition \ref{prop:unif ent dim} presented below follows closely the approach of \cite{SSS}, with some additional geometrical ingredients required in the planar case and an adjustment allowing the change of $\eta^{\pom}$ to $\eta^{(T^{n'}\om)}$. Omitting the latter modification, one can prove that $\eta^{\pom}$ is of uniform entropy dimension $\alpha$ for $\PP$-a.e.\ $\om \in \Om$ in our setting as well. We do not include the details, as we formally do not use this property.
\end{rem}

First, we need a version of \eqref{eq:inf dim} with $\om$ replaced by $T^{n'(\om)} \om$.
	\begin{lem}\label{lem:n' entropy dim}
		For every $q \in \N$
		\[  H_{qn}\left( \eta^{(T^{n'}\om)} \right)\overset{\PP}{\longrightarrow} \alpha, \]
		i.e. for every $\delta > 0$
		\begin{equation}\label{eq:n' entropy dim}\lim \limits_{n \to \infty} \PP \left( \left\{ \om \in \Om : \left| H_{qn}\left( \eta^{(T^{n'}\om)} \right) - \alpha \right| < \delta \right\} \right) = 1.
		\end{equation}
	\end{lem}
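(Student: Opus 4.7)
My plan is to prove the lemma using the convolution decomposition \eqref{eq:convolution decomposition} $\eta^{(\om)} = \nu^{(\om, n'(\om))} \ast \tau^{(\om, n'(\om))}$, which relates the entropy of $\eta^{(T^{n'(\om)}\om)}$ at scale $2^{-qn}$ to entropies of $\eta^{(\om)}$ at scales $2^{-n}$ and $2^{-(q+1)n}$ --- the latter two converging to $\alpha$ in probability by \eqref{eq:inf dim}. The key geometric observation is that by the defining property of $n'(\om)$, the measure $\tau^{(\om, n'(\om))} = \lam_{\om_1}\cdots\lam_{\om_{n'(\om)}}\,\eta^{(T^{n'(\om)}\om)}$ has support of diameter at most $2^{-n}$, so it lives at a scale exactly matching that of $\Dk_n$-cells.

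The first step is to apply the entropy chain rule
\[
H(\eta^{(\om)}, \Dk_{(q+1)n}) = H(\eta^{(\om)}, \Dk_n) + H(\eta^{(\om)}, \Dk_{(q+1)n}\,|\,\Dk_n),
\]
and analyze the conditional term cell by cell. For each $\Dk_n$-cell $c$, up to an $O(1)$ error coming from cylinders straddling cell boundaries, the normalized restriction $(\eta^{(\om)})_c$ is proportional to the convolution $\mu_c \ast (\lam\eta^{(T^{n'(\om)}\om)})$, where $\mu_c$ is the discrete measure $\sum (p_u/\eta^{(\om)}(c))\delta_{t_u^{(\om)}}$ indexed by those $u \in \X_{n'(\om)}^{(\om)}$ with $f_u^{(\om)}(B(0,R)) \subset c$. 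Applying the convolution entropy bound \eqref{eq:conv entropy bound} together with \eqref{eq:entropy bounded similarity} (since $|\lam| = \Theta(2^{-n})$) yields the pointwise estimate
\[
H((\eta^{(\om)})_c, \Dk_{(q+1)n}) \geq H(\lam\eta^{(T^{n'(\om)}\om)}, \Dk_{(q+1)n}) - O(1) = H(\eta^{(T^{n'(\om)}\om)}, \Dk_{qn}) + O(1).
\]
Averaging over $c$ weighted by $\eta^{(\om)}(c)$ gives $H(\eta^{(\om)}, \Dk_{(q+1)n}\,|\,\Dk_n) \geq H(\eta^{(T^{n'(\om)}\om)}, \Dk_{qn}) + O(1)$.

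Combining with the chain rule,
\[
H(\eta^{(T^{n'(\om)}\om)}, \Dk_{qn}) \leq H(\eta^{(\om)}, \Dk_{(q+1)n}) - H(\eta^{(\om)}, \Dk_n) + O(1).
\]
Dividing by $qn$ and using that both $H_{(q+1)n}(\eta^{(\om)}) \to \alpha$ and $H_n(\eta^{(\om)}) \to \alpha$ in probability (by \eqref{eq:inf dim}), we obtain
\[
\limsup_n H_{qn}(\eta^{(T^{n'(\om)}\om)}) \leq \tfrac{q+1}{q}\alpha - \tfrac{1}{q}\alpha = \alpha
\]
in probability, giving the upper bound. The matching lower bound is obtained from the dual convexity (Gibbs-type) upper bound on $H((\eta^{(\om)})_c, \Dk_{(q+1)n})$, averaged using that $H(\nu^{(\om, n'(\om))}, \Dk_n) = H(\eta^{(\om)}, \Dk_n) + O(1)$ (since $\tau^{(\om, n'(\om))}$ lives at scale $\leq 2^{-n}$), to control $H(\eta^{(\om)}, \Dk_{(q+1)n}\,|\,\Dk_n)$ from above by $H(\eta^{(T^{n'(\om)}\om)}, \Dk_{qn})$ plus a term of order $H((p_u)) - H(\nu^{(\om,n'(\om))}, \Dk_n)$.

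The main obstacle is sharpening this second direction: a naive convexity bound replaces $H(\nu^{(\om, n'(\om))}, \Dk_n)$ by zero and only yields a lower bound of $\alpha - (\sdim(\Sigma) - \alpha)/q$, which is strictly less than $\alpha$ when dimension drop occurs. Closing this gap requires invoking the exact-dimensionality of $\eta^{(\om)}$ from Proposition \ref{prop:exact dimension} in the form $H(\nu^{(\om, n'(\om))}, \Dk_n)/n \to \alpha$ in probability (and not merely the similarity-dimensional upper bound $H((p_u))/n \to \sdim(\Sigma)$ supplied by the Birkhoff ergodic theorem applied to $j \mapsto H(p^{(\om_j)})$). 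Together with the cocycle relation $n'(\om)/n \to 1/\chi$ (where $\chi = -\bE\log_2 r_{\om_1}$), this matching lower bound is what replaces the Bernoulli-independence step used in \cite{SSS} --- thereby fixing the gap mentioned in Remark \ref{rem:SSSgap} and extending Lemma \ref{lem:n' entropy dim} to the setting of general ergodic selection measures.
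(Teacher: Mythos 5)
Your upper bound is essentially sound (and can be made cleaner by using concavity of conditional entropy on the decomposition \eqref{eq:dyn self-sim}, avoiding the cell-restriction boundary issue you wave away as $O(1)$), but the lower bound does not close, and the fix you propose does not work. Writing the convexity bound at scale $(q+1)n$ and subtracting $H(\eta^{(\om)},\Dk_n)$, the term $H(\nu^{(\om,n')},\Dk_n)$ you want to exploit cancels exactly against $H(\eta^{(\om)},\Dk_n)$ (these differ by $O(1)$ since $\tau^{(\om,n')}$ has diameter $\le 2^{-n}$), and you are left with the loss $H\bigl((p_u^{(\om)})_{u\in\X^{(\om)}_{n'}}\bigr)\approx n\,\sdim(\Sigma)$, giving only $\liminf H_{qn}(\eta^{(T^{n'}\om)})\ge \alpha-(\sdim(\Sigma)-\alpha)/q$. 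Knowing $H(\nu^{(\om,n')},\Dk_n)/n\to\alpha$ (which is indeed true, via Proposition \ref{prop:exact dimension}) gives information at scale $n$ only; to repair the bound you would need $H(\nu^{(\om,n')},\Dk_{(q+1)n})\le n\alpha+o(n)$, i.e.\ that $\nu^{(\om,n')}$ gains essentially no entropy between scales $2^{-n}$ and $2^{-(q+1)n}$. That is precisely the content of Theorem \ref{thm:main entropy on scales}, whose proof uses the present lemma and the whole inverse-theorem machinery, and which moreover holds only under the dimension-drop hypothesis --- so your route is circular, and it fails exactly in the regime ($\alpha<\sdim(\Sigma)$) where the lemma is actually needed.

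The paper's proof is of a completely different and much softer nature, and you should compare with it: the only randomness to control is the random shift $n'(\om)$. By the ergodic theorem applied to $\om\mapsto\log r_{\om_1}$, one has $n'(\om)=Ln+O(\eps n)$ with probability tending to $1$, where $L=\bigl(-\int\log r_{\om_1}\,d\PP\bigr)^{-1}$; Lemma \ref{lem:shifted entropy comp} then shows that replacing the random shift $n'(\om)$ by the deterministic shift $\lceil Ln\rceil$ changes $H_{qn}$ by only $O(\eps/q)$; and since $\lceil Ln\rceil$ does not depend on $\om$, $T$-invariance of $\PP$ (not Bernoulliness) identifies the law of $H_{qn}(\eta^{(T^{\lceil Ln\rceil}\om)})$ with that of $H_{qn}(\eta^{(\om)})$, which converges to $\alpha$ in probability by \eqref{eq:inf dim}. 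This deterministic-shift comparison is exactly the fix for the issue flagged in Remark \ref{rem:SSSgap}; no convolution decomposition or matching entropy lower bound is required.
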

	
	\begin{proof}
		The almost sure convergence in \eqref{eq:inf dim} implies convergence in probability, hence
		\begin{equation}\label{eq: entropy dim}
			\lim \limits_{n \to \infty} \PP \left( \left\{ \om \in \Om : \left| H_{n}\left( \eta^{(\om)} \right) - \alpha \right| < \delta \right\} \right) = 1 \text{ for every } \delta>0.
		\end{equation}
		By the ergodic theorem applied to the function $\Om \ni \om \mapsto \log r_{\om_1}$ we have for $\PP$-a.e.\ $\om \in \Om$: 
		\[ \lim \limits_{n \to \infty} \frac{n'(\om)}{n} = L := \left( - \int \limits_{\Om} \log r_{\om'_1}d\PP(\om') \right)^{-1}. \]
		Fix $\eps > 0$. By the above convergence,
		\begin{equation}\label{eq:lyap convergence}
			\lim \limits_{n \to \infty} \PP \left( \left\{ \om \in \Om : |n'(\om) - Ln| \leq \eps n \right\} \right) = 1.
		\end{equation}
		By Lemma \ref{lem:shifted entropy comp}, if $n \in \N, \om \in \Om$ are such that $|n'(\om) - Ln| \leq \eps n$, then
		\[ \left| H_{qn}\left( \eta^{(T^{n'}\om)} \right) - H_{qn}\left( \eta^{(T^{ \lceil Ln \rceil}\om)} \right) \right| \leq O\left( \frac{\eps n + 1}{qn} \right) = O(\eps / q) \]
		for $n$ large enough (depending on $\eps$). Therefore for fixed $q \in \N,\ \delta > 0$, and $\eps$ small enough to guarantee $O(\eps / q) \leq \delta / 2$, we have for each $n \in \N$ large enough:
		\[
		\begin{split} \PP & \left( \left\{ \om \in \Om : \left| H_{qn}\left( \eta^{(T^{n'}\om)} \right) - \alpha \right| < \delta \right\} \right) \\
			&\geq \PP \left( \left\{ \om \in \Om : \left| H_{qn}\left( \eta^{(T^{\lceil Ln \rceil}\om)} \right) - \alpha \right| < \delta/2 \right\} \right) - \PP \left( \left\{ \om \in \Om : |n'(\om) - Ln| > \eps n \right\} \right)\\
			& =  \PP \left( \left\{ \om \in \Om : \left| H_{qn}\left( \eta^{(\om)} \right) - \alpha \right| < \delta/2 \right\} \right) - \PP \left( \left\{ \om \in \Om : |n'(\om) - Ln| > \eps n \right\} \right),
		\end{split}
		\]
		where in the last step we have used the $T$-invariance of $\PP$. By \eqref{eq: entropy dim} and \eqref{eq:lyap convergence}, this implies \eqref{eq:n' entropy dim}.
	\end{proof}

Now we turn to the proof of Propostion \ref{prop:unif ent dim}. One difference with \cite{SSS} is that instead of the Frostman condition on balls, we need measures $\eta^{(\om)}$ to satisfy the Frostman condition on tubes, in the sense of the definition below.

\begin{defn}
	We say that a measure $\mu$ on $\R^2$ is \textit{$(C,\rho)$-Frostman on tubes} if $\mu\left( B(W + x, r) \right) \leq C r^\rho$ for every $W \in \R\PP^1,\ x \in \R^2$ and $r>0$, where $B(W + x, r)$ denotes the $r$-neighbourhood of the line $W+x$ (the $r$-tube around $W+x$).
\end{defn} 

The reason for technical complications in proving the uniform entropy dimension property, as in  \cite[Section 4.4]{SSS} and \cite[Section 5.1]{H}, can be heuristically explained as follows.
The uniform entropy dimension (or its analog in our paper) requires tight bounds for the entropy of ``most'' component measures of $\eta^{\pom}$ at ``most scales,'' in the sense of being close to $\alpha$.
These bounds are deduced from the representation of the measure as a convex linear combination of its many small copies (or rather, its ``translated versions''). However, this is literally true only for the
measure itself. For its components the problem that arises is that we condition the measure on a small dyadic cell and thus we have to deal with ``partial'' little copies whose support intersect the boundary of
the cell. This requires a 
rather tedious treatment: First showing that the contribution of the small copies intersecting the boundaries is small, in view of the Frostman on tubes property, and then working with ``truncated'' components, and finally
verifying that
the resulting error is negligible in the limit. An additional complication here, as in \cite{SSS}, is that our measures are random, with a possibility of degeneracy,
hence the Frostman on tubes property can only hold almost surely and
not uniformly.

\medskip

The following lemma is a $2$-dimensional version of \cite[Lemma 4.9]{SSS}. Recall that we assume $\Sigma$ to be a model verifying Assumption \ref{assmp:non-deg and rotation}.

\begin{lem}\label{lem:frostman on tubes}
	There exists $\rho = \rho(\Sigma) > 0$ and a set $\Om' \subset \Om$ of full $\PP$-measure, such that for any $\delta>0$ there exists a constant $C_\delta > 0$ satisfying
	\[ \liminf \limits_{n \to \infty} \frac{1}{n} \# \left\{ 0 \leq k < n : \eta^{(T^k \om)} \text{ is } (C_\delta, \rho) \text{-Frostman on tubes} \right\} \geq 1 - \delta \]
	for every $\om \in \Om'$.
\end{lem}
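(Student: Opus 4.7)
By Birkhoff's theorem applied to the $T$-invariant ergodic selection measure $\PP$, it suffices to prove that there exists $\rho = \rho(\Sigma) > 0$ such that
\[
\PP\!\left(\{\om\in\Om : C_\rho(\om) < \infty\}\right) = 1, \quad \text{where } C_\rho(\om) := \sup_{W,x,r}\ r^{-\rho}\, \eta^{(\om)}(B(W+x, r)).
\]
Indeed, the sets $\{C_\rho \leq C\}$ then exhaust a full-measure set as $C \to \infty$, and for any $\delta > 0$, Birkhoff applied to the indicator of $\{C_\rho \leq C_\delta\}$ (with $C_\delta$ chosen so that this set has $\PP$-measure $\geq 1 - \delta$) yields the frequency statement of the lemma.

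For the almost-sure estimate, I would use Assumption~\ref{assmp:non-deg and rotation} to produce a finite word $w \in I^N$ and a constant $\kappa \in (0,1)$ with the following two properties: (a) $\PP([w]) > 0$; (b) for every $\om \in [w]$, the rescaled discrete approximation $\mu_N^{(\om)} := \sum_{u \in \X_N^{(\om)}} p_u^{(\om)}\, \delta_{f_u^{(\om)}(0)/r_N(\om)}$ satisfies $\mu_N^{(\om)}(B(W+y,\kappa)) \leq 1 - \kappa$ uniformly over $W \in \R\PP^1$ and $y \in \R^2$. Property~(b) expresses that the level-$N$ atoms are spread in a genuinely $2$-dimensional fashion, and the existence of such a $w$ is where both parts of Assumption~\ref{assmp:non-deg and rotation} are used: part~(i) lets us include a position in $w$ at which the IFS has distinct translates, while part~(ii), by inserting $i_1$ at a separate position, forces the atoms not to line up after being acted on by the non-real rotation $\vphi_{i_1}$ (taking $N$ sufficiently large, and appealing to a compactness argument in $\R\PP^1$, one can rule out all residual ``parallel-line'' configurations). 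Iterating the dynamic self-similarity~(\ref{eq:dyn self-sim}) in blocks of size $N$ and combining with~(b) produces, each time the shifted orbit $T^{jN}\om$ lies in $[w]$, a mass-contraction factor of $(1-\kappa)$ for tubes of the appropriate scale. Birkhoff applied to $\mathds{1}_{[w]}$ ensures that this happens with asymptotic frequency $\PP([w])$ for $\PP$-a.e.\ $\om$, translating into the polynomial bound $C_\rho(\om) < \infty$ with $\rho = \rho(\kappa, \PP([w]), \Sigma) > 0$.

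The main obstacle is the interaction between the tube direction and the rotations in the recursion: at step $n$, a tube around direction $W$ is pulled back to a tube around $\vphi^{-1}_{\om_n}\cdots\vphi^{-1}_{\om_1} W$, and the mass-contraction guaranteed by $[w]$ must apply to the tube in this rotated direction and at the rescaled width. The uniformity-in-direction in~(b) handles the first issue, but verifying that the pulled-back tube actually lies in the regime where $(1-\kappa)$-contraction applies requires matching the scale parameter $r$ against the recursion depth. The book-keeping is analogous to the argument in the proof of Lemma~\ref{lem:proj entropy improved}, invoking Lemma~\ref{lem:skew product ergodic} (to ensure equidistribution of the rotated directions along typical orbits) together with an analogue of Lemma~\ref{lem:k k' comparison} (to relate $r$ to the number of recursion steps). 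In the one-dimensional setting of \cite[Lemma 4.9]{SSS} there are no rotations and this step is trivial; the planar version is where the new technical input is genuinely required.
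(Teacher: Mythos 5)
Your overall skeleton (reduce via the ergodic theorem to an almost-sure Frostman bound, then extract a definite mass-loss factor at a positive frequency of symbolic good times and convert geometric decay in the number of good times into polynomial decay in $r$) matches the paper's proof, and the scale book-keeping you single out as the main obstacle is actually the easy part: the tube width grows by at most $r_{\min}^{-1}$ per pulled-back symbol, so starting from $r\approx \eps\, r_{\min}^{n_j}$ the contraction is available at every good time up to $n_j$, and $n_j$ grows linearly in $j$ by the frequency bound; no analogue of Lemma \ref{lem:k k' comparison} is needed, and in your scheme Lemma \ref{lem:skew product ergodic} is not needed there either. Where you genuinely diverge is the source of the mass loss: you want a single positive-measure word $w$ whose level-$N$ cylinder configuration is quantitatively non-collinear, making the loss uniform in the tube direction, whereas the paper uses only single occurrences of $i_0$ (two twin cylinders differing by the fixed vector $\theta$), accepts that this loses mass only when the pulled-back tube direction avoids a small arc $J_\eps$ around the direction of $\theta$, and obtains the positive frequency of such times from the skew-product equidistribution of Lemma \ref{lem:skew product ergodic}, Assumption \ref{assmp:non-deg and rotation}\ref{it:non-real} entering only through $\sup_{W}\xi^{(W)}(J_\eps)<1$.

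The genuine gap is your construction of $w$. You build it ``by inserting $i_1$ at a separate position'', i.e.\ by freely concatenating symbols of positive marginal probability. That is fine when $\PP$ is Bernoulli (the case feeding into Theorem \ref{thm:main ac}), but the lemma is stated for an arbitrary ergodic selection measure, and Assumption \ref{assmp:non-deg and rotation} only provides $\PP(\om_1=i_0)>0$ and $\PP(\om_1=i_1)>0$: a general ergodic $\PP$ need not charge any prescribed pattern, so the existence of a positive-measure word containing two $i_0$'s separated by a block with non-real rotation product (which is exactly what non-collinearity of the level-$N$ atoms requires) does not follow from the hypotheses; it would have to be derived indirectly, e.g.\ from Lemma \ref{lem:skew product ergodic} together with the fact that a.e.\ point's prefix cylinders have positive measure. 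The paper's mechanism is designed precisely so that only single-symbol probabilities are used. A second, smaller issue: you harvest good blocks along the arithmetic progression $jN$ and apply Birkhoff to $\ind_{[w]}$, but ergodicity of $T$ does not give ergodicity of $T^N$, so the visit frequency along multiples of $N$ is not controlled; this is repaired by running the recursion at successive, suitably separated return times to $[w]$, exactly as the paper does with its stopping times $n_j$. With these repairs (and keeping the requirement, which your ``$N$ sufficiently large'' remark does address, that the tube must miss an entire level-$N$ cylinder ball and not merely its atom), your direction-uniform variant does go through in the Bernoulli case, but as written it does not prove the lemma in the stated generality.
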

\begin{proof}
	By the ergodic theorem, it is enough to show that there exists $\rho > 0$ such that for every $\delta>0$ there exists $C_\delta>0$ with
	\[ \PP\left( \eta^{(\om) } \text{ is } (C_\delta, \rho) \text{-Frostman on tubes} \right) \geq 1 - \delta.\]
	Therefore, it is enough to find $\rho > 0$ and $C_0 > 0$ such that for $\PP$-a.e.\ $\om \in \Om$ there exists $r_0 = r_0 (\om) > 0$ satisfying
	\begin{equation}\label{eq:frostman on tubes}
		\eta^{(\om)}\left( B(W + x, r) \right) \leq C_0 r^\rho
	\end{equation}
	for all $W \in \R\PP^1,x \in \R^2, r \in (0, r_0)$. We will prove this inequality first for a fixed $W \in \R\PP^1$ and then explain how to extend it to the uniform statement above.
	
	{By Assumption \ref{assmp:non-deg and rotation}\ref{it:non-degenerate}, there exist $i_0 \in I$ such that $\PP(\om_1 = i_0)$ and $j',j'' \in \{1, \ldots, k_{i_0}\}$ with $t^{(i_0)}_{j'} \neq t^{(i_0)}_{j''}$.  Set $\theta = t^{(i_0)}_{j'} - t^{(i_0)}_{j''} \neq 0$.} We will use the following elementary geometrical observation. For every $\eps < |\theta| / 4$, there exists a closed arc $J_\eps \subset \R\PP^1$ with the following property:
	\begin{equation}\label{eq:tubes observation}
		\begin{split}
			\text{if } & V \notin J_\eps \text{ and } r < \eps, \text{ then for every } x,y \in \R^2, \text{ the tube } B(V+x, r) \\ & \text{can intersect at most one of the balls } B(y, \eps), B(y + \theta, \eps).
		\end{split}
	\end{equation}
	Moreover, $|J_\eps| \to 0$ as $\eps \to 0$. For the rest of the proof we fix $\eps \in (0, |\theta| / 4)$ small enough to have
	\begin{equation}\label{eq: uniform J measure}
		\sup \limits_{W \in \R\PP^1}  \fm^{(W)}(J_\eps) < 1,
	\end{equation}
	where $ \fm^{(W)}$ is given by \eqref{eq:xi W}. This is possible, as $ \fm^{(W)}$ is either the Lebesgue or a measure supported on a finite set, but not on a single atom (due to Assumption \ref{assmp:non-deg and rotation}.\ref{it:non-real}), satisfying \eqref{eq:xi W}. Let us denote for simplicity $J = J_\eps$. Fix $\ell \in \N$ large enough to have
	\[ \diam(B^{(\om)}_u) = \diam(f_u^{(\om)}(B(0,R))) < 2\eps \text{ for all } \om \in \Om, u \in \X^{(\om)}_\ell \]
	(it is enough to have $Rr_{\max}^\ell < \eps$). We claim that for $\PP$-a.e.\ $\om \in \Om$, for every $W \in \R\PP^1$ there exist infinitely many $n \in \N$ (depending on $W$) such that $\om_n = i_0$ and $\vphi_{\om_1}^{-1} \cdots \vphi_{\om_{n-1}}^{-1} W \notin J$. To prove that, let  $F : \Om \times \R\PP^1 \to \Om \times \R\PP^1,\ F(\om, V) = (T \om, \vphi^{-1}_{\om_1}V)$ be the skew-product corresponding to the model. Define
	\[ A = \left\{ (\om, V) \in \Om \times \R\PP^1 : \om_1 = i_0, V \notin J \right\}. \]
	As $A$ is open in $\Om \times \R\PP^1$ (we endow $\Om$ with the product topology), we conclude by Lemma \ref{lem:skew product ergodic} that for $\PP$-a.e.\ $\om \in \Om$,
	\begin{equation}\label{eq:good om frequency}
		\begin{split}
			\liminf \limits_{n \to \infty}\ \inf \limits_{W \in \R\PP^1}\ \frac{1}{n} \sum \limits_{k=0}^{n-1} \mathds{1}_A(F^n(\om, W)) & \geq \inf \limits_{W \in \R\PP^1}\ \PP \otimes  \fm^{(W)}(A) \\
			& = \PP(\om_1 = i_0) \inf \limits_{W \in \R\PP^1}\   \fm^{(W)}(\R\PP^1 \setminus J) =: c > 0.
		\end{split}
	\end{equation}
	Positivity of $c$ follows from  Assumption \ref{assmp:non-deg and rotation} and \eqref{eq: uniform J measure}. Fix now $W \in \R\PP^1$. As
	\[ \frac{1}{n} \sum \limits_{k=0}^{n-1} \mathds{1}_A(F^n(\om, W)) = \frac{1}{n} \# \{ 1 \leq k \leq n : \om_k = i_0, \vphi_{\om_1}^{-1} \cdots \vphi_{\om_{k-1}}^{-1} W \notin J \},\]
	we see that indeed, for every $\om$ satisfying \eqref{eq:good om frequency} there exist infinitely many $n \in \N$ such that $\om_n = i_0$ and $\vphi_{\om_1}^{-1} \cdots \vphi_{\om_{n-1}}^{-1} W \notin J$. For such $\om$, set $n_0 = 0$ and for $j \in \N$ define inductively $n_j = n_j(\om, W)$ by
	\[ n_{j+1} = \min \{ n\geq n_j + \ell: \om_n = i_0 \text{ and } \vphi_{\om_1}^{-1} \cdots \vphi_{\om_{n-1}}^{-1} W \notin J\} = \min \{ n\geq n_j + \ell: F^{n-1}(\om, W) \in A \}. \]
	As the event $F^{n-1}(\om, W) \in A$ occurs for at most $j\ell$ iterates $n \in \{1, \ldots, n_j\}$, we see that
	\[  \frac{\ell j}{n_j} \geq \frac{1}{n_j} \sum \limits_{k=0}^{n_j-1} \mathds{1}_A(F^n(\om, W)),\]
	hence by \eqref{eq:good om frequency} there exists $K(\om) \in \N$ such that for $j \geq K(\om)$ we have
	\begin{equation}\label{eq:n_j linear growth} \frac{n_j}{j} \leq C' := \frac{2\ell}{c} < \infty.
	\end{equation}
	Note that $K$ depends only on $\om$, even though the sequence $n_j$ depends both on $\om$ and $W$. As $n_j = \sum \limits_{k=0}^{j-1} \left( n_{k+1} - n_k \right) \leq C'j$ for such $j$, we obtain
	\begin{equation}\label{eq:large gap freq} \# \left\{ 0 \leq k < j : n_{k+1} - n_k \geq 2C' \right\} \leq j/2\ \text{ for } j \geq K(\om). 
	\end{equation}
	For $j \in \N$ write $\om^j := T^{n_j - 1} \om$. For $\om \in \Om, V \in \R\PP^1$ and $r>0$ define
	\[ \phi^{(\om)}(V,r) := \sup \limits_{x \in \R^2} \eta^{(\om)}\left( B(V+x,r) \right). \]
	Applying \eqref{eq:dyn self-sim} and recalling that $f_u^{(\om)}$ contracts by at most $r_{\min}^{k}$ for $u \in \X^{(\om)}_k$, it is easy to see that
	\[
	\phi^{(\om)}(V, r) \leq \phi^{(T^k \om)} \left(\vphi_{\om_1}^{-1} \cdots \vphi_{\om_k}^{-1} V, \frac{r}{r_{\min}^k}\right).
	\]
	In particular applying that to $W$ with $k = n_1$ gives
	\begin{equation}\label{eq:tube measure start}
		\phi^{(\om)}(W, r) \leq \phi^{(\om^1)}\left( \vphi_{\om_1}^{-1} \cdots \vphi_{\om_{n_1 - 1}}^{-1} W, \frac{r}{r_{\min}^{n_1}}  \right).
	\end{equation}
	Similarly, for $j \geq 1$ we have by \eqref{eq:dyn self-sim} the following decomposition:
	\begin{equation}\label{eq:tube measure decomp}
		\begin{split}
			\eta^{(\om^j)} & \left(B\left(\vphi_{\om_1}^{-1} \cdots  \vphi_{\om_{n_j - 1}}^{-1} W + x, r\right)\right) \\
			& = \sum \limits_{u \in \X^{(\om_j)}_{n_{j+1} - n_j}} p_u^{(\om^j)}f_u^{(\om^j)}\eta^{(\om^{j+1})} \left(B\left(\vphi_{\om_1}^{-1} \cdots  \vphi_{\om_{n_j - 1}}^{-1} W + x, r\right)\right).
		\end{split}
	\end{equation}
	As $\om^j_1 = i_0 $ and $n_{j+1} - n_{j} \geq \ell$, there are $v, \tilde{v} \in \X_{n_{j+1} - n_j}$ such that $v_1 = j, \tilde{v}_1 = j'$ and all other entries are equal. Therefore $f^{(\om_j)}_v (0) - f^{(\om_j)}_{\tilde{v}} (0) = t^{i_0}_j - t^{i_0}_{j'} = \theta$, hence $B^{(\om^j)}_v = B^{(\om^j)}_{\tilde{v}} + \theta$ . As $|v| = |\tilde{v}| \geq \ell$, these are balls of radius smaller than $\eps$. As also $\vphi_{\om_1}^{-1} \cdots \vphi_{\om_{n_j - 1}}^{-1}W \notin J$, we have by \eqref{eq:tubes observation} that for $r<\eps$, the tube $B\left(\vphi_{\om_1}^{-1} \cdots  \vphi_{\om_{n_j - 1}}^{-1} W + x, r\right)$ is disjoint with at least one of the balls $B^{(\om^j)}_v, B^{(\om^j)}_{\tilde{v}}$. As for each $u \in \X^{(\om_j)}_{n_{j+1} - n_j}$ the measure $f_u^{(\om_j)}\eta^{(\om^{j+1})}$ is supported in the ball $B_u^{(\om^j)}$, we conclude that at least one of the terms in the sum \eqref{eq:tube measure decomp} has to be zero. Assume without loss of generality that it is the term corresponding to $v$. Therefore for $r \in (0,\eps)$ holds
	\begin{equation}\label{eq:tube measure step} 
		\begin{split}
			\eta^{(\om^j)} & \left(B\left(\vphi_{\om_1}^{-1} \cdots  \vphi_{\om_{n_j - 1}}^{-1} W + x, r\right)\right) \\
			& = 
			\sum \limits_{u \neq v} p_u^{(\om^j)}f_u^{(\om^j)}\eta^{(\om^{j+1})} \left(B\left(\vphi_{\om_1}^{-1} \cdots  \vphi_{\om_{n_j - 1}}^{-1} W + x, r\right)\right) \\
			& \leq \sum \limits_{u \neq v} p_u^{(\om^j)} \eta^{(\om^{j+1})} \left(B\left(\vphi_{\om_1}^{-1} \cdots  \vphi_{\om_{n_{j+1} - 1}}^{-1} W + \left(f_u^{(\om^j)}\right)^{-1}(x), \frac{r}{r_{\min}^{n_{j+1} - n_j}}\right)\right) \\
			& \leq \left( 1 - p_{\min}^{n_{j+1} - n_j} \right) \phi^{(\om^{j+1})}\left(\vphi_{\om_1}^{-1} \cdots  \vphi_{\om_{n_{j+1} - 1}}^{-1} W,  \frac{r}{r_{\min}^{n_{j+1} - n_j}} \right),
		\end{split}
	\end{equation}
	where $p_{\min} := \min \left\{ p^{(i)}_j : i \in I, j \in \{1, \ldots, k_i \} \right\} > 0$. Starting with \eqref{eq:tube measure start} and iterating \eqref{eq:tube measure step}, we obtain using \eqref{eq:large gap freq}:
	\begin{equation}\label{eq:tube measure together} \phi^{(\om)}(W, \eps r_{\min}^{n_j}) \leq \prod \limits_{k=1}^{j-1}(1 - p_{\min}^{n_{k+1} - n_k}) \leq (1 - p_{\min}^{2C'})^{j/2}\ \text{ for } j \geq K(\om).
	\end{equation}
	Set now $r_0 := \eps r_{\min}^{n_{K(\om)}}$ and note that for every $r \in (0, r_0)$ there exists $j \geq K(\om)$ such that
	\[\eps r_{\min}^{n_{j+1}} < r \leq \eps r_{\min}^{n_j}.\]
	Therefore, by \eqref{eq:tube measure together} and \eqref{eq:n_j linear growth}, there exist constants $C_0, \rho$ such that for $r < r_0$,
	\[ \eta^{(\om)}\left(B\left(W + x , r \right)\right) \leq \phi^{(\om)}(W, \eps r_{\min}^{n_j}) \leq (1 - p_{\min}^{2C'})^{j/2} \leq (1 - p_{\min}^{2C'})^{\frac{n_{j+1}}{2C'} - 1/2} \leq C_0 r^{\rho}. \]
	Note that $C_0$ and $\rho$ depend only on the model $\Sigma$. As $r_0$ depends only on $\om$ and not on $W$, the bound is uniform $W \in \R\PP^1$, hence \eqref{eq:frostman on tubes} holds.
\end{proof}

Recall the definition of $k'(\om)$ from \eqref{eq:prime def}. For fixed $\om \in \Om, n>k$, and $D \in \Dk_k$, define (following \cite{SSS}) the $n$-truncated $k$-component of $\eta^{(\om)}$ by
\[ \eta^{(\om)}_{n, [D]} := \frac{1}{Z_{D,n}} \sum \limits_{u \in \X^{(\om)}_{n, [D]}} p^{(\om)}_u f^{(\om)}_u \eta^{(T^n \om)}, \]
where
\[ \X^{(\om)}_{n, [D]}  := \left\{ u \in \X^{(\om)}_n : B_u^{(\om)} \subset D \right\} \]
and $Z_{D,n}$ is a normalizing constant making $\eta^{(\om)}_{n, [D]}$ a probability measure. We also write
\[\eta^{(\om)}_{n, [x,k]} := \eta^{(\om)}_{n, D_k(x)}.  \]

The second step needed for the proof of Proposition \ref{prop:unif ent dim} is an analog \cite[Proposition 4.11]{SSS} in the planar setting. Together with Lemma \ref{lem:frostman on tubes}, these are the main technical ingredients of the proof of the uniform entropy property from \cite{SSS}.

\begin{lem}\label{lem:TV distance}
	For any $\eps, C, \rho > 0$ there exists $N_\eps = N(\eps, C, \rho)$ such that the following holds: If $\om \in \Om, k \in \N$ are such that $\eta^{(T^{k'}\om)}$ is $(C, \rho)$-Frostman on tubes, and we take $n = k'(\om) + N_\eps$, then
	\[ \bP_{i=k}\left(d_{TV}\left( \eta^{(\om)}_{x,i}, \eta^{(\om)}_{n,[x,i]}  \right) < \eps \right) > 1 - \eps, \]
	where $d_{TV}$ denotes the total variation distance in the space of finite Borel measure on $\R^2$.
\end{lem}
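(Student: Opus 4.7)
My plan is to exploit the level-$n$ self-similarity decomposition to write the restriction of $\eta^{(\om)}$ to a cell $D\in\Dk_k$ as a sum of a \emph{good} part (level-$n$ pieces whose ball $B_u^{(\om)}$ is entirely inside $D$) and a small \emph{bad} part (pieces whose ball straddles $\partial D$), and then to bound the total mass of the bad pieces using the Frostman-on-tubes hypothesis. Set $\mu:=\eta^{(\om)}$, and for each $D=D_k(x)$ write
\[
\nu_D := \sum_{u \in \X^{(\om)}_{n,[D]}} p_u^{(\om)} f_u^{(\om)} \eta^{(T^n\om)}, \qquad \tau_D := \mu|_D - \nu_D \geq 0.
\]
Since $\mu_{n,[D]} = \nu_D/Z_{D,n}$, $\mu_D = \mu|_D/\mu(D)$, and $Z_{D,n}\leq \mu(D)$, a short direct computation yields $d_{TV}(\mu_D,\mu_{n,[D]}) \leq 2\tau_D(\R^2)/\mu(D)$. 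Markov's inequality then gives
\[
\bP_{i=k}\bigl(d_{TV}(\mu_{x,i},\mu_{n,[x,i]}) \geq \eps\bigr) \leq \frac{2}{\eps}\sum_{D\in\Dk_k}\tau_D(\R^2),
\]
so it suffices to prove $\sum_D \tau_D(\R^2) \leq \eps^2/2$ once $N_\eps$ is large enough.

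The next step collapses this sum to the $\mu$-measure of a thin neighborhood of the dyadic grid. Since $n = k'(\om)+N_\eps$, every ball $B_u^{(\om)}$ with $u\in\X_n^{(\om)}$ has diameter at most $\delta:=2^{-k}r_{\max}^{N_\eps}$, so a piece contributes to $\tau_D$ only when $B_u^{(\om)}$ meets $\partial D$, and then lies entirely in the $\delta$-neighborhood $(\partial\Dk_k)^{(\delta)}$ of the global grid. Swapping summations gives
\[
\sum_D \tau_D(\R^2) \leq \sum_u p_u^{(\om)} f_u^{(\om)}\eta^{(T^n\om)}\bigl((\partial\Dk_k)^{(\delta)}\bigr) = \mu\bigl((\partial\Dk_k)^{(\delta)}\bigr).
\]
To bound the right-hand side, I will invoke the one-step self-similarity $\mu = \sum_{u'\in\X_{k'(\om)}^{(\om)}} p_{u'}^{(\om)} f_{u'}^{(\om)}\eta^{(T^{k'(\om)}\om)}$. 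Each piece $f_{u'}^{(\om)}\eta^{(T^{k'(\om)}\om)}$ is supported in $B_{u'}^{(\om)}$, whose diameter is at most $2^{-k}$ by the definition of $k'(\om)$, so it meets at most four grid lines of $\partial\Dk_k$ (at most two horizontal and two vertical). Pulling back by the similarity $f_{u'}^{(\om)}$ (contracting by $\Theta(2^{-k})$) sends each such grid line to an affine line in $\R^2$ and the $\delta$-neighborhood to a tube of width $\Theta(r_{\max}^{N_\eps})$. The Frostman-on-tubes hypothesis on $\eta^{(T^{k'(\om)}\om)}$ bounds the measure of each pulled-back tube by $C\,(\Theta(r_{\max}^{N_\eps}))^{\rho} = O(r_{\max}^{\rho N_\eps})$; summing the at most four contributions per $u'$ and then over $u'$ (using $\sum_{u'} p_{u'}^{(\om)} = 1$) yields $\mu((\partial\Dk_k)^{(\delta)}) = O(r_{\max}^{\rho N_\eps})$. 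Choosing $N_\eps$ large (depending only on $\eps, C, \rho$, and the model) makes this less than $\eps^2/2$, and the lemma follows.

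The only delicate point is this final estimate: a naive application of Frostman-on-tubes to $\eta^{(\om)}$ would require summing the tube bound over the $\Theta(2^k)$ grid lines intersecting its support, and would therefore blow up. The decomposition into level-$k'(\om)$ pieces is essential, since each such piece only encounters $O(1)$ grid lines (its diameter being at most $2^{-k}$); this is precisely why the hypothesis is formulated for $\eta^{(T^{k'(\om)}\om)}$ rather than for $\eta^{(\om)}$ itself. The remaining bookkeeping task is verifying that the pulled-back tube widths are uniformly $\Theta(r_{\max}^{N_\eps})$ across $u'$ and $\om$, so that the same Frostman constant $C$ can be used throughout.
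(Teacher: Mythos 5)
Your proof is correct and follows essentially the same route as the paper: both reduce the problem, via a total-variation/Markov computation over cells, to bounding the mass of level-$n$ pieces whose balls straddle $\partial\Dk_k$, and then bound that mass by splitting at level $k'(\om)$ so that each piece of diameter $\le 2^{-k}$ meets only $O(1)$ grid lines, pulling these back to tubes of width $O(r_{\max}^{N_\eps})$ and applying the Frostman-on-tubes hypothesis to $\eta^{(T^{k'}\om)}$. The only difference is cosmetic: you derive the $d_{TV}$/Markov step explicitly from the $\nu_D,\tau_D$ decomposition, whereas the paper cites the corresponding calculation from \cite{SSS}.
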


\begin{proof}
	The proof follows closely the proof of \cite[Proposition 4.11]{SSS}. We will therefore present with more details the initial portion of the proof where the Frostman on tubes property is used and only sketch the remainder of the argument.
	
	Given $\eps > 0$, choose $\delta \in (0,1/2)$ such that any tube $B(W+x,r)$ with $r \leq 2R\delta$ satisfies $\eta^{(T^{k'}\om)}(B(W+x,r)) < \frac{\eps^2}{128}$ (by the Frostman on tubes assumption) and pick $N_\eps$ such that $r_{\max}^{N_\eps} < \delta/2$.
	
	Fix $u \in \X^{(\om)}_{k'}$ and consider $v \in \X^{(T^{k'}\om)}_{N_\eps}$ such that $B^{(\om)}_{uv}$ has non-empty intersection with some $D \in \Dk_k$, but is not entirely contained in it. Then for $n = k'(\om) + N_\eps$ the measure $f^{(\om)}_{uv} \eta^{(T^n \om)}$ is supported inside the $2R\delta r_{\om_1}\cdots r_{\om_{k'}}$-neighbourhood $T_v$ of the boundary $\partial D$ of the cube $D$. As $\partial D$ is a subset of a union of $4$ lines, we see that $\left(f^{(\om)}_u\right)^{-1}(T_v)$ is a subset of a union of $4$ tubes of the form $B(W, x+r)$ with $r \leq 2R\delta$. Therefore, by the choice of $\delta$, $f^{(\om)}_u\eta^{(T^{k'}\om)} (T_v) =  \eta^{(T^{k'}\om)}\left(\left(f^{(\om)}_u\right)^{-1}(T_v)\right) < \frac{\eps^2}{32}$ for each such neighbourhood $T_v$. On the other hand, as $2R\delta r_{\om_1}\cdots r_{\om_{k'}} \leq \delta 2^{-k}$ by \eqref{eq:prime def} and as $f^{(\om)}_u\eta^{(T^{k'}\om)}$ is supported in a ball of diameter at most $2^{-k}$, we see that $f^{(\om)}_u\eta^{(T^{k'}\om)}$ can give a positive mass to at most $4$ such tubular neighbourhoods $T_v$. Using this together with the relation  $f^{(\om)}_u\eta^{(T^{k'}\om)} = \sum \limits_{v \in \X^{(T^{k'}\om)}_{N_\eps}} p^{(T^{k'}\om)}_v f^{(\om)}_{uv} \eta^{(T^n \om)}$ (following from \eqref{eq:dyn self-sim}), we see that setting
	\[ \X^{(\om)}_n[\Dk_k] := \left\{ z \in \X^{\om}_n : B_z^{(\om)} \subset D \text{ for some } D \in \Dk_k \right\} \]
	we have for each $u \in \X^{(\om)}_{k'}$:
	\[ \sum \limits_{v : uv \notin \X^{(\om)}_n[\Dk_k]} p_v^{(T^{k'} \om)} \leq \sum \limits_{v : uv \notin \X^{(\om)}_n[\Dk_k]} p_v^{(T^{k'} \om)} f^{(\om)}_{uv} \eta^{(T^n \om)}(T_v) \leq f^{(\om)}_u\eta^{(T^{k'}\om)} \left( \bigcup \limits_{v : uv \notin \X^{(\om)}_n[\Dk_k] } T_v \right) < \frac{\eps^2}{8}.\]
	By the calculation from the proof of \cite[p. 86]{SSS}\footnote{We have the constant $8$ rather than $4$ as in \cite{SSS}, since in $\R^2$ each ball $B_z^{(\om)},\ z \in \X^{(\om)}_n$ can intersect at most $4$ dyadic cells $D \in \Dk_k$.}
	\[ \bP_{i=k}\left(d_{TV}\left( \eta^{(\om)}_{x,i}, \eta^{(\om)}_{n,[x,i]}  \right) \geq \eps \right) \leq \frac{8}{\eps} \sum \limits_{z  \notin \X^{(\om)}_n[\Dk_k]} p_z^{(\om)}, \]
	hence 
	\[ \bP_{i=k}\left(d_{TV}\left( \eta^{(\om)}_{x,i}, \eta^{(\om)}_{n,[x,i]}  \right) \geq \eps \right) \leq \frac{8}{\eps} \sum \limits_{u \in \X^{(\om)}_{k'}} p^{(\om)}_u \sum \limits_{v : uv \notin \X^{(\om)}_n[\Dk_k]} p_v^{(T^{k'}\om)} < \eps.\]
\end{proof}

The following lemma is similar to \cite[Corollary 4.18]{SSS}. The difference is that we bound the entropy of the components $\eta^{(T^{n'}\om)}$ instead of $\eta^{(\om)}$.

\begin{lem}\label{lem: frostman entropy bound}
	For each $\eps, C, \rho > 0$, there exists $m_\eps = m(\eps, C, \rho)$ such that the following holds: For each $\om \in \Om$, if $m \geq m_\eps$ and $n,k \in \N$ are such that $\eta^{(T^{(n+k)'} \om)}$ is $(C, \rho)$-Frostman and $H_m(\eta^{(T^{(n+k)'} \om)}) > \alpha - \eps/8$, then
\begin{equation}\label{eq: frostman entropy bound}\bP_{i = k} \left( H_m \left( \eta^{(T^{n'}\om), x, i} \right) > \alpha - \eps \right) > 1 - \eps.
\end{equation}
\end{lem}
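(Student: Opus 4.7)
The proof will adapt \cite[Corollary 4.18]{SSS} to the change of base measure from $\eta^{\pom}$ to $\eta^{(T^{n'}\om)}$ and to the planar setting. The central decomposition is \eqref{eq:dyn self-sim} applied to $\eta^{(T^{n'}\om)}$ at depth $k'(T^{n'}\om)$, which expresses it as a convex combination of pieces $f_u^{(T^{n'}\om)}\eta^{(T^{k'(T^{n'}\om)+n'}\om)}$ of diameter at most $2^{-k}$. The cocycle bound \eqref{eq:cocycle} shows that this target measure differs from $\eta^{(T^{(n+k)'}\om)}$ by a shift of at most $c$ symbols, and iterating the one-step self-similarity at most $c$ times one checks that the hypothesis ``$\eta^{(T^{(n+k)'}\om)}$ is $(C,\rho)$-Frostman on tubes'' transfers to $\eta^{(T^{k'(T^{n'}\om)+n'}\om)}$ being $(C',\rho)$-Frostman on tubes, with $C' = C'(C,\rho,\Sigma)$ depending only on $C,\rho$ and the model constants $p_{\min}, r_{\min}, r_{\max}$.

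Given $\eps>0$, I will fix $\eps' \in (0,\eps/16]$ and apply Lemma~\ref{lem:TV distance} with $\om \leftarrow T^{n'(\om)}\om$, the same $k$, and constants $C',\rho$; set $n^* := k'(T^{n'}\om)+N_{\eps'}$. This yields
\[ \bP_{i=k}\bigl(d_{TV}\bigl(\eta^{(T^{n'}\om),x,k},\,(\eta^{(T^{n'}\om)})^{n^*,[x,k]}\bigr) < \eps'\bigr) > 1-\eps'. \]
By definition the rescaled truncated component is a convex combination
\[ (\eta^{(T^{n'}\om)})^{n^*,[x,k]} = \frac{1}{Z_{D_k(x),n^*}}\sum_{u\in\X^{(T^{n'}\om)}_{n^*,[D_k(x)]}} p_u^{(T^{n'}\om)}\,\bigl(T_{D_k(x)}\circ f_u^{(T^{n'}\om)}\bigr)\eta^{(T^{n^*+n'}\om)}, \]
in which each composite similarity has contraction ratio $\Theta_{N_{\eps'}}(1)$. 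Using \eqref{eq:entropy bounded similarity}, concavity of entropy, and Lemma~\ref{lem:shifted entropy comp} with shift $|n^*+n'-(n+k)'| \leq c+N_{\eps'}$, together with the hypothesis $H_m(\eta^{(T^{(n+k)'}\om)}) > \alpha-\eps/8$, I obtain
\[ H_m\bigl((\eta^{(T^{n'}\om)})^{n^*,[x,k]}\bigr) \geq H_m\bigl(\eta^{(T^{(n+k)'}\om)}\bigr) - O_{N_{\eps'}}(1/m) > \alpha - \tfrac{\eps}{8} - O_{N_{\eps'}}(1/m), \]
uniformly in $x$.

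To pass from the truncated to the actual rescaled component, I will invoke a Fannes-type stability estimate: since both measures are supported in $[0,1)^2$ and thus charge at most $2^{2m}+O(1)$ cells of $\Dk_m$, the bound $d_{TV}<\eps'$ implies $|H_m(\mu)-H_m(\nu)| \leq 2\eps' + O(\eps'\log(1/\eps')/m)$. Choosing $m_\eps$ large enough (depending on $\eps,C,\rho$ through $N_{\eps'}$) so that every $O_{N_{\eps'}}(1/m)$ error is bounded by $\eps/8$, the three estimates combine to give $H_m(\eta^{(T^{n'}\om),x,k}) > \alpha-\eps$ on the TV-close event, which has $\bP_{i=k}$-probability at least $1-\eps' \geq 1-\eps$, as required.

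The main technical delicacy lies in the very first step --- transferring the $(C,\rho)$-Frostman on tubes property across the cocycle discrepancy --- since iterating the one-step self-similarity in either direction costs factors of $p_{\min}^{-1}$ and $r_{\min}^{-\rho}$ per step, and one must check that a constant number of such iterations preserve the Frostman property with updated constants. The remainder of the argument is a routine combination of Lemma~\ref{lem:TV distance}, the similarity-entropy bound \eqref{eq:entropy bounded similarity}, Lemma~\ref{lem:shifted entropy comp}, and Fannes' inequality, closely paralleling \cite[Corollary 4.18]{SSS}.
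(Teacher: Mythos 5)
Your proposal is correct and follows essentially the same route as the paper's proof: transfer the Frostman-on-tubes hypothesis across the cocycle discrepancy \eqref{eq:cocycle} by finitely many applications of \eqref{eq: one step dyn self-sim}, apply Lemma~\ref{lem:TV distance} to $T^{n'}\om$, lower-bound the entropy of the rescaled truncated component via concavity, \eqref{eq:entropy bounded similarity} and Lemma~\ref{lem:shifted entropy comp}, and conclude by TV-stability of the normalized entropy. The only cosmetic difference is that you justify the last step by a Fannes-type estimate directly, where the paper cites \cite[Proposition 4.1(vi)]{SSS}, and you merge the paper's two-step reduction into a single pass.
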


\begin{proof}
The proof has two steps:
\begin{enumerate}[(i)]
	\item\label{it: original Frostman entropy conditions} there exist $N_\eps = N(\eps,C,\rho)$ such that \eqref{eq: frostman entropy bound} holds for $m > m(\eps)$, provided that $\eta^{(T^{k'(T^{n'}\om) + n'(\om)} \om)}$ is $(C, \rho)$-Frostman and $H_m(\eta^{(T^{k'(T^{n'}\om) + n'(\om) + N_\eps} \om)}) > \alpha - \eps/4$;
	\item\label{it: better Frostman entropy conditions} for $m > m(\eps)$, instead of the Frostman and entropy conditions in \ref{it: original Frostman entropy conditions} above, it is enough to verify that $\eta^{(T^{(n+k)'} \om)}$ is $(C, \rho)$-Frostman and $H_m(\eta^{(T^{(n+k)'} \om)}) > \alpha - \eps/8$.
\end{enumerate}
The proof of  \ref{it: original Frostman entropy conditions} is exactly the same as the proof of \cite[Corollary 4.18]{SSS}, with $T^{n'} \om$ replacing $\om$. We provide it for completeness. Fix $\delta =\delta(\eps) \in (0,\eps)$ small enough to guarantee that $d_{TV}(\nu, \nu') < \delta$ implies $|H_m(\nu) - H_m(\nu')| < \eps/2$ for all $\nu,\nu' \in \Pk([-R, R]^2)$ (see \cite[Proposition 4.1(vi)]{SSS} which remains valid in higher dimensions). Let $N_\eps = N(\delta(\eps), C, \rho)$ be as in Lemma \ref{lem:TV distance}. Then, applying Lemma \ref{lem:TV distance} to $T^{n'}\om$ gives that if $\eta^{(T^{k'(T^{n'}\om) + n'(\om)} \om)}$ is $(C, \rho)$-Frostman, then
\begin{equation}\label{eq: high prob tv distance} \bP_{i=k}\left(d_{TV}\left( \eta^{(T^{n'}\om)}_{x,i}, \eta^{(T^{n'}\om)}_{k'(T^{n'}\om) + N_\eps,[x,i]}  \right) < \delta \right) > 1 - \delta \geq 1 - \eps.
\end{equation}
If $\eta^{(T^{n'}\om)}_{x,i}$ is a component from the event above, then for the dyadic cell $D=D_k(x)$ we have  \[ d_{TV}\left( \eta^{(T^{n'}\om),x,i}, T_D \eta^{(T^{n'}\om)}_{k'(T^{n'}\om) + N_\eps,[x,i]}  \right) = d_{TV}\left( \eta^{(T^{n'}\om)}_{x,i},\eta^{(T^{n'}\om)}_{k'(T^{n'}\om) + N_\eps,[x,i]}  \right)   < \delta\]
and hence by the choice of $\delta$ holds
\begin{equation}\label{eq:rescaled entropy comparision} \left| H_m\left(\eta^{(T^{n'}\om),x,i} \right) - H_m \left(T_D \eta^{(T^{n'}\om)}_{k'(T^{n'}\om) + N_\eps,[x,i]} \right) \right| < \eps/2 .
\end{equation}

Recall that $\eta^{(T^{n'}\om)}_{k'(T^{n'}\om) + N_\eps,[x,i]}$ is a convex combination of measures of the form $f^{(T^{n'}\om)}_u \eta^{(T^{k'(T^{n'}\om) + n'(\om) + N_\eps} \om)},\ u \in \X^{(T^{n'}\om)}_{k'(T^{n'}\om) + N_\eps}$, with weights from some probability vector $(q_u)_{u \in \X^{(T^{n'}\om)}_{k'(T^{n'}\om) + N_\eps}}$ . Each $f^{(T^{n'}\om)}_u$ above contracts by $\Theta_{N_\eps}(2^{-k})$, hence $T_Df^{(T^{n'}\om)}_u$ contracts by $\Theta_{N_\eps}(1)$. Therefore, we can apply \eqref{eq:entropy bounded similarity} together with the concavity of entropy to obtain
\[
\begin{split} H_m \left( T_D \eta^{(T^{n'}\om)}_{k'(T^{n'}\om) +  N_\eps,[x,i]}\right) & \geq \sum \limits_{u \in \X^{(T^{n'}\om)}_{k'(T^{n'}\om) + N_\eps}} q_u H_m\left( T_D f^{(T^{n'}\om)}_u \eta^{(T^{k'(T^{n'}\om) + n'(\om) + N_\eps} \om)} \right) \\
& \geq \sum \limits_{u \in \X^{(T^{n'}\om)}_{k'(T^{n'}\om) + N_\eps}} q_u \left( H_m\left( \eta^{(T^{k'(T^{n'}\om) + n'(\om) + N_\eps} \om)} \right) - O_{N_\eps}(1/m) \right) \\
& =  H_m\left( \eta^{(T^{k'(T^{n'}\om) + n'(\om) + N_\eps} \om)} \right) - O_{N_\eps}(1/m).
\end{split}
\]
If $H_m(\eta^{(T^{k'(T^{n'}\om) + n'(\om) + N_\eps} \om)}) > \alpha - \eps/4$, then for $m > m(\eps)$ the above inequality combined with \eqref{eq:rescaled entropy comparision} and \eqref{eq: high prob tv distance} proves the claim \ref{it: original Frostman entropy conditions}.

For  step \ref{it: better Frostman entropy conditions}, note that by \eqref{eq:cocycle}, measure $\eta^{(T^{k'(T^{n'}\om) + n'(\om)} \om)}$ can be obtained from $\eta^{(T^{(n+k)'} \om)}$ by at most $c$ iterates of the self-similarity relation \eqref{eq: one step dyn self-sim}, or vice-versa. Therefore, if $\eta^{(T^{(n+k)'} \om)}$ is $(C, \rho)$-Frostman, then $\eta^{(T^{k'(T^{n'}\om) + n'(\om)} \om)}$ is $(O(C), \rho)$-Frostman, so it suffices to verify the Frostman condition for $\eta^{(T^{(n+k)'} \om)}$ and adjust the constants. Similarly, by Lemma \ref{lem:shifted entropy comp},
\[ \left| H_m(\eta^{(T^{k'(T^{n'}\om) + n'(\om) + N_\eps} \om)}) - H_m(\eta^{(T^{(n+k)'} \om)})  \right| \leq O\left( \frac{c + N_\eps}{m} \right),\]
hence if $m$ is large enough (depending on $\eps$), it is enough to verify $H_m(\eta^{(T^{(n+k)'} \om)}) > \alpha - \eps/8$.
\end{proof}

{ The remainder of the proof of Proposition \ref{prop:unif ent dim} is similar to the proof of \cite[Theorem 4.7]{SSS} from \cite[Lemma 4.9 and Proposition 4.11]{SSS}.}\\

\begin{proof}[\bf Proof of Proposition  \ref{prop:unif ent dim}]
{  Fix $\eps>0$.  By \eqref{eq:inf dim} and Lemma \ref{lem:frostman on tubes}, there exist $C_\eps, \rho>0$ and $m_\eps \in \N$ such that for $\PP$-a.e.\ $\om \in \Om$ and every $m \geq m_\eps$,
\begin{equation}\label{eq:frostman and entropy frequency}
\begin{split}
\liminf \limits_{n \to \infty}&\ \frac{1}{n} \# \left\{ 1 \leq k \leq n : \eta^{(T^k \om)} \text{ is } (C_\eps,\rho)\text{-Frostman on tubes and } H_m(\eta^{(T^{k} \om)}) \geq \alpha - \eps/8 \right\}\\
&\geq 1-\eps.
\end{split}
\end{equation}
Lemma \ref{lem: frostman entropy bound} gives for $\om \in \Om$ and $m \geq m_\eps$ (possibly after increasing $m_\eps$):
\[ \bP _{1 \leq i \leq n} \left( H_{m}\left( \eta^{(T^{n'}\om), x, i} \right) > \alpha - \eps \right) \geq \frac{|\Theta^{(\om)}_n|}{n}( 1 -\eps), \]
where
\[
\begin{split} \Theta^{\pom}_n & = \left\{ 1 \leq k \leq n: \eta^{(T^{(n+k)'} \om)} \text{ is } (C_\eps, \rho)\text{-Frostman and } H_m(\eta^{(T^{(n+k)'} \om)}) > \alpha - \eps/8  \right\} \\
	& = \left\{ n+1 \leq k \leq 2n: \eta^{(T^{k'} \om)} \text{ is } (C_\eps, \rho)\text{-Frostman and } H_m(\eta^{(T^{k'} \om)}) > \alpha - \eps/8  \right\}.
\end{split}\]
Combining \eqref{eq:frostman and entropy frequency} with the claim \eqref{eq:freq general upper bound} of Lemma \ref{lem:k k' comparison} yields
\[ \liminf \limits_{n \to \infty} \frac{|\Theta^{(\om)}_n|}{n} \geq 1 - D\eps \]
for some constant $D>0$. Consequently, for $\PP$-a.e.\ $\om \in \Om$,
\[ \liminf \limits_{n \to \infty}  \bP _{1 \leq i \leq n} \left(  H_{m}\left( \eta^{(T^{n'}\om), x, i} \right)> \alpha - \eps \right) \geq 1 - \eps'\]
with $\eps' = \eps'(\eps)$ such that $\eps'(\eps) \to 0$ as $\eps \to 0$. Therefore for $m>m(\eps)$
\[ \lim \limits_{n \to \infty}\ \PP \left( \left\{ \om \in \Om : \bP _{1 \leq i \leq n} \left(  H_{m}\left( \eta^{(T^{n'}\om), x, i} \right) \geq \alpha - \eps \right) > 1 - 2\eps' \right\} \right) = 1. \]
On the other hand, combining Lemma \ref{lem:n' entropy dim} (for $q=1$) and Lemma \ref{lem: local to global entropy}  gives for $m > m(\eps)$
\[ \lim \limits_{n \to \infty}\ \PP \left( \left\{ \om \in \Om :  \left| \bE _{1 \leq i \leq n}  H_{m}\left( \eta^{(T^{n'}\om), x, i} \right) - \alpha \right| < \eps  \right\} \right) = 1. \]
As $0 \leq H_{m}\left( \eta^{(T^{n'}\om), x, i} \right)  \leq 2$, the last two equalities finish the proof.
}

\end{proof}

\subsection{Saturation on subspaces of positive dimension}
Now we are ready to combine Lemma \ref{lem:proj entropy improved comp} and Proposition \ref{prop:unif ent dim} to prove Proposition \ref{prop:sat-dim 0}.

\begin{proof}[\bf Proof of Proposition \ref{prop:sat-dim 0}]
	Assume first $\alpha \geq 1$. Then Proposition \ref{prop:unif ent dim} implies that for $\eps < \frac{2-\alpha}{2}$ and $m>m(\eps)$ holds
	\begin{equation}\label{eq:satdim < 2}\lim \limits_{n \to \infty}\ \PP \left( \left\{ \om \in \Om :   \frac{1}{n}\# \left\{ 0 \leq k < n : \satdim(\eta^{(T^{n'}\om)}, \eps, m, k) < 2 \right\}  \geq 1 - \eps \right\} \right) = 1.
	\end{equation}
Indeed, we need to bound from above the probability that the saturation dimension of $\eta^{(T^{n'}\om)}$ is equal to $2$, with the the given parameters. The latter means, by 
Definition~\ref{def:satdim}, that, except in 
an $\eps$ proportion of scales and component measures in that scale, we have $H_m(\eta^{(T^{n'}\om,x,i})\ge 2-\eps$, see
Definition~\ref{def:saturated}. But this contradicts the inequality in the event appearing in \eqref{eq:unif ent dim} by our choice of $\eps$, hence the claim \eqref{eq:satdim < 2}. By Lemma \ref{lem:proj entropy improved comp}, for $m > m(\eps)$ we have
\[
\begin{alignedat}{2}
\lim\limits_{n \to \infty}\ &\PP   \bigg( \bigg\{ \om \in \Om :  \frac{1}{n}   \# \bigg\{  0 \leq & & k < n : \\
&  & &\inf \limits_{W \in \R\PP^1} \bP_{i=k} \left( H_m(\pi_W \eta^{(T^{n'} \om), x, i}) \geq \alpha - 1 + \kappa'' \right) > p_1 \bigg\}  \geq 1 - 2\eps \bigg\} \bigg)  \\
& = 1. & &
\end{alignedat}
\]
On the other hand, combining the above equality with Proposition \ref{prop:unif ent dim} and 
	\[  \frac{1}{m} H(\eta^{(T^{n'}\om),x,i} , \Dk^{W}_{m} | \Dk^{W^\perp}_m) = H_m(\eta^{(T^{n'}\om),x,i}) - H_m(\pi_{W^\perp}\eta^{(T^{n'}\om),x,i}) + O\left(\frac{1}{m} \right), \]
 see \eqref{eq:entropy cond W}, give for small enough $\eps > 0$ and $m > m(\eps)$:
	\[
	\begin{alignedat}{2}
	\lim\limits_{n \to \infty}\ & \PP  \bigg( \bigg\{ \om \in \Om :   \frac{1}{n}\# \bigg\{ & & 0 \leq k < n :  \\
	& & & \inf \limits_{W \in \R\PP^1} \bP_{i=k} \left( \frac{1}{m} H(\eta^{(T^{n'}\om),x,i} , \Dk^{W}_{m} | \Dk_m^{W^\perp}) \leq 1 - \kappa''/2 \right) > p_1/2 \bigg\} \geq 1 - \eps \bigg\} \bigg) \\
	& = 1. & &
	\end{alignedat}\]
	By Lemma \ref{lem:sat by cond}, this implies for $\eps < \min\{ \kappa''/4, p_1/4  \}$ and $m > m(\eps)$,
\[ \lim \limits_{n \to \infty}\ \PP \left( \left\{ \om \in \Om :  \frac{1}{n}\# \left\{ 0 \leq k < n :  \satdim(\eta^{(T^{n'}\om)}, \eps, m, k) \neq 1 \right\} \geq 1 - \eps \right\} \right) = 1. \]
	Recalling \eqref{eq:satdim < 2} finishes the proof in the case $\alpha \geq 1$.
	If $\alpha < 1$, then Proposition \ref{prop:unif ent dim} implies that \eqref{eq:satdim < 2} holds with $\satdim(\eta^{(T^{n'}\om)}, \eps, m, k) < 1$, giving directly the assertion of the proposition.
\end{proof}

\section{Proof of Theorem \ref{thm:main dim}}

In this section we will use Proposition \ref{prop:sat-dim 0} and  Theorem \ref{thm: inverse convolution} to prove Theorem \ref{thm:main dim}. It will be useful for us to apply the inverse theorem in the following form, obtained directly from Theorem \ref{thm: inverse convolution}:

\begin{thm}\label{thm: inverse convolution entropy}
	For every $R,\eps > 0$ and $m \in \N$ there exist $\eps' = \eps'(\eps)>0$, $\delta = \delta(\eps, R, m) > 0$,
	 such that for every $n > n(\eps, R, \delta, m)$, the following holds: If $\mu, \nu \in \mP([-R, R]^d)$ are such that
	\[  \frac{1}{n}\# \left\{ 0 \leq k < n : \satdim(\mu, \eps', m, k) = 0 \right\} \geq 1 - \eps'\  \text{ and }\ H_n(\mu * \nu) < H_n(\mu) + \delta, \]
	then
	\[H_n(\nu) < \eps.\]
\end{thm}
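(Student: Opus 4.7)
The strategy is to apply the inverse convolution theorem (Theorem \ref{thm: inverse convolution}) with parameters tuned so that the saturation-dimension hypothesis forces every output subspace $V_i$ to be trivial, and then to convert the resulting point-concentration of $\nu^{y,i}$ into a bound on $H_n(\nu)$ via the local-to-global entropy principle (Lemma \ref{lem: local to global entropy}).

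Given $R, \eps > 0$ and $m \in \N$, I would first fix a small $\eps' = c\eps$ depending only on $\eps$, a large auxiliary scale $M = M(\eps, R) \geq C d \max(1, \log R)/\eps$ depending on $\eps, R$, and $\eps_0 := 2^{-M-1}$, so that $\eps_0 \leq \eps'^2$ and in particular $\eps_0 \leq \eps'$, $\sqrt{\eps_0} \leq \eps'$, and $\eps_0 < 2^{-M}$. Applying Theorem \ref{thm: inverse convolution} with parameters $(R, \eps_0, m)$ produces $\delta = \delta(\eps, R, m)$ and a threshold $n(\eps, R, \delta, m)$. Under the hypothesis $H_n(\mu * \nu) < H_n(\mu) + \delta$, the theorem yields subspaces $V_0, \ldots, V_n$ such that the joint event that $\mu^{x,i}$ is $(V_i, \eps_0, m)$-saturated and $\nu^{y,i}$ is $(V_i, \eps_0)$-concentrated has probability $> 1 - \eps_0$ when $i \in \{0,\ldots,n\}$ is uniform and $x \sim \mu$, $y \sim \nu$ are independent given $i$. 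A Markov/Chebyshev argument exploiting this independence then shows that the set $G_1$ of levels $k$ at which \emph{both} conditional probabilities $\bP_{i=k}(\mu^{x,k}$ is $(V_k, \eps_0, m)$-saturated$)$ and $\bP_{i=k}(\nu^{y,k}$ is $(V_k, \eps_0)$-concentrated$)$ exceed $1 - \sqrt{\eps_0}$ has density $\geq 1 - \sqrt{\eps_0}$.

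Since $\eps_0 \leq \eps'$, $(V_k, \eps_0, m)$-saturation implies $(V_k, \eps', m)$-saturation, so on $G_1$ we have $\bP_{i=k}(\mu^{x,k}$ is $(V_k, \eps', m)$-saturated$) \geq 1 - \sqrt{\eps_0} \geq 1 - \eps'$. Intersecting $G_1$ with the set $K = \{k : \satdim(\mu, \eps', m, k) = 0\}$ from the hypothesis (of density $\geq 1 - \eps'$) gives a set $G \subset \{0,\ldots,n\}$ of density $\geq 1 - 3\eps'$ on which necessarily $\dim V_k = 0$, i.e.\ $V_k = \{0\}$. For such $k$ and for a $(1 - \sqrt{\eps_0})$-fraction of $y$, the rescaled component $\nu^{y,k}$ satisfies $\nu^{y,k}(B(a_{y,k}, \eps_0)) \geq 1 - \eps_0$ for some point $a_{y,k}$. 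Because $\eps_0 < 2^{-M}$, the ball $B(a_{y,k}, \eps_0)$ meets only $O_d(1)$ cells of $\Dk_M$; splitting the Shannon entropy into the contribution from those cells and the tail yields $H_M(\nu^{y,k}) \leq O_d(1/M) + O_d(\eps_0)(1 + \log R/M)$ for these ``good'' $(k, y)$, while the trivial bound $H_M(\nu^{y,k}) \leq O_d(1 + \log R/M)$ handles the remaining exceptional ones.

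Averaging these bounds over $y$ and over $k \in \{0,\ldots,n\}$ gives $\bE_{0 \leq k \leq n} H_M(\nu^{y,k}) \leq O_d(1/M) + O_d(\eps')(1 + \log R/M) + O_d(\eps_0)$, which is $< \eps/2$ by the choice of $M$ and $\eps'$. Finally, Lemma \ref{lem: local to global entropy} applied at scale $M$ gives $H_n(\nu) \leq \bE_{0 \leq k \leq n} H_M(\nu^{y,k}) + O((M + \log R)/n) < \eps$ for $n$ sufficiently large in terms of $\eps, R, m$. The main obstacle is parameter bookkeeping: one must ensure that $\eps'$ depends only on $\eps$, which is achieved by choosing the ``upper'' scale $M$ used in the local-to-global step independently of the (possibly small) scale $m$ appearing in the saturation-dimension hypothesis, and absorbing all dependence on $m$ into $\delta$ and the threshold $n$ via the inverse theorem.
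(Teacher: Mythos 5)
Your proof is correct and follows essentially the same route as the paper's: apply Theorem \ref{thm: inverse convolution}, use a Markov/Chebyshev argument over the levels together with the $\satdim$ hypothesis to force $\dim V_k=0$ for most $k$, and then convert the resulting $(\{0\},\cdot)$-concentration of the components of $\nu$ into $H_n(\nu)<\eps$ via an entropy estimate at an intermediate scale and Lemma \ref{lem: local to global entropy}. The only deviations are in parameter bookkeeping: you feed the inverse theorem the smaller tolerance $\eps_0\le(\eps')^2$, so that after Chebyshev the saturation probability exceeds $1-\sqrt{\eps_0}\ge 1-\eps'$ and can be compared directly with the hypothesis $\satdim(\mu,\eps',m,k)=0$ (slightly tidier than the paper's comparison, which passes through $\satdim(\mu,\sqrt{\eps'},m,k)$), and you work at a fixed intermediate scale $M(\eps,R)$ with $\eps'=c\eps$ chosen up front, where the paper uses $m_{\eps'}\approx\log(1/\eps')$ and lets $\eps'$ be small at the end.
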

\begin{proof} We can assume $\eps \leq 1$. Fix $\eps'>0$ and let $\delta = \delta(\eps', R, m)$ and $n(\eps', R, \delta, m)$ be from Theorem \ref{thm: inverse convolution}. If for $n > n(\eps', R, \delta, m)$ we have $H_n(\mu * \nu) < H_n(\mu) + \delta$, then by Theorem \ref{thm: inverse convolution} there exists a sequence $V_0, \ldots, V_{n-1} \leq \R^d$ of subspaces such that
	\begin{equation}\label{eq: sat con} \bP_{0 \leq i < n} \begin{pmatrix} \mu^{x,i} \text{ is } (V_i, \eps', m)\text{-saturated and }\\
	\nu^{y,i} \text{ is } (V_i,\eps')\text{-concentrated} \end{pmatrix} > 1 - \eps'.
	\end{equation}
{ Therefore,
	\[
	\begin{split}\frac{1}{n}\#  \Big\{ 0 \leq k < n & : \satdim(\mu, \sqrt{\eps'}, m, k) < \dim V_k  \Big\} \\
		& \leq  \frac{1}{n}\# \left\{ 0 \leq k < n : \bP_{i=k} \left( \mu^{x,i} \text{ is not }  (V_i, \sqrt{\eps'}, m)\text{-saturated}  \right) \geq \sqrt{\eps'} \right\} \\
		&\leq\frac{1}{n}\# \left\{ 0 \leq k < n : \bP_{i=k} \left( \mu^{x,i} \text{ is not }  (V_i, \eps', m)\text{-saturated}  \right) \geq \sqrt{\eps'} \right\}\\
		&  \overset{\mathclap{\text{Chebyshev's ineq.}}}{\leq}\ \ \ \ \ \ \ \  \  \frac{1}{n\sqrt{\eps'}}\sum \limits_{k=0}^{n-1} \bP_{i=k} \left( \mu^{x,i} \text{ is not }  (V_i, \eps', m)\text{-saturated}  \right) \\
		& = \frac{1}{\sqrt{\eps'}} \bP_{0 \leq i < n} \left( \mu^{x,i} \text{ is not }  (V_i, \eps', m)\text{-saturated}  \right) \\
		& \leq \sqrt{\eps'},
	\end{split}\]
	hence 
		\[  \frac{1}{n}\# \left\{ 0 \leq k < n :  \satdim(\mu, \sqrt{\eps'}, m, k) \geq \dim V_k \right\} \geq 1 - \sqrt{\eps'}. \]
	If $\frac{1}{n}\# \left\{ 0 \leq k < n : \satdim(\mu, \eps', m, k) = 0 \right\} \geq 1 - \eps'$}, then we can conclude
	\[  \frac{1}{n}\# \left\{ 0 \leq i < n : \dim V_i = 0 \right\} \geq 1 - \eps' - \sqrt{\eps'} \geq 1 - 2 \sqrt{\eps'} \]
	and hence by \eqref{eq: sat con},
	\begin{equation}\label{eq:concentration corollary} \bP_{0 \leq i \leq n} \left( \nu^{x,i} \text{ is } (\{ 0 \},\eps')\text{-concentrated} \right) \geq 1 - 2\sqrt{\eps'} - \eps' \geq 1 -3\sqrt{\eps'}.
	\end{equation}
	Set $m_\eps' = \lfloor\log (1/\eps')\rfloor$ (so that $2^{-m_\eps'} \approx \eps'$). Note that if $\nu^{x,i}$ is $(\{0\},\eps')$-concentrated then $H_{m'_\eps}(\nu^{x,i}) \leq O(1/m_{\eps'}) + 2\eps' = O(1/m_{\eps'})$ for $\eps'$ small enough. Therefore, \eqref{eq:concentration corollary} together with Lemma \ref{lem: local to global entropy} gives for $n$ large enough:
	\[\begin{split}
	H_n(\nu) & = \bE_{0 \leq i \leq n} \left( H_{m_\eps'}(\nu^{x,i}) \right) + O\left( \frac{m_\eps' + \log R}{n} \right) \\
	& \leq 3\sqrt{\eps'} O(1) + (1 - 3\sqrt{\eps'})O(1/m_\eps') + O\left( \frac{m_\eps' + \log R}{n} \right) \\
	& = 3\sqrt{\eps'} O(1) + (1 - 3\sqrt{\eps'})O\left(\frac{1}{\log(1/\eps')}\right) + O\left( \frac{\log(1/\eps') + \log R}{n} \right).
	\end{split}\]
	The last expression can be made smaller than $\eps$ if $\eps'$ is small enough (depending on $\eps$) and $n = n(\eps')$ is large enough. This finishes the proof.
\end{proof}

The remainder of the proof of Theorem \ref{thm:main dim} follows closely the proof of \cite[Theorem 1.3]{SSS}. The only essential difference is that instead of the uniform entropy dimension of $\eta^{(\om)}$, we need to use Proposition \ref{prop:sat-dim 0}. We include a detailed sketch for the convenience of the reader.

Following \cite{SSS}, we have the following proposition, based on the decomposition \eqref{eq:convolution decomposition}. For $n \in \N,\ \eps>0, \delta>0$, and $q \in \N$ let
\[ \mG^{(n)}_{\eps, \delta, q} := \left\{ \om \in \Om : \bP_{i = n} \left( \frac{1}{qn} \left| H\left( \nu^{(\om, n')}_{x,i} * \tau^{(\om, n')}, \Dk_{(q+1)n} \right) - H \left( \tau^{(\om, n')}, \Dk_{(q+1)n} \right) \right| < \delta \right) > 1 - \eps \right\}, \]
where $n' = n'(\om)$ is defined as in \eqref{eq:prime def}.
\begin{prop}\label{prop: G measure}
Let $\Sigma$ be a model. Then for every $\eps, \delta > 0$ and $q \in \N$ holds
\[ \lim \limits_{n \to \infty} \PP\left( \mG^{(n)}_{\eps, \delta, q} \right) = 1.\]
\end{prop}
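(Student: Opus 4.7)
The plan is to exploit the convolution decomposition $\eta^{(\om)} = \nu^{(\om, n')} * \tau^{(\om, n')}$ from \eqref{eq:convolution decomposition}, using the crucial fact that $\diam(\supp \tau^{(\om, n')}) \leq 2^{-n}$ by the definition of $n' = n'(\om)$ in \eqref{eq:prime def}. The first step is to establish the pointwise identity, valid for every $\om \in \Om$,
\begin{multline*}
\sum_{D \in \Dk_n} \nu^{(\om, n')}(D) \Bigl[H\bigl(\nu^{(\om, n')}_D * \tau^{(\om, n')}, \Dk_{(q+1)n}\bigr) - H\bigl(\tau^{(\om, n')}, \Dk_{(q+1)n}\bigr)\Bigr] \\
= H\bigl(\eta^{(\om)}, \Dk_{(q+1)n}\bigr) - H\bigl(\nu^{(\om, n')}, \Dk_n\bigr) - H\bigl(\tau^{(\om, n')}, \Dk_{(q+1)n}\bigr) + O(1).
\end{multline*}
To derive it, realize $\eta^{(\om)}$ as the law of $Y+Z$ with $Y \sim \nu^{(\om, n')}$ and $Z \sim \tau^{(\om, n')}$ independent, and set $J := D_n(Y) \in \Dk_n$; conditionally on $J = D$, the variable $Y+Z$ has law $\nu^{(\om, n')}_D * \tau^{(\om, n')}$, so the chain rule gives
\[ H\bigl(\eta^{(\om)}, \Dk_{(q+1)n}\bigr) = H(J) + H\bigl(D_{(q+1)n}(Y+Z) \mid J\bigr) - H\bigl(J \mid D_{(q+1)n}(Y+Z)\bigr). \]
The last term is $O(1)$ because $|Z| \leq 2^{-n}$ forces $J$ to lie in an $O(1)$-neighborhood of $D_n(Y+Z)$, and $H(J) = H(\nu^{(\om, n')}, \Dk_n)$ by definition.

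The second step is to estimate the three entropies on the right in probability. By \eqref{eq:inf dim} one has $\tfrac{1}{n} H(\eta^{(\om)}, \Dk_{(q+1)n}) \to (q+1)\alpha$ almost surely. Because $\tau^{(\om, n')}$ is a similarity image of $\eta^{(T^{n'}\om)}$ contracted by $\Theta(2^{-n})$, the scaling relation \eqref{eq:entropy bounded similarity} together with Lemma \ref{lem:n' entropy dim} gives $\tfrac{1}{n} H(\tau^{(\om, n')}, \Dk_{(q+1)n}) \to q\alpha$ in probability. Finally, the same chain-rule argument as in the first step, applied only at scale $n$, yields $|H(\nu^{(\om, n')}, \Dk_n) - H(\eta^{(\om)}, \Dk_n)| = O(1)$, so $\tfrac{1}{n}H(\nu^{(\om, n')}, \Dk_n) \to \alpha$ almost surely. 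Substituting these three asymptotics into the main identity gives
\[ \frac{1}{n}\sum_{D} \nu^{(\om, n')}(D) \Bigl[H\bigl(\nu^{(\om, n')}_D * \tau^{(\om, n')}, \Dk_{(q+1)n}\bigr) - H\bigl(\tau^{(\om, n')}, \Dk_{(q+1)n}\bigr)\Bigr] \stackrel{\PP}{\longrightarrow} 0. \]

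To conclude, \eqref{eq:conv entropy bound} bounds each bracketed summand from below by $-O(1)$, so after adding a fixed constant $C_0$ the summands become nonnegative while their $\nu^{(\om, n')}$-average still tends to $0$ in probability (after normalizing by $n$). A Markov-type bound then shows, for any $\delta, \eps > 0$, that for all sufficiently large $n$ and all $\om$ outside a set of $\PP$-probability at most $\eps$, the $\nu^{(\om, n')}$-mass of the set of cells $D$ on which the bracketed term exceeds $\delta q n$ is less than $\eps$. Combined with the automatic lower bound $-O(1) > -\delta q n$ for large $n$, this produces the defining condition of $\mG^{(n)}_{\eps, \delta, q}$ and hence $\PP(\mG^{(n)}_{\eps, \delta, q}) \to 1$.

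The main obstacle, as I see it, is verifying the $O(1)$ error control in the derivation of the main identity: the supports of $\nu^{(\om, n')}_D * \tau^{(\om, n')}$ for different $D$ are not genuinely disjoint, since $\Dk_n$-cell boundaries cut through them, and one must confirm that this overlap only contributes an additive constant rather than something of order $n$. The saving feature is precisely the containment $\diam(\supp \tau^{(\om, n')}) \leq 2^{-n}$, which bounds by $O(1)$ the number of $\Dk_n$-cells that any single support can meet; this keeps both the conditional entropy $H(J \mid D_{(q+1)n}(Y+Z))$ and the analogous scale-$n$ comparison bounded by constants depending only on the model.
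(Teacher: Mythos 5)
Your argument is correct and follows essentially the same strategy as the paper's proof: both rest on the decomposition \eqref{eq:convolution decomposition}, the entropy asymptotics \eqref{eq:inf dim} and Lemma \ref{lem:n' entropy dim} (via \eqref{eq:entropy bounded similarity}), the per-component lower bound \eqref{eq:conv entropy bound}, the fact that $\supp\,\tau^{(\om,n')}$ has diameter $O(2^{-n})$, and a final Markov/Chebyshev step over components. The only difference is technical: you obtain the upper bound on $\bE_{i=n}H\bigl(\nu^{(\om,n')}_{x,i}*\tau^{(\om,n')},\Dk_{(q+1)n}\bigr)$ from an exact chain-rule identity conditioning on $D_n(Y)$ (which additionally requires the easy estimate $\bigl|H(\nu^{(\om,n')},\Dk_n)-H(\eta^{(\om)},\Dk_n)\bigr|=O(1)$), whereas the paper uses concavity of the conditional entropy $H(\cdot,\Dk_{(q+1)n}\mid\Dk_n)$ applied to $\eta^{(\om)}=\sum_{D}\nu^{(\om,n')}(D)\,\nu^{(\om,n')}_D*\tau^{(\om,n')}$; both routes are valid.
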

\begin{proof}

As $|\lam_{\om_1}^{-1} \cdots \lam_{\om_{n'}}^{-1}| = r_{\om_1}^{-1} \cdots r_{\om_{n'}}^{-1} = O(2^n)$, rescaling by $\lam_{\om_1}^{-1} \cdots \lam_{\om_{n'}}^{-1}$ gives (recall \eqref{eq:entropy bounded similarity})
\begin{equation}\label{eq: tau to eta entropy} H(\tau^{(\om, n')}, \Dk_{(q+1)n})  = H(\eta^{(T^{n'} \om)}, \Dk_{qn}) + O(1),
\end{equation}
and we therefore obtain from Lemma \ref{lem:n' entropy dim}:\footnote{Obtaining \eqref{eq:tau entropy} is the first instance mentioned in Remark \ref{rem:SSSgap}, where a slightly different argument than the one given in \cite{SSS} is needed. In \cite[page 90, item (ii)]{SSS}, the $T$-invariance of $\PP$ is invoked to obtain equality $ \PP\left( \left\{ \om \in \Om :  \left| \frac{1}{qn}H(\eta^{(T^{n'} \om)}, \Dk_{qn}) - \alpha \right| < \delta  \right\} \right) = \PP\left( \left\{ \om \in \Om :  \left| \frac{1}{qn}H(\eta^{\pom}, \Dk_{qn}) - \alpha \right| < \delta  \right\} \right)$, which together with \eqref{eq:inf dim} and \eqref{eq: tau to eta entropy} implies \eqref{eq:tau entropy}. However, as $n'$ depends on $\om$, the $T$-invariance of $\PP$ seems insufficient to obtain the above equality. On the other hand, it is easy to see that it holds if $\PP$ is Bernoulli, as then $T^{n'} \om$ has the same distribution as $\om$. For general ergodic measures, one can prove and apply Lemma \ref{lem:n' entropy dim} also in the setting of \cite{SSS}.}
\begin{equation}\label{eq:tau entropy} \lim \limits_{n \to \infty} \PP\left( \left\{ \om \in \Om :  \left| \frac{1}{qn}H(\tau^{(\om, n')}, \Dk_{(q+1)n}) - \alpha \right| < \delta  \right\} \right)  = 1.
\end{equation}
Moreover, \eqref{eq:inf dim} implies
\begin{equation}\label{eq:cond entropy alpha} \lim \limits_{n \to \infty} \frac{1}{qn} H\left(\eta^{(\om)}, \Dk_{(q+1)n}|\Dk_{n}\right) =  \lim \limits_{n \to \infty} \frac{1}{qn} \left( H(\eta^{(\om)}, \Dk_{(q+1)n}) - H(\eta^{(\om)}, \Dk_{n}) \right) = \alpha
\end{equation}
for $\PP$-a.e.\ $\om \in \Om$. By \eqref{eq:convolution decomposition} and concavity of the conditional entropy, we get for every $\om \in \Om$:
\[
\begin{split}  H\left(\eta^{(\om)}, \Dk_{(q+1)n}|\Dk_{n}\right) & \geq \bE_{i=n} H\left(\nu^{(\om, n')}_{x,i} * \tau^{(\om, n')}, \Dk_{(q+1)n}|\Dk_{n}\right) \\
& = \bE_{i=n} H(\nu^{(\om, n')}_{x,i} * \tau^{(\om, n')}, \Dk_{(q+1)n}) - O(1),
\end{split} \]
where the last equality follows from the fact that each measure $\nu^{(\om, n')}_{x,i} * \tau^{(\om, n')}$ is supported in a ball of diameter at most $O(2^{-n})$. Combining this with \eqref{eq:cond entropy alpha} gives for every $\delta > 0$:
\begin{equation}\label{eq:expected convolution entropy} \lim \limits_{n \to \infty} \PP \left( \left\{ \om \in \Om : \frac{1}{qn} \bE_{i=n} H(\nu^{(\om, n')}_{x,i} * \tau^{(\om, n')}, \Dk_{(q+1)n}) \leq \alpha + \delta \right\} \right) = 1.
\end{equation}
On the other hand, by \eqref{eq:conv entropy bound} we have
\[ H(\nu^{(\om, n')}_{x,i} * \tau^{(\om, n')}, \Dk_{(q+1)n}) \geq H(\tau^{(\om, n')}, \Dk_{(q+1)n}) - O(1),\]
hence by \eqref{eq:tau entropy},
\[ \lim \limits_{n \to \infty} \PP\left( \left\{ \om \in \Om : \frac{1}{qn}H(\nu_{x,i}^{(\om, n')} * \tau^{(\om, n')}, \Dk_{(q+1)n}) \geq \alpha - \delta \text{ for every } x \in \R^2\right\} \right) = 1 \]
for every $\delta > 0$. The above combined with \eqref{eq:expected convolution entropy} implies that for any $\delta, \eps>0$ holds
\[ \lim \limits_{n \to \infty} \PP\left( \left\{ \om \in \Om : \bP_{i=n} \left( \left| \frac{1}{qn}H(\nu_{x,i}^{(\om, n')} * \tau^{(\om, n')}, \Dk_{(q+1)n}) - \alpha \right| < \delta \right) > 1-\eps \right\} \right) = 1. \]
Comparing this with \eqref{eq:tau entropy} finishes the proof.
\end{proof}

The next theorem is analogous to \cite[Theorem 4.3]{SSS}. The only difference in the proof is the use of the two-dimensional Hochman's inverse theorem and an application of Proposition \ref{prop:sat-dim 0} to exclude the possibility of $\tau^{(\om, n')}$ being saturated on one-dimensional subspaces for a positive frequency of levels. Similarly to \cite[Theorem 4.3]{SSS} being a ``random version'' of \cite[Theorem 1.3]{H}, our theorem can be viewed as an extension of \cite[Theorem 1.5]{HRd} to the setting of models in our special two-dimensional case.

\begin{thm}\label{thm:main entropy on scales}
	Let $\Sigma$ be a model satisfying the assumptions of Theorem \ref{thm:main dim} and such that $r_{\max} \leq 1/2$. If $\dim(\Sigma) < 2$, then
	\[ \frac{1}{n}H\left( \nu^{(\cdot, n'(\cdot))}, \Dk_{(q+1)n}| \Dk_n \right) \stackrel{\PP}{\longrightarrow} 0 \text{ for all } q \in \N,\]
	where $n' = n'(\om)$ is defined as in \eqref{eq:prime def}.
\end{thm}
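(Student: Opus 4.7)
The plan is to apply Hochman's inverse theorem in the form of Theorem \ref{thm: inverse convolution entropy} pointwise to each rescaled $n$-level component of $\nu^{(\om, n'(\om))}$, with the role of $\mu$ played by a rescaled version of $\tau^{(\om, n'(\om))}$ and the role of $\nu$ played by the component itself. The two key inputs are Proposition \ref{prop: G measure} (entropy almost-non-increase in the convolution $\nu^{(\om,n')} * \tau^{(\om,n')} = \eta^{(\om)}$) and Proposition \ref{prop:sat-dim 0} (vanishing saturation dimension of $\eta^{(T^{n'}\om)}$, hence of the rescaled $\tau^{(\om,n')}$).

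First I would write
\[
\frac{1}{n} H\bigl(\nu^{(\om, n')}, \Dk_{(q+1)n} \mid \Dk_n\bigr)
= q \cdot \bE_{i = n} \frac{1}{qn} H\bigl(\nu^{(\om, n')}_{x,n}, \Dk_{(q+1)n}\bigr) + O(1/n),
\]
and rescale each component by $\phi(z) := 2^n z$ together with a translation, so that $\tilde\nu_x := T_{D_n(x)} \nu^{(\om,n')}_{x,n} \in \mP([0,1)^2)$. Set $\tilde\tau := \phi(\tau^{(\om,n')}) = (\lam_{\om_1}\cdots\lam_{\om_{n'}} \cdot 2^n)\,\eta^{(T^{n'}\om)}$; by \eqref{eq:prime def} the complex scalar has modulus $\Theta(1)$, so $\tilde\tau$ is a rotation composed with a $\Theta(1)$-scaling of $\eta^{(T^{n'}\om)}$ supported in a ball $B(0,R')$ with $R' = O(R(\Sigma))$. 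By \eqref{eq:entropy bounded similarity}, the normalised entropies at scale $qn$ of $\tilde\nu_x$, $\tilde\tau$, and $\tilde\nu_x * \tilde\tau$ differ from those at scale $(q+1)n$ of their un-rescaled counterparts by $O(1/qn)$.

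Next I would assemble the two hypotheses of Theorem \ref{thm: inverse convolution entropy} at scale $qn$. For the \emph{entropy hypothesis}, Proposition \ref{prop: G measure} yields $\om \in \mG^{(n)}_{\eps, \delta, q}$ with $\PP$-probability tending to $1$, which after rescaling gives $H_{qn}(\tilde\nu_x * \tilde\tau) < H_{qn}(\tilde\tau) + 2\delta$ for a $(1-\eps)$-fraction of $x$. For the \emph{saturation hypothesis}, a mild strengthening of Proposition \ref{prop:sat-dim 0}, obtained by running its proof with $n$ replaced by $qn$ throughout (the ergodic-theorem inputs of Lemma \ref{lem:proj entropy improved comp} and Proposition \ref{prop:unif ent dim} produce density-$1$ statements on any range of length linear in $n$), delivers $\bP_{0 \le k \le qn}(\satdim(\tilde\tau, \eps', m, k) = 0) \ge 1 - \eps'$ with high probability; here I use that saturation dimension is invariant under similarities, so the rotation and $\Theta(1)$-scaling connecting $\tilde\tau$ to $\eta^{(T^{n'}\om)}$ does not affect it (up to an $O(1)$ shift in scale indices). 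Choosing $\delta$ and $m$ according to Theorem \ref{thm: inverse convolution entropy}, I obtain $H_{qn}(\tilde\nu_x) < \eps$ for a $(1-\eps)$-fraction of $x$, while the trivial bound $H_{qn}(\tilde\nu_x) \le 2 + O(1/qn)$ controls the remaining fraction. This gives $\bE_{i=n} H_{qn}(\tilde\nu_x) = O(\eps)$ with high $\PP$-probability, and the arbitrariness of $\eps$ concludes the convergence in probability.

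The main obstacle I anticipate is the scale-range issue: Proposition \ref{prop:sat-dim 0} is stated for $\{0, \ldots, n\}$, whereas after rescaling $\tau^{(\om,n')}$ by $2^n$ I need saturation dimension zero for scales up to $qn$. Verifying that the proofs of Proposition \ref{prop:unif ent dim}, Lemma \ref{lem:proj entropy improved comp}, and hence Proposition \ref{prop:sat-dim 0} extend to this larger range is essentially routine ergodic theory, but must be done carefully because $n'(\om)$ appears inside $\eta^{(T^{n'}\om)}$ and couples the averaging index to a random scale shift; the cocycle comparisons between $k$ and $k'$ that were needed in the proof of the original proposition (via Lemma \ref{lem:k k' comparison} and the assumption $r_{\mathrm{max}} \le 1/2$) reappear here in the same role.
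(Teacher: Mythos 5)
Your proposal is correct and follows essentially the same route as the paper's proof: decompose $\eta^{(\om)}=\nu^{(\om,n')}*\tau^{(\om,n')}$, feed Proposition \ref{prop: G measure} (the entropy hypothesis) and Proposition \ref{prop:sat-dim 0} (the saturation hypothesis) into Theorem \ref{thm: inverse convolution entropy} applied to the level-$n$ components at scale $qn$, and then average, with your direct $\eps$-argument for convergence in probability replacing the paper's summable-$\eps_j$/Borel--Cantelli subsequence device. The scale-range point you flag — that the saturation-dimension statement must cover scales up to $qn$ (keeping the shift $T^{n'}$), obtained by rerunning Proposition \ref{prop:sat-dim 0} and its inputs with the longer range — together with the invariance of the saturation condition under the $\Theta(1)$ rotation-scaling relating $2^n\tau^{(\om,n')}$ to $\eta^{(T^{n'}\om)}$, is exactly the routine extension the paper uses implicitly, so your treatment is consistent with (indeed slightly more explicit than) the paper's argument.
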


\begin{proof}
Let $\eps_0 > 0$ be from Proposition \ref{prop:sat-dim 0}. Consider a sequence $\eps_j \in (0, \eps_0)$ with $\sum \limits_{j=1}^\infty \eps_j < \infty$ and let $\eps'_j = \eps'_j(\eps_j) < \eps_0$ be as in Theorem \ref{thm: inverse convolution entropy}. Using Proposition   \ref{prop:sat-dim 0}, for $j \in \N$ choose $m_j \in \N$ such that for all $n$ large enough holds\footnote{This is the second place mentioned by Remark \ref{rem:SSSgap}. Again, \cite[(37)]{SSS} was proved by applying $T$-invariance of $\PP$ to conclude from the uniform entropy dimension property \cite[Corollary 4.13]{SSS} that one can choose $m_j$ such that for all $n$ large enough  $\PP \left( \left\{ \om \in \Om ; \bp_{1 \leq i \leq n} \left( \left| H_{m_j} \left( \eta^{(T^{n'} \om),x,i}  \right) - \alpha \right| < \eps_j \right) > 1 - \eps_j \right\} \right) = \PP \left( \left\{ \om \in \Om ; \bp_{1 \leq i \leq n} \left( \left| H_{m_j} \left( \eta^{(\om),x,i}  \right) - \alpha \right| < \eps_j \right) > 1 - \eps_j \right\} \right) > 1  - \eps_j$ holds. Similarly as before, the above equality is justified if $\PP$ is Bernoulli, while for general ergodic measures one can invoke Proposition \ref{prop:unif ent dim}, which holds also in the setting of \cite{SSS}.}
\begin{equation}\label{eq:sat-dim 0 prob} \PP \left( \left\{ \om \in \Om :  \frac{1}{n}\# \left\{ 0 \leq k < n :  \satdim(\eta^{(T^{n'}\om)}, \eps'_j, m_j, k) = 0 \right\} \geq 1 - \eps'_j \right\} \right) > 1 -\eps_j/2.
\end{equation}
For each $j \in \N$, let $\delta_j = \delta_j(\eps_j, R)$ be given by Theorem \ref{thm: inverse convolution entropy} and choose $n_j \geq n(\eps_j, R, \delta_j, m_j)$, where $n(\eps_j, R, \delta_j, m_j)$ is from Theorem \ref{thm: inverse convolution entropy}, such that \eqref{eq:sat-dim 0 prob}  holds for $n = n_j$ and also
\[ \PP\left( \mG^{(n_j)}_{\eps_j, \frac{\delta_j}{2}, q} \right) \geq 1 - \eps_j/2. \]
It exists due to Proposition \ref{prop: G measure}.
Let $\Om_j$ be the intersection of $\mG^{(n_j)}_{\eps_j, \frac{\delta_j}{2}, q}$ with the event from \eqref{eq:sat-dim 0 prob} for $n = n_j$. Then
\begin{equation}\label{eq:Om_j measure}
\PP(\Om_j) \geq 1 - \eps_j.
\end{equation}
Note that by rescaling by $r_{\om_1} \cdots r_{\om_{n'}} = \Theta(2^{-n})$, we have (due to \eqref{eq:entropy bounded similarity}) that
\[ H \left( \tau^{(\om, n')}, \Dk_{(q+1)n} \right) = H \left( \eta^{(T^{n'}\om)}, \Dk_{qn} \right) + O(1), \]
and for $i=n$, by applying $T_{\Dk_n(x)}$ (see the calculation in \cite[p.\ 91]{SSS}),
\[ H\left( \nu^{(\om, n')}_{x,i} * \tau^{(\om, n')}, \Dk_{(q+1)n} \right) = H\left( \nu^{(\om, n'), x,i} * \eta^{(T^{n'}\om)}, \Dk_{qn} \right) + O(1).  \]
Therefore, if $\nu^{(\om, n'_j)}_{x,i}$ is a level-$n_j$ component such that
\begin{equation}\label{eq:convolution vs non-convolution} \frac{1}{qn_j} \left| H\left( \nu^{(\om, n'_j)}_{x,i} * \tau^{(\om, n'_j)}, \Dk_{(q+1)n_j} \right) - H \left( \tau^{(\om, n'_j)}, \Dk_{(q+1)n_j} \right) \right| < \delta_j/2,
\end{equation}
then also, provided that $n_j$ is large enough (depending on $\delta_j$),
\[ \frac{1}{qn_j} \left| H\left( \nu^{(\om, n'_j),x,i} * \eta^{(T^{n'_j}\om)}, \Dk_{qn_j} \right) - H \left( \eta^{(T^{n'_j}\om)}, \Dk_{qn_j} \right) \right| < \delta_j. \] 
Consequently, for $\om \in \Om_j$ and any level-$n_j$ component $\nu^{(\om, n'_j)}_{x,i}$ satisfying \eqref{eq:convolution vs non-convolution}, we have by 
Theorem \ref{thm: inverse convolution entropy},
applied with $\mu=\eta^{(T^{n'_j}\om)},\ \nu = \nu^{(\om, n'_j), x, i}$, that
\[ H_{qn_j}\left(\nu^{(\om, n'_j),x,i}\right) < \eps_j,\]
hence for $\om \in \Om_j$ holds
\[ \bP_{i = n_j} \left( H_{qn_j}\left(\nu^{(\om, n'_j),x,i}\right) < \eps_j \right) > 1 - \eps_j.\]
As $H_{qn_j}\left(\nu^{(\om, n'_j),x,i}\right) \leq 2$ for every component, this gives
\[ \frac{1}{qn_j}H\left( \nu^{(\om, n'_j)}, \Dk_{(q+1)n_j} | \Dk_{n_j} \right) = \bE_{n_j} \left( H_{qn_j}\left( \nu^{(\om, n'_j), x, i} \right) \right) \leq 3 \eps_j. \]
Thus, by \eqref{eq:Om_j measure}
\[ \PP \left( \left\{ \om \in \Om : \frac{1}{qn_j}H\left( \nu^{(\om, n'_j)}, \Dk_{(q+1)n_j} | \Dk_{n_j} \right) \leq 3 \eps_j \right\} \right) \geq 1 - \eps_j. \]
As $\sum \limits_{j=1}^\infty \eps_j < \infty$, the Borel-Cantelli Lemma yields
\[ \lim \limits_{j \to \infty} \frac{1}{qn_j}H\left( \nu^{(\om, n'_j)}, \Dk_{(q+1)n_j} | \Dk_{n_j} \right) = 0\ \text{ for } \PP \text{-a.e. } \om \in \Om.  \]
Since the only requirement on $n_j$ was to be large enough, we can repeat the argument assuming that $n_j$ is a subsequence of any given sequence of integers diverging to infinity. Therefore,
\[ \frac{1}{qn}H\left( \nu^{(\cdot, n'(\cdot))}, \Dk_{(q+1)n}| \Dk_n \right) \stackrel{\PP}{\longrightarrow} 0,\]
concluding the proof.
\end{proof}

We are now ready to finally conclude the proof of Theorem \ref{thm:main dim} under the additional assumption $r_{\mathrm{max}} \leq 1/2$.\\

\begin{prop}\label{prop:main dim 1/2}
Theorem \ref{thm:main dim} holds under additional assumption $r_{\mathrm{max}} \leq 1/2$.
\end{prop}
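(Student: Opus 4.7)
The plan is to deduce the super-exponential decay of $\Delta_n^{(\om)}$ from Theorem \ref{thm:main entropy on scales} by pitting two competing estimates on $H(\nu^{(\om,n'(\om))}, \Dk_{Mn})$ against each other for large $M$. If the separation $\Delta_{n'(\om)}^{(\om)}$ failed to decay super-exponentially, then on a non-negligible event the atoms of $\nu^{(\om,n'(\om))}$ would remain distinguishable at scale $\approx 2^{-Mn}$, forcing the entropy at that scale to be close to $n \cdot \sdim(\Sigma)$; on the other hand Theorem \ref{thm:main entropy on scales} together with the exact dimensionality of $\eta^{(\om)}$ (Proposition \ref{prop:exact dimension} and \eqref{eq:inf dim}) forces it to be at most $n\alpha + o(n)$ in $\PP$-probability. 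Since $\alpha = \dim(\Sigma) < \sdim(\Sigma)$, this yields the desired contradiction.

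Concretely, I would argue by contradiction: assume there exist an integer $M \ge 2$, a number $\eps > 0$ and a subsequence $n_k \to \infty$ with $\PP\bigl(\Delta_{n'(\om)}^{(\om)} > 2^{-Mn_k}\bigr) \ge \eps$. After possibly replacing $M$ by $M+1$ (to absorb a universal constant arising from the diameter of dyadic cells in $\R^2$), on the event $\{\Delta_{n'(\om)}^{(\om)} > 2^{-Mn}\}$ the atoms $\{f_u^{(\om)}(0) : u \in \X^{(\om)}_{n'}\}$ lie in pairwise distinct cells of $\Dk_{Mn}$, hence
\[
H(\nu^{(\om,n')}, \Dk_{Mn}) \;=\; H\bigl((p_u^{(\om)})_{u \in \X^{(\om)}_{n'}}\bigr) \;=\; \sum_{i=1}^{n'(\om)} H(p_{\om_i}),
\]
where the last equality uses the product structure $p_u^{(\om)} = \prod_{i=1}^{n'} p_{u_i}^{(\om_i)}$. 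By the ergodic theorem applied to $\om \mapsto H(p_{\om_1})$ and $\om \mapsto -\log r_{\om_1}$, together with \eqref{eq:prime def}, the right-hand side equals $n \cdot \sdim(\Sigma) + o(n)$ for $\PP$-a.e.\ $\om$.

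For the opposing upper bound, the convolution identity \eqref{eq:convolution decomposition} and \eqref{eq:conv entropy bound} give $H(\nu^{(\om,n')}, \Dk_n) \le H(\eta^{(\om)}, \Dk_n) + O(1)$, which by \eqref{eq:inf dim} is at most $n\alpha + o(n)$ for $\PP$-a.e.\ $\om$. Theorem \ref{thm:main entropy on scales} applied with $q := M - 1$ yields $\tfrac{1}{n} H(\nu^{(\om,n')}, \Dk_{Mn} \mid \Dk_n) \to 0$ in $\PP$-probability, so
\[
H(\nu^{(\om,n')}, \Dk_{Mn}) \;=\; H(\nu^{(\om,n')}, \Dk_n) + H(\nu^{(\om,n')}, \Dk_{Mn} \mid \Dk_n) \;\le\; n\alpha + o(n)
\]
in $\PP$-probability. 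Combined with the lower bound in the preceding paragraph, this forces $\sdim(\Sigma) \le \alpha + o(1)$, contradicting $\dim(\Sigma) < \sdim(\Sigma)$ for all sufficiently large $n$ in the given subsequence. Hence $\PP\bigl(\Delta_{n'(\om)}^{(\om)} \le 2^{-Mn}\bigr) \to 1$ for every $M > 0$.

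It remains to convert this bound on $\Delta_{n'(\om)}^{(\om)}$ into the stated bound on $\Delta_m^{(\om)}$. Set $L := -\int_\Om \log r_{\om_1}\, d\PP(\om) > 0$, so that by the ergodic theorem and \eqref{eq:prime def}, $n'(\om,n)/n \to 1/L$ for $\PP$-a.e.\ $\om$. Given $M_0 > 0$, choose $M > M_0/L$ and set $n(m) := \lfloor (L - \eps) m \rfloor$ with $\eps > 0$ small enough that $Mn(m) \ge M_0 m$ for all sufficiently large $m$. On the event $\{n'(\om, n(m)) \le m\}$, which has $\PP$-probability tending to $1$, Lemma \ref{lem:Delta inequality} yields $\Delta_m^{(\om)} \le \Delta_{n'(\om,n(m))}^{(\om)} \le 2^{-Mn(m)} \le 2^{-M_0 m}$, completing the reduction. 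The principal obstacle lies in the contradiction step: producing the lower bound of order $n \cdot \sdim(\Sigma)$ for $H(\nu^{(\om,n')}, \Dk_{Mn})$ is straightforward, but the matching upper bound requires the non-trivial estimate of Theorem \ref{thm:main entropy on scales}, which itself rests on Proposition \ref{prop:sat-dim 0} and the inverse theorem for convolutions.
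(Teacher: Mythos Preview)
Your argument is correct and follows essentially the same route as the paper: both combine Theorem \ref{thm:main entropy on scales} with the observation that if the atoms of $\nu^{(\om,n')}$ fall into distinct cells of $\Dk_{(q+1)n}$ then $\frac{1}{n}H(\nu^{(\om,n')},\Dk_{(q+1)n})$ equals the ergodic average $s(\om,n)\to\sdim(\Sigma)$, forcing a collision (hence $\Delta^{(\om)}_{n'}\le\sqrt{2}\,2^{-(q+1)n}$) once $\alpha<\sdim(\Sigma)$. The only cosmetic differences are that the paper argues directly rather than by contradiction, bounds $H_n(\nu^{(\om,n')})$ via the approximation $\|\Pi_\om-\Pi_\om^{(n')}\|=O(2^{-n})$ instead of \eqref{eq:conv entropy bound}, and converts $\Delta^{(\om)}_{n'}$ to $\Delta^{(\om)}_m$ using the crude deterministic bounds $cn\le n'(\om)\le Cn$ rather than the ergodic limit $n'/n\to 1/L$; you should also note explicitly that Lemma \ref{lem:Delta inequality} requires $|\X^{(\om)}_{n'}|>1$, which holds with $\PP$-probability tending to $1$ by Assumption \ref{assmp:non-deg and rotation}\ref{it:non-degenerate}.
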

\begin{proof}
The deduction of Theorem \ref{thm:main dim} from Theorem \ref{thm:main entropy on scales} is exactly the same as the one of \cite[Theorem 1.3]{SSS} from \cite[Theorem 4.3]{SSS}. As in our case the proof of Theorem  \ref{thm:main entropy on scales} uses the assumption $r_{\mathrm{max}} \leq 1/2$, we (so far) obtain the result under this additional assumption. We provide only a sketch, and refer the reader to \cite[Proof of Theorem 1.3]{SSS}.

Assume $\alpha := \dim(\Sigma) < \min\{ 2, \sdim(\Sigma)\}$ and $r_{\mathrm{max}} \leq 1/2$. Fix $q \in \N$. By \eqref{eq:inf dim}, as $\|\Pi^{(n')}_\om - \Pi_\om\| \leq O(2^{-n})$, we have for $\PP$-a.e.\ $\om \in \Om$:
\[ \lim \limits_{n \to \infty} H_n\left( \nu^{(\om, n')} \right) = \alpha \]
(see \cite[Lemma 4.15]{SSS} and its proof). Therefore, Theorem \ref{thm:main entropy on scales} yields
\[ \frac{1}{n}H\left( \nu^{(\cdot, n'(\cdot))}, \Dk_{(q+1)n} \right) = H_n(\nu^{(\cdot, n'(\cdot))}) + \frac{1}{n}H\left( \nu^{(\cdot, n'(\cdot))}, \Dk_{(q+1)n}| \Dk_n \right) \overset{\PP}{\longrightarrow} \alpha. \]
As $\alpha < \sdim(\Sigma)$, there exists $\delta> 0$ such that
\begin{equation}\label{eq:qn entropy < sdim} \lim \limits_{n \to \infty} \PP \left( \left\{ \om \in \Om : \frac{1}{n}H\left( \nu^{(\om, n')}, \Dk_{(q+1)n} \right) < \sdim(\Sigma) - \delta \right\} \right) = 1.
\end{equation}
Recall that
\[ \nu^{(\om, n')} = \sum \limits_{u \in \X^{(\om)}_{n'}} p^{(\om)}_u \delta_{f^{(\om)}_u(0)}.\]
The crucial observation is the following: For given $\om \in \Om, n \in \N$, if each value $f^{(\om)}_u(0)$ among $u \in \X^{(\om)}_{n'}$ belongs to a different element of the partition $\Dk_{(q+1)n}$, then
\[ \frac{1}{n}H\left( \nu^{(\om, n')}, \Dk_{(q+1)n} \right) = \frac{1}{n}H\left(p_{\om_1} \otimes \cdots \otimes p_{\om_{n'}} \right) = \frac{n'}{n} \cdot \frac{\sum \limits_{i=1}^{n'}H(p_{\om_{i}})}{n'} =: s(\om,n). \]
Therefore, if $\frac{1}{n}H\left( \nu^{(\om, n')}, \Dk_{(q+1)n} \right) < s(\om,n)$, then at least two points of the form $f^{(\om)}_u(0), u \in \X^{(\om)}_{n'}$, belong to the same element of $\Dk_{(q+1)n}$, hence $\Delta^{(\om)}_{n'} \leq \sqrt{2} 2^{-(q+1)n}$. As $cn \leq n'(\om) \leq Cn$ for every $\om \in \Om,\ n \in \N$, and some constants $c,C>0$, depending only on $\Sigma$, we conclude using the above observations and Lemma \ref{lem:Delta inequality} that for each $n \in \N$ holds
\begin{equation}\label{eq:superexp ineq}
\begin{split}
\PP & \left( \left\{ \om \in \Om : \frac{\log \Delta^{(\om)}_{\lceil Cn \rceil }}{n} \leq -(q+1) + \frac{\log \sqrt{2}}{n} \right\} \right)  \\
& \geq \PP \left( \left\{ \om \in \Om : \frac{1}{n}H\left( \nu^{(\om, n')}, \Dk_{(q+1)n} \right) < s(\om,n) \text{ and } |\X^{(\om)}_{\lfloor cn \rfloor }|>1 \right\} \right).
\end{split}
\end{equation}
By the ergodic theorem applied to the functions $\om \mapsto H_{p_{\om_1}},\ \om \mapsto \log r_{\om_1}$, we have $\lim \limits_{n \to \infty} s(\om, n) = \sdim(\Sigma)$ for $\PP$-a.e.\ $\om \in \Om$. Moreover, Assumption \ref{assmp:non-deg and rotation}\ref{it:non-degenerate} implies
\[\lim \limits_{n \to \infty} \PP \left( \left\{ \om \in \Om : |\X^{(\om)}_{\lfloor cn \rfloor }|>1 \right\} \right) = 1.\]
Combining this with \eqref{eq:qn entropy < sdim} and \eqref{eq:superexp ineq} finishes the proof.
\end{proof}

We shall now explain why the technical assumption $r_{\max} \leq 1/2$ can be omitted, i.e. how to deduce  Theorem \ref{thm:main dim} from Proposition \ref{prop:main dim 1/2}. Below we present the proof for  $\PP$ being a Bernoulli measure, as many details simplify in this case, yet it is sufficient for the proof of Theorem \ref{thm:main ac}. We give the proof of the full version for general ergodic measures in Appendix \ref{app:ergodic case}. 

\begin{proof}[\bf Proof of Theorem \ref{thm:main dim} (for Bernoulli $\PP$)]
Let $\Sigma = \left( (\Phi^{(i)})_{i \in I}, (p_i)_{i \in I}, \PP \right)$ be a model satisfying  Assumption \ref{assmp:non-deg and rotation} with $\PP = q^{\otimes\N}$ for a probability vector $q$ on $I$. Without loss of generality we can assume that $\PP$ is fully supported. Fix $k_0 \in \N$ such that $r_{\max}^{k_0} = \left(r_{\max}(\Sigma) \right)^{k_0} \leq 1/2$. For $k \geq k_0$, consider a new model $\Sigma' = \left( \left(\Psi^{(\ov{i})}\right)_{\ov{i} \in I^k}, \left(p_{\ov{i}}\right)_{\ov{i} \in I^k}, \PP' \right)$ with the index set $I' = I^k$, where for   $\ov{i} = (i_1, \ldots, i_k) \in I^k$ we put
\[\Psi^{(\ov{i})} = \left\{ f^{(i_1)}_{u_1} \circ \cdots \circ f^{(i_k)}_{u_k} : u_j \in \{1, \ldots, k_{i_j}\} \text{ for } 1 \leq j \leq k \right\},\]
together with $p_{\ov{i}} = p_{i_1} \otimes \cdots \otimes p_{i_k}$ and $\PP' = (q^{\otimes k})^{\otimes \N}$ (so $\Sigma'$ is the ``$k$-iterate'' of $\Sigma$). We claim that $\Sigma'$ verifies Assumption \ref{assmp:non-deg and rotation} for infinitely many $k \in \N$. As $r_{\max}(\Sigma') \leq 1/2$ for $k \geq k_0$, this will imply that there exists $k$ such that we can apply the already established version of Theorem \ref{thm:main dim} (i.e. Proposition \ref{prop:main dim 1/2}) to $\Sigma'$.  Clearly, as $\Sigma$ satisfies  Assumption \ref{assmp:non-deg and rotation}\ref{it:non-degenerate}, then so does $\Sigma'$ for any $k \geq 1$. Moreover, $\Sigma'$ satisfies Assumption  \ref{assmp:non-deg and rotation}\ref{it:non-real} for infinitely many $k$. Indeed, if for given $k$ it is not satisfied, then $\vphi_{i_1}\cdots\vphi_{i_k} \in \R$ for all $(i_1, \ldots, i_k) \in I^k$. As by the assumption on $\Sigma$, there exists $i \in I$ such that $\vphi_i \notin \R$, we see that $\vphi_{i_1}\cdots\vphi_{i_k} \vphi_i \notin \R$, hence $\Sigma'$ satisfies Assumption \ref{assmp:non-deg and rotation}\ref{it:non-real} for $k+1$. As $\PP'$ is ergodic for any $k$ (as a Bernoulli measure) Assumption \ref{assmp:non-deg and rotation} is indeed satisfied by $\Sigma'$ for infinitely many $k$.

Let us fix now $k \geq k_0$ such that $\Sigma'$ satisfies Assumption \ref{assmp:non-deg and rotation}. Let $\Om = I^\N,\ \Om' := \left(I^k\right)^\N$ and note that the map $h  : \Om \to \Om'$ given by
\[h(\om_1, \om_2, \ldots) = ((\om_1, \ldots, \om_{k}), (\om_{k+1}, \ldots, \om_{2k}), \ldots)\]
is a bi-measurable bijection transporting $\PP$ to $\PP'$. It is easy to see that $\eta^{(\om)} = \eta'^{(h(\om))}$ for every $\om \in \Om$, where $\eta'^{(\cdot)}$ denotes the projected measures in the model $\Sigma'$, thus $\dim(\Sigma) = \dim(\Sigma')$. Similarly, it is a straightforward calculation that $\sdim(\Sigma) = \sdim(\Sigma')$. Moreover, for each $\om \in \Om$, we have $\Delta_{kn}^{(\om)} =\Delta_{n}^{'(h(\om))}$, where $\Delta_{n}^{'(\cdot)}$ is defined as in \eqref{eq:delta def} for the model $\Sigma'$. Therefore, if $\dim(\Sigma) < \min\{2, \sdim(\Sigma)\}$ then also $\dim(\Sigma') < \min\{2, \sdim(\Sigma')\}$, thus Proposition \ref{prop:main dim 1/2} gives
\[ \frac{\log \Delta_{kn}^{(\cdot)}}{n} \stackrel{\PP}{\longrightarrow} -\infty. \]
It remains to improve that to $\frac{\log \Delta_{n}^{(\cdot)}}{n} \stackrel{\PP}{\longrightarrow} -\infty$. This follows from Lemma \ref{lem:Delta inequality}, as Assumption \ref{assmp:non-deg and rotation}\ref{it:non-degenerate} for $\Sigma$ implies $\lim \limits_{n \to \infty} \PP \left( \left\{ \om \in \Om : |\X^{(\om)}_n |> 1 \right\} \right) = 1$.
\end{proof}

\section{Fourier decay}
In this section we combine  the methods of \cite[Theorem D]{SS16} and \cite[Theorem 1.5]{SSS} 
to show that for a ``typical'' non-degenerate model the measure $\eta^{(\om)}$ 
has power Fourier decay at infinity.

Given $1< a < b<\infty$ and $\eta>0$, let
\be \label{defH}
A_{a,b,\eta} := \{ z \in \C:\ a \le  |z| \le  b,\ \Im(z) > \eta\}.
\ee

We will parametrize the contractions of the linear maps by assuming that 
$\lam_i = \lam_1^{\beta_i}$ for some fixed complex numbers, with $\beta_1=1$, 
and use $\theta:=\lam_1^{-1}$ as a parameter, where the map $f_1$ is 
``special'' in the sense that $k_1\ge 2$  with $t_{j}^{(1)} \neq 
t_{j'}^{(1)}$ for some $j,j' \in \{1,\ldots,k_1\}$ and $\theta$ is non-real; 
we can further assume that $\Im(\theta)>0$ by symmetry  (by changing the 
coordinates via $z \mapsto \ov{z}$ if needed).

\begin{thm} \label{thm:Fourier-precise} Let $I = \{1,\ldots,N\}$, and let $\PP$ 
be a fully supported Bernoulli measure on $\Omega=I^\N$. Fix $1 < a < b< 
\infty$, and 
	let $(\beta_\kappa)_{\kappa\in I}$ be fixed  complex numbers, with 
	$\beta_1=1$, such that
	\be \label{beta:bound}
	\frac{\log a}{\log b + \pi} \le |\beta_\kappa| \le \frac{\log b + \pi}{\log 
	a}\,,\ \kappa\in I.
	\ee
	Fix a compact set
	\be \label{H-def}
	H= A_{a,b,\eta} \times [p_{\min},p_{\max}] \subset \{z: |z|>1\} \times 
	(0,1),
	\ee
	and fix also $\alpha>0$.
	
	Then there exist a Borel set $\mathcal{G}_{H,\alpha}\subset \Omega\times 
	A_{a,b,\eta}$ and a  constant $\sig=\sig(H,\alpha)>0$ such that the 
	following hold:
	\begin{enumerate}
		\item[{\rm (i)}]
		For $\PP$-almost all $\om$,
		\be \label{cond:alpha}
		\dim_H\{\theta\in A_{a,b,\eta}: (\om,\theta)\notin 
		\mathcal{G}_{H,\alpha} \} \le \alpha.
		\ee
		\item[{\rm (ii)}] If $(\om,\theta)\in\mathcal{G}_{H,\alpha}$, and 
		$\Sigma=((\Phi^{(i)})_{i \in I},(p_i)_{i \in I},\PP)$ is a 
		non-degenerate model such that:
		\begin{itemize}
			\item $k_1\ge 2,  t_{j}^{(1)} \neq t_{j'}^{(1)}$  for some 
			$j,j' \in \{1,\ldots,k_1\}$, and $p_j^{(1)}\in 
			[p_{\min},p_{\max}]$ for all  $j\in\{1,\ldots,k_1\}$,
			\item for each $\kappa \in I$, the mappings $f_j^{(\kappa)}$ have 
			contractions $\lam_\kappa = \theta^{-\beta_\kappa}= 
			e^{-\beta_\kappa \log \theta}$, where 
			$\log$ is the principal branch of logarithm, taking values in 
			$\R + (-\pi, \pi]i$ and continuous on $\C\setminus (-\infty,0]$, and all 
			contraction coefficients satisfy $a \le |\lam_\kappa|^{-1} \le b$, 
			$\kappa=1,\ldots,N$,
		\end{itemize}
		then $\eta^\pom\in\mathcal{D}_2(\sig)$.
	\end{enumerate}
\end{thm}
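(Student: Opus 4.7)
The proof combines the complex Erd\H{o}s--Kahane argument of \cite[Theorem D]{SS16} with the random-sampling device of \cite[Theorem 1.5]{SSS}. The starting point is the factored Fourier representation that stems from commutativity of multiplication in $\C$: iterating \eqref{eq:dyn self-sim} realises $\eta^{(\omega)}$ as the distribution of the sum $\sum_{k\ge 1}\lam_{\omega_1}\cdots\lam_{\omega_{k-1}} T_k$ of independent translations, and therefore
\[
  \widehat{\eta^{(\omega)}}(\xi) \;=\; \prod_{k=1}^{\infty} \psi_{\omega_k}\!\bigl(\overline{\theta^{-S_{k-1}(\omega)}}\,\xi\bigr),
  \qquad S_k(\omega) := \sum_{j=1}^{k}\beta_{\omega_j},\ S_0=0,
\]
where $\psi_\kappa(\zeta) := \sum_j p_j^{(\kappa)} e^{2\pi i \Re(t_j^{(\kappa)}\overline{\zeta})}$. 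Since $t_j^{(1)}\ne t_{j'}^{(1)}$ for some $j,j'$ and $p_j^{(1)}\in[p_{\min},p_{\max}]$, the trigonometric polynomial $\psi_1$ is non-constant, so its unit level set $\{|\psi_1|=1\}$ lies in a real-analytic variety of positive codimension. Consequently there exist $c_0,\rho_0>0$, uniform over the compact parameter set $H$, such that every annulus $\{|\zeta|\in[M,2M]\}$ with $M\ge 1$ contains an angular sector $B=B(M)$ of angular measure $\ge\rho_0$ on which $|\psi_1(\zeta)|\le 1-c_0$.

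The plan is to show that, for $\PP$-typical $\omega$ and for $\theta$ outside a set of Hausdorff dimension $\le\alpha$ in $A_{a,b,\eta}$, the orbit $\{\theta^{-S_{k-1}(\omega)}\overline{\xi} : \omega_k=1\}$ visits the relevant sectors $B(M)$ for a positive density of indices $k\le N$, uniformly in $\xi$ with $|\xi|\asymp 2^N$. This forces the infinite product above to satisfy $|\widehat{\eta^{(\omega)}}(\xi)|\le C|\xi|^{-\sigma}$ for some $\sigma=\sigma(H,\alpha)>0$. For fixed $\omega$, the problem is Diophantine: writing $\theta=re^{i\phi}$ and using the principal logarithm, one has $|\theta^{-S_{k-1}}|=\exp(-\Re(S_{k-1}\log\theta))$ and $\arg(\theta^{-S_{k-1}})=-\Im(S_{k-1}\log\theta)\bmod 2\pi$, so the orbit distribution on $\C^*$ is governed by the same algebraic--combinatorial mechanism as in \cite[Section 8]{SS16}, with the random times $S_{k-1}(\omega)$ replacing the deterministic integers used there. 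The key step is then to adapt the Erd\H{o}s--Kahane counting estimate: the set of $\theta$ for which the orbit fails to equidistribute at dyadic scale $2^{-N}$ can be covered by $O(2^{\alpha N})$ intervals of radius $2^{-N}$, after which a Borel--Cantelli argument yields $\dim_H\le\alpha$ for $\PP$-a.e.\ $\omega$.

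Once the counting estimate is available, the exceptional set $\{\theta\in A_{a,b,\eta} : (\omega,\theta)\notin\mathcal{G}_{H,\alpha}\}$ is expressed as a $\limsup$ of the $\theta$-level sets of truncated Fourier products. Defining $\mathcal{G}_{H,\alpha}$ explicitly via this $\limsup$ description makes it Borel and delivers the Hausdorff bound (i). The second conclusion (ii) follows because the decay constant extracted from the counting argument depends only on the quantities $(c_0,\rho_0,p_{\min},p_{\max},a,b,\eta,\alpha)$ that are already encoded in $H$ and $\alpha$; continuity of $\psi_\kappa$ in the translation/probability data together with compactness of $H$ gives the needed uniformity of $\sigma$.

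The main obstacle is transferring the Erd\H{o}s--Kahane counting to random times: one must ensure that the fluctuations $S_n(\omega)-n\overline{\beta}$, where $\overline{\beta}:=\sum_\kappa q_\kappa\beta_\kappa$, do not destroy the deterministic arithmetic structure exploited in \cite[Section 8]{SS16}. The natural device is to first restrict to a full-$\PP$-measure event $\Omega^\star\subset\Omega$ on which both $n^{-1}S_n(\omega)\to\overline{\beta}$ and $n^{-1}\#\{k\le n : \omega_k=1\}\to q_1$ hold with exponential large-deviation rates (available since $\PP$ is Bernoulli), and then run the counting argument of \cite{SS16} verbatim for every $\omega\in\Omega^\star$, absorbing the resulting $o(n)$ fluctuations into slightly enlarged dyadic parameter cells so that the final dimension bound is not inflated. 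This mirrors the strategy implemented in \cite[Section 5]{SSS} for the real line, now transposed to the complex-plane Erd\H{o}s--Kahane framework of \cite{SS16}.
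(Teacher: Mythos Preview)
Your outline identifies the right infinite-product starting point and the correct overall architecture (restrict to factors with $\omega_k=1$, run an Erd\H{o}s--Kahane covering argument, conclude by Borel--Cantelli). However, it skips over the one genuinely new step and misdescribes the mechanism of the complex Erd\H{o}s--Kahane argument.

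The angular-sector reduction is a red herring. After normalizing $t_1^{(1)}=0$, $t_2^{(1)}=1$, the working inequality is $|\psi_1(\zeta)|\le 1-c_1\|\Re\zeta\|^2$, where $\|\cdot\|$ is distance to the nearest integer; so the question is whether $\|\Re(\theta^{-S_{k-1}}\overline\xi)\|$ is bounded away from $0$ for a positive proportion of $k$. This is precisely the Diophantine problem that the Erd\H{o}s--Kahane method attacks via integer approximations $K_k$, and no ``sector visiting'' shortcut is available: knowing that most of each annulus lies in the good set tells you nothing about whether a specific orbit point lands there.

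The genuine gap is in transferring the argument of \cite{SS16} to random exponents. That argument rests on a reconstruction lemma (Lemma~2.2 there, restated here as Lemma~\ref{lem-tech}): from four \emph{consecutive integer} powers $\Re(z_0),\Re(\theta z_0),\Re(\theta^2 z_0),\Re(\theta^3 z_0)$ one solves an explicit algebraic system to recover both $\theta$ and $\Im(\theta^3 z_0)$. This is what makes the recursion $K_n\mapsto K_{n+1}$ well-defined in the complex case. With random increments $\beta_{\omega_k}$ there are no consecutive integer powers along the orbit in general, and the lemma has no analogue for four generic complex exponents. Your proposal to absorb $o(n)$ fluctuations of $S_n$ into enlarged dyadic cells does not touch this: the obstruction is algebraic (an underdetermined system), not metric. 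The paper's solution is to parse $\omega$ into blocks $W_n$ ending in the word $1^52$; inside each such block one has five consecutive indices with $\beta_{\omega_k}=\beta_1=1$, so Lemma~\ref{lem-tech} applies verbatim and yields an approximation $\Psi\approx\theta$ from the five integers $K_{n,1},\ldots,K_{n,5}$. The recursion then jumps from block $n$ to block $n+1$ by raising this $\Psi$ to the accumulated exponent $\beta(W_{M-n})$ of the intervening word, which produces an error factor of size $C^{|W_{M-n}|}$. Consequently the relevant probabilistic input is not a large-deviation bound on $S_n$, but control on the return times $|W_n|$ to the pattern $1^52$: one needs that the sum of the $\varrho M$ largest among $|W_1|,\ldots,|W_M|$ is $O(\varrho\log(1/\varrho)\,M)$ almost surely (Lemma~\ref{lem:proba}), so that the product of the $C^{|W_{M-n}|}$ over the ``bad'' $n$ stays below $a^{\alpha M}$. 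Without the $1^52$-block device (or some other way to manufacture runs of consecutive integer powers), there is no recursion to count, and the covering argument cannot start.
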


\begin{rem}
	Note that for any complex numbers $\theta,\theta_\kappa$, with $a < 
	|\theta|, |\theta_\kappa|< b$, the numbers $\beta_\kappa:= \frac{\log 
	\theta_\kappa}{\log \theta}$,
	where  $\log$ is the principal branch of logarithm, satisfy 
	\eqref{beta:bound} and
	$\theta_\kappa = \theta^{\beta_\kappa}$.
\end{rem}

Let us explain first how Theorem \ref{thm:fourier main} follows from Theorem \ref{thm:Fourier-precise} above. Clearly, letting
\[a\searrow 0,\ b\nearrow \infty,\ \eta \searrow 0,\ p_{\min} \searrow 0,\ p_{\max} \nearrow 1,\ \alpha \searrow 0\]
and taking a countable union of corresponding exceptional sets $\Gk_{H,\alpha} \cup \left(\Om \times \R\right)$, we obtain the conclusion of Theorem \ref{thm:fourier main} under additional assumption $\beta_1 = 1$, for models such that $t_{j}^{(1)} \neq t_{j'}^{(1)}$  for some $j,j' \in \{1,\ldots,k_1\}$. For the general case, note that for any given $c \in \C \setminus \{0\}$ the statement of Theorem \ref{thm:fourier main} is invariant under replacing simultaneously $\beta_i$ with $c\beta_i$ (by performing the change of coordinates $\lam \to \lam^{1/c}$ in the parameter space $\D$). We can therefore omit the assumption $\beta_1 = 1$ and by relabelling the elements of $I$ if necessary, we can consider all non-degenerate models, hence the full statement of Theorem \ref{thm:fourier main} is recovered.

\begin{proof}[Proof of Theorem \ref{thm:Fourier-precise}]
	Let $\Sigma$ be a non-degenerate model as in the second part of the 
	statement.                                                                  
	
	Without loss of generality, we may assume that  $t_1^{(1)}\neq t_2^{(1)}$. 
	Since translating and 
	scaling a measure does not change whether the measure is in 
	$\mathcal{D}_2(\sigma)$, we may assume also that $t_1^{(1)}=0$, and 
	$t_2^{(1)}=1$. We may also assume that $N\ge 2$, otherwise this is a 
	standard homogeneous self-similar measure  and the result follows from 
	\cite[Theorem D]{SS16}.
	
	We write $\eta^\pom_\theta$ for the measures generated by the model 
	$\Sigma$; we keep in mind that there is also a dependence on the 
	probabilities and the translations. We write $\lam_i$ for the contraction 
	of the IFS corresponding to the symbol $i$, and recall that  $\lam_i = 
	\theta^{-\beta_i}$ by assumption. By definition, $\eta^{(\om)}_\theta$ is 
	the distribution of a sum of independent random variables, hence it can be 
	represented as an infinite convolution measure
	\[
	\eta^{(\om)}_\lam = \Conv_{n\in\N} \left(\sum_{j=1}^{k_{\om_n}} p_j 
	^{(\om_n)} \delta_{\lam_{\om_1}\cdots \lam_{\om_{n-1}} 
	t_j^{(\om_n)}}\right),
	\]
	where $\delta_x$ is Dirac's delta centered at $x$.
	We thus have
	\begin{eqnarray}
	\left|\widehat{\eta_\lam^{(\om)}}(\xi)\right| & = & \prod_{n\in \N} \left| 
	\sum_{j=1}^{k_{\om_n}} p_j ^{(\om_n)} e^{2\pi i \Re(\lam_{\om_1}\cdots 
	\lam_{\om_{n-1}} t_j^{(\om_n)}\ov\xi)}\right| \nonumber \\
	& \le & \prod_{n:\,\om_n=1} \left( \left| p_1^{(1)} + p_2^{(1)} e^{2\pi i 
	\Re(\lam_{\om_1}\cdots \lam_{\om_{n-1}}  \ov\xi)}\right| + \Bigl(1 - 
	p_1^{(1)}-p_2^{(1)}\Bigr)\right) \nonumber \\
	& \le & \prod_{n:\,\om_n=1} \left( 1 - c_1 \|\Re(\lam_{\om_1}\cdots 
	\lam_{\om_{n-1}} \ov\xi)\|^2\right), \label{ineq1}
	\end{eqnarray}
	where $\|x\|$ denotes the distance from $x$ to the nearest integer and 
	$c_1>0$ is a constant depending only on $[p_{\min},p_{\max}]$.
	
	We will impose a number of conditions on $\om$ which hold $\PP$-a.e. Let 
	$\Om_1$ be the set of $\om\in \Om$ which contain infinitely many words 
	$1^52= 111112$. 
	(The symbol 2 may be replaced by any other $j\ne 1$;
	the only reason for including it is to make sure that two occurrences of 
	$1^52$ cannot overlap.) It is clear that $\PP(\Om_1) = 1$. For $\om \in 
	\Om_1$ let
	\[
	\om = W_1 W_2 W_3\ldots,\ \ \mbox{with}\ \ W_i=W'_i1^52,
	\]
	where $W'_i$ are words in the alphabet $\{1,\ldots,N\}$ not containing 
	$1^52$.
	
	For a finite word $v=v_1\ldots v_n$ let $\lam_v = \lam_{v_1\ldots v_n}:= 
	\lam_{v_1}\cdots \lam_{v_n}$. Then (\ref{ineq1}) implies
	\be \label{ineq2}
	|\widehat{\eta_\lam^{(\om)}}(\xi)| \le \prod_{n=1}^\infty \prod_{j=0}^4 
	\left(1- c_1 \left\|\Re(\lam_{W_1\ldots W_{n-1} W'_n}\cdot 
	\theta^{-j}\ov\xi)\right\|^2\right),
	\ee
	with the convention that the word $W_1\ldots W_{n-1} W'_n$ equals 
	$W'_1$ when $n=1$. 
		Notice that we take into account only the occurrences of 1's which 
		appear in blocks of $1^52$; the reason is that in this case we will
		be able to use a technical lemma from \cite{SS16}.

	Denote $\theta_v = \lam_v^{-1}$, and let
	\[
	\Theta^{(n)}:= (\lam_{W_1\ldots  W_n})^{-1} = \theta_{W_1\cdots W_n}, 
	\Theta^{(0)}=1.
	\]
	Now suppose $|\Theta^{(M)}| \le |\xi|< |\Theta^{(M+1)}|$ for some $M\in \N$ 
	(this depends on $\theta$, and we will keep this in mind), and let $\tau = 
	\ov\xi/\Theta^{(M)}$.
	Denote
	\[
	\Theta^{(M)}_{n}:= \frac{\Theta^{(M)}}{\Theta^{(M-n)}} = 
	\theta_{W_{M}\ldots W_{M-n+1}},\ \ n=1,\ldots, M;\ \ \Theta^{(M)}_0= 1,
	\]
	so that $\Theta^{(M)}_1 = \theta_{W_M}$, $\Theta^{(M)}_2 = \theta_{W_M 
	W_{M-1}}$, etc.
	Then (\ref{ineq2}) yields
	\begin{eqnarray} \nonumber
	|\widehat{\eta_\lam^{(\om)}}(\xi)|  & \le &  \prod_{n=1}^M \prod_{j=0}^4 \left(1- c_1 
	\left\|\Re(\lam_{W_1\ldots W_{n-1} W'_n}\cdot 
	\theta^{-j}\ov\xi)\right\|^2\right) \\
	& = & \prod_{n=1}^M \prod_{j=0}^4 \left(1- c_1 
	\left\|\Re(\lam_{W_1\ldots W_{n-1} W_n}\cdot \lam_1^{-5}\lam_2^{-1} 
	\theta^{-j}\Theta^{(M)}\tau)\right\|^2\right) \nonumber \\
	& = & \prod_{n=1}^M \prod_{j=0}^4 \left(1- c_1 
	\left\|\Re(\theta_{W_{n+1}\ldots W_{M}}\cdot \theta_2 
	\theta^{5-j}\tau)\right\|^2\right) \nonumber \\
	& = & \prod_{n=1}^M \prod_{j=0}^4 \left(1- c_1 
	\left\|\Re(\Theta^{(M)}_{M-n}\cdot \theta_2 
	\theta^{5-j}\tau)\right\|^2\right) \nonumber \\
	& = & 
	\prod_{n=0}^{M-1} \prod_{j=1}^5 \left( 1 - c_1 
	\left\|\Re(\Theta_n^{(M)}\cdot \theta_2 \theta^j \tau)\right\|^2\right). 
	\label{ineq3}
	\end{eqnarray}
	Note that $|\tau| \in [1, |\theta_{W_{M+1}}|]=[1,|\Theta_1^{(M+1)}|]$.
	
	\subsection{Exceptional set} Now we can define the exceptional set in 
	Theorem~\ref{thm:Fourier-precise}.  Denote $[M] = \{0,\ldots,M-1\}$.
	Fix a compact set $H$ as in \eqref{H-def} with all the parameters, in 
	particular,
	$a,b,\eta>0$. For $\rho,\delta>0$ we first define the ``bad set''
	at scale $M\in \N$ for a fixed $\om\in\Om_1$:
	\be \label{badset}
	E_{\om,M}(\rho,\delta):=
	\left\{\theta \in A_{a,b,\eta}:\! \max_{\tau:\ |\tau| \in [1, 
	|\Theta_1^{(M+1)}|]}  \#\Bigl\{n\in [M]:\,\max_{1 \le j \le 
	5}\|\Re(\Theta_n^{(M)}\cdot \theta_2 \theta^j  \tau)\| 
	\ge \rho \Bigr\} < \delta M\right\}
	\ee
	Note that this is a random set, since $\Theta_n^{(M)}$ depend on $\om$. 
	Then consider
	\be \label{set:excep}
	\Ek_\om(\rho,\delta) = \limsup_{M\to \infty} E_{\om,M}(\rho,\delta) = 
	\bigcap_{M_0=1}^\infty \bigcup_{M = M_0}^\infty E_{\om,M}(\rho,\delta).
	\ee 
	Theorem~\ref{thm:Fourier-precise} will follow from the next two 
	propositions.
	
	\begin{prop} \label{prop:decay}
		There exists a set $\Om_2\subset \Om_1$ of full $\PP$-measure, such 
		that for any $\rho,\delta>0$ and $\om\in\Om_2$, 
		we have $\eta^{(\om)} \in \Dk_2(\sig)$ whenever $\theta \in 
		A_{a,b,\eta}\setminus \Ek_\om(\rho,\delta)$, 
		where $\sig>0$ depends  on $\rho,\delta$, and $H$, but not on $\om$.
	\end{prop}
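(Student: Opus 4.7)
The plan is to convert the product bound \eqref{ineq3} into polynomial decay of $\widehat{\eta^{(\om)}_\lam}$ by using the assumption $\theta \notin \Ek_\om(\rho,\delta)$ to control a definite fraction of the factors, and then converting the exponent $M$ (the scale of the product) into $\log|\xi|$ via the ergodic theorem applied to the growth rate of the random products $\Theta^{(M)} = \theta_{W_1 \cdots W_M}$. Define $\Om_2 \subset \Om_1$ to be the set of those $\om$ for which (i) the Birkhoff averages $\frac{1}{M}\sum_{k=1}^M |W_k|$ and $\frac{1}{M}\sum_{k=1}^M \sum_{i \in W_k}\beta_i$ converge as $M \to \infty$; by ergodicity of $\PP$ these limits are $\PP$-a.s. constants, depending only on $\PP$ and $(\beta_\kappa)_{\kappa \in I}$. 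Denote the second limit by $\bar B \in \C$; note that $\bar B$ is deterministic and $\PP(\Om_2)=1$.

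Fix $\om \in \Om_2$ and $\theta \in A_{a,b,\eta} \setminus \Ek_\om(\rho,\delta)$. By definition of $\Ek_\om(\rho,\delta)$, there is $M_0 = M_0(\om,\theta)$ such that for every $M \ge M_0$ and every $\tau$ with $|\tau| \in [1, |\Theta_1^{(M+1)}|]$, the set
\[
\mathcal{N}_{M,\tau} := \left\{ n \in [M] : \max_{1 \le j \le 5}\|\Re(\Theta_n^{(M)}\cdot \theta_2\theta^j\tau)\| \ge \rho \right\}
\]
has cardinality at least $\delta M$. Given any $\xi \in \C$ with $|\xi|$ large enough (so that the unique $M = M(\xi)$ with $|\Theta^{(M)}| \le |\xi| < |\Theta^{(M+1)}|$ satisfies $M \ge M_0$), set $\tau = \ov\xi/\Theta^{(M)}$; then $|\tau| \in [1,|\Theta_1^{(M+1)}|]$ as required for \eqref{ineq3}. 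Applying $\#\mathcal{N}_{M,\tau} \ge \delta M$ to the product in \eqref{ineq3} (for each $n \in \mathcal{N}_{M,\tau}$ at least one $j$ gives a factor $\le 1-c_1\rho^2$, and all remaining factors are $\le 1$) yields
\[
\bigl|\widehat{\eta^{(\om)}_\lam}(\xi)\bigr| \;\le\; (1 - c_1\rho^2)^{\delta M}.
\]

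It remains to convert this exponential bound in $M$ into a power bound in $|\xi|$, uniformly in $\om \in \Om_2$. Since $\Theta^{(M)} = \theta^{B_M}$ with $B_M/M \to \bar B$ for every $\om \in \Om_2$, and since $\log\theta$ is continuous and bounded on the compact set $A_{a,b,\eta}$, the quantity
\[
\bar L(\theta) := \Re(\bar B \log\theta)
\]
satisfies $\log|\Theta^{(M)}|/M \to \bar L(\theta)$, and $0 < \bar L(\theta) \le L_{\max}$ for some $L_{\max} = L_{\max}(H) < \infty$ (positivity follows from $|\lam_\kappa|<1$, hence $|\Theta^{(M)}|\to\infty$). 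Consequently, for $|\xi|$ large enough (depending on $\om, \theta$) one has $M(\xi) \ge \log|\xi|/(2L_{\max})$, so
\[
\bigl|\widehat{\eta^{(\om)}_\lam}(\xi)\bigr| \;\le\; (1-c_1\rho^2)^{\delta \log|\xi|/(2L_{\max})} \;=\; C_{\om,\theta}\,|\xi|^{-\sig},
\qquad \sig := \frac{-\delta\log(1-c_1\rho^2)}{2L_{\max}} > 0,
\]
where the constant $C_{\om,\theta}$ absorbs small $|\xi|$. Crucially, $c_1$ depends only on $[p_{\min},p_{\max}]$ and $L_{\max}$ on $H$, so $\sig = \sig(H,\rho,\delta)$ is independent of $\om$, which is exactly the required uniformity.

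The main delicate point in this plan is ensuring the $\om$-independence of $\sig$: the bound on $M$ from below in terms of $\log|\xi|$ comes from the ergodic limit $\bar L(\theta)$, which is deterministic precisely because $\PP$ is ergodic. The threshold $|\xi|$ beyond which the estimate kicks in is allowed to depend on $(\om,\theta)$ (it absorbs the speed of convergence in Birkhoff's theorem and the initial fluctuations of $|\Theta^{(M)}|$), but this only affects the implicit constant $C_{\om,\theta}$, not $\sig$, consistently with the definition of $\Dk_2(\sig)$.
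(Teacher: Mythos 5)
Your proposal is correct and follows essentially the same route as the paper: deduce from $\theta\notin\Ek_\om(\rho,\delta)$ that at least $\delta M$ factors in \eqref{ineq3} are bounded by $1-c_1\rho^2$ for all large $M$, and then convert the exponent $M$ into $\log|\xi|$ via a law-of-large-numbers control of the block lengths $|W_k|$, which makes the exponent $\sig$ deterministic. The only (cosmetic) differences are that the paper takes for $\Om_2$ the set where the frequency bound \eqref{Birk} holds, giving the explicit linear bound \eqref{Birk2} and $\sig=-\delta c_2\log(1-c_1\rho^2)/\log b$ (this same $\Om_2$ is then reused in Proposition \ref{prop:Four}), whereas your $\Om_2$ is defined through Birkhoff limits of $|W_k|$ and $\beta(W_k)$; note also that your constant $L_{\max}$ is not a function of $H$ alone but also of $\PP$ (through the mean return time of $1^52$, the analogue of the paper's $c_2$), which is harmless since, as in the paper, only independence of $\om$ is needed.
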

	
	\begin{prop} \label{prop:Four} Fix a compact set $H$ as in \eqref{H-def} 
	and $\alpha>0$.
		There exist positive constants $\rho = \rho(H,\alpha)$ and 
		$\delta=\delta(H,\alpha)$  such that, for $\PP$-a.e.\ $\om\in \Om_2$, 
		there exists $M_0=M_0(\om,H)$ such that for any $M\ge M_0$ and any 
		model satisfying the conditions of (ii) in 
		Theorem~\ref{thm:Fourier-precise},
		the ``bad set'' $E_{\om,M}(\rho,\delta)$ at scale $M$ defined in 
		\eqref{badset}
		can be covered by $O_H(a^{\alpha M} )$ balls of radius $O_H(a^{-M})$. 
	\end{prop}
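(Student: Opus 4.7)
The plan is to run the complex Erd\H{o}s--Kahane scheme of \cite[Section~4]{SS16} in the random setting, in the spirit of the adaptation carried out in \cite[Section~5]{SSS} in the real case. For each $\theta\in E_{\om,M}(\rho,\delta)$ pick a maximizer $\tau=\tau(\theta)$ in \eqref{badset}; then there is a subset $N_\theta\subset[M]$ with $|N_\theta|\ge(1-\delta)M$ such that for every $n\in N_\theta$, all five real numbers $\Re(\Theta_n^{(M)}\theta_2\theta^j\tau)$, $j=1,\ldots,5$, lie within $\rho$ of integers $m_n^{(j)}\in\Z$. To each such $\theta$ I associate the combinatorial code $\bigl(N_\theta,(m_n^{(j)})_{n\in N_\theta,\,1\le j\le 5}\bigr)$, and the goal is to bound the number of codes by $O_H(a^{\alpha M})$ and show that each code determines $\theta$ within $O_H(a^{-M})$.

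For the recovery of $\theta$ I would fix an index $n\in N_\theta$ of order $M$: the five approximations $m_n^{(j)}\approx\Re(\Theta_n^{(M)}\theta_2\theta^j\tau)$ constitute an over-determined system in the complex unknowns $Z:=\theta_2\Theta_n^{(M)}\tau$ and $\theta\in A_{a,b,\eta}$, with error $O(\rho)$. Because $\Im\theta>\eta$ and the parameters vary in a compact set, the complex ``five approximations'' lemma from \cite[Section~4]{SS16} lets one solve the system and recover $\theta$ with precision $O_H(|\Theta_n^{(M)}|^{-1})=O_H(a^{-M})$, yielding the claimed ball radius.

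For the counting, the number of choices of $N_\theta$ is $\binom{M}{\le\delta M}\le 2^{h(\delta)M}$, which is at most $a^{\alpha M/3}$ once $\delta=\delta(H,\alpha)$ is small. Given $N_\theta$, I would build the integer sequence $(m_n^{(j)})$ inductively in $n$. At a ``good'' step (both $n-1,n\in N_\theta$) the recursion $\Theta_n^{(M)}=\theta_{W_{M-n+1}}\Theta_{n-1}^{(M)}$ combined with $\|\Re(\Theta_n^{(M)}\theta_2\theta^j\tau)-m_n^{(j)}\|<\rho$ at both scales forces $m_n^{(j)}$ into a set of size $O_H(1)$ provided $\rho=\rho(H,\alpha)$ is sufficiently small, uniformly in the random factor $\theta_{W_{M-n+1}}$. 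At each ``bad'' step (and at indices adjacent to $[M]\setminus N_\theta$) the number of choices grows by at most $O_H(|\theta_{W_{M-n+1}}|^5)$. By Birkhoff's theorem applied to the $\PP$-integrable function $\om\mapsto|W_1(\om)|$, for $\PP$-a.e.\ $\om$ one has $\sum_{i=1}^M|W_i|\le LM$ for all $M\ge M_0(\om)$, so the bad steps contribute at most $2^{C_H\delta M}\le a^{\alpha M/3}$ after further shrinking $\delta$. Multiplying the three bounds yields at most $O_H(a^{\alpha M})$ admissible codes, hence the same number of $O_H(a^{-M})$-balls covering $E_{\om,M}(\rho,\delta)$.

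The main obstacle is the $O_H(1)$ bound at good steps: one must verify that five approximations at two consecutive scales in $N_\theta$ admit only boundedly many admissible successors, uniformly over the random recursion factor $\theta_{W_{M-n+1}}$. This is exactly where recording five consecutive powers $\theta^j$ (and therefore the presence of the block $1^52$ in $\om$) is essential, and where the algebraic-geometric content of \cite[Section~4]{SS16} is needed; transferring that argument to the random setting requires uniform control coming from the compactness of $H$ together with the a.s.\ boundedness of $|W_i|/i$ provided by ergodicity of the Bernoulli measure $\PP$.
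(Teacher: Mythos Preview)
Your overall plan is the right Erd\H{o}s--Kahane scheme, but two specific steps do not go through as stated, and they are exactly where the paper's proof differs from yours.

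First, the claim that at a ``good'' step the number of admissible successors is $O_H(1)$, uniformly in the random factor $\theta_{W_{M-n+1}}$, is false. The recursion from level $n$ to $n+1$ multiplies by $\theta_{W_{M-n}}$, and the resulting approximation error for $K_{n+1,j}$ is bounded by $C_7 C_8^{|W_{M-n}|}\cdot\max_{k,j}|\eps_{k,j}|$, not by an $O_H(1)$ constant. Since $|W_{M-n}|$ is unbounded (only $|W_i|/i\to 0$), a fixed threshold $\rho$ cannot force boundedly many choices at every good step. The paper resolves this by replacing the fixed $\rho$ with an $n$-dependent threshold $\rho_n=(2C_7 C_8^{|W_{M-n}|})^{-1}$: at steps where both $\max_j|\eps_{n,j}|$ and $\max_j|\eps_{n+1,j}|$ are below $\rho_n$, the successor is \emph{uniquely} determined. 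One then checks (using \eqref{Birk2}) that $\rho_n<\rho$ for fewer than $\delta M$ indices $n$, so $E_{\om,M}(\rho,\delta)\subset\wt E_{\om,M}(2\delta)$ for a suitable fixed $\rho=\rho(H,\alpha)$, and the counting is carried out on $\wt E$.

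Second, your probabilistic input is too weak. To bound the contribution of the bad set $\Psi_M$ (of size $\le O(\delta M)$) you need $\sum_{n\in\Psi_M}|W_{M-n}|=O(\delta\log(1/\delta)M)$, not merely $\sum_{i\le M}|W_i|\le LM$; the latter does not prevent all the long words from falling into $\Psi_M$. The paper proves the required subset estimate (Lemma~\ref{lem:proba}): for $\PP$-a.e.\ $\om$ and all large $M$, $\max\{\sum_{n\in\Psi}X_n:\Psi\subset[M],\,|\Psi|\le\varrho M\}\le L_1\varrho\log(1/\varrho)M$, where $X_n=|W_n|$. This is what makes the product over $\Psi_M$ at most $a^{\alpha M/3}$ for small $\delta$. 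The ergodic theorem alone (on $|W_1|$) is used only to control the number of initial seeds $\{K_{n,j}:n\le n_3(H)\}$ via $|W_{M+1-k}|\le\wt\varrho(M+1)$.
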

	
	Let us define explicitly the set $\Om_2$, which will be needed later. By 
	the Law of Large Numbers there exists $ c_2>0$ such that for $\PP$-a.e.\ 
	$\om$,
	\be \label{Birk}
	\left|\{n\in\{1,\ldots,L\}:\,\om[n,n+5] = 111112\}\right| \ge  c_2L
	\ee
	for all  $L$ sufficiently large (depending on $\om$). (In fact, one can 
	take any  $c_2 \in (0, \PP([111112]))$.) For such $\om$ we have
	\be \label{Birk2}
	|W_1\ldots W_{M+1}| \le  c_2^{-1}(M+1),\ \ \mbox{for}\ M\ge M_0(\om).
	\ee
	Now we define
	\be \label{def:Om2}
	\Om_2 = \{\om\in \Om_1:\mbox{\ \eqref{Birk}\ holds  for all $L$ sufficiently 
	large}\}.
	\ee
	
	\begin{proof}[Proof of Theorem~\ref{thm:Fourier-precise} assuming 
	Propositions \eqref{prop:decay} and \eqref{prop:Four}] Let $\wt\Om$ be the 
	set of full $\PP$-measure from Proposition~\ref{prop:Four}, and let
		$$
		\Gk_{H,\alpha} := \Bigl\{(\om,\theta):\ \om \in \wt\Om,\ \theta \in 
		A_{a,b,\eta} \setminus \Ek_\om (\rho,\delta)\Bigr\}.
		$$
		The dimension estimate \eqref{cond:alpha} is immediate from 
		Proposition~\ref{prop:Four}, and uniform Fourier decay of 
		$\eta^{(\om)}$ for $(\om,\theta)\in \Gk_{H,\alpha}$ follows from 
		Proposition~\ref{prop:decay}.
		
		Observe that $\Gk_{H,\alpha}$ is a Borel set: The numbers 
		$\Theta_n^{(M)}$ depend on $\om$ and $\theta$ in a Borel manner, and 
		because $\|\cdot\|$ is continuous, the set
		\eqref{badset} may be defined in terms maxima over complex rational 
		$\tau$, rather than all $\tau$. \end{proof}
	
	\subsection{Proof of Proposition~\ref{prop:decay}} If $\theta \in 
	A_{a,b,\eta}\setminus \Ek_\om(\rho,\delta)$, then $\theta\notin 
	E_{\om,M}(\rho,\delta)$ for all $M\ge M_1 = M_1(\om,\theta)$.
	We can also make sure that $M_1(\om,\theta) > M_0(\om)$, where $M_0(\om)$ 
	is from \eqref{def:Om2}.
	Then for $\xi\in \C$ with $|\xi| \ge |\Theta^{(M_1)}|$ choose $M\ge M_1$ 
	such that $|\xi| \in [|\Theta^{(M)}|,|\Theta^{(M+1)}|)$, and obtain from 
	\eqref{ineq3} and \eqref{badset} that
	\be \label{ineq4}
	|\widehat{\eta_\lam^{(\om)}}(\xi)| \le (1-c_1 \rho^2)^{\delta M} .
	\ee
	By assumption, $|\theta_\kappa|\le b$ for all $\kappa$, hence, in view of 
	\eqref{Birk2},
	$$
	|\xi| \le |\Theta^{(M+1)} |\le b^{c_2^{-1}(M+1)},
	$$ 
	and (\ref{ineq4}) yields
	\[
	|\widehat{\eta_\lam^{(\om)}}(\xi)|  = O_\om(|\xi|^{-\sigma}),\ \ 
	\mbox{where}\ \ \sigma = -\frac{\delta  c_2 \log(1-c_1 \rho^2)}{\log b}>0,
	\]
	as desired. Note that $\sig$ depends on $H$, but not on $\om$ --- just on 
	the model (and $\rho, \delta$).
	\qed

	\subsection{Proof of Proposition~\ref{prop:Four}}
	Fix a complex number $\theta\in A_{a,b,\eta}$, a symbolic sequence $\om\in 
	\Om_1$, and $M\in \N$. We follow the general scheme of the 
	``Erd\H{o}s-Kahane's argument'' (see e.g. \cite[Section 6]{Sixty} for 
	an exposition in the basic case of Bernoulli convolutions). For 
	$n=0,\ldots, M-1$, and $j=1,\ldots,5$, let
	\be \label{eqa1}
	\Re(\Theta_n^{(M)}\cdot \theta_2\theta^j \tau) = K^{(M)}_{n,j} + 
	\eps_{n,j}^{(M)},\ \ \mbox{where}\ K^{(M)}_{n,j}\in \Z,\ |\eps_{n,j}^{(M)}| 
	\le 1/2,
	\ee
	so that $\left\|\Re(\Theta_n^{(M)}\cdot \theta_2\theta^j  \tau)\right\| = 
	\eps_{n,j}^{(M)}$. 
	We will next drop the superscript $(M)$ from the notation of $K_{n,j}$ and 
	$\eps_{n,j}$, but will keep the dependence in mind. These numbers also 
	depend on $\theta$, $\tau$, and $\om$. Observe that
	\be \label{eqa111}
	\frac{\Theta_{n+1}^{(M)}}{\Theta_n^{(M)}}= \theta_{W_{M-n}} = 
	\theta^{\beta(W_{M-n})},\ \ \ \mbox{where}\ \ \beta(v_1\ldots v_\ell):= 
	\sum_{j=1}^{\ell} \beta_{v_j}.
	\ee
	
	\medskip
	
	We will need a technical lemma from \cite{SS16}. 
	Our goal is to recover $\theta$ and $z_0$ from the system of equations
	\begin{eqnarray*}
		& & \Re(z_0) = x_0  \\
		& &\Re(\theta z_0) = x_1  \\
		&  &\Re(\theta^2 z_0) = x_2  \\
		& & \Re(\theta^3 z_0) =x_3.
	\end{eqnarray*}
	Recall that $a,b,\eta$ are fixed and for $R_0>0$ let
	$$
	V_{R_0} =\left\{\bx = (x_0,x_1,x_2,x_3):\ x_j = \Re(\theta^j z_0),\ 0\le 
	j\le 3,\ |z_0|\ge R_0,\ \theta\in A_{a,b,\eta}\right\}.
	$$
	Denote by $\Nk_\eps (V)$ the $\eps$-neighborhood of $V$ in the 
	$\ell^\infty$ metric.

	\begin{lemma}[{\cite[Lemma 2.2]{SS16}}] \label{lem-tech}
		{\em There exist $R_0>0$ and $C_3>0$ depending only on $a,b,\eta$ such 
		that
			there are continuously differentiable functions 
			$$F:\,\Nk_1(V_{R_0})\to \{\theta:\ |\theta|>1, \ \Im(\theta)>0\}\ \ 
			\mbox{and}\ \  G:\,\Nk_1(V_{R_0})\to \R,$$ such that $\theta = 
			F(\bx)$ and $y_3 = G(\bx)$ are the unique solutions to 
			\be \label{equs}
			 x_j = \Re(\theta^{j-3} (x_3+iy_3)),\ \ \ 0\le j \le 2,\ \bx \in 
			V_{R_0}. 
			\ee
			Moreover,
			$$
			\left|\frac{\partial G}{\partial x_j}\right| \le C_3,\ \ \ 0\le 
			j\le 3,\ \ \ \mbox{on}\ \ \Nk_1(V_{R_0}).
			$$
		}
	\end{lemma}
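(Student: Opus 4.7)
The plan is to invert the system \eqref{equs} explicitly and then check that the inversion extends smoothly to a neighborhood of $V_{R_0}$. I would exploit the second-order recurrence
\[
x_{j+2} = 2s\,x_{j+1} - |\theta|^2\,x_j, \qquad j = 0,1,
\]
(with $\theta = s+it$), which comes from the identity $\theta^{j+2} = 2s\,\theta^{j+1} - |\theta|^2\,\theta^j$ multiplied by $z_0$ and followed by taking real parts. Written for $j=0,1$, this gives a $2\times 2$ linear system in the unknowns $s$ and $|\theta|^2$:
\[
\begin{pmatrix} 2x_1 & -x_0 \\ 2x_2 & -x_1 \end{pmatrix} \begin{pmatrix} s \\ |\theta|^2 \end{pmatrix} = \begin{pmatrix} x_2 \\ x_3 \end{pmatrix}.
\]
A direct calculation using $x_j = \Re(\theta^j z_0)$ yields the identity $x_0 x_2 - x_1^2 = -t^2|z_0|^2$, so on $V_{R_0}$ the determinant $2(x_0 x_2 - x_1^2)$ is bounded below in modulus by $2\eta^2 R_0^2$. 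Cramer's rule then gives
\[
s = \frac{x_0 x_3 - x_1 x_2}{2(x_0 x_2 - x_1^2)}, \qquad |\theta|^2 = \frac{x_1 x_3 - x_2^2}{x_0 x_2 - x_1^2}.
\]
Setting $t = \sqrt{|\theta|^2 - s^2}$ (positive since $\Im\theta>\eta$), $F(\bx) = s+it$, and iterating the relation $x_{j+1} = sx_j - ty_j$ from $j=2$ down to $j=0$ produces $G(\bx) := y_3 = (|\theta|^2 x_2 - s x_3)/t$; a short algebraic verification shows that $F, G$ defined this way do satisfy \eqref{equs}.

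\textbf{Extending to the neighborhood and uniqueness.} On $V_{R_0}$ one has $|x_j| \le b^j|z_0|$, so a perturbation of size $1$ in each coordinate changes $x_0 x_2 - x_1^2$ by at most $O(|z_0|)$; for $R_0$ large enough (depending only on $a, b, \eta$), the quantitative bound $|x_0 x_2 - x_1^2| \ge \tfrac{1}{2} \eta^2 R_0^2$ survives on $\Nk_1(V_{R_0})$. The partial derivatives of $|\theta|^2$ and $s^2$ with respect to $x_j$ are $O(|z_0|^{-1})$, so on the same neighborhood $|\theta|^2 - s^2 \ge \tfrac{1}{2}\eta^2$ and $|\theta| \ge a - o(1) > 1$. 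It follows that $F$ and $G$ are continuously differentiable on $\Nk_1(V_{R_0})$, with $F$ taking values in $\{\theta:\,\Im\theta > 0,\ |\theta|>1\}$. Uniqueness on this neighborhood is automatic from the explicit algebraic inversion: any solution to \eqref{equs} forces Cramer's rule for $s$ and $|\theta|^2$, and the sign of $t$ is pinned down by $\Im\theta>0$.

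\textbf{Main obstacle: the derivative bound.} The delicate point is the uniform estimate $|\partial G/\partial x_j| \le C_3$, since $G(\bx) = y_3$ itself has size of order $|z_0|$ on $V_{R_0}$, so the bound requires genuine cancellation between $|z_0|$-sized quantities. The accounting goes as follows: the numerators in the formulas for $s$ and $|\theta|^2$ are $O(|z_0|^2)$ while the denominator is $\asymp |z_0|^2$, so $s$ and $|\theta|^2$ are $O(1)$, and differentiating a numerator of derivative size $O(|z_0|)$ over a denominator of size $\asymp |z_0|^2$ gives $\partial s/\partial x_j$ and $\partial|\theta|^2/\partial x_j$ of order $O(|z_0|^{-1})$; since $t$ is bounded below, $\partial t/\partial x_j = \tfrac{1}{2t}(\partial|\theta|^2/\partial x_j - 2s\,\partial s/\partial x_j)$ is also $O(|z_0|^{-1})$. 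Applying the quotient rule to $G = (|\theta|^2 x_2 - s x_3)/t$ produces a sum of terms each of one of the forms (i) an $O(|z_0|^{-1})$ derivative multiplied by an $O(|z_0|)$ coordinate like $x_2$ or $x_3$, (ii) the bounded quantity $s/t$, or (iii) $G$ times $(\partial t/\partial x_j)/t$ with the latter of order $O(|z_0|^{-1})$; every term is $O(1)$, with constants depending only on $a, b, \eta$, giving the uniform bound $C_3$.
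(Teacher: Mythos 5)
Your proof is correct. Note that this paper does not prove the lemma at all — it imports it verbatim from \cite[Lemma 2.2]{SS16} — so there is no in-paper argument to compare against; what you have written is a legitimate self-contained derivation, and it is in the spirit of the Erd\H{o}s--Kahane computations of \cite{SS16}. The key points all check out: the quadratic relation $\theta^2-2s\theta+|\theta|^2=0$ (with $s=\Re\theta$, $t=\Im\theta$) does give $x_{j+2}=2sx_{j+1}-|\theta|^2x_j$ upon multiplying by $\theta^j z_0$ and taking real parts; the determinant identity $x_0x_2-x_1^2=-t^2|z_0|^2$ is verified by a direct trigonometric computation, giving the lower bound $\eta^2R_0^2$ on $V_{R_0}$ and, after an $O(|z_0|)$ perturbation estimate, $\tfrac12\eta^2|z_0|^2$ on $\Nk_1(V_{R_0})$ for $R_0$ large; the Cramer formulas for $s$ and $|\theta|^2$, the choice $t=\sqrt{|\theta|^2-s^2}>0$, and the resulting bounds $|\theta|\ge a-o(1)>1$, $t\ge\eta/\sqrt2$ make $F,G$ rational-plus-square-root expressions that are $C^1$ (indeed real-analytic) on the neighborhood, and uniqueness on $V_{R_0}$ follows since any solution with $\Im\theta>0$ must satisfy the same nonsingular linear system, while $y_3$ is then forced by the $j=2$ equation $x_2=\Re(\theta^{-1}(x_3+iy_3))=(sx_3+ty_3)/|\theta|^2$, which is exactly your formula $G=(|\theta|^2x_2-sx_3)/t$. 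Two cosmetic remarks: you do not need to ``iterate down to $j=0$'' — the single step $x_3=sx_2-ty_2$, $y_3=tx_2+sy_2$ (equivalently the $j=2$ equation just quoted) already yields $G$; and in your itemization of the terms of $\partial G/\partial x_j$ the bounded coefficients arising from differentiating $x_2$ and $x_3$ directly are $|\theta|^2/t$ and $s/t$, both of which are indeed $O_{a,b,\eta}(1)$, so the stated bound $|\partial G/\partial x_j|\le C_3(a,b,\eta)$ follows as you argue, with the essential cancellation being that the $O(|z_0|)$-sized numerator $tG$ is always paired with an $O(|z_0|^{-1})$-sized derivative of $s$, $|\theta|^2$ or $t$.
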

	
	\medskip
	
	All our constants will depend only on $H$; in particular,
	$C_3 = C_3(H)$.
	Denote
	\be \label{Ynj}
	Y_{n,j}=\Im(\Theta_n^{(M)}\cdot \theta_2\theta^j \tau),\ n=0, 1,\ldots, 
	M-1,\ j=1, \ldots, 5.
	\ee
	Note that $|\Theta_n^{(M)}| \ge |\theta|^{5n} \ge a^{5n}$, so we can apply 
	Lemma~\ref{lem-tech} with $z_0 = \Theta_n^{(M)}\cdot \theta^j \theta_2 
	\tau$, $j=1,2$, for $M\ge n\ge n_1 = n_1(H)$ such that $a^{5n_1} \ge 
	R_0$, to obtain
	$$
	Y_{n,j+3} = G\bigl(K_{n,j} + \eps_{n,j}, \ldots,K_{n,j+3} + 
	\eps_{n,j+3}\bigr),\ \ j=1,2.
	$$
	Let
	\be \label{def-G}
	\wtil{Y}_{n,j+3}  = G\bigl(K_{n,j} , \ldots,K_{n,j+3} \bigr),\ \ j=1,2,\ \ 
	\ n\ge n_1(H),
	\ee
	which is well-defined, since $|\eps_{n,j}|\le 1/2<1$, so we are within 
	$\Nk_1(V_{R_0})$, the domain of $G$. Moreover, writing $Y_{n,j+3} - 
	\wtil{Y}_{n,j+3}$ as a sum of path integrals of the
	partial derivatives of $G$, we obtain from
	Lemma~\ref{lem-tech} that
	\be \label{eq2}
	|Y_{n,j+3} - \wtil{Y}_{n,j+3}| \le 4 C_3\cdot 
	\max\{|\eps_{n,j}|,\ldots,|\eps_{n,j+3}|\},\ \ j=1,2,\ n\ge n_1(H)
	\ee
	(the factor $4$ comes from the estimate $\|\bx\|_1 \le 4\|\bx\|_\infty$ 
	on $\R^4$). Then, of course,
	\be \label{eq20}
	|Y_{n,j+3} - \wtil{Y}_{n,j+3}| \le 2 C_3,\ \ j=1,2,\ n\ge n_1(H).
	\ee

	The following is a modification of \cite[Lemma 2.3]{SS16}.
	
	\begin{lem} \label{lem-theta}
		There exist $C_4=C_4(H)>0$ and $n_2=n_2(H)\in \N$, such that
		\be \label{est-theta}
		\left|\theta - \frac{K_{n,5} + i \wtil{Y}_{n,5}}{K_{n,4} + 
		i\wtil{Y}_{n,4}}\right| \le C_4\cdot |\Theta_n^{(M)}\tau|^{-1}  
		\max\{|\eps_{n,1}|,\ldots,|\eps_{n,5}|\}\ \ \ \mbox{for}\ M\ge n\ge 
		n_2(H).
		\ee
	\end{lem}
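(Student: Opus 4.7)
The plan is to start from the exact identity
\[
\theta = \frac{\Theta_n^{(M)}\theta_2\theta^5\tau}{\Theta_n^{(M)}\theta_2\theta^4\tau} = \frac{K_{n,5} + \eps_{n,5} + iY_{n,5}}{K_{n,4} + \eps_{n,4} + iY_{n,4}},
\]
which is immediate from \eqref{eqa1} and \eqref{Ynj}, and then to replace the numerator and denominator by the ``cleaned-up'' quantities $A_2 := K_{n,5} + i\wtil{Y}_{n,5}$ and $B_2 := K_{n,4} + i\wtil{Y}_{n,4}$, controlling the two resulting errors via \eqref{eq2} and the bound $|\eps_{n,j}|\le 1/2$. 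Writing $A_1, B_1$ for the numerator and denominator above and applying the quotient identity
\[
\frac{A_1}{B_1} - \frac{A_2}{B_2} = \frac{(A_1 - A_2)B_2 + A_2(B_2 - B_1)}{B_1 B_2},
\]
the task reduces to (a) bounding $|A_1 - A_2|$ and $|B_1 - B_2|$ linearly in $\max_j|\eps_{n,j}|$, and (b) lower-bounding $|B_1 B_2|$ and upper-bounding $|A_2|, |B_2|$ in terms of $|\Theta_n^{(M)}\tau|$.

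Step (a) is essentially direct: $|A_1 - A_2| \le |\eps_{n,5}| + |Y_{n,5} - \wtil{Y}_{n,5}|$, which by \eqref{eq2} is at most $(1 + 4C_3)\max_{1\le j\le 5}|\eps_{n,j}|$, and analogously for $|B_1 - B_2|$. Step (b) uses that $|B_1| = |\Theta_n^{(M)}\theta_2\theta^4\tau|$ and $|A_1| = |\theta||B_1|$ are exact, while \eqref{eq20} combined with $|\eps_{n,j}|\le 1/2$ gives $|B_2 - B_1|, |A_2 - A_1| \le 2C_3 + 1$. Since $\theta\in A_{a,b,\eta}$ forces $|\theta|, |\theta_2|$ into constants depending only on $H$, and since $|\Theta_n^{(M)}|\ge a^{5n}$ for $n\ge n_1(H)$ (from the presence of the blocks $W_{M-k+1}$ containing $1^5 2$) and $|\tau|\ge 1$, we can choose $n_2(H)\ge n_1(H)$ large enough so that the additive $O_H(1)$ errors are dominated by, say, half the main term. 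Thus for $n\ge n_2(H)$ we have $|B_2|\ge \tfrac12 |B_1|$ and $|A_2|\le 2|A_1| = 2|\theta||B_1|$.

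Putting the pieces together yields
\[
\left|\theta - \frac{A_2}{B_2}\right| \le \frac{(1+4C_3)\max_j|\eps_{n,j}|\cdot(|B_2| + |A_2|)}{|B_1||B_2|} \le \frac{C_4 \max_j|\eps_{n,j}|}{|B_1|} = \frac{C_4 \max_j|\eps_{n,j}|}{|\Theta_n^{(M)}\theta_2\theta^4\tau|},
\]
and absorbing the bounded factor $|\theta_2 \theta^4|^{-1}$ into $C_4 = C_4(H)$ gives \eqref{est-theta}. The argument is essentially routine once Lemma~\ref{lem-tech} is in hand; the only delicate point is verifying that the threshold $n_2(H)$ can be chosen uniformly in $\theta\in A_{a,b,\eta}$ and $\tau$ with $|\tau|\ge 1$, but this follows from the uniform geometric growth of $|\Theta_n^{(M)}|$ in the compact parameter region.
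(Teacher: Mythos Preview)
Your proof is correct and follows essentially the same route as the paper: both start from the exact identity $\theta = (K_{n,5}+\eps_{n,5}+iY_{n,5})/(K_{n,4}+\eps_{n,4}+iY_{n,4})$, control the passage to $(K_{n,5}+i\wtil{Y}_{n,5})/(K_{n,4}+i\wtil{Y}_{n,4})$ via \eqref{eq2}, and use the lower bound $|\Theta_n^{(M)}\tau|\ge a^{5n}$ to absorb the additive $O_H(1)$ errors for $n\ge n_2(H)$. The only cosmetic difference is that the paper splits the quotient estimate into two triangle-inequality steps (first dropping the $\eps_{n,j}$'s, then replacing $Y_{n,j}$ by $\wtil{Y}_{n,j}$), whereas you do it in a single application of the identity $A_1/B_1 - A_2/B_2 = \bigl((A_1-A_2)B_2 + A_2(B_2-B_1)\bigr)/(B_1B_2)$.
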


	\begin{proof} By \eqref{eqa1} and \eqref{Ynj},
		\be \label{trick1}
		K_{n,j} + \eps_{n,j} + iY_{n,j} = 
		\Theta_n^{(M)}\cdot\theta_2\theta^j\tau,\ j=1,\ldots,5,
		\ee
		hence 
		$$
		|K_{n,j} + \eps_{n,j} + iY_{n,j}| \ge |\theta|^{5n+j}\cdot |\tau|\ge 
		|\theta|^{5n} \ge a^{5n},
		$$
		and then for all $n\ge 1$,
		\be \label{trick11}
		\frac{|K_{n,j+1}  + iY_{n,j+1}|}{|K_{n,j}  + iY_{n,j}|} \le 
		\frac{|\Theta_n^{(M)}\cdot\theta^{j+1}\theta_2\tau|+\half}{|\Theta_n^{(M)}\cdot\theta^{j}\theta_2\tau|-\half}
		 \le 3|\theta|\le 3b,\ \ j=1,\ldots,4.
		\ee
		In view of \eqref{eq20},
		\be \label{trick0}
		|K_{n,4}  + i\wt Y_{n,4}| \ge |\Theta_n^{(M)}\theta_2\theta^4\tau| - 
		\textstyle{\half}-2C_3 \ge \textstyle{\half} |\Theta_n^{(M)}\tau|\ \ 
		\mbox{for}\ n\ge n_2(H),
		\ee
		where $n=n_2(H) \ge n_1(H)$ is such that $\half + 2C_3 < \half a^{5n}$.
		
		We begin by estimating, for $j=1,\ldots,4$ and all $n\ge 1$, \smallskip
		\begin{eqnarray*}
			\left|\theta - \frac{K_{n,j+1} + i Y_{n,j+1}}{K_{n,j} + 
			iY_{n,j}}\right| & = & \left| \frac{K_{n,j+1} + \eps_{n,j+1} + i 
			Y_{n,j+1}}{K_{n,j} +\eps_{n,j} + iY_{n,j}} - 
			\frac{K_{n,j+1} + i Y_{n,j+1}}{K_{n,j} + iY_{n,j}}\right| \\[1.2ex]
			&\le  & \frac{|\eps_{n,j+1}|}{|K_{n,j} +\eps_{n,j} + iY_{n,j}|} + 
			\frac{|\eps_{n,j}|}{ |K_{n,j} +\eps_{n,j} + iY_{n,j}|}\cdot 
			\frac{|K_{n,j+1} + iY_{n,j+1}|}{|K_{n,j} + iY_{n,j}|} \\[1.3ex]
			& \le &  |\Theta_n^{(M)}\tau|^{-1}\cdot(|\eps_{n, j+1}| + 
			|\eps_{n,  j}|\cdot 3b)\\[1.1ex]
			& \le &(1+3b) |\Theta_n^{(M)}\tau|^{-1}\cdot\max\{|\eps_{n,j}|, 
			|\eps_{n,j+1}|\}.
		\end{eqnarray*}
		On the other hand, for $n\ge n_2(H)$ we have, using \eqref{eq2}, 
		\eqref{trick0}, and \eqref{trick11},
		\begin{eqnarray*}
			\left| \frac{K_{n,5} + i Y_{n,5}}{K_{n,4} + iY_{n,4}} - 
			\frac{K_{n,5} + i \wtil{Y}_{n,5}}{K_{n,4} + i\wtil{Y}_{n,4}}\right|
			& \le & \frac{|\wtil{Y}_{n,5}-Y_{n,5}|}{|K_{n,4} + i 
			\wtil{Y}_{n,4}|} + \frac{|\wtil{Y}_{n,4}-Y_{n,4}| \cdot 
			|K_{n,5}+iY_{n,5}|}{|K_{n,4}+iY_{n,4}|\cdot|K_{n,4} + 
			i\wtil{Y}_{n,4}|} \\[1.2ex] 
			& \le & 8C_3\cdot (1+3b)\cdot  
			|\Theta_n^{(M)}\tau|^{-1}\cdot\max\{|\eps_{n,1}|,\ldots,|\eps_{n,5}|\},
		\end{eqnarray*}
		The claim of the lemma follows, with $C_4 = (1+3b) (1+8C_3)$, by 
		combining the last two inequalities.
	\end{proof}
	
	In view of \eqref{eqa111} and \eqref{trick1},
	\be \label{eq3}
	(K_{n+1,j} + \eps_{n+1,j}) + i Y_{n+1,j} = 
	\theta^{\beta(W_{M-n})+j-5}\bigl( (K_{n,5} + \eps_{n,5}) + i 
	Y_{n,5}\bigr),\ \ j=1,\ldots,5.
	\ee
	Thus, Lemma~\ref{lem-theta} suggests the approximation that can be used to 
	run a step in the Erd\H{o}s-Kahane's argument:
	$$
	K_{n+1,j} \approx \Re\left[{\left(\frac{K_{n,5} + i \wtil{Y}_{n,5}}{K_{n,4} 
	+ i\wtil{Y}_{n,4}}\right)}^{\beta(W_{M-n})+j-5}\cdot\bigl( K_{n,5}  + i 
	\wtil{Y}_{n,5}\bigr)\right],\ \ j=1,\ldots,5.
	$$
	However, in order to make this precise, careful estimates are required.
	
	Let us fix $j\in \{1,\ldots,5\}$ and denote
	$$
	s:= \beta(W_{M-n})+j-5.
	$$
	Note that
		\be \label{eqs}
		|\theta^s|\ge 1, \text{ as } |\theta^s| = 
		|\theta_{W_{M-n}}||\theta|^{j-5} \geq |\theta^5\theta_2||\theta|^{j-5}.
		\ee
	
	In view of \eqref{eq3}, for all $n\ge 1$,
	$$
	\bigl| K_{n+1,j}  - \Re\left[\theta^s(K_{n,5}  + i Y_{n,5})\right]\bigr| 
	\le (1 + |\theta^s|) \max\{|\eps_{n,5}|, 
	|\eps_{n+1,1}|,\ldots,|\eps_{n+1,5}|\},
	$$
	and then,  by \eqref{eq2} and \eqref{eqs}, for $n\ge n_1(H)$,
	\be \label{eq31}
	\Bigl| K_{n+1,j}  - \Re\bigl[\theta^s(K_{n,5}  + i \wt Y_{n,5})\bigr]\Bigr| 
	\le (4C_3 + 2)\cdot |\theta^s| \cdot \max\{|\eps_{n,k}|, |\eps_{n+1,k}|; 
	k=1,\ldots,5\}.
	\ee
	Observe that 
	\be \label{eq32}
	|\theta^s| \le |\theta_{W_{M-n}}| \le b^{|W_{M-n}|},
	\ee
	by the a priori bounds on contraction ratios.
	
	It is convenient to introduce notation for the approximation of $\theta$ 
	from \eqref{est-theta}:
	\be \label{def-Psi}
	\Psi(K_{n,1},\ldots,K_{n,5}):= \frac{K_{n,5} + i \wtil{Y}_{n,5}}{K_{n,4} + 
	i\wtil{Y}_{n,4}} = \frac{K_{n,5} + i G(K_{n,2},\ldots,K_{n,5})}{K_{n,4} + 
	iG(K_{n,1},\ldots,K_{n,4})}\,.
	\ee
	Since $\theta\in A_{a,b,\eta}$, in view of \eqref{est-theta}, for $n\ge 
	n_3(H)\ge n_2(H)$,  the straight line segment from
	$\theta$ to $\Psi(K_{n,1},\ldots,K_{n,5})|$ will be contained in $\wt 
	A_{a,b,\eta}:= A_{\frac{1+a}{2},\, b+1,\,\frac{\eta}{2}}$.
	We next need to estimate $\left|\theta^s - 
	\Psi(K_{n,1},\ldots,K_{n,5})^s\right|$ from above. We can apply the 
	inequality:
	\be \label{trick10}
	|f(z_2)-f(z_1)| \le |z_2-z_1|\cdot \max\{|f'(z)|: z\in \wt A_{a,b,\eta}\},
	\ee
	where $z_1 = \theta,\ z_2 = \Psi(K_{n,0},\ldots,K_{n,4})$, and $f(z) = 
	z^s$, with $s = {\beta(W_{M-n})+j-5}$.  Note that
	$$
	|(z^s)'| = \frac{|s| |z^s|}{|z|} =\frac{|s|}{|z|} \cdot \exp{[\Re(s\log 
	z)]}.
	$$
	Recall that we are using the principal branch of the logarithm, so
	$$
	\Re(s\log z) = \Re (s)\cdot \log|z| - \Im (s)\cdot \arg(z) \le |s|(\log|z| 
	+ \pi),
	$$
	Thus, for $z\in \wt A_{a,b,\eta}$, using that $|s| \le e^{|s|} \le 
	e^{|\beta(W_{M-n})|+5}$ we get
	\begin{eqnarray*}
		|(z^s)'| & \le & |z|^{-1}\cdot |s|\cdot {(e^\pi |z|)}^{|s|}\cdot \\
		& \le & |z|^{-1} \cdot {(e^{\pi+1} |z|)}^{|s|}\\
		& \le & 2(1+a)^{-1} {\bigl(e^{\pi+1}(b+1)\bigr)}^{\max_\kappa 
		|\beta_\kappa|\cdot |W_{M-n}|+5} =: C_5\cdot C_6^{|W_{M-n}|},
	\end{eqnarray*}
	where $C_5=C_5(H)>0$ and $C_6=C_6(H)>1$.
	Combining this with \eqref{trick10} and Lemma~\ref{lem-theta} yields
	\begin{eqnarray} 
	&  & \bigl|\theta^s - \Psi(K_{n,1},\ldots,K_{n,5})^s\bigr| \nonumber \\
	& \le & C_4C_5\cdot C_6^{|W_{M-n}|} \cdot|\Theta_n^{(M)}\tau|^{-1} \cdot 
	\max\{|\eps_{n,1}|,\ldots,|\eps_{n,5}|\},\ \ \mbox{for}\ n\ge n_3(H). 
	\label{trick31}
	\end{eqnarray}
	Similarly to \eqref{trick0}, we have
	$$
	|K_{n,5}  + i\wt Y_{n,5}| \le |\Theta_n^{(M)}\theta_2\theta^5\tau| + 
	\textstyle{\half} + 2C_3 \le 2b^6 |\Theta_n^{(M)}\tau|,\ \ \mbox{for}\ n\ge 
	n_2(H).
	$$
	Finally, together with \eqref{eq31}, \eqref{eq32} and 
	\eqref{trick31}, recalling the definition of $s$, this yields for some $C_7 
	= C_7(H)>0$ and $C_8= C_8(H)>1$:
	\begin{eqnarray} \nonumber
	& & \left|K_{n+1,j} - \Re\Bigl[{\Bigl(\frac{K_{n,5} + i 
	\wtil{Y}_{n,5}}{K_{n,4} + 
	i\wtil{Y}_{n,4}}\Bigr)}^{\beta(W_{M-n})+j-5}\cdot\bigl( K_{n,5}  + i 
	\wtil{Y}_{n,5}\bigr)\Bigr]\right| \\[1.1ex]
	& \le & C_7\cdot C_8^{|W_{M-n}|}\cdot \max\{|\eps_{n,k}|,|\eps_{n+1,k}|:\, 
	k=1,\ldots,5\},\ \ j=1,\ldots,5,\ \ \ n\ge n_3(H).  \label{eq4}
	\end{eqnarray}
	
	Now we can state the main lemma in the Erd\H{o}s-Kahane method. Denote
	\be \label{def-Bro}
	B_n :=  C_7\cdot C_8^{|W_{M-n}|},\ \ \rho_n = (2B_n)^{-1}.
	\ee
	Note that these numbers depend on $\om$ and $M$, and recall that the 
	$K_{n,j}$ depend, additionally, on $\theta$ and $\tau$.
	
	\begin{lem} \label{lem-step}
		Fix $M\in\N$ and $\om$ such that $\om\in \Om_1$. Then for $n\ge n_3(H)$ 
		the following holds:
		
		{\rm (i)}  Fix $\theta\in A_{a,b,\eta}$ and $\tau\in \C$, with
			$|\tau| \in [1, \theta_{W_{M+1}}]$. Then, given $K_{n,1},\ldots, 
			K_{n,5}$, there are at most $2B_n+1$ possibilities for each of 
			$K_{n+1,1}\ldots,K_{n+1,5}$. 
		
		{\rm (ii)} if $\max\{|\eps_{n,1}|,\ldots,|\eps_{n,5}|; 
		|\eps_{n+1,1}|,\ldots,|\eps_{n+1,5}|\}<\rho_n$, 
		then each of $K_{n+1,1}\ldots,K_{n+1,5}$ is uniquely determined by 
		$K_{n,1},\ldots, K_{n,5}$, independent of $\theta\in A_{a,b,\eta}$ 
		and $\tau\in \C$, with $|\tau| \in [1, \theta_{W_{M+1}}]$.
		\end{lem}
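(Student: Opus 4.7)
The plan is to read off both conclusions directly from the master inequality \eqref{eq4}, which, after we fix $j \in \{1,\ldots,5\}$ and recall the definition \eqref{def-Psi} of $\Psi$, says that the integer $K_{n+1,j}$ lies within $B_n \cdot E_n$ of the \emph{real number}
\[
R_{n,j}^{(\theta,\tau)} := \Re\!\left[\Psi(K_{n,1},\ldots,K_{n,5})^{\beta(W_{M-n})+j-5}\cdot\bigl(K_{n,5}+i\wt Y_{n,5}\bigr)\right],
\]
where $E_n := \max\{|\eps_{n,k}|,|\eps_{n+1,k}| : k=1,\ldots,5\}$. The critical point is that $R_{n,j}^{(\theta,\tau)}$ depends only on $K_{n,1},\ldots,K_{n,5}$ (through $\Psi$, $\wt Y_{n,5}$ and $\beta(W_{M-n})$): the parameters $\theta$ and $\tau$ do not appear in its expression, even though $K_{n,j}$, $K_{n+1,j}$ and $\eps_{n,j}$ themselves depend on them.

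For part (i), I would invoke the a priori bound $|\eps_{n,k}|\le 1/2$ from \eqref{eqa1}, valid for all $k$, which gives $E_n \le 1/2$ and hence
\[
\bigl|K_{n+1,j} - R_{n,j}\bigr| \le \tfrac{1}{2} B_n,
\]
so $K_{n+1,j}$ is an integer in an interval of length $B_n$; counting endpoints generously yields at most $2B_n+1$ choices per index $j$. For part (ii), the hypothesis is exactly $E_n < \rho_n = (2B_n)^{-1}$, which plugged into \eqref{eq4} produces $\bigl|K_{n+1,j} - R_{n,j}\bigr| < 1/2$. Since there is at most one integer in any open interval of length $1$ centered at a real number, $K_{n+1,j}$ is uniquely determined by $R_{n,j}$, and since $R_{n,j}$ is a function of $K_{n,1},\ldots,K_{n,5}$ alone, this determination is independent of $\theta$ and $\tau$ in the prescribed ranges.

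The only minor subtlety is making sure that \eqref{eq4} is indeed applicable, i.e.\ that we are in the regime $n\ge n_3(H)$ where the estimates in Lemma~\ref{lem-theta} and the mean-value style bound \eqref{trick10} both hold; this is precisely the hypothesis of the lemma. No obstacle of substance remains: the proof is a clean two-line consequence of \eqref{eq4}, with the roles of $B_n$ and $\rho_n$ chosen in \eqref{def-Bro} precisely to make the thresholds in (i) and (ii) line up.
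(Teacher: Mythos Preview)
Your proposal is correct and follows exactly the paper's approach: the paper's own proof is a one-line remark that the claims are immediate from \eqref{eq4} together with the observation (via \eqref{def-G}) that $\wt Y_{n,4}$ and $\wt Y_{n,5}$ depend only on $K_{n,1},\ldots,K_{n,5}$. You have simply written out the details of that remark, correctly identifying that the approximating quantity $R_{n,j}$ is independent of $\theta,\tau$ and then reading off the integer-counting conclusions for (i) and (ii) from the bound $B_n E_n$ with $E_n\le 1/2$ and $E_n<\rho_n$ respectively.
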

	
	The claims of the lemma are immediate from (\ref{eq4}), using the fact 
	that  $K_{n+1,1}\ldots,K_{n+1,5}$ are integers and keeping in mind  the 
	definition \eqref{def-G} (showing that $\wtil{Y}_{n,4}$ and 
	$\wtil{Y}_{n,5}$ depend only on $K_{n,1}, \ldots, K_{n,5}$).

	\subsection{Probabilistic estimates}
	
	To continue with the proof, we will need some probabilistic estimates, 
	which are taken almost verbatim from \cite[Section 5.1]{SSS}. Let $T$ be 
	the left shift on $\Om$ and let
	\[
	X_1 = X_1(\om) = \min\{i\ge 1:\ \om[i,i+5]=111112\},\ \ \ X_{n+1}(\om) = 
	X_1(T^{X_1(\om)+\dots +X_n(\om)}\om),\ \ n\ge 1.
	\]
	Thus, $X_{n+1}(\om)$ is the waiting time between the $n$-th and $(n+1)$-st 
	appearance of the word $1^52$ in the sequence $\om$.
	The process $(X_n)_{n\in \N}$ is defined almost surely; more precisely, on 
	the set $\Om_1\subset \Om$ of full $\PP$-measure.
	Since $\PP$ is Bernoulli, $(X_n)_{n\in \N}$ is an i.i.d.\ sequence of 
	exponential random variables.
	Note that $$X_n = |W_n|, \ n\ge 2,$$ by construction.

	Next we state the property which provides the full measure set appearing in 
	Theorem~\ref{thm:Fourier-precise}.
	
	\begin{lem} \label{lem:proba}
		Consider the process $(X_n)_{n\in \N}$ defined as above. There exists a 
		positive constant $L_1$ such that for $\PP$-a.e.\ $\om$, for any 
		$\varrho>0$ and all $M$ sufficiently large (depending on $\varrho$ and 
		$\om$),
		\be \label{cond2}
		\max\left\{\sum_{n\in \Psi} X_n(\om):\ \Psi \subset \{0,\ldots,M-1\}, \ 
		|\Psi| \le \varrho M\right\} \le L_1\cdot \log(1/\varrho)\cdot \varrho 
		M.
		\ee
	\end{lem}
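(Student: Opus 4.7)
The plan is to exploit the Bernoulli assumption on $\PP$: since $p := \PP([111112]) > 0$ by full support, and occurrences of the pattern $111112$ in disjoint blocks are independent, the waiting times $(X_n)_{n \in \N}$ form an i.i.d.\ sequence whose common distribution has exponentially decaying tails, i.e.\ $\PP(X_1 \geq k) \leq C_0 e^{-\lambda k}$ for constants $C_0, \lambda > 0$ depending only on $p$. A direct summation then yields $\mu_k := \E[X_1 \mathds{1}_{X_1 > k}] \leq C_1 (k+1) e^{-\lambda k}$.

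The first step is the elementary ``peeling'' observation that for any $\Psi \subset \{0, \ldots, M-1\}$ with $|\Psi| \leq \varrho M$ and any threshold $t \geq 0$,
\[
\sum_{n \in \Psi} X_n(\om) \leq \varrho M \cdot t + \sum_{\substack{0 \leq n < M \\ X_n(\om) > t}} X_n(\om),
\]
because indices $n \in \Psi$ with $X_n \leq t$ contribute at most $\varrho M \cdot t$ in total, while those with $X_n > t$ are dominated by the full truncated sum on the right. This reduces the lemma to controlling the tail sum $\sum_{n < M, X_n > t} X_n$ for a well-chosen threshold $t = t_\varrho$.

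Next I would apply the Strong Law of Large Numbers to the i.i.d.\ variables $X_n \mathds{1}_{X_n > k}$ for each integer $k \geq 0$; intersecting the resulting countable family of full-measure sets yields a set $\Om^* \subset \Om_1$ of full $\PP$-measure on which $\frac{1}{M} \sum_{n=0}^{M-1} X_n(\om) \mathds{1}_{X_n(\om) > k} \to \mu_k$ for every $k \in \N$. Restricting to integer thresholds (rather than real-valued $t$) is essential here, as it is the only way to reduce to countably many exceptional null sets while still retaining enough flexibility to choose $t_\varrho$.

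Finally, given $\varrho \in (0, 1]$ I set $k_\varrho := \lceil \lambda^{-1} \log(1/\varrho) \rceil$, so that $e^{-\lambda k_\varrho} \leq \varrho$. For $\om \in \Om^*$ and $M$ sufficiently large (depending on $\varrho$ and $\om$), the SLLN estimate gives
\[
\sum_{\substack{n < M \\ X_n > k_\varrho}} X_n \leq 2 M \mu_{k_\varrho} \leq 2 C_1 M (k_\varrho + 1) e^{-\lambda k_\varrho} \leq C_2 \varrho \log(1/\varrho) M
\]
for $\varrho \leq 1/2$, where $C_2$ depends only on $p$. Combined with the peeling bound and the estimate $k_\varrho \leq \lambda^{-1} \log(1/\varrho) + 1$, this delivers \eqref{cond2} with a suitable constant $L_1$; the remaining range $\varrho \in (1/2, 1]$ is handled trivially by the ordinary SLLN applied to the $X_n$ themselves, after enlarging $L_1$ if needed. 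There is no real obstacle --- the argument amounts to a careful accounting of the top $\varrho M$ order statistics of i.i.d.\ random variables with exponential tails --- the only minor subtlety being the coordination of the full-measure set across all $\varrho$ simultaneously, addressed via the countable intersection step above.
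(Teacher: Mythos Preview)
Your argument is correct for the range $\varrho\in(0,1/2]$, which is the only range ever used in the paper (the lemma is applied with $\varrho=4\delta$ small). The peeling inequality is valid, the exponential tail of $X_1$ indeed gives $\mu_k\le C_1(k+1)e^{-\lambda k}$, the threshold $k_\varrho=\lceil\lambda^{-1}\log(1/\varrho)\rceil$ balances the two contributions, and the countable intersection of SLLN events (one for each integer $k$) correctly produces a single full-measure set on which the estimate holds for all $\varrho$ simultaneously.

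Your final sentence, however, is wrong: the range $\varrho\in(1/2,1]$ cannot be ``handled trivially by enlarging $L_1$''. As $\varrho\to 1^-$ the right-hand side $L_1\varrho\log(1/\varrho)M\to 0$, while the left-hand side with $|\Psi|$ close to $M$ is of order $M\cdot\E[X_1]$, so no universal $L_1$ works. This is a defect of the lemma as literally stated rather than of your method; the bound is only meaningful, and only needed, for small $\varrho$.

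The paper itself gives no independent proof, referring verbatim to \cite[Lemma~5.2]{SSS}, so a line-by-line comparison is not possible. Your approach---splitting at a threshold and invoking the SLLN on truncated variables---is the standard way to control the sum of the top $\varrho M$ order statistics of i.i.d.\ variables with exponential tails, and is very likely the same argument as in the cited reference.
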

	
	The proof repeats verbatim the proof of \cite[Lemma 5.2]{SSS}.
	
	\begin{rem}
		Observe that for any given $\wtil{\varrho}>0$, we have for $\PP$-almost 
		all $\om$ and all $n$ sufficiently large (depending on $\om$ and 
		$\wtil\varrho$) that
		\be \label{cond1}
		X_n(\om) \le \wtil{\varrho} n;
		\ee
		Indeed, $\frac{1}{n} \sum_{i=1}^n X_i \to \E[X_1]< \infty$ almost 
		surely, by the Law of Large Numbers, and hence $X_n(\om)/n\to 0$ for 
		$\PP$-a.e.\ $\om$.
	\end{rem}
	
	Denote by $\Om_3\subset \Om_1$ the set of sequences $\om$ such that 
	\eqref{Birk}, \eqref{cond2}, and \eqref{cond1} hold. Thus $\PP(\Om_3) = 1$.
	
	\subsection{Conclusion of the proof} We next define a variant of the 
	exceptional set \eqref{badset} whose dimension is easier to estimate. For 
	$\om\in \Om_3$, $\delta>0$, and $M\in \N$ let
	$$
	\wt E_{\om,M}(\delta):=
	\left\{\theta \in A_{a,b,\eta}:\! \max_{\tau:\ |\tau| \in [1, 
	|\Theta_1^{(M+1)}|]}  \#\Bigl\{n\in [M]:\,\max_{1 \le j \le 
	5}\|\Re(\Theta_n^{(M)}\cdot \theta_2 \theta^j \tau)\| 
	\ge \rho_n\Bigr\} < \delta M\right\},
	$$
	where $\rho_n$ is from \eqref{def-Bro}.
	
	\begin{lem} \label{lem:inclu} Let $\delta>0$.
		For all $\om\in \Om_3$  there exists $M_2(\om,\delta)$ such that for 
		$M\ge M_2(\om)$ holds
		\be \label{eq:inclu}
		\wt E_{\om,M}(2\delta) \supset E_{\om,M}(\delta,\rho),
		\ee
		with 
		$$
		\rho = (2C_7)^{-1} C_8^{-1/c_2\delta},
		$$
		where $c_2>0$ is the constant from \eqref{Birk}.
	\end{lem}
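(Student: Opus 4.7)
The plan is to establish the inclusion directly by comparing the two defining conditions index by index. Fix $\theta \in E_{\om,M}(\rho,\delta)$ and any $\tau$ with $|\tau| \in [1,|\Theta_1^{(M+1)}|]$. The first key observation is that the specific value $\rho = (2C_7)^{-1}C_8^{-1/(c_2\delta)}$ is chosen exactly so that
$\rho_n \ge \rho$ if and only if $|W_{M-n}| \le 1/(c_2\delta)$,
giving a natural dichotomy on the indices $n \in [M]$.

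With this dichotomy in hand, I would split the index set appearing in the definition of $\wt E_{\om,M}(2\delta)$ as
$$\{n \in [M] : \max_j \|\Re(\Theta_n^{(M)}\theta_2\theta^j\tau)\| \ge \rho_n\} \subset A \cup B,$$
where $A$ collects the indices with $\rho_n \ge \rho$ and $B = \{n \in [M] : |W_{M-n}| > 1/(c_2\delta)\}$. For $n \in A$, the inequality $\max_j \|\cdot\| \ge \rho_n$ automatically implies $\max_j \|\cdot\| \ge \rho$, so $A$ embeds into the set appearing in the definition of $E_{\om,M}(\rho,\delta)$; the hypothesis on $\theta$ then bounds $|A| < \delta M$ for every $\tau$ in the allowed range.

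The remaining input is a bound on $|B|$. After the change of variable $k = M-n$, this reduces to counting $k \in \{1,\ldots,M\}$ with $|W_k| > 1/(c_2\delta)$. Here I would invoke \eqref{Birk2}, which holds for any $\om \in \Om_3 \subset \Om_1$ once $M \ge M_0(\om)$ and gives $\sum_{k=1}^{M+1} |W_k| \le (M+1)/c_2$; Markov's inequality then yields $|B| \le \delta(M+1)$. Summing the two pieces bounds the total count by roughly $2\delta M$, and for $M \ge M_2(\om,\delta)$ sufficiently large the additive slack is absorbed, yielding $\theta \in \wt E_{\om,M}(2\delta)$. No substantial obstacle is expected here: the lemma is essentially a careful bookkeeping on the distribution of block lengths $|W_k|$, and the only real content is the matching between the threshold $1/(c_2\delta)$ and the exponent appearing in $\rho$.
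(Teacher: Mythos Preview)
Your proposal is correct and matches the paper's argument (the paper phrases it as the contrapositive, but the content --- splitting $[M]$ according to whether $\rho_n \ge \rho$, i.e., $|W_{M-n}| \le (c_2\delta)^{-1}$, and bounding the complement via \eqref{Birk2} and Markov --- is identical). One minor bookkeeping point: applying \eqref{Birk2} with $M+1$ gives $|B| \le \delta(M+1)$, and the leftover $+\delta$ is \emph{not} absorbed by taking $M$ large; instead apply \eqref{Birk2} at level $M$ (i.e., with $M-1$ in place of $M$, valid once $M \ge M_0(\om)+1$) to get $\sum_{k=1}^M |W_k| \le M/c_2$ and hence $|B| < \delta M$ strictly, after which the total is $< 2\delta M$ as required.
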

	
	\begin{proof}[Proof of the lemma] In view of \eqref{Birk2}, there are fewer 
	than $\delta M$ integers $n\in [M]$ for which $|W_{M-n}| > 
	(c_2\delta)^{-1}$, for $M$ sufficiently large,
		depending on $\om$ and $\delta$, hence there are fewer than $\delta M$ 
		integers $n\in [M]$ for which $\rho_n \le \rho$, see \eqref{def-Bro}. 
		Let  $\theta \notin \wt E_{\om,M}(2\delta)$.
		Then there exists $\tau$, with $1 \le |\tau| \le |\Theta_1^{(M+1)}|$, 
		such that 
		$$
		\#\Bigl\{n\in [M]:\,\max_{1 \le j \le 5}\|\Re(\Theta_n^{(M)}\cdot 
		\theta_2\theta^j  \tau)\| 
		\ge \rho_n\ge  \rho\Bigr\} \ge \delta M,
		$$
		which means that $\theta\notin E_{\om,M}(\delta,\rho)$.
	\end{proof}
	
	It remains to estimate the number of balls of radius $O_H(a^{-M})$ needed 
	to cover $\wt E_{\om,M}(2\delta)$. Suppose that $\theta \in \wt 
	E_{\om,M}(2\delta)$. Choose appropriate $\tau$
	from the definition of $\wt E$, and find the corresponding $K_{n,j}$ and 
	$\eps_{n,j}$. Given the sequence $K_{n,j}$ for $n=0,\ldots,M-1$ and 
	$j=1,\ldots,5$, Lemma~\ref{lem-theta}, with 
	$n=M$, provides a ball of radius
	\[ C_4 |\Theta^{(M)}_M \tau|^{-1} = C_4 |\theta_{W_M\ldots 
	W_1}\tau|^{-1} \leq C_4a^{-|W_M| \cdots |W_1| } =   O_H(a^{-M})  \]
	to cover $\theta$. It remains to estimate the number of sequences
	$K_{n,j}$ that can arise this way.
	
	Let $\Psi_M$ be the set of $n\in [M-1]$ where we have 
	$\max\{|\eps_{n,j}|,|\eps_{n+1,j}|:\ j=1,\ldots,5\}\ge \rho_n$. By the 
	definition of $\wt E_{\om,M}(2\delta) $, we have $|\Psi_M| \le 4\delta 
	M$.
	By Lemma~\ref{lem-step}, for a fixed $\Psi_M$ the number of possible 
	sequences $\{K_{n,j}:\, n\le M,\ j=1,\ldots,5\}$ is at most
	$$
	\Bk_M := \prod_{n\in \Psi_M} (2B_n+1)^5,
	$$
	times the number of ``beginnings'' $\{K_{n,j}:\, n\le n_3(H),\ 
	j=1,\ldots,5\}$. To estimate the number of choices for $K_{n,j}$, observe 
	that
	$$
	 |K_{n,j}| \le |\Theta_n^{(M)} \theta_2 \theta^j\tau| + 1 \le b^{|W_{M}| 
	\cdots |W_{M-n+1}|+7} |\Theta_1^{(M+1)}| \le b^{|W_{M}| \cdots 
	|W_{M-n+1}|+7 + |W_{M+1}|}. 
	$$
	It follows that the number of sequences $\{K_{n,j}:\, n\le n_3(H),\ 
	j=1,\ldots,5\}$ is bounded above by
	
$${
		\begin{split}
		& \left(2b^{|W_{M}| \cdots |W_{M-n_{3}(H)+1}|+7 + |W_{M+1}|} + 
		1\right)^{n_3(H)} \\
		& \leq \left(3b^{|W_{M}| \cdots |W_{M-n_{3}(H)+1}|+7 + 
		|W_{M+1}|}\right)^{n_3(H)} \\
		& \leq 3^{n_3(H)}b^{7n_3(H)}b^{n^2_3(H)\max \left\{ |W_{M+1-k}|\ :\ 0 
		\leq k \leq n_3(H)  \right\}}.
		\end{split}
	}
	$$

	Let $\wtil\varrho = \wtil\varrho(H,\alpha)>0$ be such that
	$${
		b^{\wtil\varrho \cdot(n^2_3(H))} < a^{\alpha/2}.
	}
	$$
	Then \eqref{cond1} implies that for $\PP$-a.e.\ $\om$, for $M$ sufficiently 
	large, 
	$${ 
		|W_{M+1-k}| \le (M+1)\wtil\varrho,\ k=0,\ldots, n_3(H),
	}
	$$
	so the number of choices of $\{K_{n,j}:\, n\le n_3(H),\ j=1,\ldots,5\}$ is 
	bounded above by $O_H(1)\cdot a^{\alpha M/2}$.
	
	Now we estimate $\Bk_M$. In view of (\ref{cond2}) in Lemma \ref{lem:proba} 
	and (\ref{def-Bro}),
	\[
	\Bk_M \le \exp \Bigl(O_H(1)\cdot  \sum_{n\in \Psi_M} |W_{M-n}| \Bigr) \le 
	\exp\bigl(O_H(1)\cdot L_1\cdot \log(1/4\delta)\cdot \delta M\bigr),
	\]
	for $M$ sufficiently large. Combining everything, we obtain that the number 
	of balls of radius $O_H(a^{-M})$ needed to cover $\wtil{E}_{2\delta,M}$ is 
	not greater than
	\begin{equation}\label{eqEcov}
	a^{\alpha M/2}\cdot \sum \limits_{i\le 4\delta M} \binom{M}{i}\cdot 
	\exp\left(O_H(1)\cdot L_1\cdot \log(1/4\delta)\cdot \delta M\right).
	\end{equation}
	{By Stirling's formula, there exists $C>0$ such that
		\[ \sum \limits_{i\le \delta M} \binom{M}{i} \leq \exp \left( C \delta 
		\log (1/\delta) M \right) \text{ for } \delta < e^{-1} \text{ and all } 
		M > 1. \]
		Therefore, the covering number from \eqref{eqEcov} can be made smaller 
		than $a^{\alpha M}$} by taking appropriate $\delta=\delta(H,\alpha)>0$. 
		This concludes the proof of Proposition~\ref{prop:Four}, and now 
		Theorem~\ref{thm:Fourier-precise} is proved completely.
\end{proof}

\section{Absolute continuity of self-similar measures}

This section is devoted to proving Theorem \ref{thm:main ac}. The proof follows closely the one of \cite[Theorem 1.1]{SSS}, with small adjustments needed in the planar case. We will therefore omit some of the details, which follow exactly as in \cite{SSS}.

Fix distinct translations $t_1, \ldots, t_k \in \C$ and a probability vector $\bp = (p_1, \ldots, p_k)$ with strictly positive entries. Denote $t = (t_1, \ldots, t_k)$. It follows from \cite[Theorem 10]{SimonVago} that the set
\[ \Ak = \left\{ \lam \in \D_*^k : \nu^{\bp}_{\lam, t} \text{ is absolutely continuous} \right\} \]
is $F_\sigma$ (and hence Borel measurable). Fix non-zero complex numbers $1 = \beta_1, \beta_2, \ldots, \beta_k$, and write for short $\nu^{\bp}_\lam = \nu^{\bp}_{(\lam^{\beta_1}, \ldots, \lam^{\beta_k}), t}$ and $s(\lam,\bp) = s((\lam^{\beta_1}, \ldots, \lam^{\beta_k}), \bp)$ for the corresponding similarity dimension (recall \eqref{eq:sim dim}). Note that we do not include $t$ in the notation, as in this section it is fixed. Applying (locally) the coarea formula \cite[Theorem 3.10]{Evans} to the map
\[ (\lam_1, \ldots, \lam_k) \mapsto \left( \frac{\log \lam_2}{\log \lam_1}, \ldots, \frac{\log \lam_k}{\log \lam_1} \right)  \]
on the set $\D_*^k \setminus \Ak$, we see that for Theorem \ref{thm:main ac} it suffices to prove the following
\begin{prop}\label{prop:ac on curves}
For every $\eps > 0$ there exists a set $E^{\bp}_t \subset \D_*$ (depending also on $\beta_i$) of zero $2$-dimensional Lebesgue measure such that if $\lam \in  \D_*\setminus E^{\bp}_t$ satisfies $|\lam|^{\beta_i} < 1$ for $i=1, \ldots, k$ and $s(\lam, \bp) > 2+\eps$, then $\nu^{\bp}_\lam$ is absolutely continuous.
\end{prop}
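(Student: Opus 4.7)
The plan is to implement the strategy sketched in Subsection~1.3: disintegrate $\nu^\bp_\lam$ via the $r$-block scheme, decompose the resulting random measures as a convolution of a ``dense'' and a ``sparse'' part, apply Theorem~\ref{thm:main dim} to the dense part for full Hausdorff dimension, apply Theorem~\ref{thm:fourier main} to the sparse part for power Fourier decay, and conclude via the planar extension of \cite[Lemma 2.1]{ShmerkinBC}. Concretely, I would fix $r\in\N$ and consider the disintegration model $\Sigma=\Sigma^{(r)}_\lam$ on the alphabet $I=\{(n_1,\ldots,n_k)\in\Z_{\ge 0}^k:\sum n_j=r\}$ with multinomial selection measure $\PP$, where $\Phi^{(i)}$ is the collection of all length-$r$ compositions of $g_1,\ldots,g_k$ of type $i$, all sharing the common linear part $\lam^{\sum_j n_j\beta_j}$ (by commutativity of complex multiplication). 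A Shannon-entropy chain-rule computation yields $\sdim(\Sigma^{(r)}_\lam)=s(\lam,\bp)-O(\log r/r)$, so choosing $r=r(\eps)$ large enough guarantees $\sdim(\Sigma)>2+\eps/2$ throughout the supercritical region. Next, for an integer $s\ge 2$, I would split the random series~\eqref{eq:random sum} defining $\eta^\pom_\lam$ into independent subsums over $\{n:s\nmid n\}$ and $\{n:s\mid n\}$, obtaining $\eta^\pom_\lam=\eta'^\pom_\lam*\eta''^\pom_\lam$. Grouping consecutive $s$-tuples of symbols of $\om$ realises both $\eta'$ and $\eta''$ as random measures generated by new models $\Sigma',\Sigma''$ on the alphabet $I^s$ with Bernoulli selection measure $\PP^s$, and a direct computation gives $\sdim(\Sigma')=\tfrac{s-1}{s}\sdim(\Sigma)$ and $\sdim(\Sigma'')=\tfrac{1}{s}\sdim(\Sigma)$. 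Taking $s=s(\eps)$ large enough then secures $\sdim(\Sigma')>2$.

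\textbf{Full dimension and Fourier decay of the factors.} I would then verify Assumption~\ref{assmp:non-deg and rotation} for $\Sigma'$: non-degeneracy is automatic since $k\ge 2$ and the $t_j$'s are distinct, while the non-real rotation assumption holds for every $\lam\in\D_*\setminus\R$. Theorem~\ref{thm:main dim} applied to $\Sigma'$ says that $\dim(\Sigma')<2$ would force super-exponential concentration of the cylinder distances $\Delta_n^{(\cdot)}(\Sigma')$; this is to be excluded for Lebesgue-a.e.\ $\lam\in\D$ by the standard parameter-transversality argument for the deterministic IFS $\{g_1,\ldots,g_k\}$, which delivers exponential separation at all scales and hence a uniform lower bound $\Delta_n^\pom(\Sigma')\ge e^{-Crsn}$. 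Consequently $\dim(\Sigma')=\min\{2,\sdim(\Sigma')\}=2$, and Proposition~\ref{prop:exact dimension} yields $\dim\eta'^\pom_\lam=2$ for $\PP^s$-a.e.\ $\om$. Separately, the linear parts of $\Sigma''$ are of the form $\lam^{\gamma_{\ov{i}}}$ with $\gamma_{\ov{i}}\ne 0$ (since $|\lam^{\beta_j}|<1$ for every $j$ forces $|\lam^{\gamma_{\ov{i}}}|<1$, ruling out $\gamma_{\ov{i}}=0$), so Theorem~\ref{thm:fourier main} applied to $\Sigma''$ produces a Borel set $\Gk\subset\Om\times\D$ with $\hdim\{\lam:(\om,\lam)\notin\Gk\}\le 1$ for $\PP^s$-a.e.\ $\om$, and $\eta''^\pom_\lam\in\Dk_2$ whenever $(\om,\lam)\in\Gk$. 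By Fubini, $\eta''^\pom_\lam\in\Dk_2$ for Lebesgue-a.e.\ $\lam$ and $\PP^s$-a.e.\ $\om$.

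\textbf{Conclusion and the main obstacle.} By the planar extension of \cite[Lemma 2.1]{ShmerkinBC}, outside a Lebesgue-null set $E^\bp_t\subset\D$ the convolution $\eta^\pom_\lam=\eta'^\pom_\lam*\eta''^\pom_\lam$ is absolutely continuous for $\PP^s$-a.e.\ $\om$, and integrating the disintegration $\nu^\bp_\lam=\int_\Om\eta^\pom_\lam\,d\PP(\om)$ will then finish the proof. The hard part will be the Lebesgue-a.e.\ exponential-separation input feeding into the dimension step: Theorem~\ref{thm:main dim} only converts a deterministic separation hypothesis into the full-dimension conclusion, so a separate transversality argument in the complex parameter $\lam$ must be supplied. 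I would carry it out as the planar analogue of the one-dimensional argument in \cite[Section 6]{SSS}, via transversality for a well-chosen one-parameter subfamily combined with Fubini.
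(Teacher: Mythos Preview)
Your approach is essentially the same as the paper's: the $r$-block disintegration, the $s$-splitting into dense and sparse convolution factors, Theorem~\ref{thm:main dim} for full dimension of the dense factor, Theorem~\ref{thm:fourier main} for power Fourier decay of the sparse factor, and the convolution lemma to conclude. Two small clarifications: non-degeneracy of the block models is not literally automatic but holds for Lebesgue-a.e.\ $\lam$ (it fails on the null set where two of the $g_{\lam,i}$ share a fixed point, see the paper's Lemma~\ref{lem:non-deg}), and the exponential-separation input you call ``transversality'' is in fact Hochman's result \cite[Theorem~1.10]{HRd} on analytic families, applied exactly as in \cite[Lemma~6.6]{SSS} to get $\wt\Delta_{rsn}(\lam)\ge e^{-Mrsn}$ along a subsequence for a.e.\ $\lam$.
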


Proof of Proposition \ref{prop:ac on curves} occupies the rest of this section. Let us present its scheme. First of all, we will introduce a family of models $\Sigma_\lam$ (with similarity dimension arbitrarily close to $s(\lam, \bp)$) over a common probability space $(I^{\N}, \PP)$, such that $\nu^{\bp}_\lam = \int \eta^{\pom}_\lam d\PP(\om)$, where $\eta^{\pom}_\lam$ are the random measures generated by the model $\Sigma_\lam$. Therefore, to prove that a given $\nu^{\bp}_\lam$ is absolutely continuous, it will suffice to prove that $\eta^{\pom}_\lam$ is absolutely continuous for $\PP$-almost every $\om$. To that end, we will use the fact that each measure $\eta^{\pom}_\lam$ is an infinite convolution and introduce new families of models $\Sigma'_\lam, \Sigma''_\lam$ such that the corresponding random measures satisfy $\eta^{\pom}_\lam = (\eta'_\lam)^{\pom} * (\eta''_\lam)^{\pom}$. Applying Theorems \ref{thm:main dim} and \ref{thm:fourier main} to $\Sigma'_\lam$ and $\Sigma''_\lam$, we will conclude that for almost every pair $(\lam, \om)$, the measure $ (\eta'_\lam)^{\pom}$ has power Fourier decay and the measure  $(\eta''_\lam)^{\pom}$ has full Hausdorff dimension. As convolution of two such measures is absolutely continuous on $\C$ (see \cite[Lemma 4.3.(i)]{SS16'} and \cite[Lemma 2.1]{ShmerkinBC} for the proof), this will finish the proof.  This is exactly the same strategy as in \cite{SSS}. The only additional feature is that we need to verify item \ref{it:non-real} of Assumption \ref{assmp:non-deg and rotation} in order to apply Theorem \ref{thm:main dim} for typical $\lam$. This is rather straightforward.

Let us elaborate now on how the models $\Sigma_\lam$ are defined.  Fix $r \in \N$. For $i=1, \ldots, k$ write
\[g_{\lam, i} (z)= \lam^{\beta_i}z + t_i\]
and for $u \in \{1, \ldots, k\}^r$ write $g_{\lam, u} = g_{\lam, u_1} \circ \cdots \circ g_{\lam, u_r}$ and $p_u = p_{u_1} \cdots p_{u_r}$. To construct the model $\Sigma_\lam$, we will gather into a single homogeneous IFS all the compositions $g_{\lam, u}$ with a prescribed number of occurrences of each map $g_{\lam, i}$ from the original system. More precisely, we define
\[  I = I(r) := \left\{ (n_1, \ldots, n_k) \in \{0, \ldots, r \} : \sum \limits_{j=1}^k n_j = r \right\}\]
and a map $\Psi : \{1, \ldots, k\}^r \to I$ given by $\Psi(u) = \left( N_1(u), \ldots, N_k(u) \right)$, where $N_j(u)$ denotes the number of occurrences of the symbol $j$ in the word $u$. For $\vec{n} = (n_1, \ldots, n_k) \in I$ denote $k_{\vec{n}} = |\Psi^{-1}(\vec{n})|$ and enumerate $\Psi^{-1}(\vec{n}) = \left\{ u^{(\vec{n})}_1, \ldots, u^{(\vec{n})}_{k_{\vec{n}}}\right\}$. For $1 \leq j \leq k_{\vec{n}}$ set
\[ f^{(\vec{n})}_{\lam, j} = g_{\lam, u^{(\vec{n})}_j}, \]
\[ \wt{p}^{(\vec{n})}_j = k_{\vec{n}}^{-1} = p_{u^{(\vec{n})}_j} / q_{\vec{n}}, \]
where \[q_{\vec{n}} = \sum \limits_{u \in \Psi^{-1}(\vec{n})} p_u = |\Psi^{-1}(\vec{n})|p_1^{\vec{n}_1} \cdots p_k^{\vec{n}_k}.\]
Note that for a fixed $\vec{n} \in I$, all the maps $f^{(\vec{n})}_{\lam, j}$ have the same linear part $\prod \limits_{j=1}^k\lam^{\vec{n}_j\beta_j} = \lam^{\gamma_{\vec{n}}}$, where $\gamma_{\vec{n}} = \sum \limits_{j=1}^k n_j \beta_j$. Therefore, defining $\Phi^{(\vec{n})}_\lam = \left( f^{(\vec{n})}_{\lam, 1}, \ldots, f^{(\vec{n})}_{\lam, k_{\vec{n}}} \right),\ \wt{p}_{\vec{n}} = \left( \wt{p}^{(\vec{n})}_1, \ldots, \wt{p}^{(\vec{n})}_{k_{\vec{n}}} \right)$, and setting $\PP$ to be the Bernoulli measure on $I^\N$ with the marginal $q = (q_{\vec{n}})_{\vec{n} \in I}$, we obtain a model $\Sigma_\lam = \left( \left( \Phi^{(\vec{n})}_\lam \right)_{\vec{n} \in I}, \left( \wt{p}_{\vec{n}}\right)_{\vec{n} \in I}, \PP \right)$. Note that we suppress the dependence on $r$ in the notation for $\Sigma_\lam$ and that $\PP$ does not depend on $\lam$ (only on the numbers $k, r$). Let $\eta_\lam^{\pom}$ denote the random measure generated by the model $\Sigma_\lam$. It is easy to check directly (see the calculation in \cite[Proof of Lemma 6.2.(iv)]{SSS}) that for every $\lam \in \D_*$ holds
\begin{equation}\label{eq:disintegrate} \nu^{\bp}_{\lam} = \int \limits \eta_\lam^{\pom}d\PP(\om).
\end{equation}
Moreover, by \eqref{eq:sdim Bernoulli} and a direct calculation from \cite[Proof of Lemma 6.2.(v)]{SSS}, we have
\[ \sdim(\Sigma_\lam) = \frac{\sum \limits_{\vec{n} \in I} q_{\vec{n}}  H(\wt{p}_{\vec{n}})    } {-\sum \limits_{\vec{v} \in I} q_{\vec{n}} \log \prod \limits_{j=1}^k\left|\lam^{\vec{n}_j\beta_j}\right|   }  =  \frac{r H(\bp) - H(q)}{ - r \sum \limits_{i=1}^k p_i\log|\lam_i| } = \left(1 - \frac{H(q)}{r H(\bp)} \right)s(\lam, \bp).\]
As the cardinality of $I$ is polynomial in $r$ (for a fixed $k$), we see that $H(q)/r \to 0$ as $r \to \infty$. Therefore, given $\eps > 0$, there exists $r = r(\eps, \bp) \in \N$ such that for every $\lam \in \D_*$ holds
\begin{equation}\label{eq:sdim model bound}
	\sdim(\Sigma_\lam) > (1 - \eps/2)s(\lam, \bp).
\end{equation}
The following elementary observation will be useful to us:
\begin{lem}\label{lem:non-deg}
Let $\lam \in \D_*$ be such that not all the maps $g_{\lam, i}, i = 1, \ldots, k$, share the same fixed point. Then for every $r \geq 2$, the model $\Sigma_\lam$ is non-degenerate. 
Consequently, $\Sigma_\lam$ is non-degenerate for almost every $\lam \in \D_*$.
\end{lem}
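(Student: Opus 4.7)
To verify Assumption \ref{assmp:non-deg and rotation}\ref{it:non-degenerate} for $\Sigma_\lam$, note first that since $p_i>0$ for all $i$, we have $q_{\vec n}>0$ for every $\vec n\in I$. All maps in $\Phi^{(\vec n)}_\lam$ share the common linear part $\lam^{\gamma_{\vec n}}$, so they fail to be all equal precisely when the translation parts $T_u=g_{\lam,u}(0)$ are not all equal across $u\in\Psi^{-1}(\vec n)$. My plan is therefore to exhibit, for some $\vec n\in I$, two words in $\Psi^{-1}(\vec n)$ producing different translation parts.

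The key computation is the effect of an adjacent transposition. Write
\[
T_u=\sum_{j=1}^{r}\Big(\prod_{m=1}^{j-1}\lam^{\beta_{u_m}}\Big)\,t_{u_j}.
\]
If $v$ is obtained from $u$ by swapping positions $l$ and $l+1$, where $u_l=i$ and $u_{l+1}=j$, then since complex multiplication is commutative the factors $\prod_{m<l}\lam^{\beta_{u_m}}$ and $\prod_{m<s}\lam^{\beta_{u_m}}$ for $s>l+1$ coincide for $u$ and $v$, and only the contributions at positions $l$ and $l+1$ change. A direct calculation gives
\[
T_u-T_v=\Big(\prod_{m=1}^{l-1}\lam^{\beta_{u_m}}\Big)\big[(1-\lam^{\beta_j})\,t_i-(1-\lam^{\beta_i})\,t_j\big].
\]
Since the prefactor is nonzero on $\D_*$, the bracket vanishes iff $\tfrac{t_i}{1-\lam^{\beta_i}}=\tfrac{t_j}{1-\lam^{\beta_j}}$, that is, iff $g_{\lam,i}$ and $g_{\lam,j}$ have a common fixed point.

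For the first statement, assume that the maps $g_{\lam,1},\dots,g_{\lam,k}$ do not all share a fixed point; then there exist indices $i\ne j$ whose fixed points differ. Since $r\ge 2$, pick any $\vec n\in I$ with $n_i,n_j\ge 1$ (for instance $n_i=n_j=1$, the remaining mass distributed arbitrarily among the other symbols), and choose $u\in\Psi^{-1}(\vec n)$ containing an adjacent $ij$-block; the swap $v$ lies in $\Psi^{-1}(\vec n)$ and the displayed identity gives $T_u\neq T_v$, so $\Phi^{(\vec n)}_\lam$ contains two distinct maps, proving non-degeneracy.

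For the second statement, it suffices to show that the set $S\subset\D_*$ of $\lam$ for which all $g_{\lam,i}$ share one common fixed point has zero Lebesgue measure. This set is contained in $S_{1j}=\{\lam:\,t_1(1-\lam^{\beta_j})=t_j(1-\lam^{\beta_1})\}$ for any fixed $j\ne 1$, so it is enough to argue that each $S_{1j}$ has measure zero. The defining relation is real-analytic in $\lam$ (in fact holomorphic on $\C\setminus(-\infty,0]$), so either its zero set has measure zero or the equation is identically satisfied. The latter possibility is ruled out by substituting real $\lam=e^s$, $s<0$, and using the $\C$-linear independence of $1,\lam^{\beta_1},\lam^{\beta_j}$ (when $\beta_1\ne\beta_j$) together with $t_1\ne t_j$; if instead $\beta_1=\beta_j$, the equation reduces to $t_1=t_j$, again impossible. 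The mildly technical point of this last paragraph, and the only place where subtlety might arise, is handling the branch of $\lam^{\beta}$ carefully so that the analyticity argument applies on the prescribed region $\{\lam\in\D_*:|\lam^{\beta_i}|<1 \text{ for all }i\}$; but this is immediate from the definition $\lam^{\beta}=e^{\beta\log\lam}$ with the principal branch. Thus $S$ has measure zero and, by the first part of the lemma, $\Sigma_\lam$ is non-degenerate for almost every $\lam\in\D$.
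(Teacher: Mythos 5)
Your proof is correct and follows essentially the same route as the paper: you exhibit two words in the same symbol-count class $\Psi^{-1}(\vec n)$ whose translation parts differ exactly because $g_{\lam,i}$ and $g_{\lam,j}$ do not share a fixed point (the paper phrases this as non-commutation of the two maps, padded by $r-2$ copies of one of them), and then observe that the shared-fixed-point condition is a nontrivial analytic equation in $\lam$, hence defines a Lebesgue-null set. Your write-up merely supplies more detail than the paper on the null-set step (the branch of $\lam^\beta$ and the non-identical vanishing via linear independence of exponentials), which is a welcome but not essentially different addition.
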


\begin{proof}
As $\PP$ is fully supported, it is enough to prove that there exist $\vec{n} \in I$ and $j,j' \in \{1, \ldots, k_{\vec{n}}\}$ such that  $g_{\lam, u^{(\vec{n})}_j} \neq g_{\lam, u^{(\vec{n})}_{j'}}$. It is straightforward to check that for $i_1, i_2 \in \{1, \ldots, k\}$, the equality $g_{\lam, i_1} \circ g_{\lam, i_2} = g_{\lam, i_2} \circ g_{\lam, i_1}$ holds if and only if $g_{\lam, i_1}$ and $g_{\lam, i_2}$ share a fixed point. Therefore, if they do not share a fixed point, then $g_{\lam, i_1} \circ g_{\lam, i_2} \neq g_{\lam, i_2} \circ g_{\lam, i_1}$ and also
\[ \underbrace{g_{\lam, i_1} \circ \cdots \circ g_{\lam, i_1}}_\text{$r-2$ times} \circ g_{\lam, i_1} \circ g_{\lam, i_2} \neq \underbrace{g_{\lam, i_1} \circ \cdots \circ g_{\lam, i_1}}_\text{$r-2$ times} \circ g_{\lam, i_2} \circ g_{\lam, i_1}, \]
hence $\Sigma_\lam$ is non-degenerate for any $r \geq 2$. The maps $g_{\lam, 1}$ and $g_{\lam, 2}$ share a fixed point if and only if $\lam t_2 - \lam^{\beta_2}t_1 + t_1 - t_2 = 0$. As $t_1 \neq t_2$ by assumption, this equation can hold only on a set of $\lam$'s in $\D_*$ of Lebesgue measure zero.
\end{proof}

Let us fix now $r \geq 2$ such that \eqref{eq:sdim model bound} holds. From now on, for simplification, we will use the notation $\Sigma_\lam = \left( \left( \Phi^{(i)}_\lam \right)_{i \in I}, \left( \wt{p}_{i}\right)_{i \in I}, \PP \right)$ with
\[ \Phi^{(i)}_\lam = \left( \lam^{\gamma_i}z + \wt{t}^{(i)}_{\lam,j} \right)_{1 \leq j \leq k_i},\]
where $\gamma_i$ depend on $\beta_1, \ldots, \beta_k$. Recall that for a fixed $\om \in \Om = I^\N$, the measure $\eta^{\pom}$ (corresponding to model $\Sigma_\lam$) is the distribution of the random series
\[ \sum \limits_{n=1}^\infty \left( \prod_{j=1}^{n-1}\lam^{\gamma_j} \right) \wt{t}_{\lam,u_n}^{(\om_n)}, \]
where $u_n \in \{1, \ldots, k_{\om_n} \}$ are chosen independently according to probability vectors $\wt{p}_{\om_n}$. Fix $s \in \N$. If for a given $\om$ we keep $u_n$ distributed as above and define $(\eta'_{\lam})^{(\om)},\ (\eta''_{\lam})^{(\om)}$ to be distributed as
\[ (\eta'_{\lam})^{(\om)} \sim \sum \limits_{n\in \N, s|n } \left( \prod_{j=1}^{n-1}\lam^{\gamma_j} \right) \wt{t}_{\lam,u_n}^{(\om_n)}\ \text{ and }\ (\eta''_{\lam})^{(\om)} \sim \sum \limits_{n\in \N, s \nmid n } \left( \prod_{j=1}^{n-1}\lam^{\gamma_j} \right) \wt{t}_{\lam,u_n}^{(\om_n)},\]
then, by the independence of the terms in the series, we have
\[ \eta^{\pom} = (\eta'_{\lam})^{\pom} * (\eta''_{\lam})^{\pom}. \]
By the decomposition \eqref{eq:disintegrate} and \cite[Lemma 4.3(i)]{SS16'}, in order to finish the proof of Proposition \ref{prop:ac on curves} it suffices to prove the following two lemmas, analogous to \cite[Lemmas 6.4 and 6.5]{SSS}.

\begin{lem}\label{lem:typical fourier}
For any choice of $s \in \N$, for almost all $\lam \in \D_*$, the measure $ (\eta'_{\lam})^{\pom}$ is in $\Dk_2$ for $\PP$-almost all $\om \in  \Om$.
\end{lem}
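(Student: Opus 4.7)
The plan is to realize $(\eta'_\lam)^{(\om)}$ as a random self-similar measure generated by a ``sparsified'' model over the index set $I^* := I^s$, to which Theorem \ref{thm:fourier main} applies directly.

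First, I would group the coordinates of $\om$ into blocks of length $s$: for $m \ge 1$, set $\om^*_m := (\om_{(m-1)s+1}, \ldots, \om_{ms}) \in I^*$, and for $\vec i = (i_1, \ldots, i_s) \in I^*$ define $\gamma(\vec i) := \sum_{j=1}^{s} \gamma_{i_j}$ and $\alpha(\vec i) := \sum_{j=1}^{s-1} \gamma_{i_j}$. Restricting the random sum for $\eta^{\pom}_\lam$ to indices $n = sm$ and writing $\sum_{j=1}^{sm-1} \gamma_{\om_j} = \sum_{k=1}^{m-1}\gamma(\om^*_k) + \alpha(\om^*_m)$ rewrites $(\eta'_\lam)^{(\om)}$ as the distribution of
\[
\sum_{m=1}^{\infty} \lam^{\gamma(\om^*_1)+\cdots+\gamma(\om^*_{m-1})}\,\lam^{\alpha(\om^*_m)}\,\widetilde{t}^{(\om_{sm})}_{\lam,u_{sm}},
\]
which identifies $(\eta'_\lam)^{(\om)}$ with $\eta^{*,(\om^*)}_\lam$, the measure generated at $\om^*$ by the model $\Sigma^*_\lam$ on $I^*$ consisting of the IFS $\Psi^{(\vec i)}_\lam = \{z \mapsto \lam^{\gamma(\vec i)} z + \lam^{\alpha(\vec i)} \widetilde t^{(i_s)}_{\lam,j}\}_{1\le j\le k_{i_s}}$, probability vectors $p^*_{\vec i} := \widetilde p_{i_s}$, and selection measure $\PP^* := (q^{\otimes s})^{\otimes \N}$. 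Because $\PP$ is Bernoulli with marginal $q$, the block map $\om \mapsto (\om^*_m)_{m\ge 1}$ pushes $\PP$ forward to $\PP^*$, so ``$\PP^*$-a.e.\ $\om^*$'' is equivalent to ``$\PP$-a.e.\ $\om$''.

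Next I would verify the hypotheses of Theorem \ref{thm:fourier main} for $\Sigma^*_\lam$ with exponents $\beta^*_{\vec i} := \gamma(\vec i)$. Both the contractivity $|\lam^{\gamma(\vec i)}| < 1$ and the non-vanishing $\gamma(\vec i)\neq 0$ follow from the identity
\[
\log|\lam^{\gamma(\vec i)}| \;=\; \sum_{j=1}^{s}\sum_{l=1}^{k} n^{(i_j)}_l \log|\lam^{\beta_l}|,
\]
each of whose summands is non-positive, with at least one strictly negative (since every $|\lam^{\beta_l}|<1$ by assumption and $rs\ge 1$). Non-degeneracy of $\Sigma^*_\lam$ follows from that of $\Sigma_\lam$: if $\Phi^{(i_0)}_\lam$ contains two distinct maps, then so does $\Psi^{(\vec i)}_\lam$ for any $\vec i$ ending in $i_0$, and the $\PP^*$-probability of such $\vec i$ is positive because $q$ is fully supported on $I$. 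By Lemma \ref{lem:non-deg}, $\Sigma_\lam$ is non-degenerate for Lebesgue-a.e.\ $\lam \in \D$.

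Applying Theorem \ref{thm:fourier main} with $I = I^*$, exponents $(\gamma(\vec i))_{\vec i\in I^*}$ and selection measure $\PP^*$ produces a Borel set $\Gk^* \subset (I^*)^\N \times \D_*$ whose $\om^*$-sections have Hausdorff dimension at most $1$ (hence two-dimensional Lebesgue measure zero) for $\PP^*$-a.e.\ $\om^*$, and on which $\eta^{*,(\om^*)}_\lam = (\eta'_\lam)^{(\om)} \in \Dk_2$. Fubini applied to the Borel null set $((I^*)^\N \times \D_*) \setminus \Gk^*$ interchanges the quantifiers: for Lebesgue-a.e.\ $\lam \in \D_*$, the $\lam$-section of $\Gk^*$ has full $\PP^*$-measure. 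Intersecting with the open set $\{\lam : |\lam^{\beta_i}|<1 \text{ for all } i\}$ and the Lebesgue-full set on which $\Sigma_\lam$ is non-degenerate yields the lemma. The only genuine subtlety is the bookkeeping in the sparsification --- matching the randomness in the $u_{sm}$'s with the probability vector $p^*_{\vec i}$, which depends only on the last coordinate of $\vec i$ --- and observing that the $\lam$-dependence of the translations in $\Sigma^*_\lam$ creates no obstruction because Theorem \ref{thm:fourier main} imposes no condition on them.
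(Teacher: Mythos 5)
Your proof is correct and follows essentially the same route as the paper's: pass to the $s$-block model over $I^s$ with Bernoulli selection measure, observe that non-degeneracy is inherited from $\Sigma_\lam$ (Lemma \ref{lem:non-deg}), and apply Theorem \ref{thm:fourier main} together with Fubini, using Borel measurability of the exceptional set. The only difference is cosmetic: you carry the factor $\lam^{\alpha(\vec i)}$ into the translations so that the identification of $(\eta'_\lam)^{(\om)}$ with the block-model measure is exact, which is harmless (and if anything slightly tidier than the paper's bookkeeping) since Theorem \ref{thm:fourier main} imposes no condition on the translation parts.
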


\begin{lem}\label{lem:typical dim}
For any $s \in \N$ large enough, for almost all $\lam \in \D_*$ with $s(\lam, \bp) > 2 + \eps$, the measure $ (\eta''_{\lam})^{\pom}$ has exact dimension $2$ for $\PP$-almost all $\om \in \Om$.
\end{lem}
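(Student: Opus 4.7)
The plan, following \cite[Lemma~6.5]{SSS}, is to realize $(\eta''_\lam)^{(\om)}$ as the random measure of an auxiliary block model $\Sigma''_\lam$ whose similarity dimension exceeds $2$, and then to combine Theorem~\ref{thm:main dim} with a Fubini/transversality argument to exclude the dimension drop on all but a Lebesgue null set of parameters $\lam$.

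First I would build $\Sigma''_\lam$ by grouping $\om \in I^\N$ into consecutive blocks of length $s$. Set $I'' := I^s$, $\PP'' := (q^{\otimes s})^{\otimes \N}$, and for each block $\vec i = (i_1,\ldots,i_s)\in I^s$ take the homogeneous IFS
\[
 \Phi''^{(\vec i)}_\lam := \bigl\{\,f^{(i_1)}_{\lam,u_1}\circ\cdots\circ f^{(i_{s-1})}_{\lam,u_{s-1}}\circ\bigl(z\mapsto \lam^{\gamma_{i_s}} z\bigr)\ :\ u_n\in\{1,\ldots,k_{i_n}\}\,\bigr\},
\]
equipped with the uniform probabilities $\bigl(\prod_{n=1}^{s-1}k_{i_n}\bigr)^{-1}$. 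A direct expansion of the random sum defining $(\eta''_\lam)^{(\om)}$ identifies it with the random measure generated by $\Sigma''_\lam$ under the identification $I^\N\simeq(I^s)^\N$, while a computation parallel to the one preceding \eqref{eq:sdim model bound} gives
\[
 \sdim(\Sigma''_\lam) \;=\; \frac{(s-1)\bigl(rH(\bp)-H(q)\bigr)}{-sr\sum_{j=1}^k p_j\log|\lam^{\beta_j}|}\;=\;\frac{s-1}{s}\,\sdim(\Sigma_\lam).
\]
Sharpening \eqref{eq:sdim model bound} (by enlarging $r$) to $\sdim(\Sigma_\lam) > s(\lam,\bp)-\eps/2 > 2+\eps/2$ whenever $s(\lam,\bp)>2+\eps$, any $s > 1 + 4/\eps$ yields $\sdim(\Sigma''_\lam) > 2$. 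Non-degeneracy of $\Sigma''_\lam$ is inherited from that of $\Sigma_\lam$ (Lemma~\ref{lem:non-deg}), since a block whose first coordinate is non-degenerate produces distinct block translations, and the non-real rotation condition $\lam^{\gamma_{i_1}+\cdots+\gamma_{i_s}}\notin\R$ for some $\vec i\in I^s$ fails only on a countable union of real-analytic curves in $\D$, so Assumption~\ref{assmp:non-deg and rotation} is satisfied for $\Sigma''_\lam$ at a.e.\ $\lam$.

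By Proposition~\ref{prop:exact dimension}, $(\eta''_\lam)^{(\om)}$ is $\PP''$-almost surely exact dimensional with common value $\dim(\Sigma''_\lam)\in[0,2]$, and Theorem~\ref{thm:main dim} applied to $\Sigma''_\lam$ forces $\PP''\bigl(\Delta^{(\om)}_n(\Sigma''_\lam)\le e^{-Mn}\bigr)\to 1$ for every $M>0$ as soon as $\dim(\Sigma''_\lam)<2$. It therefore suffices to show that for each $M > 0$ the set $B_M\subset\D_*$ of parameters exhibiting this super-exponential concentration has two-dimensional Lebesgue measure zero. I would do this by a Fubini--Borel--Cantelli argument: for each pair $u\neq v$ of $n$-cylinder words in $\Sigma''_\lam$, the map $\lam\mapsto f_u^{(\om)}(0)-f_v^{(\om)}(0)$ is a non-vanishing complex-analytic function of $\lam$, and complex transversality estimates in the spirit of \cite{SolXu} and \cite[Lemma~4.3]{SS16'} bound the two-dimensional Lebesgue measure of its $e^{-Mn}$ sub-level set. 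Using the Bernoulli product structure of $\PP''$ to average over $\om$, together with a careful enumeration of cylinder pairs grouped by their leading power of $\lam$, should yield $\sum_n \Leb\{\lam : \PP''(\Delta^{(\om)}_n \le e^{-Mn})\ge 1/2\} < \infty$; Borel--Cantelli and a countable union over $M\to\infty$ then give $\Leb(\bigcup_M B_M)=0$, so $\dim(\Sigma''_\lam)=2$ for a.e.\ $\lam$ with $s(\lam,\bp)>2+\eps$.

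The principal obstacle lies in the transversality step. A naive union bound over the $|\X^{(\om)}_n|^2$ cylinder pairs grows exponentially in $n$, whereas the standard sub-level set estimate for a complex analytic function of degree also $O(n)$ decays only like $e^{-cM}$, independent of $n$. Resolving this tension requires exploiting the fine algebraic/combinatorial structure of cylinder differences---most pairs of cylinder words coincide except in a small number of coordinates, yielding transversal functions of low effective degree---or, alternatively, an $L^q$-dimension / Shmerkin-type argument adapted to the block structure of $\Sigma''_\lam$, closely mirroring the corresponding step in \cite[Lemma~6.5]{SSS} but replacing real transversality with its complex analogue as developed in \cite{SolXu}.
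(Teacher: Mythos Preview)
Your construction of $\Sigma''_\lam$, the similarity-dimension computation $\sdim(\Sigma''_\lam)=\frac{s-1}{s}\sdim(\Sigma_\lam)$, and the verification of Assumption~\ref{assmp:non-deg and rotation} are all correct and agree with the paper. The genuine gap is in the last step, where you attempt to rule out super-exponential concentration by a Fubini--Borel--Cantelli transversality argument. You yourself identify the obstacle: a naive union bound over exponentially many cylinder pairs cannot be beaten by the sub-level-set estimate for a single analytic function, and the suggested fixes (``fine combinatorial structure'', ``$L^q$-dimension / Shmerkin-type argument'') are gestures rather than arguments. In particular, nothing in \cite{SolXu} or \cite{SS16'} supplies the required summability, and the corresponding step in \cite[Lemma~6.5]{SSS} does \emph{not} proceed via transversality.

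The paper avoids this difficulty altogether by a reduction to the \emph{deterministic} IFS. The key observation is that every map in any $\Phi''^{(\vec i)}_\lam$ is a composition of maps from the original system $\{g_{\lam,1},\ldots,g_{\lam,k}\}$, so for the minimum cylinder separation one has
\[
\Delta_n^{(F(\om))}(\Sigma''_\lam)\ \ge\ \wt\Delta_{rsn}(\lam):=\min\bigl\{|g_{\lam,u}(0)-g_{\lam,v}(0)|:\ u\neq v\in\{1,\ldots,k\}^{rsn}\bigr\}
\]
whenever $|\X''^{(F(\om))}_n|>1$. Now one invokes Hochman's exponential separation theorem for parametrized families, \cite[Theorem~1.10]{HRd} (applied exactly as in \cite[Lemma~6.6]{SSS}), which gives for a.e.\ $\lam$ an $M=M(\lam)$ with $\wt\Delta_{rsn}(\lam)\ge e^{-Mrsn}$ for infinitely many $n$. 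Hence $\frac{1}{n}\log\Delta_n^{(\cdot)}(\Sigma''_\lam)$ does \emph{not} converge to $-\infty$ in probability, and the contrapositive of Theorem~\ref{thm:main dim} forces $\dim(\Sigma''_\lam)=\min\{2,\sdim(\Sigma''_\lam)\}=2$. This replaces your entire transversality step by a single citation.
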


For both proofs, we will realize measures $(\eta'_{\lam})^{\pom},\ (\eta''_{\lam})^{\pom}$ as random measures generated by certain models $\Sigma'_\lam, \Sigma''_\lam$.

\begin{proof}[Proof of Lemma \ref{lem:typical fourier}]
Let $I' = I^s$ and for $\vec{i} = (\vec{i}_1, \ldots, \vec{i}_s) \in I^s$ consider an IFS
\[ \left( \Phi'_\lam \right)^{(\vec{i})} = \left( \lam_{\vec{i}_1}\cdots\lam_{\vec{i}_s}z + \wt{t}_{\lam, j}^{(\vec{i}_s)} \right)_{j=1}^{k_{\vec{i}_s}},\]
together with $\wt{p}'_{\vec{i}} = \wt{p}_{\vec{i}_s}$. Define a map $F : I^\N \to (I')^\N$ by 
\begin{equation}\label{eq:F def}
F(\om) = \left( (\om_{js + 1}, \ldots, \om_{(j+1)s})\right)_{j=0}^{\infty}.
\end{equation}
Clearly, $F$ maps a fully supported Bernoulli measure $\PP$ on $I^\N$ to a fully supported Bernoulli measure $F(\PP)$ on $(I')^\N$. It is straightforward to check that if $\wt{\eta}^{(\om')}_\lam$ is a random measure generated by the the model $\Sigma'_\lam =   \left( \left( \left( \Phi'_\lam \right)^{(\vec{i})} \right)_{\vec{i}\in I'}, \left( \wt{p}'_{\vec{i}} \right)_{\vec{i} \in I'}, F(\PP)  \right)$, then $(\eta'_{\lam})^{\pom} = \wt{\eta}^{(F(\om))}_\lam$. As the set of translation parts in $\left( \Phi'_\lam \right)^{(\vec{i})}$ is the same as in $\Phi^{(\vec{i}_s)}_\lam$, we see that $\Sigma'_\lam$ is non-degenerate provided that $\Sigma_\lam$ is non-degenerate. By Lemma \ref{lem:non-deg}, this holds for almost every $\lam \in \D_*$. We can therefore apply Theorem \ref{thm:fourier main} together with Fubini's Theorem to finish the proof (Fubini's theorem can be applied, as the set $\Gk$ from Theorem \ref{thm:fourier main} is Borel measurable)
\end{proof}

\begin{proof}[Proof of Lemma \ref{lem:typical dim}]
Fix $s \in \N$ and define a model $\Sigma''_\lam$ as follows. Let $I'' = I^s$. For each $\vec{i} \in I''$ define $K_{\vec{i}} = \prod \limits_{\ell = 1}^{s-1} \{1, \ldots, k_{ \vec{i}_{\ell} } \}$ and consider  the IFS
\[ \left( \Phi''_\lam \right)^{\left( \vec{i} \right)} = \left( \lam_{\vec{i}_1} \cdots \lam_{\vec{i}_s}z + \sum \limits_{\ell=1}^{s-1}\lam_{\vec{i}_1} \cdots \lam_{\vec{i}_{\ell-1}}\wt{t}^{(\vec{i}_\ell)}_{\lam, j_{\ell}} : j \in K_{\vec{i}}\right). \]
Let also $\wt{p}''_{\vec{i}}$ be the uniform probability vector on $K_{\vec{i}}$. Let $F : I^\N \to (I'')^\N$ be defined as in \eqref{eq:F def}. Similarly as before, it is easy to check that the measures $\wt{\eta}_\lam^{(\om'')}$ generated by the model $\Sigma''_\lam =   \left( \left( \left( \Phi''_\lam \right)^{(\vec{i})} \right)_{\vec{i}\in I''}, \left( \wt{p}''_{\vec{i}} \right)_{\vec{i} \in I''}, F(\PP)  \right)$ satisfy  $(\eta''_{\lam})^{\pom} = \wt{\eta}^{(F(\om))}_\lam$.
We shall now verify that the model $\Sigma''_\lam$ satisfies Assumption \ref{assmp:non-deg and rotation} for almost every $\lam \in \D_*$.  To see that the model $\Sigma''_{\lam}$ is non-degenerate, note that
\[  \sum \limits_{\ell=1}^{s-1}\lam_{\vec{i}_1} \cdots \lam_{\vec{i}_{\ell-1}}\wt{t}^{(\vec{i}_\ell)}_{\lam, j_{\ell}} = f_{\lam, j_1}^{\vec{i}_1} \circ \cdots \circ f_{\lam, j_{s-1}}^{\vec{i}_{s-1}}(0),\]
hence for a given $\vec{i} \in I''$ the set of translations occurring in the IFS $\left( \Phi''_\lam \right)^{\left( \vec{i} \right)} $ is the same as the ones occurring in $\Phi_\lam^{\left( \vec{i}_1 \right)} \circ \cdots \circ \Phi_\lam^{\left( \vec{i}_{s-1} \right)} $. Consequently, the model $\Sigma''_\lam$ is non-degenerate provided that $\Sigma_\lambda$ is non-degenerate. By Lemma \ref{lem:non-deg}, this holds for almost every $\lam \in \D_*$. Moreover, it follows from the definitions of $\Sigma_\lam$ and $\Sigma''_\lam$, that the latter model contains an IFS with the linear part $\lam^{rs}$. As for a fixed $s$
 we have $\lam^{rs} \notin \R$ for almost every $\lam \in \D_*$, we see that $\Sigma''_\lam$ verifies Assumption \ref{assmp:non-deg and rotation} for almost every $\lam \in \D_*$.

Define now
\[ \wt{\Delta}_n(\lam) = \min \left\{ |g_{\lam, u} (0)- g_{\lam, v}(0)| : u \neq v \in \{ 1, \ldots, k\}^n \right\}.\]
The alculation from \cite[Proof of Lemma 6.5]{SSS} gives for $\om \in \Om$:
\begin{equation}\label{eq:delta comp} \Delta^{(F(\om))}_n(\Sigma''_\lam) \geq \wt{\Delta}_{rsn}(\lam)\ \text{ provided }\ |\X''^{(F(\om))}_n| > 1,
\end{equation}
where $\Delta^{(F(\om))}_n(\Sigma''_\lam)$ and $\X''^{(F(\om))}_n$ are defined as in \eqref{eq:delta def} and \eqref{eq:doubleX def} respectively, for the model $\Sigma''_\lam$ (note that $\X''^{(F(\om))}_n$ does not depend on $\lam$). As $t_i$ are all distinct, we use \cite[Theorem 1.10]{HRd} in the same way as in the proof of \cite[Lemma 6.6]{SSS} to conclude that for almost every $\lam \in \D_*$ (actually for $\lam$ from outside of a set of Hausdorff dimension at most $1$) there exists $M = M(\lam)$ such that
\begin{equation}\label{eq:exp sep} \wt{\Delta}_{rsn}(\lam) \geq e^{-Mrsn} \text{ for infinitely many } n \in \N.
\end{equation}
As for every $\lam \in \D_*$, such that $\Sigma''_\lam$ is non-degenerate, we have $\lim \limits_{n \to \infty} \PP \left( \left\{ \om \in \Om : |\X''^{(F(\om))}_n |> 1 \right\} \right) = 1$, combining \eqref{eq:delta comp} with \eqref{eq:exp sep} gives
\[ \frac{1}{n}\log \Delta^{(\cdot)}_n(\Sigma''_\lam) \overset{F(\PP)}{\nrightarrow} - \infty\ \text{ for almost every } \lam \in \D_*.\]
Now  Theorem \ref{thm:main dim} implies that $\dim(\Sigma''_\lam) = \min\{2, \sdim(\Sigma''_\lam) \} $ for almost every $\lam \in \D_*$.  As shown in \cite[Proof of Lemma 6.5]{SSS},
\[ \sdim(\Sigma''_\lam) = (1-1/s)\sdim(\Sigma_\lam).\]
Recalling \eqref{eq:sdim model bound} we see that if $s$ is large enough, then for almost every $\lam \in \D_*$, the inequality $s(\lam, p) > 2 + \eps$ implies $\sdim(\Sigma''_\lam) \geq 2$ and hence $\dim(\Sigma''_\lam) = 2$ . For such $\lam$, Proposition \ref{prop:exact dimension} gives   $\dim(\eta''^{\pom}) = 2$ for $\PP$-a.e.\ $\om \in \Om$. This finishes the proof of Lemma  \ref{lem:typical dim} and concludes the proof of Proposition \ref{prop:ac on curves}. Therefore, Theorem  \ref{thm:main ac} is proved.
\end{proof}

\appendix

\section{Proof of Theorem \ref{thm:main dim} for ergodic measures}\label{app:ergodic case}

In this section we explain how to deduce  Theorem \ref{thm:main dim} from Proposition \ref{prop:main dim 1/2} for an arbitrary ergodic measure $\PP$ on $I^\N$. The idea of the proof is the same as in the already established Bernoulli case, but now the measure $\PP'$ on the ``$k$-iterate'' model $\Sigma'$ is no longer guaranteed to be ergodic, hence we have to consider its ergodic decomposition. Below are the details.

Let $\Sigma = \left( (\Phi^{(i)})_{i \in I}, (p_i)_{i \in I}, \PP \right)$ be an arbitrary model verifying Assumption \ref{assmp:non-deg and rotation} and assume that $\dim(\Sigma) < \min\{2, \sdim(\Sigma)\}$. Fix $k_0 \in \N$ such that $r_{\max}^{k_0} = \left(r_{\max}(\Sigma) \right)^{k_0} \leq 1/2$. Let $\Om = I^\N$ and for $k \geq k_0$ define $\Om'_k := \left(I^k\right)^\N$ together with the bijection $h_k  : \Om \to \Om'_k$ given by
\[h_k(\om_1, \om_2, \ldots) = ((\om_1, \ldots, \om_{k}), (\om_{k+1}, \ldots, \om_{2k}), \ldots),\]
and the shift map $T'_k : \Om'_k \to \Om'_k$ defined as 
\[ T'_k((\om_1, \ldots, \om_{k}), (\om_{k+1}, \ldots, \om_{2k}), \ldots) = ((\om_{k+1}, \ldots, \om_{2k}), \ldots) .\] Let \[\Sigma'_k = \left( \left(\Psi^{(\ov{i})}\right)_{\ov{i} \in I^k}, (p_{\ov{i}})_{\ov{i} \in I^k}, \PP'_k \right)\]
be a model with the index set $I^k$, where $\PP'_k = h_k(\PP)$ and for $\ov{i} = (i_1, \ldots, i_k) \in I^k$ we put
\[\Psi^{(\ov{i})} = \left\{ f^{(i_1)}_{u_1} \circ \cdots \circ f^{(i_k)}_{u_k} : u_j \in \{1, \ldots, k_{i_j}\} \text{ for } 1 \leq j \leq k \right\},\]
together with $p_{\ov{i}} = p_{i_1} \otimes \cdots \otimes p_{i_k}$. Note that $\PP'_k$ is $T'_k$-invariant (as $h_k \circ T^k = T'_k \circ h_k$)\footnote{$\PP'_k$ is $T'_k$-ergodic if and only if $\PP$ is ergodic for the $k$-th iterate of the shift $T$ on $\Om$. This does not have to be case for an arbitrary $T$-ergodic measure $\PP$ on $\Om$.} and $h_k$ is a measure-preserving bi-measurable bijection between $(\Om, \PP)$ and $(\Om'_k, \PP'_k)$.
Consider the ergodic decomposition of $\PP'_k$ with respect to $T'_k$ (see e.g. \cite[Theorem 6.2]{EWergodictheory}): There is a $\PP'_k$-almost surely defined assignment $\om' \to \PP'_{\om', k}$, with $\PP'_{\om', k}$ being $\T'_k$-invariant and ergodic probability measures on $\Om'_k$ such that
\begin{equation}\label{eq: ergodic decomp}
	\PP'_k = \int \limits_{\Om'_k} \PP'_{\om', k}d\PP'_k(\om').
\end{equation}
Therefore, for $\PP$-a.e.\ $\om \in \Om$, we can consider a model $\Sigma'_{\om,k} = \left( \left(\Psi^{(\ov{i})}\right)_{\ov{i} \in I^k}, (p_{\ov{i}})_{\ov{i} \in I^k}, \PP'_{h_k(\om), k} \right)$ with an ergodic selection measure $\PP'_{h_k(\om), k}$ on $\Om'_k$. It is easy to see that $\eta^{(\om)} = \eta'^{(h_k(\om))}$ for every $\om \in \Om$, where $\eta'^{(\cdot)}$ denotes  projected measures in the model $\Sigma'_{\om,k}$. Therefore, \begin{equation}\label{eq:sigma k dim} \dim(\Sigma) = \dim(\Sigma'_{\om,k}) \text{ for } \PP \text{-a.e. } \om \in \Om.
\end{equation}
We shall prove now that for $\PP$-almost every $\om \in \Om$ there exists $k$ such that we can apply the already established version of Theorem \ref{thm:main dim} to $\Sigma'_{\om,k}$. We will do so in a series of claims. To make the notation more clear, we will denote elements of $\Om$ by $\om = (\om_1, \om_2, \ldots)$ and elements of $\Om'_k$ by $\om' = ((\om'_1, \ldots \om'_k), (\om'_{k+1}, \ldots, \om'_{2k}), \ldots)$. \\

\textbf{Claim 1.} {\em For $\PP$-a.e.\ $\om \in \Om$ there exists $k(\om)$ such that $\dim(\Sigma'_{\om,k}) < \min\{2, \sdim(\Sigma'_{\om,k}) \}$ for every $k \geq k(\om)$}.\\

Define the functions $f,g : \Om \to \R$ as $f(\om) = H(p_{\om_1})$ and $g(\om) = -\log r_{\om_1}$. A direct computation from the definition \eqref{eq:sdim def} of the similarity dimension shows that
\begin{equation}\label{eq: sigma_k sdim} \sdim(\Sigma'_{\om,k}) = \frac{\int \limits_{\Om'_k}H(p_{\om'_1} \otimes \cdots \otimes p_{\om'_k})d\PP'_{\om,k}(\om')}{\int \limits_{\Om'_k} \log ( r_{\om'_1}\cdots r_{\om'_k} )  d\PP'_{\om,k}(\om')} = \frac{\int \limits_{\Om'_k} \frac{1}{k} \sum \limits_{j=0}^{k-1} f(T^j (h^{-1}_k(\om')))d\PP'_{\om,k}(\om')}{\int \limits_{\Om'_k} \frac{1}{k} \sum \limits_{j=0}^{k-1} g(T^j (h^{-1}_k(\om')))d\PP'_{\om,k}(\om')}
\end{equation}
and
\[ \sdim(\Sigma) = \frac{\int\limits_{\Om} f(\om)d\PP(\om)}{\int\limits_{\Om} g(\om)d\PP(\om)}. \]
As $\PP$ is ergodic, we have for $\PP$-a.e.\ $\om \in \Om$:
\[\lim \limits_{k \to \infty} \frac{1}{k} \sum \limits_{j=0}^{k-1} f(T^j \om) = \int\limits_{\Om} f(\om)d\PP(\om)\ \text{ and }\ \lim \limits_{k \to \infty} \frac{1}{k} \sum \limits_{j=0}^{k-1} g(T^j \om) = \int\limits_{\Om} g(\om)d\PP(\om).\]
By Egorov's theorem applied to the convergence above, for every $\eps > 0$ there exists a Borel set $E_\eps \subset \Om$ with $\PP(E_\eps) \geq 1 - \eps$ and $k(\eps) \in \N$ such that for every $k \geq k(\eps)$ and $\om \in E_\eps$ we have
\[ \left| \frac{1}{k} \sum \limits_{j=0}^{k-1} f(T^j \om) - \int\limits_{\Om} f(\om)d\PP(\om) \right| < \eps\ \text{ and }\ \left| \frac{1}{k} \sum \limits_{j=0}^{k-1} g(T^j \om) - \int\limits_{\Om} g(\om)d\PP(\om) \right| < \eps\]
By the Chebyshev inequality, the set $E'_\eps = \{ \om \in \Om : \PP'_{k, h(\om)}(h_k(E_\eps)) \geq 1 - \sqrt{\eps} \}$ satisfies $\PP(E'_\eps) \geq 1 - \sqrt{\eps}$. By \eqref{eq: sigma_k sdim}, this implies that for $\delta > 0$, if $\eps< \eps(\delta)$ then $|\sdim(\Sigma'_{\om,k}) - \sdim(\Sigma)| < \delta$ for every $\om \in E'_\eps$ and $k \geq k(\eps)$. We can therefore conclude, in view of
 $\dim(\Sigma) < \min\{2, \sdim(\Sigma)\}$ together with \eqref{eq:sigma k dim}, that if $\eps > 0$ is small enough, then $\dim(\Sigma'_{\om,k}) < \min\{2, \sdim(\Sigma'_{\om,k}) \}$ holds for $\PP$-almost every $\om \in E'_\eps$ and $k \geq k(\eps)$. Consequently, the conclusion of the claim holds for $\PP$-a.e.\ $\om \in  \bigcup \limits_{n = N}^\infty E'_{\frac{1}{n}}$ with $N$ large enough. As this is a set of full measure, the claim is proved.\\

\textbf{Claim 2.} {\em For $\PP$-a.e.\ $\om \in \Om$ there exists $k(\om)$ such that model $\Sigma'_{\om, k}$ satisfies the item \ref{it:non-degenerate} of Assumption \ref{assmp:non-deg and rotation} for every $k \geq k(\om)$}.\\

Let $i_0 \in I$ satisfy Assumption \ref{assmp:non-deg and rotation}\ref{it:non-degenerate} for the original model $\Sigma$. First, note that for $\PP$-a.e.\ $\om \in \Om$, the following implication is true for every $k \in \N$:
\begin{equation}\label{eq: Sigma k fails implication}
	\Sigma'_{\om,k} \text{ does not satisfy Assumption \ref{assmp:non-deg and rotation}\ref{it:non-degenerate}} \Longrightarrow i_0 \notin \{\om_1, \ldots \om_k\}.
\end{equation}
Indeed, fix $k \in \N$ and observe that if $\Sigma'_{\om,k}$ does not satisfy  Assumption \ref{assmp:non-deg and rotation}\ref{it:non-degenerate}, then $\PP'_{h_k(\om),k}\left( \left\{ \om' \in \Om'_k :  i_0 \notin \{ \om'_{1}, \ldots, \om'_k \} \right\} \right) = 1$. As for $\PP$-a.e.\ $\om$, the sequence $h_k(\om)$ is typical is for its own ergodic element $\PP'_{h_k(\om),k}$\footnote{This can be formally seen e.g. from the proof of \cite[Theorem 6.2]{EWergodictheory} combined with \cite[Theorem 5.14 p. (ii)]{EWergodictheory}.}, this implies \eqref{eq: Sigma k fails implication}. It now follows from \eqref{eq: Sigma k fails implication} that
\[
\begin{split} \PP & \left( \left\{ \om \in \Om : \Sigma'_{k,\om} \text{ does not satisfy Assumption \ref{assmp:non-deg and rotation}\ref{it:non-degenerate} for infinitely many } k \in \N,\ \right\} \right) \\
	& \leq \PP \left( \left\{ \om \in \Om : i_0 \notin \{\om_1, \ldots \om_k\} \text{ for infinitely many } k \in \N \right\} \right) \\
	& = \PP \left( \left\{ \om \in \Om : i_0 \notin \{\om_1, \om_2, \ldots \} \right\} \right) \\
	& = 0,
\end{split}
\]
by Assumption \ref{assmp:non-deg and rotation}\ref{it:non-degenerate} for $\Sigma$. This finishes the proof of Claim 2.\\

\textbf{Claim 3.} {\em For $\PP$-a.e.\ $\om \in \Om$, the model $\Sigma'_{\om, k}$ satisfies the item \ref{it:non-real} of Assumption \ref{assmp:non-deg and rotation} for infinitely many $k \in \N$}.\\

Using the same approach as in Claim 2, we can prove that for $\PP$-a.e.\ $\om \in \Om$ and every $k \in \N$, the following implication is true:
\[ \Sigma'_{\om,k} \text{ does not satisfy Assumption \ref{assmp:non-deg and rotation}\ref{it:non-real}} \Longrightarrow \vphi_{\om_1} \cdots \vphi_{\om_k} \in \R. \]
This gives
\[
\begin{split} \PP & \left( \left\{ \om \in \Om : \Sigma'_{k,\om} \text{ satisfies  Assumption \ref{assmp:non-deg and rotation}\ref{it:non-real} only for finitely many } k \in \N \right\} \right) \\
	& \leq \PP \left( \left\{ \om \in \Om : \vphi_{\om_1} \cdots \vphi_{\om_k} \notin \R \text{ only for finitely many } k \in \N \right\} \right) \\
	& = 0,
\end{split}
\]
where the last equality follows from Lemma \ref{lem:skew product ergodic} and the fact that $\xi^{(W)}$ is not concentrated on a single atom by Assumption 
\ref{assmp:non-deg and rotation}\ref{it:non-real} for $\Sigma$.\\

\textbf{Claim 4.} {\em For $\PP$-a.e.\ $\om \in \Om$ there exists $k = k(\om) \geq k_0$ such that $\dim(\Sigma'_{\om,k}) < \min\{2, \sdim(\Sigma'_{\om,k}) \}$ and the model $\Sigma'_{\om,k}$ satisfies Assumption \ref{assmp:non-deg and rotation}.}\\

This follows directly from Claims 1-3. For $k \geq k_0$ define
\[A_k:= \left\{ \om \in \Om : \text{ model } \Sigma'_{\om,k} \text{ satisfies Assumption \ref{assmp:non-deg and rotation} and }  \dim(\Sigma'_{\om,k}) < \min\{2, \sdim(\Sigma'_{\om,k}) \} \right\}.\]

\textbf{Claim 5.} {\em For each $k \geq k_0$ and $M > 0$ we have $\lim \limits_{n \to \infty} \PP\left( \left\{ \om \in \Om :  \Delta^{(\om)}_{kn} \leq e^{-Mn} \text{ and } \om \in A_k  \right\} \right) = \PP(A_k) $}.\\

Fix $M > 0$ and $k \geq k_0$. For each $\om \in A_k$, we can apply Proposition \ref{prop:main dim 1/2} to the model $\Sigma'_{\om,k}$, as $r_{\mathrm{max}}(\Sigma'_{\om,k}) \leq \left(r_{\mathrm{max}}(\Sigma)\right)^k \leq 1/2$. For $\om' \in \Om'_k$, let $\Delta_{n}^{'(\om')}$ be defined as in \eqref{eq:delta def} for the model $\Sigma'_{\om,k}$ (which we formally consider over the symbolic space $\Om'_k$).  Therefore, for $\om \in A_k$ we have
\[ \lim \limits_{n \to \infty} \PP'_{h_k(\om), k} \left( \left\{ \om' \in \Om'_k: \Delta^{'(\om')}_n \leq e^{-Mn} \right\} \right) = 1 \]
and hence by the Lebesgue's dominated convergence theorem
\begin{equation}\label{eq: limit n over Ak} \lim \limits_{n \to \infty} \int \limits_{A_k} \PP'_{h_k(\om), k} \left( \left\{ \om' \in \Om'_k: \Delta^{'(\om')}_n \leq e^{-Mn} \right\} \right) d\PP(\om) = \PP(A_k).
\end{equation}
On the other hand, note that for $\PP$-a.e.\ $\om \in A_k$, we also have
\[\PP'_{h_k(\om),k} \left( \left\{ \om' \in \Om'_k : h_k^{-1}(\om') \in A_k \right\} \right) = 1,\]
as the property of belonging to $A_k$ depends only on the model $\Sigma'_{\om,k}$ given by the corresponding ergodic element (which is the same for typical points coming from a fixed ergodic element $\PP_{h_k(\om), k}$). Moreover, for each $\om \in \Om$, we have $\Delta_{kn}^{(\om)} =\Delta_{n}^{'(h_k(\om))}$. The above observations, together with \eqref{eq: ergodic decomp} and $\PP = h^{-1}_k\PP'_k$, give
\[
\begin{split} \int \limits_{A_k} \PP'_{h_k(\om), k} & \left( \left\{ \om' \in \Om': \Delta^{'(\om')}_n \leq e^{-Mn} \right\} \right) d\PP(\om) \\
	& = \int \limits_{A_k} \PP'_{h_k(\om), k} \left( \left\{ \om' \in \Om': \Delta^{(h_k^{-1}(\om'))}_{kn} \leq e^{-Mn} \text{ and } h_k^{-1}(\om') \in A_k \right\} \right) d\PP(\om) \\
	& \leq \int \limits_{\Om} h_k^{-1}\PP'_{h_k(\om), k} \left( \left\{ \overline{\om} \in \Om: \Delta^{(\overline{\om})}_{kn} \leq e^{-Mn} \text{ and } \overline{\om} \in A_k \right\} \right) d\PP(\om) \\
	& = \PP\left( \left\{ \om \in \Om: \Delta^{(\om)}_{kn} \leq e^{-Mn} \text{ and } \om \in A_k \right\} \right) \leq \PP(A_k).
\end{split}  \]
Combining this with \eqref{eq: limit n over Ak} finishes the proof of Claim 5.\\

The following claim will conclude the proof of Theorem \ref{thm:main dim}.\\

\textbf{Claim 6.} {\em For every $M>0$, we have $\lim \limits_{n \to \infty} \PP \left( \left\{ \om \in \Om : \Delta^{(\om)}_n \leq e^{-Mn} \right\} \right) = 1$}.\\

Note first that Assumption \ref{assmp:non-deg and rotation} implies
\begin{equation}\label{eq: X om > 1}\lim \limits_{n \to \infty} \PP\left( \left\{ \om \in \Om : |\X^{(\om)}_n| > 1 \right\} \right) = 1.
\end{equation}
For $k,n \in \N$ define $\lfloor n/k \rfloor = \frac{n - (n \mod k)}{k}$, so that $n = k\lfloor n/k \rfloor + (n \mod k)$. Lemma \ref{lem:Delta inequality} gives for $n \geq k$:
\begin{equation}\label{eq: superexp inclusion}
	\begin{split} \Big\{ \om \in \Om : \Delta^{(\om)}_{k\lfloor n/k \rfloor} \leq e^{-2Mk\lfloor n/k \rfloor}&,\ \om \in A_k,\ |\X^{(\om)}_{k\lfloor n/k \rfloor}| > 1 \Big\}  \\
		& \subset \left\{ \om \in \Om : \Delta^{(\om)}_{n} \leq e^{-Mn} \text{ and } \om \in A_k \right\},
	\end{split}
\end{equation}
as if $|\X^{(\om)}_{k\lfloor n/k \rfloor}| > 1$ and $\Delta^{(\om)}_{k\lfloor n/k \rfloor} \leq e^{-2Mk\lfloor n/k \rfloor}$, then for $j = n \mod k$:
\[ \Delta^{(\om)}_n = \Delta^{(\om)}_{k\lfloor n/k \rfloor + j} \leq \Delta^{(\om)}_{k\lfloor n/k \rfloor} \leq e^{-2Mk\lfloor n/k \rfloor} \leq e^{-M(k\lfloor n/k \rfloor + j)} = e^{-Mn}.  \]
Let us define
\[ B_{n,k,M} = \left\{ \om \in \Om :  \Delta^{(\om)}_{kn} \leq e^{-Mn} \text{ and } \om \in A_k  \right\}. \]
Combining Claim 5 with \eqref{eq: X om > 1} allows us to conclude that for every $k \geq k_0$ and $M>0$,
\begin{equation}\label{eq:B convergence} \lim \limits_{n \to \infty } \PP\left( B_{\lfloor n/k \rfloor, k, Mk} \cap \{ \om \in \Om : |\X^{(\om)}_{k\lfloor n/k \rfloor}| > 1 \} \right) = \PP(A_k).
\end{equation}
As Claim 4 gives $\PP(\bigcup \limits_{k \geq k_0}A_k) = 1$, we have by \eqref{eq: superexp inclusion}:
\[\begin{split} \PP \left( \left\{ \om \in \Om : \Delta^{(\om)}_n \leq  e^{-Mn} \right\} \right) & \geq \PP \left( \bigcup \limits_{k \geq k_0} \left( A_k \cap \left\{ \om \in \Om : \Delta^{(\om)}_n \leq e^{-Mn} \right\} \right) \right)  \\
	& \geq \PP\left(\bigcup \limits_{k = k_0}^{n} \left( B_{\lfloor n/k \rfloor, k, Mk} \cap \{ \om \in \Om : |\X^{(\om)}_{k\lfloor n/k \rfloor}| > 1 \} \right) \right).
\end{split}
\]
Finally, \eqref{eq:B convergence} gives
\[ \lim \limits_{n \to \infty} \PP \left( \left\{ \om \in \Om : \Delta^{(\om)}_n \leq e^{-Mn} \right\} \right) \geq \PP(\bigcup \limits_{k \geq k_0}A_k) = 1, \]
finishing the proof. \hfill $\square$

\bibliographystyle{alpha}
\bibliography{selfsimilar_bib}

\end{document}